\def\VersionPath{versions/revision-1}
\def\input@path{{\VersionPath}}
\setlist[enumerate]{wide=0pt}
\theoremstyle{definition} \newtheorem{theorem}{Theorem}
\theoremstyle{remark} \newtheorem{remark}{Remark}
\newcommand{\neutralize}[1]{\expandafter\let\csname c@#1\endcsname\count@}
\theoremstyle{theorem}
\newenvironment{assprime}[1]
  {\renewcommand{\theass}{#1}\addtocounter{ass}{-1}\begin{ass}}
  {\end{ass}}
\newcommand{\inrevise}[1]{{\color{black}#1}}
\newenvironment{bulkrevise}{\begingroup\color{black}}{\endgroup}
\newcommand{\kmax}{k_{\text{max}}}
\newcommand{\rhomax}{\ensuremath{\rho_{\text{max}}}}
\newcommand{\taumin}{\ensuremath{\tau_{\text{min}}}}
\newcommand{\ximin}{\ensuremath{\xi_{\text{min}}}}
\newcommand{\cAmin}{\ensuremath{\mathcal{A}^{\text{min}}}}
\newcommand{\cAmax}{\ensuremath{\mathcal{A}^{\text{max}}}}
\newcommand{\tauzero}{\ensuremath{\bar{\mathcal{\tau}}_0}}
\newcommand{\nonE}{\ensuremath{\mathcal{E}_{\tau\xi}^{\prime}}}
\newcommand{\natF}{\ensuremath{\mathcal{G}}}
\newcommand{\supp}[1]{\mathop{\textbf{supp}}(#1)}
\newcommand{\bdsemicolon}{\;\bm{;}\;}
 \renewcommand{\top}{T}
\begin{document}

\title{Sequential Quadratic Optimization for Solving Expectation Equality Constrained Stochastic Optimization Problems}

\author{Haoming Shen\footnote{Department of Industrial Engineering, University of Arkansas (haomings@uark.edu).} \and Yang Zeng\footnote{School of Computing and Augmented Intelligence, Arizona State University (yzeng87@asu.edu).} \and Baoyu Zhou\footnote{School of Computing and Augmented Intelligence, Arizona State University (baoyu.zhou@asu.edu).}}
\date{}
\maketitle

\begin{abstract}
  A sequential quadratic programming method is designed for solving general smooth nonlinear stochastic optimization problems subject to expectation equality constraints. We consider the setting where the objective and constraint function values, as well as their derivatives, are not directly available. The algorithm applies an adaptive step size policy and only relies on objective gradient estimates, constraint function estimates, and constraint derivative estimates to update iterates. Both asymptotic and non-asymptotic convergence properties of the algorithm are analyzed. Under reasonable assumptions, the algorithm generates a sequence of iterates whose first-order stationary measure diminishes in expectation. In addition, we identify the iteration and sample complexity for obtaining a first-order $\varepsilon$-stationary iterate in expectation. The results of numerical experiments demonstrate the efficiency and efficacy of our proposed algorithm compared to a penalty method and an augmented Lagrangian method.
 \end{abstract}

\section{Introduction}
In this paper, we design, analyze, and implement a sequential quadratic programming (SQP) method for minimizing a general smooth nonlinear function, which is written in an expectation format, subject to expectation equality constraints. Such optimization problems arise in various science and engineering applications, including but not limited to machine learning fairness~\citep{oneto2020fairness}, dynamic systems~\citep{simon2002kalman}, PDE-constrained optimization~\citep{Gahu21}, structural model estimation~\citep{su2012constrained}, and training physics-informed neural networks~\citep{basir2022physics}.

Numerous algorithms have been designed and analyzed for solving general nonlinear constrained \textit{deterministic} optimization problems, including penalty methods, interior-point methods and SQP methods. The fundamental idea of penalty methods~\citep{courant1994variational,fletcher2000practical,zangwill1967non} is to incorporate constraint violations into the objective function using a weighted penalty term and then apply unconstrained optimization algorithms, such as subgradient methods~\citep{kiwiel2006methods,shor2012minimization}, to solve the resulting penalty subproblem. Penalty methods are easy to implement, however, they usually have inferior numerical performance compared to interior-point methods and SQP methods due to the difficulty in selecting suitable penalty parameters and the ill-conditioning of penalty subproblems. Meanwhile, interior-point methods~\citep{dikin1967iterative,renegar1988polynomial,wright1997primal} apply barrier functions to guide iterates moving along a central path within the feasible set to a solution. Interior-point methods are well-known for their outstanding numerical behaviors as they serve as base algorithms for many advanced computational optimization softwares, including \texttt{IPOPT} \citep{WachBieg06} and \texttt{KNITRO} \citep{ByrdNoceWalt06}. Finally, SQP methods~\citep{boggs1995sequential,wilson1963simplicial} are another class of optimization algorithms that have superior performance in both theory and practice. Line-search SQP methods~\citep{han1977globally,han1979exact,powell2006fast} are widely recognized as a class of state-of-the-art algorithms for solving equality-constrained optimization problems. At every iteration, SQP methods construct and solve a subproblem model of quadratic objective function subject to linearized constraints. This helps SQP methods allow infeasible iterates while (under reasonable conditions) enjoy global and fast local convergence behaviors at the same time.

To handle optimization problems arising in areas of machine learning and data science, it becomes crucial to design advanced algorithms for solving \textit{stochastic} optimization problems subject to nonlinear constraints. When solving \textit{stochastic} optimization problems with \textit{deterministic} constraints, most of existing algorithms are either stochastic penalty methods, including stochastic augmented Lagrangian algorithms, which offer worst-case complexity guarantees but often exhibit relatively inferior numerical performance, or imposing strong conditions (e.g., convexity and boundedness) on the feasible set so that projection-type operations or linear-type operators (such as Frank-Wolfe methods) are tractable~\citep{boob2025level,hazan2016variance,lu2021generalized,lu2024variance,ma2020quadratically,reddi2016stochastic,shi2025momentum}. Recently, a few papers proposed novel algorithms such as stochastic interior-point methods and stochastic SQP methods. In particular,~\cite{CurtKungRobiWang23,DezfWach24} develop stochastic interior-point methods for solving \textit{stochastic} (or \textit{noisy}) optimization problems with \textit{deterministic} box constraints, while~\cite{curtis2025interior,CurtJianWang24} extend the problem setting to general \textit{deterministic} (potentially \textit{noisy}) nonlinear constraints. Compared to stochastic interior-point methods, stochastic SQP methods have been studied more extensively in the literature. \citet{BeraCurtRobiZhou21} proposed the very first stochastic SQP algorithm for solving \textit{deterministic} nonlinear equality constrained \textit{stochastic} optimization problems, while~\cite{NaAnitKola23b} is the first work that considers general \textit{deterministic} functional constrained setting. There are multiple follow-up works based on~\cite{BeraCurtRobiZhou21,NaAnitKola23b}, including showing almost-sure convergence and non-asymptotic convergence properties of stochastic SQP algorithms~\cite{curtis2023almost,CurtRobiOnei24}; designing and analyzing stochastic line-search SQP methods~\citep{BeraXieZhou23,NaAnitKola23a,OztoByrdNoce23,QiuKung23}; relaxing requirements of constraint qualifications~\citep{BeraCurtOneiRobi23,berahas2025optimistic}; allowing inexactness when solving SQP subproblems~\citep{BeraBollShi24,BeraBollZhou22,CurtRobiZhou24,NaMaho22}; improving convergence behaviors via variance-reduced techniques~\citep{BeraShiYiZhou23}; designing stochastic trust-region SQP methods~\citep{FangNaMahoKola24,fang2024trust,sun2024trust}; and solving general \textit{deterministic} nonlinear constrained \textit{stochastic} optimization problems with an objective-function-free stochastic SQP method~\citep{CurtRobiZhou23}.

On the other hand, there are much less literature discussing general expectation-constrained stochastic optimization algorithms~\cite{alacaoglu2024complexity, BoobDengLan23, CuiWangXiao24, DzahKokkLeDi23, FaccKung23, LanZhou20, lew2024sample, li2024stochastic, MenhAuguBungYous17}. We note that some proximal-type algorithms are proposed in \cite{BoobDengLan23,LanZhou20}, however, relatively restrictive assumptions (including convexity of functions and ``strong feasibility'' conditions; see \cite[Assumption~3]{BoobDengLan23}) are required for proving theoretical convergence results. Meanwhile,~\cite{DzahKokkLeDi23, MenhAuguBungYous17} consider the ``black-box'' optimization setting, where trust-region and direct-search methods are developed respectively. Recently, several papers~\cite{alacaoglu2024complexity,CuiWangXiao24,li2024stochastic} have proposed variants of stochastic penalty and augmented Lagrangian algorithms, along with corresponding complexity analyses. To the best of our knowledge,~\cite{FaccKung23} proposes the only existing stochastic SQP method for solving expectation-constrained stochastic optimization problems, however, such algorithm relies on a special Monte Carlo process that generates stochastic search directions as unbiased estimators of the corresponding true search direction, while our proposed algorithm does not require such a restrictive unbiasedness condition.

\subsection{Contributions}
Our paper makes three-fold major contributions, which are summarized as follows.

\textbf{Algorithmistic Perspective.}\ \ We design \inrevise{a stochastic $\ell_1$-SQP method} to solve general nonlinear equality-constrained optimization problems, where both objective and constraint functions are represented as expectations. Our algorithm framework shares the spirit of \cite[Algorithm~4.1]{bottou2018optimization} and \cite[Algorithm~3.1]{BeraCurtRobiZhou21} that only a few stochastic estimates need to be evaluated at each iteration, while these stochastic estimates are not necessarily satisfying any probablistic accuracy conditions as \cite{BeraXieZhou23,NaAnitKola23a}. Moreover, our proposed algorithm takes adaptive step sizes and never requires objective function estimates.

\textbf{Theoretical Perspective.}\ \ Although our algorithmic framework is inspired by~\cite{BeraCurtRobiZhou21}, our theoretical analysis differs as we focus on the \textit{stochastic} constrained setting. In~\cite{BeraCurtRobiZhou21}, when solving \textit{deterministic} equality-constrained \textit{stochastic} optimization problems, under reasonable assumptions, the proposed algorithm always computes stochastic search directions behaving as unbiased estimators of ``real'' search directions, which would be computed via corresponding \textit{deterministic} quantities. However, such nice properties no longer hold in our setting due to uncertainty within constraints, which results in a different analysis from \cite{BeraCurtRobiZhou21,CurtRobiZhou23}. Because of loose conditions required for stochastic estimates, we only present convergence in expectation properties of our proposed algorithm. We demonstrate asymptotic and non-asymptotic convergence properties of our method, including both iteration and sample complexity results.
\inrevise{In particular, the theoretical performance of our algorithm remains competitive with that of variance-reduced stochastic gradient type algorithms proposed in~\cite{alacaoglu2024complexity} for the same problem setting.}

\textbf{Numerical Perspective.}\ \ We present the empirical performance of our proposed algorithm by comparing it to the stochastic momentum-based method proposed in~\cite{CuiWangXiao24} on a constrained binary classification problem with five different datasets from the LIBSVM collection~\citep{chang2011libsvm}. We also test and show the results of numerical experiments of our algorithm in contrast to a stochastic subgradient method on problems from the CUTEst collection~\citep{gould2015cutest}. These numerical comparisons illustrate the benefits of applying \inrevise{our proposed $\ell_1$-SQP framework} for solving equality-expectation-constrained stochastic optimization problems.

\subsection{Notation}
Let $\RR$ denote the set of real numbers, $\RR_{>r}$ (resp., $\RR_{\geq r}$) denote the set of real numbers that are strictly greater than (resp., greater than or equal to) $r\in\RR$, $\RR^n$ denote the set of $n$-dimensional real vectors, $\RR^{m\times n}$ denote the set of $m$-by-$n$-dimensional real matrices, $\mathbb{S}^n$ denote the set of $n$-by-$n$-dimensional real symmetric matrices, and $\NN$ denote the set of positive natural numbers $\{1,2,3,\ldots\}$. For any $m\in\NN$, we define $[m] := \{1,2,\ldots, m\}$. Let $\Null(A)$ and $\Range(A^T)$ denote the null space of matrix $A\in\RR^{s\times t}$ and the range space of matrix $A^T\in\RR^{t\times s}$, respectively, i.e., $\Null(A) := \{z^{\Null}\in\RR^t: Az^{\Null} = 0\}$ and $\Range(A^T) := \{z^{\Range}\in\RR^t: z^{\Range} = A^Tz \text{ for some }z\in\RR^s\}$. Let $\|\cdot\|$ denote the $\ell_2$-norm, $\|\cdot\|_F$ denote the Frobenius norm, and $\lceil\cdot\rceil$ denote the ceiling function. We define $\ones(\cdot)$ as the boolean function showing the input statement is true or false, and we use $\Vectorize{(\cdot)}$ to denote the vectorization function. For a random vector \(X \in \reals^n\), we use \(\supp{X}\) to denote its support.

\subsection{Organization}
The rest of this paper is structured as follows. In Section~\ref{sec.prob}, we introduce our problem setting of interest. We present the algorithm framework in Section~\ref{sec.alg}. The theoretical results of our proposed algorithm, including both asymptotic and non-asymptotic convergence behaviors, are provided in Section~\ref{sec.analysis}. We demonstrate the numerical performance of the algorithm in Section~\ref{sec.numerical}. Finally, Section~\ref{sec.conclusion} includes some concluding remarks.

\section{Problem Setting}\label{sec.prob}
This paper designs, analyzes, and tests a stochastic SQP algorithm for solving problems of the form
\bequation\label{eq.prob}
\min_{x\in\RR^{n}}\ f(x)\ \text{s.t.}\ c(x)=0\ \text{with}\ f(x) = \EE[F(x,\omega)]\text{ and }c(x) = \EE[C(x,\omega)],
\eequation
where both functions $f:\RR^{n}\to\RR$ and $c:\RR^{n}\to\RR^{m}$ are continuously differentiable, \(\omega\) is a random vector defined on probability space $(\Omega,\cF_{\Omega},\Prob)$ and $\EE[\cdot]$ is the expectation with respect to \(\Prob\). Since both objective and constraint functions in \eqref{eq.prob} could be nonlinear and nonconvex, instead of looking for global solutions of~\eqref{eq.prob}, we target on finding primal-dual iterates $(x,y)\in\RR^n \times\RR^m$ satisfying the stationary condition of the Lagrangian function $\cL(x,y) = f(x) + c(x)^Ty$, i.e.,
\bequation\label{eq.KKT}
0 = \bbmatrix \nabla \cL_x(x,y) \\ \nabla \cL_y(x,y) \ebmatrix = \bbmatrix \nabla f(x) + \nabla c(x)y \\ c(x) \ebmatrix.
\eequation
From~\eqref{eq.KKT}, one may see that the algorithm aims to find a primal iterate $x\in\RR^n$ that satisfies the feasibility condition $c(x) = 0$ while ensuring that its objective gradient lies in the range space of the constraint jacobian, i.e., $\nabla f(x) \in \Range (\nabla c(x))$.
\inrevise{Next, we make the following assumption regarding problem~\eqref{eq.prob}.}

\inrevise{
\begin{ass}\label{ass.prob}
The objective function $f:\RR^{n}\to\RR$ is continuously differentiable and bounded below. For every $i\in[m]$, the constraint function $c^i:\RR^{n}\to\RR$ is continuously differentiable and bounded.
The objective gradient function $\nabla f:\RR^{n}\to\RR^{n}$ and constraint gradient function $\nabla c^i:\RR^{n}\to\RR^{n}$, where $i\in [m]$, are Lipschitz continuous and bounded. Moreover, the minimum singular value of the constraint jacobian function $\nabla c^T:\RR^{n}\to\RR^{m \times n}$ is uniformly bounded away from zero.
\end{ass}
}

By Assumption~\ref{ass.prob}, there exist constants $f_{\inf}\in \RR$ and $\{\kappa_c,\kappa_{\nabla f},\kappa_{\nabla c},\lambda_{\min}\}\subset\RR_{>0}$ such that for any \inrevise{$x\in\RR^n$},
\bequation\label{eq.basic_condition}
f(x) \geq f_{\inf}, \ \|c(x)\|_1 \leq \kappa_c,\ \|\nabla f(x)\| \leq \kappa_{\nabla f},\ \sum_{i=1}^m\|\nabla c^i(x)\|_1 \leq \kappa_{\nabla c},\ \text{and} \ \nabla c(x)^T\nabla c(x) \succeq \lambda_{\min}^2 I.
\eequation
Meanwhile, when Assumption~\ref{ass.prob} holds, by the Lipschitz continuity of functions $\nabla f$ and $\nabla c^i$ for $i\in[m]$, there also exist Lipschitz constants $L > 0$, $\{\gamma_i\}_{i\in[m]} \subset \RR_{\geq 0}$, and $\Gamma = \sum_{i=1}^m \gamma_i \geq 0$
such that for any $\alpha \geq 0$ and \inrevise{$\{x,x+\alpha d\}\subset\RR^n$} that
\bequation\label{eq.Lipschitz_continuity}
\baligned
f(x+\alpha d) &\leq f(x) + \alpha\nabla f(x)^Td + \frac{L}{2}\alpha^2\|d\|^2 \quad \text{and} \\
|c^i(x+\alpha d)| &\leq |c^i(x) + \alpha\nabla c^i(x)^Td| + \frac{\gamma_i}{2}\alpha^2\|d\|^2 \  \text{and} \  \|c(x+\alpha d)\|_1 \leq \|c(x) + \alpha\nabla c(x)^Td \|_1 + \frac{\Gamma}{2}\alpha^2\|d\|^2.
\ealigned
\eequation
\inrevise{In this paper, we will propose an objective-function-free algorithm (see Section~\ref{sec.alg}), which never requires objective function values or estimates.} Moreover, we consider the setting that at any \inrevise{$x\in\RR^n$}, the objective gradient vector, the constraint function, and the constraint jacobian matrix are all unavailable, while we only have access to estimates of corresponding values, i.e., $(\bar{g}(x),\bar{c}(x),\bar{j}(x)) \approx (\nabla f(x),c(x),\nabla c(x)^T)$.

Similar to other SQP methods, we aim to solve the nonlinear system in~\eqref{eq.KKT} using Newton's method. At each iteration, we compute the gradient of the right-hand side of~\eqref{eq.KKT} and determine the search directions by solving the corresponding linear system. However, due to uncertainty in both the objective and constraint functions, we may only obtain noisy estimates for the objective gradient $\nabla f(x)$, the constraints $c(x)$, and the constraint Jacobian $\nabla c(x)^\top$, potentially through sampling oracles. 
Additionally, an accurate estimate of $\nabla_{xx}^2 \cL(x,y)$ is difficult to obtain especially when the gradient estimates are already noisy. To address this, instead of directly estimating second-order information, we rely on a simple oracle to generate \inrevise{crude ``estimates''} $\{H_k\} \subset \symm^n$ of $\{\nabla_{xx}^2 \cL(x_k, y_k)\}$ as \textit{inputs} to our algorithm, only requiring them to be positive definite without imposing further assumptions on their accuracy.

\begin{ass}\label{ass.H}
There exist constants $\kappa_H \geq \zeta > 0$ such that $\kappa_H I \succeq H_k \succeq \zeta I$ for all $k\in\NN$. 
\end{ass}

Therefore, to construct such oracles for \(\{H_k\}\), one may have it consistently return the identity matrix at all iterations (see Section~\ref{sec.numerical}). Alternatively, our analysis also allows one to generate $\{H_k\}$ as stochastic estimates, so long as each $H_k$ only relies on information evaluated \textit{prior to} iteration $k$. For example, one could use any information collected before the current iteration to predict the primal part of the Hessian of the Lagrangian and then add a multiple of the identity matrix to ensure that \(\{H_k\}\) remain bounded and sufficiently positive definite. It is worth mentioning that one could further relax the positive definiteness requirement by only asking $H_k$ to be sufficiently positive definite on the space $\Null(\nabla c(x_k)^T)\cup\Null(\bar{j}(x_k))$, to match the conditions in classic SQP methods~\cite{nocedal2006numerical}. However, such relaxation of the positive definiteness requirement is not the main focus of our paper, and we consider Assumption~\ref{ass.H} for the ease of presentation.
In essence, our algorithm is Hessian-free---that is, it does not require Hessian information---and our theoretical analyses are quite general, relying only on the uniform boundedness of \(\{H_k\}\) assumed in Assumption~\ref{ass.H}.

\section{Algorithm Framework}\label{sec.alg}
We propose \inrevise{a stochastic $\ell_1$-SQP method} for solving \eqref{eq.prob}. At every iterate $x_k\in\RR^{n}$, our algorithm computes a search direction $\bar{d}_k\in\RR^{n}$ by solving a quadratic-objective-linear-constrained subproblem
\bequation\label{eq.SQP}
\min_{\bar{d}\in\RR^n}\ \bar{g}(x_k)^T\bar{d} + \frac{1}{2}\bar{d}^TH_k\bar{d}\quad\text{s.t.}\ \bar{c}(x_k) + \bar{j}(x_k)\bar{d} = 0,
\eequation
where $(\bar{g}(x_k),\bar{c}(x_k),\bar{j}(x_k))$ are stochastic estimates of $(\nabla f(x_k),c(x_k),\nabla c(x_k)^T)$. \inrevise{Under Assumption~\ref{ass.H}, the solution of \eqref{eq.SQP}, denoted by $\bar{d}_k$, is unique, since~\eqref{eq.SQP} amounts to minimizing a strongly convex quadratic function over an affine set. Moreover, under Assumption~\ref{ass.H}, $\bar{d}_k\in\RR^n$ can be achieved by solving the following linear system}
\bequation\label{eq.linear_system}
\bbmatrix H_k & \bar{j}(x_k)^T \\ \bar{j}(x_k) & 0 \ebmatrix \bbmatrix \bar{d}_k \\ \bar{y}_k \ebmatrix = -\bbmatrix \bar{g}(x_k) \\ \bar{c}(x_k) \ebmatrix,
\eequation
where $\bar{y}_k\in\RR^m$, the dual multiplier corresponding to equality constraints in \eqref{eq.SQP}, is also unique if the minimum singular value of $\bar{j}(x_k)$ is \inrevise{positive}. After computing search direction $\bar{d}_k$, we construct a model based on an $\ell_1$-norm merit function to monitor the algorithm's progress. In particular, we consider the merit function $\phi:\RR^{n}\times\RR_{>0} \to \RR$ as
\bequation\label{eq.merit_func}
\phi(x,\tau) := \tau f(x) + \|c(x)\|_1,
\eequation
where $\tau > 0$ is the merit parameter that weighs the balance between objective function value and the $\ell_1$-norm of constraint violation. Moreover, we define $l:\RR^n\times\RR_{>0}\times\RR^n\times\RR^m\times\RR^{m\times n}\times\RR^n \to \RR$ as a linear model of merit function $\phi(x,\tau)$ along direction $d\in\RR^n$ with estimates $(\bar{g}(x),\bar{c}(x),\bar{j}(x))\approx (\nabla f(x),c(x),\nabla c(x)^T)$, i.e.,
\bequationn
l(x,\tau,\bar{g}(x),\bar{c}(x),\bar{j}(x),d) = \tau(f(x) + \bar{g}(x)^Td) + \|\bar{c}(x) + \bar{j}(x)d\|_1.
\eequationn
Furthermore, we consider a model reduction function $\Delta l:\RR^n\times\RR_{>0}\times\RR^n\times\RR^m\times\RR^{m\times n}\times \RR^n \to \RR$, where
\bequationn
\Delta l(x,\tau,\bar{g}(x),\bar{c}(x),\bar{j}(x),d) := l(x,\tau,\bar{g}(x),\bar{c}(x),\bar{j}(x),0) - l(x,\tau,\bar{g}(x),\bar{c}(x),\bar{j}(x),d).
\eequationn
In particular, when the given search direction $\bar{d}\in\RR^n$ satisfies $\bar{c}(x) + \bar{j}(x)\bar{d} = 0$, we have
\bequation\label{eq.linear_model_reduction}
\baligned
\Delta l(x,\tau,\bar{g}(x),\bar{c}(x),\bar{j}(x),\bar{d}) = \ &l(x,\tau,\bar{g}(x),\bar{c}(x),\bar{j}(x),0) - l(x,\tau,\bar{g}(x),\bar{c}(x),\bar{j}(x),\bar{d}) \\
=\ &-\tau \bar{g}(x)^T\bar{d} + \|\bar{c}(x)\|_1 - \|\bar{c}(x)+\bar{j}(x)\bar{d}\|_1 = -\tau \bar{g}(x)^T\bar{d} + \|\bar{c}(x)\|_1.
\ealigned
\eequation
\inrevise{This model reduction function serves as a key measure that is closely related to progress toward stationarity (in expectation). In the special case of stochastic gradient methods in unconstrained optimization, where $(\tau,\bar{d},\bar{c}(x)) = (1,-\bar{g}(x),0)$, substituting into~\eqref{eq.linear_model_reduction} shows that the model reduction function reduces to $-\tau \bar{g}(x)^T\bar{d} + \|\bar{c}(x)\|_1 = \|\bar{g}(x)\|_2^2$, which is a standard progress measure in unconstrained stochastic optimization. Model reduction functions of a similar spirit have also been proposed in~\cite{BeraCurtOneiRobi23,BeraCurtRobiZhou21,CurtRobiOnei24,CurtRobiZhou24}.}

Next, we introduce the update rule of stochastic merit parameters used in our algorithm. At each iterate $x_k$, after evaluating stochastic objective gradient and constraint function estimates $\bar{g}(x_k)$ and $\bar{c}(x_k)$, stochastic search direction $\bar{d}_k$ (by solving~\eqref{eq.linear_system}), and the previous stochastic merit parameter $\bar\tau_{k-1}$, we update $\bar\tau_k$ by setting
\bequation\label{eq.merit_parameter_update}
\bar\tau_k \leftarrow \bcases \bar\tau_{k-1} \quad\quad\quad\quad\ \ \ \text{if }\bar\tau_{k-1}\leq (1-\epsilon_{\tau})\cdot\bar\tau^{\trial}_k \\ (1-\epsilon_{\tau})\cdot\min\left\{\bar\tau_{k-1}, \bar\tau_k^{\trial}\right\}
\quad\text{otherwise,} \ecases\ \text{with} \ \bar\tau_k^{\trial} = \bcases +\infty \quad\quad\quad\quad\text{if }\bar{g}(x_k)^T\bar{d}_k + \frac{1}{2}\bar{d}_k^TH_k\bar{d}_k \leq 0 \\ \frac{(1-\sigma)\|\bar{c}(x_k)\|_1}{\bar{g}(x_k)^T\bar{d}_k + \frac{1}{2}\bar{d}_k^TH_k\bar{d}_k} \quad\quad\quad\quad\quad\quad \text{otherwise,} \ecases
\eequation
where $\{\sigma,\epsilon_{\tau}\}\subset (0,1)$ are user-defined parameters. \inrevise{The main goal of updating stochastic merit parameters via \eqref{eq.merit_parameter_update} is to ensure that $\bar\tau_k$ is always sufficiently small relative to $\bar\tau_k^{\trial}$ (see Lemma~\ref{lem.merit_parameter}) that the model reduction function remains sufficiently large at every iteration $k\in\mathbb{N}$. Specifically, for the stochastic search direction $\bar{d}_k$ obtained from \eqref{eq.linear_system}, the model reduction condition
\bequation\label{eq.model_reduction_cond}
\Delta l(x_k,\bar\tau_k,\bar{g}(x_k),\bar{c}(x_k),\bar{j}(x_k),\bar{d}_k) = -\bar\tau_k\bar{g}(x_k)^T\bar{d}_k + \|\bar{c}(x_k)\|_1 \geq \frac{\zeta}{2}\bar\tau_k\|\bar{d}_k\|^2 + \sigma\|\bar{c}(x_k)\|_1 \geq 0
\eequation
is guaranteed to hold (see Lemma~\ref{lem.model_reduction_suff_large}). This condition also fundamentally ensures that the stochastic search direction $\bar{d}_k$ constitutes a descent direction for the model function $l(x_k,\bar\tau_k,\bar{g}(x_k),\bar{c}(x_k),\bar{j}(x_k),d)$.}
Meanwhile, our algorithm generates a sequence of ratio parameters $\{\bar\xi_k\}$ that helps to determine adaptive step sizes. In fact, for any $k\in\NN$, $\bar\xi_k$ always behaves as a lower bound of the ratio between $\Delta l(x_k,\bar\tau_k,\bar{g}(x_k),\bar{c}(x_k),\bar{j}(x_k),\bar{d}_k)$ and $\bar\tau_k\|\bar{d}_k\|^2$. In particular, whenever $\bar{d}_k\neq 0$, we update $\{\bar\xi_k\}$ by setting
\bequation\label{eq.ratio_parameter_update}
\bar\xi_k \leftarrow \bcases \bar\xi_{k-1} &\text{if }\bar\xi_{k-1} \leq \bar\xi_k^{\trial} \\ \min\{(1-\epsilon_{\xi})\bar\xi_{k-1},\bar\xi_k^{\trial}\} &\text{otherwise,} \ecases \ \text{ with } \ \bar\xi_k^{\trial} = \frac{\Delta l(x_k,\bar\tau_k,\bar{g}(x_k),\bar{c}(x_k),\bar{j}(x_k),\bar{d}_k)}{\bar\tau_k\|\bar{d}_k\|^2},
\eequation
where $\epsilon_{\xi}\in (0,1)$ is a prescribed parameter. Even though ratio parameters rely on stochastic estimates of objective gradient and constraint information $\{(\bar{g}(x_k),\bar{c}(x_k),\bar{j}(x_k))\}$, we show in Lemma~\ref{lem.ratio_parameter} that the sequence $\{\bar\xi_k\}\subset\mathbb{R}_{>0}$ is uniformly bounded away from zero with a deterministic positive lower bound.
\inrevise{This ratio parameter update shares a similar spirit with those in~\cite{BeraCurtRobiZhou21,BeraShiYiZhou23,CurtRobiZhou24}.
To further illustrate this behavior, consider the special case of stochastic gradient methods for unconstrained stochastic optimization. In this setting, $(\bar{d}_k,\bar{c}(x_k)) = (-\bar{g}(x_k),0)$ for any $k\in\mathbb{N}$, then it follows from~\eqref{eq.linear_model_reduction} and~\eqref{eq.ratio_parameter_update} that $\bar\xi_k^{\trial} = \frac{-\bar\tau_k\bar{g}(x_k)^T\bar{d}_k + \|\bar{c}(x_k)\|_1}{\bar\tau_k\|\bar{d}_k\|^2} = 1$ and $\bar\xi_k$ remains constant for all $k\in\mathbb{N}$.}

The last centerpiece of our algorithm framework is the adaptive step size selection strategy.
Given prescribed parameters $(\bar\tau_0,\bar\xi_0,\eta)\in \mathbb{R}_{>0}\times\mathbb{R}_{>0}\times(0,1)$ and Lipschitz constants $(L,\Gamma)$ from \eqref{eq.Lipschitz_continuity}, we choose a sequence of~$\{\beta_k\}\subset (0,1]$ satisfying
\bequation\label{eq.beta_requirement}
\frac{2(1-\eta)\beta_k\bar\xi_0\bar\tau_0}{(\bar\tau_0L + \Gamma)} \in (0,1] \text{ for all }k\in\NN{}.
\eequation
When $\bar{d}_k \neq 0$, with user-defined parameters $\theta \geq 0$ and $\eta \in (0,1)$, we set
\bequation\label{eq.stepsizes}
\bar\alpha_k^{\min} := \frac{2(1-\eta)\beta_k\bar\xi_k\bar\tau_k}{\bar\tau_kL+\Gamma}, \quad  \bar\alpha_k^{\max} := \min\{\bar\alpha_k^{\min} + \theta\beta_k, \bar\alpha_k^{\varphi}\} \quad \text{and} \quad \bar\alpha_k^{\varphi} = \max\left\{\alpha > 0\ | \ \varphi_k(\alpha) \leq 0\right\},
\eequation
where for any $k\in\mathbb{N}$ function $\varphi_k:\RR \to \RR$ is defined as
\bequation\label{eq.alphaphi}
\varphi_k(\alpha) :=  (\eta - 1)\alpha\beta_k\Delta l(x_k,\bar\tau_k,\bar{g}(x_k),\bar{c}(x_k),\bar{j}(x_k),\bar{d}_k) + (|1-\alpha| - (1-\alpha))\|\bar{c}(x_k)\|_1 + \frac{1}{2}(\bar\tau_k L+\Gamma)\alpha^2\|\bar{d}_k\|^2.
\eequation
The $\varphi_k(\alpha)$ function is constructed in a way that for any iteration $k\in\NN$, when $(\bar{g}(x_k),\bar{c}(x_k),\bar{j}(x_k)) = (\nabla f(x_k),c(x_k),\nabla c(x_k)^T)$ and $\beta_k = 1$, any $\alpha > 0$ satisfying $\varphi_k(\alpha) \leq 0$ would also guarantee that $\alpha\bar{d}_k$ provides a sufficient decay in the merit function, i.e., $\phi(x_k + \alpha\bar{d}_k,\bar\tau_k) - \phi(x_k,\bar\tau_k) \leq -\eta\alpha\Delta l(x_k,\bar\tau_k,\bar{g}(x_k),\bar{c}(x_k),\bar{j}(x_k),\bar{d}_k)$; see~\cite[Equation (25)]{CurtRobiZhou24}.
We notice in~\eqref{eq.alphaphi} that when $\bar{d}_k\neq 0$, $\varphi_k(\alpha)$ is the maximum of two strongly convex quadratic functions, whose nonsmoothness only appears at $\alpha = 1$. Meanwhile, $\varphi_k(\alpha)$ is also a convex function, and there exist two solutions satisfying $\varphi_k(\alpha) = 0$ when $\bar{d}_k\neq 0$, which are $\alpha = 0$ and $\alpha = \bar\alpha_k^{\varphi}$ (see Lemma~\ref{lem.stepsize}). Finally, we choose an adaptive step size $\bar\alpha_k \in [\bar\alpha_k^{\min}, \bar\alpha_k^{\max}]$ and update $x_{k+1} \leftarrow x_k + \bar\alpha_k \bar{d}_k$. On the other hand, if $\bar{d}_k = 0$, we set $\bar\alpha_k^{\min} = \frac{2(1-\eta)\beta_k\bar\xi_k\bar\tau_k}{\bar\tau_kL+\Gamma}$ and $(\bar\alpha_k^{\max},\bar\alpha_k^{\varphi},\bar\alpha_k) = \left(\bar\alpha_k^{\min} + \theta\beta_k, \bar\alpha_k^{\min} + \theta\beta_k, \bar\alpha_k^{\min}\right)$ then update the next iterate $x_{k+1} \leftarrow x_k + \bar\alpha_k \bar{d}_k$.

In the rest of this paper, for brevity, we abbreviate function values evaluated at iterate $x_k\in\RR^n$ by adding iteration counter $k$ as a subscript to the corresponding function, e.g., $f_k := f(x_k)$ and $\bar{g}_k := \bar{g}(x_k)$.
The detailed algorithm is presented in Algorithm~\ref{alg.main}.

\begin{algorithm}[tb]
	\SetAlgoLined
	\textbf{Require:} initial iterate $x_1\in\RR^n$; initial merit parameter $\bar\tau_0 > 0$; initial ratio parameter $\bar\xi_0 > 0$; $\{H_k\}\subset\mathbb{S}^n$ satisfying Assumption~\ref{ass.H}; Lipschitz constants $L>0$ and $\Gamma>0$; constant parameters $\eta\in(0,1)$, $\{\sigma,\epsilon_{\tau},\epsilon_{\xi}\}\subset (0,1)$ and $\theta > 0$; step-size parameter sequence $\{\beta_k\}\subset (0,1]$ satisfying \eqref{eq.beta_requirement}; and variance control sequences $\{\rho_k^g\}\subset\RR_{>0}$, $\{\rho_k^c\}\subset\RR_{>0}$, and $\{\rho_k^j\}\subset\RR_{>0}$.\\
	\For{$k\in \NN$}{
	    Evaluate stochastic estimates $(\bar{g}_k,\bar{c}_k,\bar{j}_k) \approx (\nabla f(x_k),c(x_k),\nabla c^T(x_k))$ satisfying Assumption~\ref{ass.estimate}. \\
            Compute $(\bar{d}_k,\bar{y}_k)$ by solving \eqref{eq.linear_system} \\
            \uIf{$\bar{d}_k \neq 0$}
            {Update $\bar\tau_k$ and $\bar\tau_k^{\trial}$ via \eqref{eq.merit_parameter_update} \\
            Update $\bar\xi_k$ and $\bar\xi_k^{\trial}$ via \eqref{eq.ratio_parameter_update} \\
            Compute $\bar\alpha_k^{\min}$ and $\bar\alpha_k^{\max}$ via \eqref{eq.stepsizes} \\
            Choose $\bar\alpha_k \in [\bar\alpha_k^{\min}, \bar\alpha_k^{\max}]$}
            \Else{Set $(\bar\tau_k,\bar\tau_k^{\trial},\bar\xi_k,\bar\xi_k^{\trial})\leftarrow(\bar\tau_{k-1},+\infty,\bar\xi_{k-1},+\infty)$ \\
            Set $\bar\alpha_k^{\min} \leftarrow \frac{2(1-\eta)\beta_k\bar\xi_k\bar\tau_k}{\bar\tau_kL+\Gamma}$ and $(\bar\alpha_k^{\max},\bar\alpha_k^{\varphi},\bar\alpha_k)\leftarrow \left(\bar\alpha_k^{\min} + \theta\beta_k, \bar\alpha_k^{\min} + \theta\beta_k, \bar\alpha_k^{\min}\right)$}
	    Update $x_{k+1}\leftarrow x_k + \bar\alpha_k \bar{d}_k$
	}
	\caption{\inrevise{Stochastic $\ell_1$-SQP Algorithm with Adaptive Step-size Selection}}
	\label{alg.main}
\end{algorithm}

Our proposed Algorithm~\ref{alg.main} shares the similar spirit as \cite[Algorithm~3.1]{BeraCurtRobiZhou21} but with major differences in step size selection and merit parameter update. In particular, \cite[Algorithm~3.1]{BeraCurtRobiZhou21} chooses adaptive step sizes within an interval whose length is proportional to $\beta_k^2$, while our Algorithm~\ref{alg.main} considers $(\bar\alpha_k^{\max} - \bar\alpha_k^{\min}) \leq \theta\beta_k$ (see~\eqref{eq.stepsizes}).
\inrevise{This distinction arises from the intrinsic difference between the stochastic search directions $\bar{d}_k$ defined in~\eqref{eq.linear_system} and those considered in~\cite[Equation~(3.2)]{BeraCurtRobiZhou21}.
Specifically, the stochastic search directions in~\cite{BeraCurtRobiZhou21} are \textit{unbiased} estimators of their deterministic counterparts, which allows exact asymptotic convergence in expectation even when stochastic gradient estimates have non-diminishing but bounded variance, provided that suitably diminishing step sizes (corresponding to an interval length of order $\mathcal{O}(\beta_k^2)$) are used; see \cite[Corollary~3.14]{BeraCurtRobiZhou21}. In contrast, our work addresses a more challenging setting in which the stochastic search directions are \textit{biased} estimators of the associated deterministic quantities. This necessitates an adaptive step-size selection strategy that is closer in spirit to that of~\cite{CurtRobiZhou23} than to~\cite{BeraCurtRobiZhou21}. When only \textit{biased} stochastic search directions are available, diminishing variance of the stochastic gradient estimates is required to ensure exact asymptotic convergence in expectation, which in turn permits the use of potentially larger step sizes corresponding to an adaptive step-size interval of length $\mathcal{O}(\beta_k)$.
Meanwhile, our merit parameter update rule ensures that $\bar\tau_k \leq (1-\epsilon_{\tau})\cdot\bar\tau_k^{\trial}$ for all $k\in\mathbb{N}$ (see Lemma~\ref{lem.merit_parameter}), which is a more conservative update compared to \cite{BeraCurtRobiZhou21}, but guarantees that stochastic merit parameters generated by Algorithm~\ref{alg.main} are not excessively large with high probability at each iteration (see Lemma~\ref{lem.prob-tau-good-k}).}
In addition, we note that Algorithm~\ref{alg.main} generates a stochastic sequence
\bequationn
\left\{(x_k,\bar{y}_k,\bar{g}_k,\bar{c}_k,\bar{j}_k,\bar{d}_k,\bar\tau_k,\bar\tau_k^{\trial},\bar\xi_k,\bar\xi_k^{\trial},\bar\alpha_k^{\min},\bar\alpha_k^{\max}, \bar\alpha_k^{\varphi}, \bar\alpha_k)\right\},
\eequationn
which can be considered as a realization of the stochastic process
\bequationn
\left\{(X_k,\bar{Y}_k,\bar{G}_k,\bar{C}_k,\bar{J}_k,\bar{D}_k,\bar\cT_k,\bar\cT_k^{\trial},\bar\Xi_k,\bar\Xi_k^{\trial},\bar\cA_k^{\min},\bar\cA_k^{\max},\bar\cA_k^{\varphi},\bar\cA_k)\right\}.
\eequationn
For the sake of analysis, we further define $(d_k,y_k)$ as the solution to \eqref{eq.linear_system} with estimates $(\bar{g}(x_k),\bar{c}(x_k),\bar{j}(x_k))$ being replaced by corresponding true values $(\nabla f(x_k),c(x_k),\nabla c(x_k)^T)$, where the uniqueness of $(d_k,y_k)$ follows Assumption~\ref{ass.prob}, i.e.,
\bequation\label{eq.linear_system_true}
\bbmatrix H_k & \nabla c(x_k) \\ \nabla c(x_k)^T & 0 \ebmatrix \bbmatrix d_k \\ y_k \ebmatrix = -\bbmatrix \nabla f(x_k) \\ c(x_k) \ebmatrix.
\eequation
We also consider $\{(d_k,y_k)\}$ as a realization of $\{(D_k,Y_k)\}$. In addition, we define $\{\cT_k\}$ and $\{\cT_k^{\trial}\}$ as the deterministic counterpart of $\{\bar\cT_k\}$ and $\{\bar\cT_k^{\trial}\}$ (with $(\bar{g}(x_k),\bar{c}(x_k),\bar{d}_k)$ replaced by $(\nabla f(x_k),c(x_k),d_k)$ in \eqref{eq.merit_parameter_update}). Note that $\{(D_k,Y_k,\cT_k,\cT_k^{\trial})\}$ is merely for theoretical analysis and we never compute such values during the implementation of Algorithm~\ref{alg.main}.
Moreover, \inrevise{we let \(\cG_1 := \{\varnothing, \Omega\}\) be the initial \(\sigma\)-algebra}, and for all iterations \(k \geq 2\), we use \(\cG_k\) to denote the \(\sigma\)-algebra generated by random variables $\{\bar{G}_1,\ldots,\bar{G}_{k-1}\}\cup\{\bar{C}_1,\ldots,\bar{C}_{k-1}\}\cup\{\bar{J}_1,\ldots,\bar{J}_{k-1}\}$.

\inrevise{In the remainder of this paper, and unless stated otherwise, all equalities and inequalities involving random variables are understood to hold almost surely. We also impose the following assumption.}
\begin{ass}\label{ass.estimate}
\inrevise{
The following two statements hold for stochastic constraint jacobian estimates generated by Algorithm~\ref{alg.main}: (i) constraint jacobian estimates $\{\bar{J}_k\}$ are uniformly upper bounded in norm, and (ii) the minimum singular values of the constraint jacobian estimates $\{\bar{J}_k\}$ are uniformly bounded away from zero. In particular, we assume that there exist constants $\{\kappa_{\bar{J}},\bar\lambda_{\min}\}\subset\RR_{>0}$ such that for any $k\in\NN$, $\|\bar{J}_k\|_1 \leq \kappa_{\bar{J}}$ and $\bar{J}_k\bar{J}_k^T \succeq \bar\lambda_{\min}^2 I$.
}
\end{ass}
\inrevise{Now we conclude this section by showing that under aforementioned assumptions, singular values of matrices in~\eqref{eq.linear_system} and~\eqref{eq.linear_system_true} are always uniformly bounded above and away from zero.}

\blemma\label{lem.singular_values}
Suppose Assumptions~\ref{ass.prob}--\ref{ass.estimate} hold. There exist constants $q_{\max} \geq q_{\min} > 0$ such that for any iteration~$k\in\NN$, the singular values of $\bbmatrix H_k & \nabla c(X_k) \\ \nabla c(X_k)^T & 0 \ebmatrix$ and $\bbmatrix H_k & \bar{J}_k^T \\ \bar{J}_k & 0 \ebmatrix$ always stay in $[q_{\min}, q_{\max}]$.
\elemma
\bproof
For any iteration $k\in\mathbb{N}$, let's denote $M_k=\begin{bmatrix} H_k & S_k \\ S_k^T & 0 \end{bmatrix}$, then by Assumptions~\ref{ass.prob}--\ref{ass.estimate} and \cite[Theorem~3.1]{gill1991inertia}, for any $S_k\in\{\nabla c(X_k), \bar{J}_k^T\}$, $M_k$ is always invertible and we always have
\bequationn
M_k^{-1} = \begin{bmatrix} H_k^{-1} - H_k^{-1}S_k(S_k^TH_k^{-1}S_k)^{-1}S_k^TH_k^{-1} & H_k^{-1}S_k(S_k^TH_k^{-1}S_k)^{-1} \\ (S_k^TH_k^{-1}S_k)^{-1}S_k^TH_k^{-1} & -(S_k^TH_k^{-1}S_k)^{-1} \end{bmatrix}.
\eequationn
From Assumptions~\ref{ass.prob}--\ref{ass.estimate}, we know that the singular values of $H_k$ and $S_k$ are uniformly bounded above and away from zero. Thus, every block of $M_k$ and $M_k^{-1}$ is uniformly bounded. Therefore, there exists $q_{\max}>1$ such that the singular values of both $M_k$ and $M_k^{-1}$ are upper bounded by $q_{\max}$. Meanwhile, since the singular values of $M_k$ are the reciprocals of the singular values of $M_k^{-1}$, we may set $q_{\min}:=\frac{1}{q_{\max}}\in (0,1)$, where $q_{\min}$ performs as a lower bound of singular values of $M_k$.
\eproof

\section{Convergence Properties and Complexity Analysis}\label{sec.analysis}
In this section, we analyze the theoretical performance of Algorithm~\ref{alg.main}. In Section~\ref{sec.fund_lemmas}, we present some fundamental lemmas that would be useful for the analysis of both asymptotic and non-asymptotic convergence properties of our algorithm. In Section~\ref{sec.converg_results}, we demonstrate the asymptotic convergence (in expectation) behavior of our stochastic SQP algorithm (Algorithm~\ref{alg.main}). Moreover, we provide the non-asymptotic convergence performance of Algorithm~\ref{alg.main} in Section~\ref{sec.complex_results}, which includes both iteration and sample complexity results for finding a near-stationary iterate in expectation. Note that convergence results presented in both Sections~\ref{sec.converg_results} and~\ref{sec.complex_results} rely on the existence of ``well-behaved'' merit parameters. Finally, we conclude this section by Section~\ref{sec.merit_parameter_behaviors}, where we argue that Algorithm~\ref{alg.main} is unlikely to generate ``poorly-behaved'' merit parameters  when the variances of stochastic estimates meet mild conditions.

\subsection{Fundamental Technical Results}\label{sec.fund_lemmas}
In this subsection, we present some fundamental technical lemmas that would be useful to asymptotic and non-asymptotic convergence results in Sections~\ref{sec.converg_results} and~\ref{sec.complex_results}.

Our first lemma is about the behavior of the merit parameter sequence $\{\bar\cT_k\}$. In particular, we show $\{\bar\cT_k\}\subset\RR_{>0}$ is a monotonically decreasing sequence and we reveal the relation between $\{\bar\cT_k\}$ and $\{\bar\cT_k^{\trial}\}$.
\blemma\label{lem.merit_parameter}
Suppose Assumptions~\ref{ass.prob}--\ref{ass.estimate} hold. At any iteration $k$ of Algorithm~\ref{alg.main}, we always have $\bar\cT_{k-1} \geq \bar\cT_k > 0$ and $\bar\cT_k \leq (1-\epsilon_{\tau})\cdot\bar\cT_k^{\trial}$.
\elemma
\bproof
We first prove $\bar\cT_k > 0$ by showing that $\bar\cT_k^{\trial} > 0$ for all $k\in\NN$. At any iteration $k$ such that $\bar{D}_k \neq 0$, if $\bar{C}_k = 0$ then we have from Assumption~\ref{ass.H} and \eqref{eq.linear_system} that
\bequationn
\bar{G}_k^T\bar{D}_k + \frac{1}{2}\bar{D}_k^TH_k\bar{D}_k \leq \bar{G}_k^T\bar{D}_k + \bar{D}_k^TH_k\bar{D}_k = \bar{D}_k^T(\bar{G}_k + H_k\bar{D}_k) = -\bar{D}_k^T\bar{J}_k^T\bar{Y}_k = \bar{C}_k^T\bar{Y}_k = 0,
\eequationn
which implies that $\bar\cT_k^{\trial} = +\infty$ by \eqref{eq.merit_parameter_update}. On the other hand, if $\bar{D}_k = 0$ or $\bar{C}_k \neq 0$, from \eqref{eq.merit_parameter_update} and Algorithm~\ref{alg.main}, we always have $\bar\cT_k^{\trial} > 0$. Therefore, $\bar\cT_k^{\trial} > 0$ at every iteration $k$. Moreover, combining $\bar\cT_k^{\trial} > 0$, $\bar\tau_0 > 0$, \eqref{eq.merit_parameter_update}, and Algorithm~\ref{alg.main}, we further know that $\bar\cT_k > 0$ at every iteration $k$. Furthermore, \eqref{eq.merit_parameter_update} implies that either $\bar\cT_k = \bar\cT_{k-1} \leq (1-\epsilon_{\tau})\cdot\bar\cT_k^{\trial}$ or $\bar\cT_k = (1-\epsilon_{\tau})\cdot\min\{\bar\cT_{k-1},\bar\cT_k^{\trial}\} \leq (1-\epsilon_{\tau})\cdot \bar\cT_k^{\trial} < \bar\cT_{k-1}$, which concludes the statement.
\eproof

The next lemma proves that \eqref{eq.model_reduction_cond} always holds at all iterations.

\blemma\label{lem.model_reduction_suff_large}
Suppose Assumptions~\ref{ass.prob}--\ref{ass.estimate} hold. For every iteration $k$ in Algorithm~\ref{alg.main}, $\Delta l(X_k,\bar\cT_k,\bar{G}_k,\bar{C}_k,\bar{J}_k,\bar{D}_k) \geq \frac{\zeta}{2}\bar\cT_k\|\bar{D}_k\|^2 + \sigma\|\bar{C}_k\|_1 \geq 0$ always holds, where $\zeta>0$ and $\sigma \in (0,1)$ are parameters from Assumption~\ref{ass.H} and~\eqref{eq.merit_parameter_update}. Moreover, if $\bar{D}_k \neq 0$, then $\Delta l(X_k,\bar\cT_k,\bar{G}_k,\bar{C}_k,\bar{J}_k,\bar{D}_k) > 0$.
\elemma
\bproof
By the update of $\{\bar\cT_k\}$ in \eqref{eq.merit_parameter_update} and Lemma~\ref{lem.merit_parameter}, we have that for all iterations $k$,
\bequation\label{eq.merit_parameter_relation}
\bar\cT_k(\bar{G}_k^T\bar{D}_k + \tfrac{1}{2}\bar{D}_k^TH_k\bar{D}_k) \leq (1-\epsilon_{\tau})(1-\sigma)\|\bar{C}_k\|_1 \leq (1-\sigma)\|\bar{C}_k\|_1.
\eequation
Moreover, at any iteration $k$, it follows Assumption~\ref{ass.H}, \eqref{eq.linear_model_reduction}, $\bar\cT_k > 0$ (see Lemma~\ref{lem.merit_parameter}), and \eqref{eq.merit_parameter_relation} that
\bequationn
\Delta l(X_k,\bar\cT_k,\bar{G}_k,\bar{C}_k,\bar{J}_k,\bar{D}_k) = -\bar\cT_k\bar{G}_k^T\bar{D}_k + \|\bar{C}_k\|_1 \geq \tfrac{1}{2}\bar\cT_k\bar{D}_k^TH_k\bar{D}_k + \sigma\|\bar{C}_k\|_1 \geq \tfrac{\zeta}{2}\bar\cT_k\|\bar{D}_k\|^2 + \sigma\|\bar{C}_k\|_1 \geq 0,
\eequationn
which concludes the first part of the statement. When $\bar{D}_k \neq 0$, from $\zeta > 0$ and $\bar\cT_k > 0$ (see Assumption~\ref{ass.H} and Lemma~\ref{lem.merit_parameter}) we know $\Delta l(X_k,\bar\cT_k,\bar{G}_k,\bar{C}_k,\bar{J}_k,\bar{D}_k) \geq \tfrac{\zeta}{2}\bar\cT_k\|\bar{D}_k\|^2 + \sigma\|\bar{C}_k\|_1 > 0$, which completes the proof.
\eproof

Similar to \eqref{eq.model_reduction_cond} and Lemma~\ref{lem.model_reduction_suff_large}, we next show that for all small enough $\cT > 0$, the deterministic model reduction function can also be lower bounded by a quantity related to $\|D_k\|^2$ and $\|c(X_k)\|_1$.
\blemma\label{lem.model_reduction_suff_large_true}
Suppose Assumptions~\ref{ass.prob}--\ref{ass.estimate} holds. For any $k\in\NN{}$, with $(\zeta,\sigma) \in \RR{}_{>0}\times (0,1)$ in Assumption~\ref{ass.H} and~\eqref{eq.merit_parameter_update}, we know that for any $\cT\in (0,\cT_k^{\trial}]$,
\bequation\label{eq.model_reduction_suff_large_true}
\Delta l(X_k,\cT,\nabla f(X_k), c(X_k), \nabla c(X_k)^T, D_k) \geq \tfrac{\zeta}{2}\cT\|D_k\|^2 + \sigma\|c(X_k)\|_1.
\eequation
\elemma
\bproof
This proof is the same as the proof of Lemma~\ref{lem.model_reduction_suff_large}, with stochastic quantities replaced by corresponding deterministic counterparts.
\eproof

The following lemma demonstrates that even though the ratio parameter sequence $\{\bar\Xi_k\}$ is updated based on stochastic quantities $\{(\bar{G}_k,\bar{C}_k,\bar{J}_k)\}$, the sequence $\{\bar\Xi_k\}$ generated by Algorithm~\ref{alg.main} is always bounded away from zero.
\blemma\label{lem.ratio_parameter}
Suppose Assumptions~\ref{ass.prob}--\ref{ass.estimate} hold. At any iteration $k$ of Algorithm~\ref{alg.main}, we have $\bar\Xi_k \leq \bar\Xi_{k-1}$ and $\bar\Xi_k \leq \bar\Xi_k^{\trial}$. Meanwhile, if $\bar\Xi_k < \bar\Xi_{k-1}$, then $\bar\Xi_k \leq (1-\epsilon_{\xi})\bar\Xi_{k-1}$. Moreover, there exists a constant $\xi_{\min} > 0$ such that $\bar\Xi_k \geq \xi_{\min}$ for all iterations $k\in\NN$.
\elemma
\bproof
Firstly, by $\bar\xi_0 > 0$, \eqref{eq.ratio_parameter_update}, Algorithm~\ref{alg.main} and Lemma~\ref{lem.model_reduction_suff_large}, we know that $\{\bar\Xi_k\}\subset\RR_{>0}$. Moreover,~\eqref{eq.ratio_parameter_update} and Algorithm~\ref{alg.main} imply that $\{\bar\Xi_k\}$ is a monotonically decreasing sequence that either $\bar\Xi_k = \bar\Xi_{k-1} \leq \bar\Xi_k^{\trial}$ or $\bar\Xi_k = \min\left\{(1-\epsilon_{\xi})\bar\Xi_{k-1},\bar\Xi_k^{\trial}\right\} \leq \bar\Xi_k^{\trial} < \bar\Xi_{k-1}$, which concludes the first part of the statement. Meanwhile, Algorithm~\ref{alg.main} directly implies that $\bar\Xi_k < \bar\Xi_{k-1}$ can only happen when $\bar{D}_k \neq 0$. By~\eqref{eq.ratio_parameter_update}, if $\bar\Xi_k < \bar\Xi_{k-1}$ then $\bar\Xi_k = \min\left\{(1-\epsilon_{\xi})\bar\Xi_{k-1},\bar\Xi_k^{\trial}\right\} \leq (1-\epsilon_{\xi})\bar\Xi_{k-1}$, which concludes the second part of the statement. Lastly, from \eqref{eq.ratio_parameter_update}, Algorithm~\ref{alg.main}, and Lemma~\ref{lem.model_reduction_suff_large}, we know that $\bar\Xi_k^{\trial} > 0$.
When $\bar\Xi_k < \bar\Xi_{k-1}$, we know from \eqref{eq.ratio_parameter_update} and $\bar\Xi_k^{\trial} > 0$ that $\bar\Xi_{k-1} > \bar\Xi_k^{\trial}$ and $\bar\Xi_k = \min\left\{(1-\epsilon_{\xi})\bar\Xi_{k-1},\bar\Xi_k^{\trial}\right\} > (1-\epsilon_{\xi})\bar\Xi_k^{\trial}$. Therefore, given $\bar\xi_0 > 0$ from Algorithm~\ref{alg.main}, to show the existence of such a lower bound constant $\xi_{\min} > 0$ for the sequence $\{\bar\Xi_k\}$, we only need to prove that the sequence $\{\bar\Xi_k^{\trial}\}$ is uniformly bounded away from zero.

At any iteration $k\in\NN$, if $\bar{D}_k = 0$, then we have $\bar\Xi_k^{\trial} = +\infty$ from Algorithm~\ref{alg.main}. Otherwise, if $\bar{D}_k \neq 0$, then by \eqref{eq.ratio_parameter_update} we have
\bequationn
\bar\Xi_k^{\trial} = \frac{\Delta l(X_k,\bar\cT_k,\bar{G}_k,\bar{C}_k,\bar{J}_k,\bar{D}_k)}{\bar\cT_k\|\bar{D}_k\|^2} \geq \frac{\zeta\bar\cT_k\|\bar{D}_k\|^2}{2\bar\cT_k\|\bar{D}_k\|^2} = \frac{\zeta}{2} > 0,
\eequationn
where the first inequality follows Lemma~\ref{lem.model_reduction_suff_large} and the last inequality is from Assumption~\ref{ass.H}.
\eproof

Now we are ready to show that the adaptive step size selection strategy is well-defined.
\blemma\label{lem.stepsize}
Suppose Assumptions~\ref{ass.prob}--\ref{ass.estimate} hold. For any iteration $k$ of Algorithm~\ref{alg.main}, we always have $0 < \bar\cA_k^{\min} \leq \bar\cA_k^{\max} \leq \bar\cA_k^{\varphi}$ and $\varphi_k(\alpha) \leq 0$ for all $\alpha\in (0,\bar\cA_k^{\varphi}]$.
\elemma
\bproof
By Algorithm~\ref{alg.main}, \eqref{eq.stepsizes}, $(\eta,L,\Gamma)\in(0,1)\times\RR_{> 0}\times\RR_{\geq 0}$, $\{\beta_k\}\subset(0,1]$, Lemma~\ref{lem.merit_parameter} and Lemma~\ref{lem.ratio_parameter}, for any iteration $k$ of Algorithm~\ref{alg.main}, we always have
\bequation\label{eq.stepsize_intermediate_1}
\bar\cA_k^{\max} \leq \bar\cA_k^{\varphi} \quad \text{and} \quad \bar\cA_k^{\min} = \frac{2(1-\eta)\beta_k\bar\Xi_k\bar\cT_k}{\bar\cT_kL+\Gamma} > 0.
\eequation
Meanwhile, we also have from Algorithm~\ref{alg.main} that $\bar\cA_k^{\min} \leq \bar\cA_k^{\max}$ when $\bar{D}_k = 0$. To prove $\bar\cA_k^{\min} \leq \bar\cA_k^{\max}$ for all $k\in\NN{}$ where $\bar{D}_k\neq 0$, we define $\bar\cA_k^{\suff} := \min\left\{1, \frac{2(1-\eta)\beta_k\Delta l(X_k,\bar\cT_k,\bar{G}_k,\bar{C}_k,\bar{J}_k,\bar{D}_k)}{(\bar\cT_kL+\Gamma)\|\bar{D}_k\|^2}\right\}$ and then show that $\bar\cA_k^{\min} \leq \bar\cA_k^{\suff} \leq \bar\cA_k^{\varphi}$. From the positivity and monotonicity of $\{\bar\cT_k\}$ and $\{\bar\Xi_k\}$ (see Lemmas~\ref{lem.merit_parameter} and~\ref{lem.ratio_parameter}), we know that $0 < \bar\cT_k \leq \bar\tau_0$ and $0 < \bar\Xi_k \leq \bar\xi_0$ for all iterations $k$. By~\eqref{eq.beta_requirement},~\eqref{eq.stepsizes}, $(\eta,L,\Gamma)\in(0,1)\times\RR_{> 0}\times\RR_{\geq 0}$ and $\{\beta_k\}\subset(0,1]$, we have
\bequation\label{eq.alphamin_bound_1}
\bar\cA_k^{\min} = \frac{2(1-\eta)\beta_k\bar\Xi_k\bar\cT_k}{(\bar\cT_kL+\Gamma)} \leq \frac{2(1-\eta)\beta_k\bar\xi_0\bar\tau_0}{(\bar\tau_0L+\Gamma)} \leq 1.
\eequation
Meanwhile, using \eqref{eq.stepsizes}, $\bar\Xi_k \leq \bar\Xi_k^{\trial}$ (see Lemma~\ref{lem.ratio_parameter}), $\bar\cT_k > 0$, $(\eta,L,\Gamma)\in(0,1)\times\RR_{> 0}\times\RR_{\geq 0}$ and $\{\beta_k\}\subset(0,1]$, we also have
\bequation\label{eq.alphamin_bound_2}
\bar\cA_k^{\min} = \frac{2(1-\eta)\beta_k\bar\Xi_k\bar\cT_k}{(\bar\cT_kL+\Gamma)} \leq \frac{2(1-\eta)\beta_k\bar\Xi_k^{\trial}\bar\cT_k}{(\bar\cT_kL+\Gamma)} = \frac{2(1-\eta)\beta_k\Delta l(X_k,\bar\cT_k,\bar{G}_k,\bar{C}_k,\bar{J}_k,\bar{D}_k)}{(\bar\cT_kL+\Gamma)\|\bar{D}_k\|^2},
\eequation
where the last equality follows \eqref{eq.ratio_parameter_update} and the condition of $\bar{D}_k \neq 0$. Combining \eqref{eq.alphamin_bound_1} and~\eqref{eq.alphamin_bound_2}, we conclude that $\bar\cA_k^{\min} \leq \min\left\{1, \frac{2(1-\eta)\beta_k\Delta l(X_k,\bar\cT_k,\bar{G}_k,\bar{C}_k,\bar{J}_k,\bar{D}_k)}{(\bar\cT_kL+\Gamma)\|\bar{D}_k\|^2}\right\} = \bar\cA_k^{\suff}$ for all iterations $k$ such that $\bar{D}_k \neq 0$.

Next, we are going to prove $\bar\cA_k^{\suff} \leq \bar\cA_k^{\varphi}$ by considering two cases on the value of $\bar\cA_k^{\suff}$.
\\
\noindent \textbf{Case (i):} when $\bar\cA_k^{\suff} = 1 \leq \frac{2(1-\eta)\beta_k\Delta l(X_k,\bar\cT_k,\bar{G}_k,\bar{C}_k,\bar{J}_k,\bar{D}_k)}{(\bar\cT_kL+\Gamma)\|\bar{D}_k\|^2}$ (or equivalently, $(\bar\cT_kL+\Gamma)\|\bar{D}_k\|^2 \leq 2(1-\eta)\beta_k\Delta l(X_k,\bar\cT_k,\bar{G}_k,\bar{C}_k,\bar{J}_k,\bar{D}_k)$), it follows \eqref{eq.alphaphi} that
\bequationn
\baligned
\varphi_k(\bar\cA_k^{\suff}) &= (\eta - 1)\bar\cA_k^{\suff}\beta_k\Delta l(X_k,\bar\cT_k,\bar{G}_k,\bar{C}_k,\bar{J}_k,\bar{D}_k) + (|1-\bar\cA_k^{\suff}| - (1-\bar\cA_k^{\suff}))\|\bar{C}_k\|_1 \\
&\quad\quad+ \frac{1}{2}(\bar\cT_k L+\Gamma)(\bar\cA_k^{\suff})^2\|\bar{D}_k\|^2 \\
&= (\eta - 1)\beta_k\Delta l(X_k,\bar\cT_k,\bar{G}_k,\bar{C}_k,\bar{J}_k,\bar{D}_k) + \frac{1}{2}(\bar\cT_kL+\Gamma)\|\bar{D}_k\|^2 \leq 0.
\ealigned
\eequationn
\noindent \textbf{Case (ii):} when $\bar\cA_k^{\suff} = \frac{2(1-\eta)\beta_k\Delta l(X_k,\bar\cT_k,\bar{G}_k,\bar{C}_k,\bar{J}_k,\bar{D}_k)}{(\bar\cT_kL+\Gamma)\|\bar{D}_k\|^2} < 1$ (or equivalently, $(\bar\cT_kL+\Gamma)\|\bar{D}_k\|^2 > 2(1-\eta)\beta_k\Delta l(X_k,\bar\cT_k,\bar{G}_k,\bar{C}_k,\bar{J}_k,\bar{D}_k)$), it follows \eqref{eq.alphaphi} that
\bequationn
\baligned
\varphi_k(\bar\cA_k^{\suff}) &= (\eta - 1)\bar\cA_k^{\suff}\beta_k\Delta l(X_k,\bar\cT_k,\bar{G}_k,\bar{C}_k,\bar{J}_k,\bar{D}_k) + (|1-\bar\cA_k^{\suff}| - (1-\bar\cA_k^{\suff}))\|\bar{C}_k\|_1 \\
&\quad\quad+ \frac{1}{2}(\bar\cT_k L+\Gamma)(\bar\cA_k^{\suff})^2\|\bar{D}_k\|^2 \\
&= (\eta - 1)\bar\cA_k^{\suff}\beta_k\Delta l(X_k,\bar\cT_k,\bar{G}_k,\bar{C}_k,\bar{J}_k,\bar{D}_k) + \bar\cA_k^{\suff}\cdot\frac{\bar\cA_k^{\suff}}{2}(\bar\cT_k L+\Gamma)\|\bar{D}_k\|^2 \\
&= (\eta - 1)\bar\cA_k^{\suff}\beta_k\Delta l(X_k,\bar\cT_k,\bar{G}_k,\bar{C}_k,\bar{J}_k,\bar{D}_k) + (1-\eta)\bar\cA_k^{\suff}\beta_k\Delta l(X_k,\bar\cT_k,\bar{G}_k,\bar{C}_k,\bar{J}_k,\bar{D}_k) = 0.
\ealigned
\eequationn
Combining \textbf{Cases (i)} and \textbf{(ii)}, we always have $\varphi_k(\bar\cA_k^{\suff}) \leq 0$. Then by using the definition of $\bar\cA_k^{\varphi}$ in \eqref{eq.stepsizes}, we have $\bar\cA_k^{\suff} \leq \bar\cA_k^{\varphi}$ for all iterations $k$ such that $\bar{D}_k \neq 0$. After summarizing results above, we know for all iterations $k$ where $\bar{D}_k \neq 0$, $\bar\cA_k^{\min} \leq \bar\cA_k^{\suff} \leq \bar\cA_k^{\varphi}$ always holds. Therefore, by Algorithm~\ref{alg.main},~\eqref{eq.stepsizes} and~\eqref{eq.stepsize_intermediate_1}, we conclude that $0 < \bar\cA_k^{\min} \leq \bar\cA_k^{\max} \leq \bar\cA_k^{\varphi}$ for all iterations $k$.

Next, we are going to prove $\varphi_k(\alpha) \leq 0$ for all step sizes $\alpha\in (0,\bar\cA_k^{\varphi}]$. When $\bar{D}_k = 0$, by \eqref{eq.linear_system}, \eqref{eq.linear_model_reduction}, and \eqref{eq.alphaphi}, we know $\varphi_k(\alpha) = 0$ for all $\alpha\in\RR$, so the statement trivially holds. On the other hand, when $\bar{D}_k \neq 0$, we notice that $\varphi_k:\RR\to\RR$
is a convex function in \(\alpha\) because it is a nonnegative weighted sum of \(\abs{1 - \alpha}\), \(\alpha^2\), and additional linear terms. Consequently, its sublevel set \(\{\alpha \in \reals \colon \varphi_k(\alpha) \leq 0\}\) is convex. Since \(\varphi_k(0) = 0\) and \(\varphi_k(\bar\cA_k^{\varphi}) = 0\) by construction, both points belong to the sublevel set, and accordingly the interval \([0, \bar\cA_k^{\varphi}]\) is contained in the sublevel set, which completes the proof.
\eproof

Our next lemma provides a critical upper bound on the merit function decrease.
\blemma\label{lem.merit_function_decrease}
Suppose Assumptions~\ref{ass.prob}--\ref{ass.estimate} hold. For all iterations $k$, we always have
\begin{equation}
\begin{aligned}
\phi(X_k + \bar\cA_k \bar{D}_k,\bar\cT_k) - \phi(X_k,\bar\cT_k)
\leq & -\bar\cA_k\Delta l(X_k,\bar\cT_k,\nabla f(X_k),c(X_k),\nabla c(X_k)^T,D_k) \\
& \quad + \bar\cA_k\bar\cT_k\nabla f(X_k)^T(\bar{D}_k - D_k)
  + \bar\cA_k\|\nabla c(X_k)^T(D_k - \bar{D}_k)\|_1 \\
& \quad \inrevise{+ 2 \bar\cA_k \norm{c(X_k) - \bar{C}_k}_1} + (1-\eta)\bar\cA_k\beta_k\Delta l(X_k,\bar\cT_k,\bar{G}_k,\bar{C}_k,\bar{J}_k,\bar{D}_k).
\end{aligned}
\end{equation}
\elemma
\bproof
By Assumption~\ref{ass.prob}, \eqref{eq.Lipschitz_continuity}, \eqref{eq.merit_func}, \eqref{eq.linear_model_reduction} and the triangle inequality, we know that for any $\bar\cA_k \in [\bar\cA_k^{\min},\bar\cA_k^{\max}]$,
\begin{align*}
&\phi(X_k + \bar\cA_k \bar{D}_k,\bar\cT_k) - \phi(X_k,\bar\cT_k) \\
= \ &\bar\cT_k(f(X_k + \bar\cA_k \bar{D}_k) - f(X_k)) + (\|c(X_k + \bar\cA_k \bar{D}_k)\|_1 - \|c(X_k)\|_1) \\
\leq \ &\bar\cA_k\bar\cT_k\nabla f(X_k)^T\bar{D}_k + \frac{L}{2}\bar\cT_k\bar\cA_k^2\|\bar{D}_k\|^2 + \|c(X_k) + \bar\cA_k\nabla c(X_k)^T\bar{D}_k\|_1 + \frac{\Gamma}{2}\bar\cA_k^2\|\bar{D}_k\|^2 - \|c(X_k)\|_1 \\
= \ &-\bar\cA_k\Delta l(X_k,\bar\cT_k,\nabla f(X_k),c(X_k),\nabla c(X_k)^T,D_k) + \bar\cA_k\bar\cT_k\nabla f(X_k)^T(\bar{D}_k - D_k) + \|c(X_k) + \bar\cA_k\nabla c(X_k)^T\bar{D}_k\|_1 \\
&\quad- (1-\bar\cA_k)\|c(X_k)\|_1 + \frac{1}{2}(\bar\cT_k L+\Gamma)\bar\cA_k^2\|\bar{D}_k\|^2 \\
=\ &-\bar\cA_k\Delta l(X_k,\bar\cT_k,\nabla f(X_k),c(X_k),\nabla c(X_k)^T,D_k) + \bar\cA_k\bar\cT_k\nabla f(X_k)^T(\bar{D}_k - D_k) \\
&\quad + \|(1-\bar\cA_k)c(X_k) + \bar\cA_k (c(X_k) + \nabla c(X_k)^T\bar{D}_k)\|_1 - (1-\bar\cA_k)\|c(X_k)\|_1 + \frac{1}{2}(\bar\cT_k L+\Gamma)\bar\cA_k^2\|\bar{D}_k\|^2 \\
\leq\ &-\bar\cA_k\Delta l(X_k,\bar\cT_k,\nabla f(X_k),c(X_k),\nabla c(X_k)^T,D_k) + \bar\cA_k\bar\cT_k\nabla f(X_k)^T(\bar{D}_k - D_k) + \bar\cA_k\|c(X_k) + \nabla c(X_k)^T\bar{D}_k\|_1 \\
&\quad + (|1-\bar\cA_k| - (1-\bar\cA_k))\|c(X_k)\|_1 + \frac{1}{2}(\bar\cT_k L+\Gamma)\bar\cA_k^2\|\bar{D}_k\|^2 \\
= \ & -\bar\cA_k\Delta l(X_k,\bar\cT_k,\nabla f(X_k),c(X_k),\nabla c(X_k)^T,D_k) + \bar\cA_k\bar\cT_k\nabla f(X_k)^T(\bar{D}_k - D_k)
+ \bar\cA_k\|\nabla c(X_k)^T(D_k - \bar{D}_k)\|_1
\\
& \quad \inrevise{ + (|1-\bar\cA_k| - (1-\bar\cA_k)) \left(\left( \|c(X_k)\|_1 - \|\bar{C}_k\|_1 \right) + \|\bar{C}_k\|_1\right)} + \frac{1}{2}(\bar\cT_k L+\Gamma)\bar\cA_k^2\|\bar{D}_k\|^2
\\
\leq\ & -\bar\cA_k\Delta l(X_k,\bar\cT_k,\nabla f(X_k),c(X_k),\nabla c(X_k)^T,D_k) + \bar\cA_k\bar\cT_k\nabla f(X_k)^T(\bar{D}_k - D_k) + \bar\cA_k\|\nabla c(X_k)^T(D_k - \bar{D}_k)\|_1 \\
&\quad \inrevise{+ 
(|1-\bar\cA_k| - (1-\bar\cA_k)) \|c(X_k) - \bar{C}_k\|_1} + (1-\eta) \bar\cA_k\beta_k\Delta l(X_k,\bar\cT_k,\bar{G}_k,\bar{C}_k,\bar{J}_k,\bar{D}_k)
\\
\inrevise{\leq} \ & \inrevise{ -\bar\cA_k\Delta l(X_k,\bar\cT_k,\nabla f(X_k),c(X_k),\nabla c(X_k)^T,D_k) + \bar\cA_k\bar\cT_k\nabla f(X_k)^T(\bar{D}_k - D_k) + \bar\cA_k\|\nabla c(X_k)^T(D_k - \bar{D}_k)\|_1 } \\
&\quad \inrevise{+
2 \bar\cA_k \|c(X_k) - \bar{C}_k\|_1 + (1-\eta) \bar\cA_k\beta_k\Delta l(X_k,\bar\cT_k,\bar{G}_k,\bar{C}_k,\bar{J}_k,\bar{D}_k),}
\end{align*}
where the last equality is from $c(X_k) + \nabla c(X_k)^TD_k = 0$, \inrevise{the second last inequality follows $[\bar\cA_k^{\min},\bar\cA_k^{\max}]\subset(0,\bar\cA_k^{\varphi}]$, \eqref{eq.stepsizes}, \eqref{eq.alphaphi}, Lemma~\ref{lem.stepsize}, triangle inequality, and $z \leq |z|$ for all $z \in \RR$, and the last inequality is due to $|1 - z| - (1 - z) \leq 2 z$ for any $z \in \RR_{\geq 0}$.}
\eproof

Lemma~\ref{lem.merit_function_decrease} is with a similar spirit as \cite[Lemma~3.7]{BeraCurtRobiZhou21}, however, there \inrevise{are additional terms, i.e., $\bar\cA_k\|\nabla c(X_k)^T(D_k - \bar{D}_k)\|_1$ and $2\bar\cA_k\|c(X_k) -\bar{C}_k\|_1$,} appearing in the upper bounding function of Lemma~\ref{lem.merit_function_decrease} because we are focusing on \textit{stochastic} constrained problems in this paper. In particular, for \textit{deterministic} constrained problems, where $\bar{C}_k = c(X_k)$ and $\bar{J}_k = \nabla c(X_k)^T$, \inrevise{ we directly have $2\bar\cA_k\|c(X_k) -\bar{C}_k\|_1 = 0$, and meanwhile}, it follows \eqref{eq.linear_system} and \eqref{eq.linear_system_true} that
\bequationn
\bar\cA_k\|\nabla c(X_k)^T(D_k - \bar{D}_k)\|_1 = \bar\cA_k\|\nabla c(X_k)^TD_k - \bar{J}_k\bar{D}_k\|_1 = \bar\cA_k\|\bar{C}_k - c(X_k)\|_1 = 0.
\eequationn
This observation demonstrates one aspect of the difference between the analysis of our paper and the prior work. More significant distinctions, including $\{\bar{D}_k\}$ performing as a biased estimator of $\{D_k\}$ and the behavior of stochastic merit parameters $\{\bar\cT_k\}$, are discussed in the following sections (Sections~\ref{sec.converg_results}--\ref{sec.merit_parameter_behaviors}) in more detail.

\subsection{Convergence Results}\label{sec.converg_results}

In this subsection, we are going to show asymptotic convergence performance of Algorithm~\ref{alg.main} under the condition that event $\cE_{\tau\xi} := \cE(\bar\tau_{\min},k_{\max},f_{\max})$ occurs, where $(\bar\tau_{\min},k_{\max},f_{\max}) \in \RR_{>0}\times \NN \times \RR$ and
\bequation\label{eq.event_asy}
\baligned
\cE(\bar\tau_{\min},k_{\max},f_{\max}) := &\left\{\inrevise{f(X_{k_{\max}})} \leq f_{\max}\text{ and there exist }(\bar\cT',\bar\Xi',k') \in \RR_{>0} \times \RR_{>0} \times [k_{\max}]\text{ satisfying } \right. \\
&\ \left.\cT_k^{\trial} \geq \bar\cT_k = \bar\cT' \geq \bar\tau_{\min} > 0\text{ and }\bar\Xi_k = \bar\Xi' > 0\text{ for all iterations }k\geq k'\right\}.
\ealigned
\eequation
We note that event $\cE_{\tau\xi} := \cE(\bar\tau_{\min},k_{\max},f_{\max})$ describes behaviors of \inrevise{$f(X_{k_{\max}})$}, stochastic merit parameters $\{\bar\cT_k\}$ and stochastic ratio parameters $\{\bar\Xi_k\}$. In particular, the event $\cE_{\tau\xi}$ requires that \inrevise{$f(X_{k_{\max}})$}, the objective value at iterate \inrevise{$X_{k_{\max}}$}, is not too large (no more than $f_{\max}$) and ratio parameters $\{\bar\Xi_k\}$ stay constant for all sufficiently large iterations, while these requirements are not restrictive given Assumptions~\ref{ass.prob}--\ref{ass.estimate} and Lemma~\ref{lem.ratio_parameter}. In addition, the event $\cE_{\tau\xi}$ asks for stochastic merit parameters $\{\bar\cT_k\}$ reaching a constant and sufficiently small value (compared to $\{\cT_k^{\trial}\}$) for all large enough iterations. However, such nice properties may not always hold. In fact, similar to discussions in~\cite[Section~3.2.2]{BeraCurtRobiZhou21} and~\cite[Section~3.3.1]{CurtRobiZhou24} on \textit{deterministic} constrained \textit{stochastic} optimization problems, our $\{\bar\cT_k\}$ sequence may behave in other two ways:
\bequation\label{eq.poor_merit_parameters_asymptotic}
\baligned
(i)\ &\{\bar\cT_k\}\text{ converges to zero, or} \\
(ii)\ &\{\bar\cT_k\}\text{ eventually stays at a constant but not sufficiently small value compared to }\{\cT_k^{\trial}\},\text{ i.e.,}\\
&\text{ there exists an infinite set }\cK_{\bar\tau,big}\subseteq\NN\text{ such that }\bar\cT_k > \cT_k^{\trial}\text{ for all }k\in\cK_{\bar\tau,big}.
\ealigned
\eequation
Note that there are two cases introduced in~\eqref{eq.poor_merit_parameters_asymptotic}. We will show in Lemmas~\ref{lem.tau_zero_asymptotic} and~\ref{lem.tau_large_asymptotic} that under additional reasonable assumptions, case $(i)$ never occurs while case $(ii)$ at most happens with probablity zero. These results, together with aforementioned discussions on $\inrevise{f(X_{k_{\max}})} \leq f_{\max}$ and $\{\bar\Xi_k\}$, illustrate the fact that the event $\cE_{\tau\xi}$ is not excessively restrictive and indeed occurs frequently in many realistic problems. We defer further detailed discussions on these behaviors of $\{\bar\cT_k\}$ to Section~\ref{sec.merit_parameter_behaviors}. For the rest of this subsection, we prove theoretical results under the condition that event $\cE_{\tau\xi}$ occurs, which is described in the following assumption.

\begin{ass}\label{ass.event_asymptotic}
During the runs of Algorithm~\ref{alg.main}, event $\cE_{\tau\xi}:=\cE(\bar\tau_{\min},k_{\max},f_{\max})$ (defined in~\eqref{eq.event_asy}) occurs with given constants $(\bar\tau_{\min},k_{\max},f_{\max})\in\RR_{>0}\times\NN\times\RR$.
\end{ass}
Due to the focus on event \(\cE_{\tau\xi}\), we accordingly restrict the filtration \(\{\cF_k\}\) by
\[
\cF_k := \{E \cap \cE_{\tau\xi} \colon E \in \cG_k\}
\quad
\text{ for all \(k \in \NN\) },
\]
and we define the following notation
\begin{align}
\PP_k[\cdot] := \PP[\cdot | \cF_k]
\quad \text{and} \quad
\EE_k[\cdot] := \EE[\cdot | \cF_k]. \label{eq:def-Pk-Ek}
\end{align}
In addition to Assumption~\ref{ass.event_asymptotic}, we make the following assumption on random variables $\{(\bar{G}_k,\bar{C}_k,\bar{J}_k)\}$.

\begin{ass}\label{ass.estimate_asymptotic}
For every iteration $k\in\NN$, conditioned on filtration $\cF_{k}$, the objective gradient estimate $\bar{G}_k$, the constraint function estimate $\bar{C}_k$, and the constraint jacobian estimate $\bar{J}_k$ are all unbiased estimators of their corresponding deterministic quantities evaluated at $X_k$, i.e., for all $k\in\NN$,
\bequation\label{eq.unbiased_estimates}
\EE_k[\bar{G}_k] = \nabla f(X_k), \quad \EE_k[\bar{C}_k] = c(X_k),\quad \text{and}\quad \EE_k[\bar{J}_k] = \nabla c(X_k)^T.
\eequation
Moreover, for any $k\in\NN$, conditioned on filtration $\cF_k$, random elements $(\bar{G}_k,\bar{C}_k)$ and $\bar{J}_k$ are independent, and their variances satisfy
\bequation\label{eq.variance}
\EE_k[\|\bar{G}_k - \nabla f(X_k)\|^2]  \leq \rho^g_k,\quad \EE_k[\|\bar{C}_k - c(X_k)\|^2] \leq \rho^c_k,\quad
\text{and}\quad \EE_k[\|\bar{J}_k - \nabla c(X_k)^T\|_F^2] \leq \rho^j_k,
\eequation
where non-negative parameters $\{\rho^g_k\}$, $\{\rho^c_k\}$, and $\{\rho^j_k\}$ are universally upper bounded by $\rhomax > 0$.
Additionally, \inrevise{conditioned on filtration \(\cF_k\)}, \(\nabla f(X_k)\), \(c(X_k)\), and \(\nabla c(X_k)^{\top}\) are always in the support of the distributions of $\bar{G}_k$, $\bar{C}_k$, and $\bar{J}_k$,
respectively; i.e., for all $k\in\NN$, it holds \inrevise{almost surely} that, conditioned on filtration \(\cF_k\),
\begin{align*}
\nabla f(X_k)\in\supp{\bar{G}_k}, \quad c(X_k)\in\supp{\bar{C}_k}, \ \ \text{and} \ \ \nabla c(X_k)^T \in \supp{\bar{J}_k},
\end{align*}
\inrevise{that is, for any \(r > 0\)}
\begin{align*}
\inrevise{\Prob_k[ || \nabla f(X_k) - \bar{G}_k || < r ] > 0, \quad
\Prob_k[ || c(X_k) - \bar{C}_k || < r ] > 0, \ \ \text{and} \ \
\Prob_k[ || \nabla c(X_k)^\top - \bar{J}_k || < r ] > 0.}
\end{align*}
\end{ass}

Assumption~\ref{ass.estimate_asymptotic} requires that random variables $\{(\bar{G}_k,\bar{C}_k,\bar{J}_k)\}$ are all unbiased estimators of their corresponding deterministic quantities, and meanwhile, their (conditional) variances are controlled by predetermined parameter sequences. \inrevise{Both of these conditions are common in constrained stochastic optimization literature~\cite{BeraCurtRobiZhou21,CurtJianWang24,CurtRobiOnei24,CurtRobiZhou23,FangNaMahoKola24}.} We also assume that $\bar{J}_k$ is (conditionally) independent to $(\bar{G}_k,\bar{C}_k)$, which facilitates the uncertainty quantification of the search direction  (see Lemma~\ref{lem.solution_bias}) by making the parameter matrix and the vector in~\eqref{eq.linear_system}, i.e., $\begin{bmatrix} H_k & \bar{J}_k^T \\ \bar{J}_k & 0 \end{bmatrix}$ and $\begin{bmatrix} \bar{G}_k \\ \bar{C}_k \end{bmatrix}$, (conditionally) independent to each other. Additionally, we presume a mild condition in Assumption~\ref{ass.estimate_asymptotic} that $\{(\nabla f(X_k), c(X_k), \nabla c(X_k)^T)\}$ are within the support of $\{(\bar{G}_k, \bar{C}_k, \bar{J}_k)\}$, which will be useful to ensure that the lower bound provided in Lemma~\ref{lem.prob-tau-good-k} is nontrivial and asymptotically tight when \(\{(\rho^g_k, \rho^c_k, \rho^j_k)\}\) diminishes. 
\inrevise{Such assumption on support is not restrictive. Specifically, in Appendix~\ref{sec:apx-supp-asm}, we show that it holds whenever the sampling errors of \(\nabla f(X_k)\), \(c(X_k)\), and \(\nabla c(X_k)^\top\) have zero mean and \emph{symmetric support}. Intuitively, this requires that the errors are centered around zero in expectation, and whenever the noise can take a realization \(z\) with positive probability, it is also possible to take \(-z\) (though not necessarily with the same probability). This property is satisfied by many commonly used noise models, including the white Gaussian noise~\cite[Chapter~1.2]{kay1993statistical}, zero-mean uniform noise supported on \(\ell_p\) norm ball~\cite{franceschi2018robustness}, as well as zero-mean discrete noise supported on a symmetric lattice~\cite{zamir1996lattice}.
}

Next, we start with proving a result that shows the model reduction $\Delta l(X_k,\bar\cT_k,\nabla f(X_k),c(X_k),\nabla c(X_k)^T,D_k)$ is always non-negative for any sufficiently large iteration $k$.

\blemma\label{lem.true_model_reduction_bound}
Suppose Assumptions~\ref{ass.prob}--\ref{ass.event_asymptotic} hold. For all iterations $k \geq k_{\max}$, we always have
\bequationn
\Delta l(X_k,\bar\cT_k,\nabla f(X_k),c(X_k),\nabla c(X_k)^T,D_k) \geq 0.
\eequationn
\elemma
\bproof
Using the definition of event $\cE_{\tau\xi}$ (which includes $0 < \bar\cT_k \leq \cT_k^{\trial}$ for any sufficiently large iteration~$k$), we have that for all iterations $k \geq k_{\max}$,
\bequationn
\Delta l(X_k,\bar\cT_k,\nabla f(X_k),c(X_k),\nabla c(X_k)^T,D_k) \geq \tfrac{\zeta}{2}\bar\cT_k\|D_k\|^2 + \sigma\|c(X_k)\|_1 \geq 0,
\eequationn
where the first inequality follows Lemma~\ref{lem.model_reduction_suff_large_true}.
\eproof

We then provide lower bounds and upper bounds for adaptive step size intervals. We note that both lower and upper bounds of the step size $\bar\cA_k$ are proportional to $\beta_k$.
\blemma\label{lem.stepsize_interval}
Suppose Assumptions~\ref{ass.prob}--\ref{ass.event_asymptotic} hold. For all iterations $k$,
\bequationn
\bar\cA_k \in [\bar\cA_k^{\min},\bar\cA_k^{\max}] \subseteq \left[\frac{2(1-\eta)\bar\Xi'\bar\cT'}{\bar\cT'L + \Gamma}\beta_k, \left(\frac{2(1-\eta)\bar\xi_0\bar\tau_0}{\bar\tau_0L + \Gamma}+ \theta\right)\beta_k\right].
\eequationn
Moreover, for all iterations $k \geq k_{\max}$,
\bequationn
\bar\cA_k \in [\bar\cA_k^{\min},\bar\cA_k^{\max}] \subseteq \left[\frac{2(1-\eta)\bar\Xi'\bar\cT'}{\bar\cT'L + \Gamma}\beta_k, \left(\frac{2(1-\eta)\bar\Xi'\bar\cT'}{\bar\cT'L + \Gamma}+ \theta\right)\beta_k\right].
\eequationn
\elemma
\bproof
By Algorithm~\ref{alg.main} and Lemma~\ref{lem.stepsize}, we know that $\bar\cA_k \in [\bar\cA_k^{\min},\bar\cA_k^{\max}]$ is well-defined for any $k\in\NN{}$. When event $\cE_{\tau\xi}$ occurs, from Algorithm~\ref{alg.main} and~\eqref{eq.stepsizes}, we further have
\bequationn
\bar\cA_k^{\min} = \frac{2(1-\eta)\beta_k\bar\Xi_k\bar\cT_k}{(\bar\cT_kL+\Gamma)} \geq \frac{2(1-\eta)\beta_k\bar\Xi'\bar\cT'}{(\bar\cT'L+\Gamma)}.
\eequationn
Meanwhile, by Algorithm~\ref{alg.main}, \eqref{eq.stepsizes}, Lemma~\ref{lem.merit_parameter}, and Lemma~\ref{lem.ratio_parameter}, for all iterations $k$,
\bequationn
\bar\cA_k^{\max} \leq \bar\cA_k^{\min} + \theta\beta_k = \frac{2(1-\eta)\beta_k\bar\Xi_k\bar\cT_k}{(\bar\cT_kL+\Gamma)} + \theta\beta_k \leq \left(\frac{2(1-\eta)\bar\xi_0\bar\tau_0}{\bar\tau_0L + \Gamma}+ \theta\right)\beta_k.
\eequationn
When $k \geq k_{\max}$, by additional information from Assumption~\ref{ass.event_asymptotic}, it holds that
\bequationn
\bar\cA_k^{\max} \leq \bar\cA_k^{\min} + \theta\beta_k = \left(\frac{2(1-\eta)\bar\Xi'\bar\cT'}{\bar\cT'L + \Gamma}+ \theta\right)\beta_k.
\eequationn
By combining all aforementioned results, we conclude the statement.
\eproof

The following lemma shows the relationship between matrices used in~\eqref{eq.linear_system} and~\eqref{eq.linear_system_true}, i.e., $\bbmatrix H_k & \bar{J}_k^T \\ \bar{J}_k & 0 \ebmatrix$ and $\bbmatrix H_k & \nabla c(X_k) \\ \nabla c(X_k)^T & 0 \ebmatrix$. In particular, we prove that the difference between the inverse of these two matrices is uniformly bounded. Meanwhile, when the difference between these two matrices is sufficiently small, we can use the norm of their difference to provide an upper bound on the difference between their inverses.

\begin{lem}\label{lem:inv-mat-err}
Suppose Assumptions~\ref{ass.prob}--\ref{ass.estimate} hold. For all iterations $k$, we always have
\bequationn
\left\|\bbmatrix H_k & \bar{J}_k^T \\ \bar{J}_k & 0 \ebmatrix^{-1} - \bbmatrix H_k & \nabla c(X_k) \\ \nabla c(X_k)^T & 0 \ebmatrix^{-1}\right\| \leq \frac{2}{q_{\min}},
\eequationn
where $q_{\min}$ is defined in Lemma~\ref{lem.singular_values}. Moreover, if $\left\|\bbmatrix H_k & \bar{J}_k^T \\ \bar{J}_k & 0 \ebmatrix - \bbmatrix H_k & \nabla c(X_k) \\ \nabla c(X_k)^T & 0 \ebmatrix\right\| < \frac{q_{\min}}{3}$, then
\bequationn
\left\|\bbmatrix H_k & \bar{J}_k^T \\ \bar{J}_k & 0 \ebmatrix^{-1} - \bbmatrix H_k & \nabla c(X_k) \\ \nabla c(X_k)^T & 0 \ebmatrix^{-1}\right\| \leq \frac{3}{2q_{\min}^2}\cdot\left\|\bbmatrix H_k & \bar{J}_k^T \\ \bar{J}_k & 0 \ebmatrix - \bbmatrix H_k & \nabla c(X_k) \\ \nabla c(X_k)^T & 0 \ebmatrix\right\|.
\eequationn
\end{lem}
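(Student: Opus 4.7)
The plan is to denote $M_k := \bbmatrix H_k & \nabla c(X_k) \\ \nabla c(X_k)^T & 0 \ebmatrix$ and $\bar M_k := \bbmatrix H_k & \bar{J}_k^T \\ \bar{J}_k & 0 \ebmatrix$ for brevity, and then extract from Lemma~\ref{lem.singular_values} the key fact that $\max\{\|M_k^{-1}\|,\|\bar M_k^{-1}\|\} \leq 1/q_{\min}$ because the singular values of both matrices lie in $[q_{\min},q_{\max}]$.

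For the first (unconditional) bound, the route is simply the triangle inequality: $\|\bar M_k^{-1} - M_k^{-1}\| \leq \|\bar M_k^{-1}\| + \|M_k^{-1}\| \leq 2/q_{\min}$. This uses nothing beyond Lemma~\ref{lem.singular_values} and requires no extra work.

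For the second (conditional) bound, I plan to use the standard resolvent identity
\bequationn
\bar M_k^{-1} - M_k^{-1} = \bar M_k^{-1}(M_k - \bar M_k)M_k^{-1} = -\bar M_k^{-1} E_k M_k^{-1}, \quad \text{where } E_k := \bar M_k - M_k,
\eequationn
which gives $\|\bar M_k^{-1} - M_k^{-1}\| \leq \|\bar M_k^{-1}\|\cdot\|E_k\|\cdot\|M_k^{-1}\|$. Since $\|M_k^{-1}\| \leq 1/q_{\min}$ is already available, the only remaining piece is to sharpen the bound on $\|\bar M_k^{-1}\|$ from $1/q_{\min}$ to $\tfrac{3}{2q_{\min}}$ under the hypothesis $\|E_k\| < q_{\min}/3$. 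For this I would write $\bar M_k = M_k + E_k = M_k(I + M_k^{-1}E_k)$, observe that $\|M_k^{-1}E_k\| \leq \tfrac{1}{q_{\min}}\cdot\tfrac{q_{\min}}{3} = \tfrac{1}{3} < 1$, and invoke the Neumann series (Banach lemma) to get
\bequationn
\|\bar M_k^{-1}\| = \|(I + M_k^{-1}E_k)^{-1}M_k^{-1}\| \leq \frac{1}{1 - \|M_k^{-1}E_k\|}\cdot\|M_k^{-1}\| \leq \frac{1}{1 - 1/3}\cdot\frac{1}{q_{\min}} = \frac{3}{2q_{\min}}.
\eequationn
Substituting these two norm estimates into the resolvent identity yields the claimed bound $\tfrac{3}{2q_{\min}^2}\|E_k\|$.

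The main (mild) obstacle is ensuring the sharper $\tfrac{3}{2q_{\min}}$ bound on $\|\bar M_k^{-1}\|$ rather than the loose $1/q_{\min}$ from Lemma~\ref{lem.singular_values}; the Banach lemma handles this cleanly precisely because the hypothesis $\|E_k\| < q_{\min}/3$ forces $\|M_k^{-1}E_k\| \leq 1/3$, with the constant $1/3$ chosen so that $1/(1-1/3) = 3/2$. All other steps are routine operator-norm manipulations.
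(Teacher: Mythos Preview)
Your proof is correct and rests on the same Neumann-series perturbation idea as the paper, just organized differently: the paper expands $\bar M_k^{-1} = (I + M_k^{-1}E_k)^{-1}M_k^{-1}$ and sums the resulting geometric series for $\bar M_k^{-1} - M_k^{-1}$ directly, whereas you route through the resolvent identity $\bar M_k^{-1} - M_k^{-1} = -\bar M_k^{-1}E_kM_k^{-1}$ and then invoke the Banach lemma to control $\|\bar M_k^{-1}\|$. Your route is in fact more elementary than you realize: Lemma~\ref{lem.singular_values} already gives $\|\bar M_k^{-1}\| \leq 1/q_{\min}$, so the resolvent identity alone yields $\|\bar M_k^{-1} - M_k^{-1}\| \leq \|\bar M_k^{-1}\|\,\|E_k\|\,\|M_k^{-1}\| \leq \|E_k\|/q_{\min}^2$, which is \emph{stronger} than the claimed $\tfrac{3}{2q_{\min}^2}\|E_k\|$ and does not use the hypothesis $\|E_k\| < q_{\min}/3$ at all. (Relatedly, replacing $1/q_{\min}$ by $3/(2q_{\min})$ is a \emph{weakening}, not a ``sharpening.'') The paper's version genuinely needs the hypothesis to make the Neumann series converge; your resolvent-identity shortcut bypasses that, which is a small but real advantage.
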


\bproof
See Appendix~\ref{sec:apx-inv-mat-err}.
\eproof

Based on the result of Lemma~\ref{lem:inv-mat-err}, we use the next two lemmas to present some critical results that reveal the relationship between $\bar{D}_k$ and $D_k$, i.e., primal solutions of \eqref{eq.linear_system} and \eqref{eq.linear_system_true} at iteration $k$.

\blemma\label{lem.solution_bias}
Suppose Assumptions~\ref{ass.prob}--\ref{ass.estimate_asymptotic} hold. There exists a fixed constant $\omega_1 > 0$ such that  for all iterations $k\in\NN{}$ of Algorithm~\ref{alg.main},
\bequationn
\baligned
\left\|\EE_k\left[\bar{D}_k - D_k\right]\right\| &\leq \left\|\EE_k\left[\bbmatrix\bar{D}_k \\ \bar{Y}_k\ebmatrix - \bbmatrix D_k \\ Y_k \ebmatrix\right]\right\| \leq \omega_1\cdot \sqrt{\rho_k^j} \\
\text{and}\quad\EE_k \left[\left\|\bar{D}_k - D_k\right\|\right] &\leq \EE_k \left[\left\|\bbmatrix \bar{D}_k \\ \bar{Y}_k \ebmatrix - \bbmatrix D_k \\ Y_k \ebmatrix \right\|\right] \leq \frac{\sqrt{\rho_k^g + \rho_k^c}}{q_{\min}} + \omega_1\cdot \sqrt{\rho_k^j},
\ealigned
\eequationn
where \inrevise{$(q_{\min},\rho_k^g,\rho_k^c,\rho_k^j)$ are parameteres} defined in Lemma~\ref{lem.singular_values} and Assumption~\ref{ass.estimate_asymptotic}.
\elemma
\bproof
See Appendix~\ref{sec:apx-sol-est}.
\eproof

\blemma\label{lem.solution_variance}
Suppose Assumptions~\ref{ass.prob}--\ref{ass.estimate_asymptotic} hold. There exist fixed constants $\{\omega_2,\omega_3,\omega_4\}\subset\RR{}_{>0}$ such that for all iterations $k$ of Algorithm~\ref{alg.main}, we always have
\bequationn
\baligned
\EE_k \left[\left\|\bar{D}_k - D_k\right\|^2\right] &\leq \EE_k\left[\left\|\bbmatrix \bar{D}_k \\ \bar{Y}_k \ebmatrix - \bbmatrix D_k \\ Y_k \ebmatrix \right\|^2\right] \leq  \frac{\rho_k^g + \rho_k^c}{q_{\min}^2} + \omega_2\cdot\rho_k^j + \omega_3\cdot \sqrt{\rho_k^j(\rho_k^g+\rho_k^c)} \\
\text{and}\quad \left|\EE_k\left[\bar{G}_k^T\bar{D}_k - \nabla f(X_k)^TD_k\right]\right| &\leq \frac{\rho_k^g}{2} + \frac{\rho_k^g + \rho_k^c}{2q_{\min}^2} + \omega_4\cdot\sqrt{\rho_k^j} + \frac{\omega_3}{2}\cdot\sqrt{\rho_k^j(\rho_k^g+\rho_k^c)},
\ealigned
\eequationn
where \inrevise{$(q_{\min},\rho_k^g,\rho_k^c,\rho_k^j)$ are parameteres} defined in Lemma~\ref{lem.singular_values} and Assumption~\ref{ass.estimate_asymptotic}.
\elemma
\bproof
See Appendix~\ref{sec:apx-pf-sol-var}.
\eproof

We next present an upper bound on the expected value of the difference between stochastic model reduction estimate and its corresponding deterministic quantity (with $\cT_k$ replaced by $\bar\cT_k$), i.e., $\Delta l(X_k,\bar\cT_k,\bar{G}_k,\bar{C}_k,\bar{J}_k,\bar{D}_k)$ and $\Delta l(X_k,\bar\cT_k,\nabla f(X_k),c(X_k),\nabla c(X_k)^T,D_k)$, for all sufficiently large iterations.
\blemma\label{lem.model_reduction_comparison}
Suppose Assumptions~\ref{ass.prob}--\ref{ass.estimate_asymptotic} hold. There exist fixed constants $\{\omega_5,\omega_6\}\subset\RR{}_{>0}$ such that for all iterations $k \geq k_{\max}$ of Algorithm~\ref{alg.main}, we always have
\bequationn
\baligned
\ &\left|\EE_k\left[\Delta l(X_k,\bar\cT_k,\bar{G}_k,\bar{C}_k,\bar{J}_k,\bar{D}_k) - \Delta l(X_k,\bar\cT_k,\nabla f(X_k),c(X_k),\nabla c(X_k)^T,D_k)\right]\right| \\
\leq \ &\bar\tau_0\left(\omega_5\cdot \rho_k^g + \omega_6\cdot \sqrt{\rho_k^c} + \omega_4\cdot\sqrt{\rho_k^j} + \frac{\omega_3}{2}\cdot\sqrt{\rho_k^j(\rho_k^g+\rho_k^c)} \right),
\ealigned
\eequationn
where \inrevise{$(\rho_k^g,\rho_k^c,\rho_k^j,\omega_3,\omega_4)$ are parameters} defined in Assumption~\ref{ass.estimate_asymptotic} and Lemma~\ref{lem.solution_variance}.
\elemma
\bproof
From Assumption~\ref{ass.event_asymptotic}, we know $\bar\cT_k = \bar\cT'$ for all iterations $k \geq k_{\max}$. Moreover, by \eqref{eq.linear_model_reduction} and the triangle inequality, for all iterations $k \geq k_{\max}$,
\begin{align*}
\ &\left|\EE_k\left[\Delta l(X_k,\bar\cT_k,\bar{G}_k,\bar{C}_k,\bar{J}_k,\bar{D}_k) - \Delta l(X_k,\bar\cT_k,\nabla f(X_k),c(X_k),\nabla c(X_k)^T,D_k)\right]\right| \\
= \ &\left|\EE_k\left[\Delta l(X_k,\bar\cT',\bar{G}_k,\bar{C}_k,\bar{J}_k,\bar{D}_k) - \Delta l(X_k,\bar\cT',\nabla f(X_k),c(X_k),\nabla c(X_k)^T,D_k)\right]\right| \\
= \ &\left|\EE_k\left[\left(-\bar\cT'\bar{G}_k^T\bar{D}_k + \|\bar{C}_k\|_1\right) - \left(-\bar\cT'\nabla f(X_k)^TD_k + \|c(X_k)\|_1\right)\right]\right| \\
= \ &\left|\EE_k\left[\bar\cT'\left(\nabla f(X_k)^TD_k-\bar{G}_k^T\bar{D}_k\right) + \left(\|\bar{C}_k\|_1 - \|c(X_k)\|_1\right)\right]\right| \\
\leq \ &\bar\cT'\cdot \left|\EE_k\left[\nabla f(X_k)^TD_k-\bar{G}_k^T\bar{D}_k\right]\right| + \EE_k\left[\|\bar{C}_k - c(X_k)\|_1\right] \\
\leq \ &\bar\cT'\cdot \left(\frac{\rho_k^g}{2} + \frac{\rho_k^g + \rho_k^c}{2q_{\min}^2} + \omega_4\cdot\sqrt{\rho_k^j} + \frac{\omega_3}{2}\cdot\sqrt{\rho_k^j(\rho_k^g+\rho_k^c)}\right) + \sqrt{m}\cdot \EE_k\left[\|\bar{C}_k - c(X_k)\|\right] \\
\leq \ &\bar\cT'\cdot \left(\frac{\rho_k^g}{2} + \frac{\rho_k^g + \rho_k^c}{2q_{\min}^2} + \omega_4\cdot\sqrt{\rho_k^j} + \frac{\omega_3}{2}\cdot\sqrt{\rho_k^j(\rho_k^g+\rho_k^c)}\right) + \sqrt{m}\cdot \sqrt{\EE_k\left[\|\bar{C}_k - c(X_k)\|^2\right]} \\
\leq \ &\bar\tau_0\cdot \left(\frac{\rho_k^g}{2} + \frac{\rho_k^g + \rho_k^c}{2q_{\min}^2} + \omega_4\cdot\sqrt{\rho_k^j} + \frac{\omega_3}{2}\cdot\sqrt{\rho_k^j(\rho_k^g+\rho_k^c)}\right) + \sqrt{m}\cdot \sqrt{\rho_k^c} \\
\leq \ &\bar\tau_0\left(\omega_5\cdot \rho_k^g + \omega_6\cdot \sqrt{\rho_k^c} + \omega_4\cdot\sqrt{\rho_k^j} + \frac{\omega_3}{2}\cdot\sqrt{\rho_k^j(\rho_k^g+\rho_k^c)} \right),
\end{align*}
where the second inequality uses Lemma~\ref{lem.solution_variance}, the second to last inequality is from Lemma~\ref{lem.merit_parameter} that $\bar\tau_0 \geq \bar\cT'$, and the last inequality holds for suffciently large constants $\{\omega_5,\omega_6\}\subset\RR{}_{>0}$ because of $\rho_k^c \leq \sqrt{\rhomax}\cdot\sqrt{\rho_k^c}$ (see Assumption~\ref{ass.estimate_asymptotic}), e.g., $\omega_5 := \frac{1}{2}+\frac{1}{2q_{\min}^2}$ and $\omega_6 := \frac{\sqrt{\rho_{\max}}}{2q_{\min}^2} + \sqrt{m}$.
\eproof

The following lemma provides an upper bound on $\EE_k\left[\bar\cA_k\bar\cT_k\nabla f(X_k)^T(\bar{D}_k - D_k)\right]$ for all sufficiently large iterations, which plays a crucial role in our final asymptotic convergence theorem (see Theorem~\ref{thm.asymptotic}). Similar to Lemmas~\ref{lem.solution_bias}--\ref{lem.model_reduction_comparison}, the next lemma also implies that if function and gradient estimates $(\bar{G}_k,\bar{C}_k,\bar{J}_k)$ almost surely recover their corresponding true values $(\nabla f(X_k),c(X_k),\nabla c(X_k)^T)$ (or in other words if $\rho_k^g = \rho_k^c = \rho_k^j = 0$), then the expected difference between stochastic and deterministic quantities would diminish to zero.

\blemma\label{lem.direction_bias_product}
Suppose Assumptions~\ref{ass.prob}--\ref{ass.estimate_asymptotic} hold. For all iterations $k \geq k_{\max}$ of Algorithm~\ref{alg.main}, we always have
\bequationn
\EE_k\left[\bar\cA_k\bar\cT_k\nabla f(X_k)^T(\bar{D}_k - D_k)\right] \leq \bar\cT'\cdot \left(\frac{2(1-\eta)\bar\Xi'\bar\cT'}{\bar\cT'L + \Gamma}+ \theta\right)\beta_k \cdot \kappa_{\nabla f}\cdot \left(\frac{\sqrt{\rho_k^g + \rho_k^c}}{q_{\min}} + \omega_1\cdot \sqrt{\rho_k^j}\right),
\eequationn
where $(\kappa_{\nabla f},q_{\min},\rho_k^g,\rho_k^c,\rho_k^j,\omega_1)$ are parameteres defined in \eqref{eq.basic_condition}, Lemma~\ref{lem.singular_values}, Assumption~\ref{ass.estimate_asymptotic} and Lemma~\ref{lem.solution_bias}.
\elemma
\bproof
From Assumption~\ref{ass.event_asymptotic}, we know $\bar\cT_k = \bar\cT'$ for all iterations $k \geq k_{\max}$, which implies
\bequationn
\EE_k\left[\bar\cA_k\bar\cT_k\nabla f(X_k)^T(\bar{D}_k - D_k)\right] = \bar\cT'\cdot\EE_k\left[\bar\cA_k\nabla f(X_k)^T(\bar{D}_k - D_k)\right].
\eequationn
Moreover, it follows the Cauchy-Schwarz inequality, Lemma~\ref{lem.merit_parameter} and Lemma~\ref{lem.stepsize_interval} that for all iterations $k \geq k_{\max}$,
\bequationn
\baligned
\EE_k\left[\bar\cA_k\bar\cT_k\nabla f(X_k)^T(\bar{D}_k - D_k)\right]
&= \bar\cT'\cdot\EE_k\left[\bar\cA_k\nabla f(X_k)^T(\bar{D}_k - D_k)\right] \\
&\leq \bar\cT'\cdot \left(\frac{2(1-\eta)\bar\Xi'\bar\cT'}{\bar\cT'L + \Gamma}+ \theta\right)\beta_k \cdot \|\nabla f(X_k)\|\cdot \EE_k\left[\|\bar{D}_k - D_k\|\right] \\
&\leq \bar\cT'\cdot \left(\frac{2(1-\eta)\bar\Xi'\bar\cT'}{\bar\cT'L + \Gamma}+ \theta\right)\beta_k \cdot \kappa_{\nabla f}\cdot \left(\frac{\sqrt{\rho_k^g + \rho_k^c}}{q_{\min}} + \omega_1\cdot \sqrt{\rho_k^j}\right),
\ealigned
\eequationn
where the second inequality is from \eqref{eq.basic_condition} and Lemma~\ref{lem.solution_bias}.
\eproof

Now we are ready to present our final theorem describing the asymptotic convergence behavior of Algorithm~\ref{alg.main}.
\btheorem\label{thm.asymptotic}
Suppose Assumptions~\ref{ass.prob}--\ref{ass.estimate_asymptotic} hold. Let sequences $\{\rho_k^g\}$, $\{\rho_k^c\}$, and $\{\rho_k^j\}$ defined in Assumption~\ref{ass.estimate_asymptotic} be chosen such that there exists a fixed constant $\iota \geq 0$ satisfying for all iterations $k\in\NN{}$,
\bequationn
\rho_k^g \leq \iota\beta_k^2,\quad \rho_k^c \leq \iota\beta_k^2,\quad \text{and}\quad \rho_k^j \leq \iota\beta_k^2.
\eequationn
Then with $\EE[\cdot|\cE_{\tau\xi}]$ representing the total expectation over all realizations of Algorithm~\ref{alg.main} conditioned on event $\cE_{\tau\xi}$ occurring, by defining
$\omega_{\iota} = \left(\left(\bar\tau_0\kappa_{\nabla f} + \sqrt{m}\kappa_{\nabla c}\right)\cdot \left(\tfrac{\sqrt{2}}{q_{\min}} + \omega_1\right) \inrevise{+ 2 \sqrt{m}} + (1-\eta)\cdot\bar\tau_0\left(\omega_4 + \omega_6+ \tfrac{\omega_3}{\sqrt{2}} + \omega_5 \right)\right)\cdot \max\{\iota,\sqrt{\iota}\}$
and $\bar\cA' = \frac{2(1-\eta)\bar\Xi'\bar\cT'}{\bar\cT'L+\Gamma}$, the following statements hold: \\
\textbf{Case (i)}\quad if $\beta_k = \beta = \frac{\psi \bar\cA'}{2(1-\eta)(\bar\cA'+\theta)} \in (0,1]$ with some $\psi\in (0,1]$ for all iterations $k \geq k_{\max}$ (see Assumptions~\ref{ass.event_asymptotic} and~\ref{ass.estimate_asymptotic}), then Algorithm~\ref{alg.main} generates a sequence of iterates $\{X_k\}$ such that
\bequationn
\limsup_{K\to\infty}\ \EE\left[\frac{1}{K}\sum_{t=k_{\max}}^{k_{\max} + K - 1} \Delta l\left(X_{t},\bar\cT',\nabla f(X_{t}),c(X_{t}),\nabla c(X_{t})^T,D_{t}\right) \bigg| \cE_{\tau\xi}
\right] \leq \frac{\omega_{\iota}\psi}{(2-\psi)(1-\eta)};
\eequationn
\textbf{Case (ii)}\quad if $\sum_{k=k_{\max}}^{\infty}\beta_k = \infty$, $\sum_{k=k_{\max}}^{\infty}\beta_k^2 < \infty$ and $\beta_k \leq \frac{\psi \bar\cA'}{2(1-\eta)(\bar\cA'+\theta)} \in (0,1]$ with some $\psi\in (0,1]$ for all iterations $k \geq k_{\max}$ (see Assumptions~\ref{ass.event_asymptotic} and~\ref{ass.estimate_asymptotic}), then Algorithm~\ref{alg.main} generates a sequence of iterates $\{X_k\}$ such that
\bequationn
\lim_{K\to\infty}\ \EE\left[\frac{1}{\sum_{t = k_{\max}}^{k_{\max} + K - 1}\beta_t}\cdot \sum_{t=k_{\max}}^{k_{\max} + K - 1}\beta_t\Delta l\left(X_{t},\bar\cT',\nabla f(X_{t}),c(X_{t}),\nabla c(X_{t})^T,D_{t}\right) \bigg| \cE_{\tau\xi} \right] = 0.
\eequationn
\etheorem
\bproof
From Assumption~\ref{ass.event_asymptotic}, we know $\bar\cT_k = \bar\cT'$ for all iterations $k \geq k_{\max}$. Therefore, combining the Cauchy-Schwarz inequality, Algorithm~\ref{alg.main}, Assumptions~\ref{ass.event_asymptotic} and~\ref{ass.estimate_asymptotic}, Lemma~\ref{lem.model_reduction_suff_large}, Lemmas~\ref{lem.merit_function_decrease}--\ref{lem.stepsize_interval}, and Lemma~\ref{lem.direction_bias_product}, for all iterations $k \geq k_{\max}$,
\begin{align}\label{eq.main_before_telescoping}
\ &\EE_k[\phi(X_{k+1},\bar\cT_k) - \phi(X_k,\bar\cT_k)]
= \EE_k[\phi(X_k + \bar\cA_k\bar{D}_k,\bar\cT_k) - \phi(X_k,\bar\cT_k)] \nonumber{} \\
\leq \ &\EE_k[-\bar\cA_k\Delta l(X_k,\bar\cT_k,\nabla f(X_k),c(X_k),\nabla c(X_k)^T,D_k)] + \EE_k[\bar\cA_k\bar\cT_k\nabla f(X_k)^T(\bar{D}_k - D_k)] \nonumber{} \\
  &\ \  + \EE_k[\bar\cA_k\|\nabla c(X_k)^T(D_k - \bar{D}_k)\|_1]
    \inrevise{+ \EE_k [2\bar\cA_k \|c(X_k) - \bar{C}_k\|_1]}
    + \EE_k[(1-\eta)\bar\cA_k\beta_k\Delta l(X_k,\bar\cT_k,\bar{G}_k,\bar{C}_k,\bar{J}_k,\bar{D}_k)] \nonumber{} \\
\leq \ &-\frac{2(1-\eta)\beta_k\bar\Xi'\bar\cT'}{\bar\cT'L+\Gamma}\cdot \Delta l(X_k,\bar\cT',\nabla f(X_k),c(X_k),\nabla c(X_k)^T,D_k) \nonumber{} \\
&\ \ +\bar\cT'\cdot \left(\frac{2(1-\eta)\bar\Xi'\bar\cT'}{\bar\cT'L + \Gamma}+ \theta\right)\beta_k \cdot \kappa_{\nabla f}\cdot \left(\frac{\sqrt{\rho_k^g + \rho_k^c}}{q_{\min}} + \omega_1\cdot \sqrt{\rho_k^j}\right) \nonumber{} \\
&\ \ + \EE_k\left[\left(\frac{2(1-\eta)\beta_k\bar\Xi'\bar\cT'}{\bar\cT'L+\Gamma} + \theta\beta_k\right)\cdot\sqrt{m}\cdot\|\nabla c(X_k)\|\|D_k - \bar{D}_k\|\right] \nonumber{} \\
  & \ \ \inrevise{+ \EE_k \left[\left(\frac{2(1-\eta)\beta_k\bar\Xi'\bar\cT'}{\bar\cT'L+\Gamma} + \theta\beta_k\right) \cdot \sqrt{m} \cdot 2 \sqrt{\|c(X_k) - \bar{C}_k\|^2} \right]}
    \nonumber{} \\
&\ \ + \EE_k\left[(1-\eta)\cdot \left(\frac{2(1-\eta)\beta_k\bar\Xi'\bar\cT'}{\bar\cT'L+\Gamma} + \theta\beta_k\right)\cdot\beta_k\Delta l(X_k,\bar\cT_k,\bar{G}_k,\bar{C}_k,\bar{J}_k,\bar{D}_k)\right] \nonumber{} \\
\leq \ &-\frac{2(1-\eta)\beta_k\bar\Xi'\bar\cT'}{\bar\cT'L+\Gamma}\cdot \Delta l(X_k,\bar\cT',\nabla f(X_k),c(X_k),\nabla c(X_k)^T,D_k) \nonumber{} \\
&\ \ +\bar\cT'\cdot \left(\frac{2(1-\eta)\bar\Xi'\bar\cT'}{\bar\cT'L + \Gamma}+ \theta\right)\beta_k \cdot \kappa_{\nabla f}\cdot \left(\frac{\sqrt{\rho_k^g + \rho_k^c}}{q_{\min}} + \omega_1\cdot \sqrt{\rho_k^j}\right) \nonumber{} \\
&\ \ + \left(\frac{2(1-\eta)\beta_k\bar\Xi'\bar\cT'}{\bar\cT'L+\Gamma} + \theta\beta_k\right)\cdot\sqrt{m} \cdot \left( \kappa_{\nabla c} \cdot \left(\frac{\sqrt{\rho_k^g + \rho_k^c}}{q_{\min}} + \omega_1\cdot \sqrt{\rho_k^j} \right) \inrevise{+ 2\sqrt{\rho_k^c}}\right)  \nonumber{} \\
&\ \  + (1-\eta)\cdot \left(\frac{2(1-\eta)\beta_k\bar\Xi'\bar\cT'}{\bar\cT'L+\Gamma} + \theta\beta_k\right)\cdot\beta_k\Delta l(X_k,\bar\cT',\nabla f(X_k),c(X_k),\nabla c(X_k)^T,D_k) \nonumber{} \\
&\ \  + (1-\eta)\cdot \left(\frac{2(1-\eta)\beta_k\bar\Xi'\bar\cT'}{\bar\cT'L+\Gamma} + \theta\beta_k\right)\cdot\beta_k\bar\tau_0\left(\omega_5\cdot \rho_k^g + \omega_6\cdot \sqrt{\rho_k^c} + \omega_4\cdot\sqrt{\rho_k^j} + \frac{\omega_3}{2}\cdot\sqrt{\rho_k^j(\rho_k^g+\rho_k^c)} \right) \nonumber{} \\
\leq \ &-\left(\frac{2(1-\eta)\bar\Xi'\bar\cT'}{\bar\cT'L+\Gamma} - \left(\frac{2(1-\eta)\bar\Xi'\bar\cT'}{\bar\cT'L+\Gamma} + \theta\right)\cdot(1-\eta)\beta_k\right)\cdot \beta_k\Delta l(X_k,\bar\cT',\nabla f(X_k),c(X_k),\nabla c(X_k)^T,D_k) \nonumber{} \\
&\ \  + \left(\frac{2(1-\eta)\bar\Xi'\bar\cT'}{\bar\cT'L+\Gamma} + \theta\right)\cdot \left(\left(\bar\tau_0\kappa_{\nabla f} + \sqrt{m}\kappa_{\nabla c}\right)\cdot \left(\frac{\sqrt{2\iota}}{q_{\min}} + \omega_1\cdot \sqrt{\iota} \right) \inrevise{+ 2 \sqrt{m \iota}} \right) \cdot \beta_k^2 \nonumber{} \\
&\ \  + (1-\eta)\cdot \left(\frac{2(1-\eta)\bar\Xi'\bar\cT'}{\bar\cT'L+\Gamma} + \theta\right)\cdot\bar\tau_0\left((\omega_4 + \omega_6)\cdot\sqrt{\iota} + \left(\frac{\omega_3}{\sqrt{2}} + \omega_5\right)\cdot\iota \right)\cdot \beta_k^2 \nonumber{} \\
\leq \ &-\left(\bar\cA' - \left(\bar\cA' + \theta\right)\cdot(1-\eta)\beta_k\right)\cdot \beta_k\Delta l(X_k,\bar\cT',\nabla f(X_k),c(X_k),\nabla c(X_k)^T,D_k) + \omega_{\iota}\cdot\left(\bar\cA' + \theta\right)\cdot\beta_k^2,
\end{align}
where the third inequality follows \inrevise{Jenson's inequality,} \eqref{eq.basic_condition}, Lemma~\ref{lem.solution_bias} and Lemma~\ref{lem.model_reduction_comparison}, the fourth inequality is from $\{\beta_k\}\subset(0,1]$ and $\max\{\rho_k^g,\rho_k^c,\rho_k^j\} \leq \iota\beta_k^2$ in the theorem statement, while the last inequality results from the definitions of $\omega_{\iota}$ and $\bar\cA'$. \\
\textbf{Case (i):} combining the definition of $\beta_k$ in the theorem statement and \eqref{eq.main_before_telescoping}, for all iterations $k \geq k_{\max}$,
\bequationn
\baligned
\ &\EE_k[\phi(X_{k+1},\bar\cT_k) - \phi(X_k,\bar\cT_k)] \\
\leq \ &-\left(\bar\cA' - \left(\bar\cA' + \theta\right)\cdot(1-\eta)\beta_k\right)\cdot \beta_k\Delta l(X_k,\bar\cT',\nabla f(X_k),c(X_k),\nabla c(X_k)^T,D_k) + \omega_{\iota}\cdot\left(\bar\cA' + \theta\right)\cdot\beta_k^2 \\
= \ & -\left(1-\frac{\psi}{2}\right)\cdot\bar\cA'\beta\Delta l(X_k,\bar\cT',\nabla f(X_k),c(X_k),\nabla c(X_k)^T,D_k) + \omega_{\iota}\cdot\left(\bar\cA' + \theta\right)\cdot\beta^2.
\ealigned
\eequationn
By Assumption~\ref{ass.prob}, there exists $\phi_{\min}\in\RR{}$ such that $\phi_{\min} \leq \phi(X_k,\bar\cT')$ for all iterations $k \geq k_{\max}$. Moreover, from Assumptions~\ref{ass.event_asymptotic} and~\ref{ass.estimate_asymptotic}, we further have
\bequationn
\baligned
\ &\phi_{\min} - (\bar\tau_0|f_{\max}| + \kappa_c) \leq \phi_{\min} - \EE[\phi(X_{k_{\max}},\bar\cT')|\cE_{\tau\xi}] \leq \EE[\phi(X_{k_{\max} + K},\bar\cT') - \phi(X_{k_{\max}},\bar\cT')|\cE_{\tau\xi}] \\
\leq \ &\EE\left[\sum_{t=k_{\max}}^{k_{\max} + K - 1} \left(-\left(1-\frac{\psi}{2}\right)\cdot\bar\cA'\beta\Delta l\left(X_{t},\bar\cT',\nabla f(X_{t}),c(X_{t}),\nabla c(X_{t})^T,D_{t}\right) + \omega_{\iota}\cdot\left(\bar\cA' + \theta\right)\cdot\beta^2\right) \bigg| \cE_{\tau\xi}\right] \\
= \ &-\left(1-\frac{\psi}{2}\right)\bar\cA'\beta\cdot \EE\left[\sum_{t=k_{\max}}^{k_{\max} + K - 1}\Delta l\left(X_{t},\bar\cT',\nabla f(X_{t}),c(X_{t}),\nabla c(X_{t})^T,D_{t}\right) \bigg| \cE_{\tau\xi} \right] + K\cdot\left(\bar\cA' + \theta\right)\cdot\omega_{\iota}\beta^2,
\ealigned
\eequationn
which concludes the first part of the statement by rearranging terms, using definitions of $(\bar\cA',\beta)$,  dividing all the terms by $K$ and finally driving $K\to\infty$.

\textbf{Case (ii):} from the condition of $\beta_k \leq \frac{\psi \bar\cA'}{2(1-\eta)(\bar\cA'+\theta)} \in (0,1]$ and \eqref{eq.main_before_telescoping}, for all iterations $k \geq k_{\max}$, we have
\bequationn
\baligned
\ &\EE_k[\phi(X_{k+1},\bar\cT_k) - \phi(X_k,\bar\cT_k)] \\
\leq \ &-\left(\bar\cA' - \left(\bar\cA' + \theta\right)\cdot(1-\eta)\beta_k\right)\cdot \beta_k\Delta l(X_k,\bar\cT',\nabla f(X_k),c(X_k),\nabla c(X_k)^T,D_k) + \omega_{\iota}\cdot\left(\bar\cA' + \theta\right)\cdot\beta_k^2 \\
\leq \ & -\left(1-\frac{\psi}{2}\right)\cdot\bar\cA'\beta_k\Delta l(X_k,\bar\cT',\nabla f(X_k),c(X_k),\nabla c(X_k)^T,D_k) + \omega_{\iota}\cdot\left(\bar\cA' + \theta\right)\cdot\beta_k^2.
\ealigned
\eequationn
Using the same logic as the proof of \textbf{Case (i)}, we further have
\bequationn
\baligned
\ &\phi_{\min} - (\bar\tau_0|f_{\max}| + \kappa_c) \leq \phi_{\min} - \EE[\phi(X_{k_{\max}},\bar\cT')|\cE_{\tau\xi}] \leq \EE[\phi(X_{k_{\max} + K},\bar\cT') - \phi(X_{k_{\max}},\bar\cT')|\cE_{\tau\xi}] \\
\leq \ &\EE\left[\sum_{t=k_{\max}}^{k_{\max} + K - 1} \left(-\left(1-\frac{\psi}{2}\right)\cdot\bar\cA'\beta_t\Delta l\left(X_{t},\bar\cT',\nabla f(X_{t}),c(X_{t}),\nabla c(X_{t})^T,D_{t}\right) + \omega_{\iota}\left(\bar\cA' + \theta\right)\cdot\beta_t^2\right) \bigg| \cE_{\tau\xi}
\right] \\
= \ &-\left(1-\frac{\psi}{2}\right)\bar\cA'\cdot \EE\left[\sum_{t=k_{\max}}^{k_{\max} + K - 1}\beta_t\Delta l\left(X_{t},\bar\cT',\nabla f(X_{t}),c(X_{t}),\nabla c(X_{t})^T,D_{t}\right) \bigg| \cE_{\tau\xi} \right] + \omega_{\iota}\left(\bar\cA' + \theta\right)\cdot\sum_{t=k_{\max}}^{k_{\max} + K - 1}\beta_t^2,
\ealigned
\eequationn
which further implies that
\bequationn
\frac{(\bar\tau_0|f_{\max}| + \kappa_c) - \phi_{\min}}{(1-\frac{\psi}{2})\bar\cA'} \geq \EE\left[\sum_{t=k_{\max}}^{k_{\max} + K - 1}\beta_t\Delta l\left(X_{t},\bar\cT',\nabla f(X_{t}),c(X_{t}),\nabla c(X_{t})^T,D_{t}\right) \bigg| \cE_{\tau\xi} \right] - \frac{\omega_{\iota}\left(\bar\cA' + \theta\right)}{(1-\frac{\psi}{2})\bar\cA'}\cdot \sum_{t=k_{\max}}^{k_{\max} + K - 1}\beta_t^2.
\eequationn
After dividing both sides by $\sum_{k=k_{\max}}^{k_{\max} + K - 1}\beta_k$, we drive $K\to\infty$ and utilize conditions of $\sum_{k=k_{\max}}^{\infty}\beta_k = \infty$ and $\sum_{k=k_{\max}}^{\infty}\beta_k^2 < \infty$, then the conclusion follows.
\eproof

As Theorem~\ref{thm.asymptotic} demonstrates the asymptotic convergence behavior of \textit{deterministic} model reduction sequence $\left\{\Delta l\left(X_k,\bar\cT',\nabla f(X_k),c(X_k),\nabla c(X_k)^T,D_k\right)\right\}$ by running Algorithm~\ref{alg.main}, with the help of~\eqref{eq.linear_system_true} and Lemma~\ref{lem.model_reduction_suff_large_true}, we further use the following corollary to present the asymptotic convergence behavior of a quantity related to stationary measurements in~\eqref{eq.KKT}, i.e., $\left\{\|\nabla f(X_k) + \nabla c(X_k)Y_k\|^2 + \|c(X_k)\|_1\right\}$.

\begin{cor}\label{cor.asymptotic}
Under the same conditions and using the same notations as Theorem~\ref{thm.asymptotic}, the following statements hold: \\
\textbf{Case (i)}\quad if $\beta_k = \beta = \frac{\psi \bar\cA'}{2(1-\eta)(\bar\cA'+\theta)} \in (0,1]$ with some $\psi\in (0,1]$ for all iterations $k \geq k_{\max}$ (see Assumptions~\ref{ass.event_asymptotic} and~\ref{ass.estimate_asymptotic}), then Algorithm~\ref{alg.main} generates a sequence of iterates $\{X_k\}$ such that
\bequationn
\limsup_{K\to\infty}\ \EE\left[\frac{1}{K}\sum_{t=k_{\max}}^{k_{\max} + K - 1} \left(\|\nabla f(X_t) + \nabla c(X_t)Y_t\|^2 + \|c(X_t)\|_1\right) \bigg| \cE_{\tau\xi} \right] \leq \frac{\omega_{\iota}\psi}{(2-\psi)(1-\eta)\cdot\min\left\{\frac{\zeta\bar\cT'}{2\kappa_H^2},\sigma\right\}};
\eequationn
\textbf{Case (ii)}\quad if $\sum_{k=k_{\max}}^{\infty}\beta_k = \infty$, $\sum_{k=k_{\max}}^{\infty}\beta_k^2 < \infty$ and $\beta_k \leq \frac{\psi \bar\cA'}{2(1-\eta)(\bar\cA'+\theta)} \in (0,1]$ with some $\psi\in (0,1]$ for all iterations $k \geq k_{\max}$ (see Assumptions~\ref{ass.event_asymptotic} and~\ref{ass.estimate_asymptotic}), then Algorithm~\ref{alg.main} generates a sequence of iterates $\{X_k\}$ such that
\bequationn
\lim_{K\to\infty}\ \EE\left[\frac{1}{\sum_{t = k_{\max}}^{k_{\max} + K - 1}\beta_t}\cdot \sum_{t=k_{\max}}^{k_{\max} + K - 1}\beta_t\left(\|\nabla f(X_t) + \nabla c(X_t)Y_t\|^2 + \|c(X_t)\|_1\right) \bigg| \cE_{\tau\xi} \right] = 0,
\eequationn
which further implies that
\bequationn
\liminf_{k\to\infty}\ \EE\left[\|\nabla f(X_k) + \nabla c(X_k)Y_k\|^2 + \|c(X_k)\|_1 \bigg| \cE_{\tau\xi} \right] = 0.
\eequationn
\end{cor}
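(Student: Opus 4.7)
The plan is to reduce Corollary~\ref{cor.asymptotic} to Theorem~\ref{thm.asymptotic} by establishing a pointwise lower bound that shows the deterministic model reduction already controlled in the theorem dominates the stationarity measure up to a fixed positive multiplicative constant. The key observation comes from the top block of~\eqref{eq.linear_system_true}, which gives $\nabla f(X_k)+\nabla c(X_k)Y_k=-H_k D_k$; combined with $\|H_k\|\le\kappa_H$ from Assumption~\ref{ass.H}, this yields $\|\nabla f(X_k)+\nabla c(X_k)Y_k\|^2\le \kappa_H^2\|D_k\|^2$. Since Assumption~\ref{ass.event_asymptotic} guarantees $\bar\cT_k=\bar\cT'\le \cT_k^{\trial}$ for every $k\ge k_{\max}$, I can invoke Lemma~\ref{lem.model_reduction_suff_large_true} with $\cT=\bar\cT'$ to conclude
\[
\Delta l(X_k,\bar\cT',\nabla f(X_k),c(X_k),\nabla c(X_k)^T,D_k) \;\ge\; \tfrac{\zeta\bar\cT'}{2}\|D_k\|^2 + \sigma\|c(X_k)\|_1.
\]
Chaining the two estimates produces the pointwise bound
\[
\Delta l(X_k,\bar\cT',\nabla f(X_k),c(X_k),\nabla c(X_k)^T,D_k) \;\ge\; \min\!\bigl\{\tfrac{\zeta\bar\cT'}{2\kappa_H^2},\sigma\bigr\}\cdot\bigl(\|\nabla f(X_k)+\nabla c(X_k)Y_k\|^2+\|c(X_k)\|_1\bigr),
\]
valid for all $k\ge k_{\max}$, with exactly the constant appearing in the statement of Case~(i) of the corollary.

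With this pointwise bound in hand, Cases (i) and (ii) of the corollary follow immediately from the corresponding cases of Theorem~\ref{thm.asymptotic}: one substitutes the bound inside the $\limsup$ (resp.\ $\lim$) under the conditional expectation, which preserves the inequality, and divides through by $\min\{\zeta\bar\cT'/(2\kappa_H^2),\sigma\}$.

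For the concluding $\liminf$ assertion in Case~(ii), I would argue by contradiction. Suppose that $\delta:=\liminf_{k\to\infty}\EE\bigl[\|\nabla f(X_k)+\nabla c(X_k)Y_k\|^2+\|c(X_k)\|_1 \,\big|\, \cE_{\tau\xi}\bigr]>0$, so the conditional expectation exceeds $\delta/2$ for all $k$ past some threshold $K_0\ge k_{\max}$. Because $\{\beta_t\}$ is deterministic, the weighted quantity inside the second conclusion of Case~(ii) equals $\sum_{t}\beta_t\,\EE[\,\cdot\,|\,\cE_{\tau\xi}]/\sum_{t}\beta_t$, which is bounded below by $(\delta/2)\cdot\sum_{t\ge K_0}\beta_t\big/\sum_{t=k_{\max}}^{k_{\max}+K-1}\beta_t$; by $\sum_{k\ge k_{\max}}\beta_k=\infty$ this ratio tends to $\delta/2>0$ as $K\to\infty$, contradicting the weighted limit just established. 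The main obstacle is essentially bookkeeping---identifying the correct constant $\min\{\zeta\bar\cT'/(2\kappa_H^2),\sigma\}$ and preserving the conditioning on $\cE_{\tau\xi}$ throughout---with no new analytic ingredient beyond Theorem~\ref{thm.asymptotic} and Lemma~\ref{lem.model_reduction_suff_large_true}.
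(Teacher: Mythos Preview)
Your proposal is correct and follows essentially the same approach as the paper: derive the pointwise bound $\Delta l(X_k,\bar\cT',\nabla f(X_k),c(X_k),\nabla c(X_k)^T,D_k) \ge \min\{\zeta\bar\cT'/(2\kappa_H^2),\sigma\}\cdot(\|\nabla f(X_k)+\nabla c(X_k)Y_k\|^2+\|c(X_k)\|_1)$ from \eqref{eq.linear_system_true}, Assumption~\ref{ass.H}, and Lemma~\ref{lem.model_reduction_suff_large_true}, then plug into Theorem~\ref{thm.asymptotic}. The paper's proof leaves the final $\liminf$ assertion implicit, whereas you spell out the standard contradiction argument; this is only an elaboration, not a different route.
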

\bproof
It follows Assumption~\ref{ass.H}, \eqref{eq.linear_system_true}, and the Cauchy-Schwarz inequality that
\bequationn
\|\nabla f(X_k) + \nabla c(X_k)Y_k\| = \|H_kD_k\| \leq \|H_k\|\|D_k\| \leq \kappa_H\|D_k\|,
\eequationn
from which we further have
\bequation\label{eq.Dk_true_lb}
\|D_k\| \geq \frac{\|\nabla f(X_k) + \nabla c(X_k)Y_k\|}{\kappa_H}.
\eequation
Combining Assumption~\ref{ass.event_asymptotic}, Lemma~\ref{lem.model_reduction_suff_large_true}, and~\eqref{eq.Dk_true_lb}, we know that for any iteration $k\geq k_{\max}$,
\bequationn
\begin{aligned}
&\Delta l\left(X_k,\bar\cT',\nabla f(X_k),c(X_k),\nabla c(X_k)^T,D_k\right) \geq \frac{\zeta}{2}\bar\cT'\|D_k\|^2 + \sigma\|c(X_k)\|_1 \\
\geq \ &\frac{\zeta\bar\cT'}{2\kappa_H^2}\|\nabla f(X_k) + \nabla c(X_k)Y_k\|^2 + \sigma\|c(X_k)\|_1 \geq \min\left\{\frac{\zeta\bar\cT'}{2\kappa_H^2},\sigma\right\}\cdot\left(\|\nabla f(X_k) + \nabla c(X_k)Y_k\|^2 + \|c(X_k)\|_1\right).
\end{aligned}
\eequationn
Finally, using the results of Theorem~\ref{thm.asymptotic}, we may conclude the statement.
\eproof

We close this subsection with the following remark on Corollary~\ref{cor.asymptotic}.
\begin{remark}
For the constant step size policy (\textbf{Case (i)}) of Algorithm~\ref{alg.main}, the radius of asymptotic convergence neighborhood is upper bounded by a quantity proportional to $\frac{\psi}{2-\psi}$, which implies that users may choose a smaller $\psi$ (indicating a shorter step size $\beta$) to improve the algorithm's final asymptotic convergence behavior in expectation. Moreover, such an upper bound on the radius of convergence neighborhood is also proportional to $\max\{\iota,\sqrt{\iota}\} \geq 0$, which further describes the impacts from $\{\rho_k^g\}$, $\{\rho_k^c\}$, and $\{\rho_k^j\}$, i.e., the variances of stochastic estimates $\{\bar{G}_k\}$, $\{\bar{C}_k\}$, and $\{\bar{J}_k\}$, respectively. In particular, when estimates $\{(\bar{G}_k,\bar{C}_k,\bar{J}_k)\}$ are more accurate and with smaller variances, we could have a smaller value of $\max\{\iota,\sqrt{\iota}\}$, and improved near-stationary iterates would be achieved by Algorithm~\ref{alg.main} asymptotically in expectation. Finally, under the diminishing step size policy (\textbf{Case (ii)}), Algorithm~\ref{alg.main} achieves exact convergence in expectation. Similar results have been established for the unconstrained setting (e.g.,~\cite{bottou2018optimization}) and the \textit{deterministic} constrained setting (see~\cite{BeraCurtRobiZhou21,CurtRobiZhou23}). However, unlike~\cite{BeraCurtRobiZhou21,bottou2018optimization}, our algorithm requires variance-reduced stochastic estimates in addition to diminishing step sizes to guarantee exact convergence in expectation. This is mainly because in our expectation-constrained setting, the computed \textit{stochastic} search directions serve as biased estimators of the \textit{deterministic} counterpart; i.e., $\mathbb{E}_k[\bar{D}_k] \neq D_k$, which makes our asymptotic convergence theoretical results more close to~\cite{CurtRobiZhou23} rather than~\cite{BeraCurtRobiZhou21,bottou2018optimization}.
\end{remark}

\subsection{Complexity Results}\label{sec.complex_results}

\newcounter{cnstone}
\setcounter{cnstone}{6}

\newcounter{cnsttwo}
\setcounter{cnsttwo}{0}

\newcommand*{\nc}{
  \refstepcounter{cnstone}
  \ensuremath{\omega_{\thecnstone}}
}
\newcommand{\rc}[1]{\ensuremath{\omega_{\ref{#1}}}}

\newcommand{\nctwo}{
  \refstepcounter{cnsttwo}
  \ensuremath{M_{\thecnsttwo}}
}

\newcommand{\rctwo}[1]{\ensuremath{M_{\ref{#1}}}}

The analysis in Section~\ref{sec.converg_results} provides valuable insights into the asymptotic performance of Algorithm~\ref{alg.main} in the regime of infinite iterations, and we now examine its non-asymptotic behavior to offer a detailed investigation of its performance when the total number of iterations is finite. To begin with, we define the following event for any $(\taumin, \kmax) \in \reals_{>0} \times \NN$:
\inrevise{
\begin{equation}\label{eq.event_nonasy}
\begin{aligned}
\cE'(\tau_{\min},k_{\max}) :=
& \left\{
\min\{\cT_k^{\trial}, \bar\cT_k\} \geq \taumin > 0 \text{ for all } k \in [\kmax]
\right\}
\end{aligned}
\end{equation}
}and denote it as \(\nonE\) for brevity. \inrevise{Specifically, it includes all runs where \(\taumin\) serves as a lower bound for both the unknown true trial merit parameters \(\{\cT^{\trial}_k\}_{k\in[\kmax]}\) and the stochastic merit parameters \(\{\bar\cT_k\}_{k\in[\kmax]}\).
}With these notations in place, we make the following assumption.

\begin{assprime}{\ref*{ass.event_asymptotic}$'$}
\label{ass.event_nonasymptotic}
During the runs of Algorithm~\ref{alg.main}, event \(\nonE:=\cE'(\taumin,\kmax)\) (defined in~\eqref{eq.event_nonasy}) occurs with given constants \((\taumin,\kmax) \in \reals_{>0}\times\mathbb{N}\).
\end{assprime}
\renewcommand{\theass}{\ref*{ass.estimate_asymptotic}$'$}
In addition, we slightly abuse the notations of \(\Prob_k[\cdot]\) and \(\Expect_k[\cdot]\) by referring them to \(\Prob [ \cdot | \mathcal{F}_k' ]\) and \(\Expect [ \cdot | \mathcal{F}_k' ]\), respectively, where $\mathcal{F}_k':= \{E\cap \nonE: E\in\cG_k\}$ for all $k\in\mathbb{N}$, and we consider another assumption similar to Assumption~\ref{ass.estimate_asymptotic}.
\begin{assprime}{\ref*{ass.estimate_asymptotic}$'$}
\label{ass.estimate_nonasymptotic}
\inrevise{Assumption~\ref{ass.estimate_asymptotic} holds with $\mathcal{F}_k$ replaced by $\mathcal{F}_k'$, or equivalently, with \(\mathcal{E}_{\tau\xi}\) replaced by \(\nonE\).}
\end{assprime}
Furthermore, the technical lemmas in Section~\ref{sec.fund_lemmas} remain valid under this adjustment, and Assumptions~\ref{ass.event_nonasymptotic} and~\ref{ass.estimate_nonasymptotic} are mild for the following reasons:
\inrevise{
\begin{enumerate}[label=(\roman*)]
\item the sequence $\{\cT_k^{\trial}\}_{k\in\mathbb{N}}$ is bounded away from zero under Assumptions \ref{ass.prob}--\ref{ass.H} (see~\cite[Lemma~2.16]{BeraCurtRobiZhou21}), 
\item the limit of stochastic merit parameters (i.e., $\lim_{k\to\infty}\bar\cT_k$) is strictly positive with probability one if \(\left\{\rho^g_k + \rho^c_k + \rho^j_k\right\}_{k\in\mathbb{N}}\) is summable (Lemma~\ref{lem.tau_zero_wpzero}), and
\item with high probability, the stochastic merit parameters \(\{\bar{\cT}_k\}_{k\in[\kmax]}\) are bounded below by a positive constant independent of $\kmax$  if \(\left(\sqrt{\rho^g_k} + \sqrt{\rho^c_k} + \sqrt{\rho^j_k}\right)\) is bounded above by \(\mathcal{O}(1/\sqrt{\kmax})\) for all $k\in[\kmax]$ (Lemma~\ref{lem:tau-nonasymptotic-bdd-away-from-zero}).
\end{enumerate}
}\paragraph{An overview of our complexity result} The main objective of this section is to establish a worst-case complexity bound for the total number of iterations needed to reach a near stationary point in expectation,~\ie{}, determining an iteration $k$ such that
\inrevise{
\begin{align}
\Expect \left[ \|\nabla f(X_k) + \nabla c(X_k)Y_k\|^2 + \|c(X_k)\|_1 | \nonE \right] < \varepsilon^2,  \label{eq:p-sketch-goal}
\end{align}
}for a given tolerance $\varepsilon > 0$. A natural approach to achieve this is to leverage~\eqref{eq.linear_system_true} and Lemma~\ref{lem.model_reduction_suff_large_true}, which connect the stationary measure in~\eqref{eq:p-sketch-goal} with $\Delta l(X_k,\cT,\nabla f(X_k), c(X_k), \nabla c(X_k)^T, D_k)$ whenever $\cT \in (0, \cT_k^{\trial}]$. However, since $\cT_k^{\trial}$ is always inaccessible, we instead rely on the stochastic merit parameter $\bar\cT_k$, at the cost of introducing an additional \textit{``noise''} term. By further connecting the model reduction quantity with the bounded expected cumulative changes in the merit function, we will show that the following chain of inequalities are satisfied at all iterates:
\begin{align}
&\Expect_k \left[ \|\nabla f(X_k) + \nabla c(X_k)Y_k\|^2 + \|c(X_k)\|_1  \right] \leq \rc{cnst:kkt-delta-l-demo}
\cdot \Expect_k \left[ \Delta l(X_k, \bar{\cT}_k ,\nabla f(X_k),c(X_k),\nabla c(X_k)^T,D_k)  \right] \label{eq:p-sketch-l-bar} \\
\leq \ &\rc{cnst:kkt-delta-l-demo} \cdot \Expect_k \left[ \phi(X_{k+1}, \bar{\cT}_k) - \phi(X_k, \bar{\cT}_k) \right]
+ \text{``noise one''} \cdot \Prob_k \left[ \bar{\cT}_k > \cT_k^{\trial} \right]
+ \text{``noise two''} \cdot \left( \sqrt{\rho^g_k} + \sqrt{\rho^c_k} + \sqrt{\rho^j_k} \right), \nonumber{}
\end{align}
where $\nc\label{cnst:kkt-delta-l-demo}$ is a positive constant that only depends on parameters $(\kappa_H, \sigma, \zeta, \taumin)$ from \eqref{eq.merit_parameter_update} and Assumptions~\ref{ass.H} and~\ref{ass.event_nonasymptotic}, and the~\dbquot{noise two} is incurred due to \inrevise{the bias of the search direction $\mathbb{E}_k[\bar{D}_k - D_k]$ and the estimation error in  \((\bar{G}_k, \bar{C}_k, \bar{J}_k)\)}. Moreover, we will show that the entire first noise term (\ie{}, \(\dbquot{noise one} \cdot \Prob_k \left[ \bar{\cT}_k > \cT_k^{\trial} \right]\)) is upper bounded by \inrevise{\(\mathcal{O}(\rho^g_k + \rho^c_k + \rho^j_k)/\varepsilon^2\)} (see the proof of Lemma~\ref{lem.expect-diff-phi-k-bad}), so the negative impact of the two noise terms in~\eqref{eq:p-sketch-l-bar} is minimal when the pre-specified variances \(\{(\rho^g_k, \rho^c_k, \rho^j_k)\}\) are small.
\inrevise{Specifically, we show in Theorem~\ref{thm.complex} and Corollary~\ref{cor.complex} that if the variances are \(\mathcal{O}\left( 1 / \kmax^2 \right)\), then Algorithm~\ref{alg.main} can achieve
\begin{align*}
\Expect \left[ \|\nabla f(X_k) + \nabla c(X_k)Y_k\|^2 + \|c(X_k)\|_1 | \nonE \right] \leq
\mathcal{O} \left(\frac{1}{\kmax}\right) + \mathcal{O} \left(\frac{1}{\varepsilon^2 \cdot \kmax^2}\right) + \mathcal{O} \left(\varepsilon^2\right),
\end{align*}
establishing an iteration complexity of \(\mathcal{O}(1/\varepsilon^2)\) and sample complexity of \(\mathcal{O}(1/\varepsilon^6)\) for identifying an $\varepsilon$-stationary iterate in expectation (see~\eqref{eq:p-sketch-goal}).
}

\paragraph{Comparision with the complexity result in~\cite{CurtRobiOnei24}}
Compared to the proof techniques employed in the complexity result of~\cite{CurtRobiOnei24}, we adopt significantly relaxed assumptions and emphasize the role of the variances in the algorithm's performance, resulting in a different mechanism for establishing convergence. We next detail the similarities and differences between our proof and the one provided in~\cite{CurtRobiOnei24}.
\begin{enumerate}
\item[\emph{Similar approach to relate \(\cT_k^{\trial}\) with \(\bar{\cT}_k\)}.] When bounding the ``noise one'' term on the right-hand-side of~\eqref{eq:p-sketch-l-bar}, one needs to relate \(\bar{\cT}_k\) with \(\cT_k^{\trial}\). To this end, in Section~\ref{sec:complex-noise} we follow~\cite{CurtRobiOnei24}'s approach by defining an auxiliary sequence
\(\hat{\cT}_k := \min \{ \bar{\cT}_k, \cT_k^{\trial}\}\)
and analyzing the relationship between \(\{\hat{\cT}_k\}\) and \(\{\bar{\cT}_k\}\) instead.
\item[\emph{Different assumptions on the variance sequence}.] The major distinction between our complexity result and that of~\cite{CurtRobiOnei24} lies in the presence of stochastic constraints. Specifically, when the constraint function and its derivative can be evaluated deterministically, search directions $\{\bar{D}_k\}$ are unbiased estimators for their deterministic counterpart \(\{D_k\}\), and we may expect the algorithm to reach an exact stationary point in expectation, even if the variance sequence \(\{\rho^g_k\}\) remains non-diminishing. However, incorporating stochasticity into constraints typically leads to biased search directions in SQP methods~\cite{FaccKung23}, making it unrealistic to expect the algorithm to perform effectively as before. Therefore, by allowing the variance sequence \(\{(\rho^g_k, \rho^c_k, \rho^j_k)\}\) to be dependent on \(\kmax\), we show that Algorithm~\ref{alg.main} achieves an iteration complexity of \(\mathcal{O}(\varepsilon^{-2})\), matching the best known results for the deterministic case and better than the \(\mathcal{O}(\varepsilon^{-4})\) iteration complexity established in~\cite{CurtRobiOnei24}. However, such improvement is unsurprising, as~\cite{CurtRobiOnei24} employs \inrevise{a weaker condition, namely, that the sequence of stochastic estimates has constant variance $\{\rho_k^g\}$ independent of $\kmax$.}

\item[\emph{A new approach for analyzing the relationship between \(\{\bar{\cT}_k\}\) and \(\{\cT_k^{\trial}\}\)}.] Because of stochastic constraints, our complexity analysis emphasizes the influence of the variance sequence on the stationarity measure. In particular, we make much more relaxed assumptions on the relationship between \(\{\bar{\cT}_k\}\) and \(\{\cT_k^{\trial}\}\) compared to~\cite{CurtRobiOnei24}, which directly assumes the existence of a nonzero probability \(p_{\tau} \in (0, 1]\) such that
\[
\Prob_k \left[
\bar{G}_k^{\top} \bar{D}_k + \max \{\bar{D}_k^{\top} H_k \bar{D}_k, 0\}
\geq \nabla f(X_k)^{\top} D_k + \max\{D_k^{\top} H_k D_k, 0\}
\right] \geq p_{\tau}
\text{ for~\emph{all} iterations \(k\in\NN\).}
\]
In view of the definitions of \(\{\bar{\cT}_k\}\) and \(\{\cT_k^{\trial}\}\) in~\cite{CurtRobiOnei24}, this condition is directly assuming that \(\Prob_k \left[ \bar{\cT}_k \leq \cT_k^{\trial} \right] \geq p_{\tau}\) for~\emph{all} iterations \(k\in\NN\) when the constraint function is deterministic. Although they have shown that this assumption indeed holds when the estimation error on \(\nabla f(x)\) is~\emph{symmetric} and~\emph{sub-Gaussian}, such ideal conditions are rarely encountered in real-world applications~\cite{mandelbrot1997variation}. For example, when $F(x, \omega_f) = (x + \omega_f)^3$ (see~\eqref{eq.prob}) and $x = 0$, one has \(\nabla f(x)|_{x=0} = \Expect \left[ \nabla F(0, \omega_f) \right] = \Expect \left[ 3\omega_f^2 \right]\). In this case, even when \(\omega_f\) is sub-Gaussian and symmetric around zero, the objective gradient estimates $3\omega_f^2$ are \inrevise{neither symmetric around their mean value nor sub-Gaussian}. In contrast, our complexity analysis imposes no additional assumptions on the estimation errors of $\{(\nabla f(X_k), c(X_k), \nabla c(X_k)^T)\}$. Moreover, we derive an asymptotically tight, iteration-dependent lower bound for \(\Prob_k [ \bar{\cT}_k \leq \cT_k^{\trial}]\) in Lemma~\ref{lem.prob-tau-good-k} and apply it in Lemma~\ref{lem.expect-diff-phi-k-bad} to show that the error incurred by the ``bad'' case (\ie{}, \(\bar{\cT}_k > \cT_k^{\trial}\)) is \inrevise{controlled by \(\mathcal{O}(\rho^g_k + \rho^c_k + \rho^j_k)/\varepsilon^2\)} as long as \((X_k,Y_k)\) is~\emph{not} near-stationary. While this result only applies to iterates where \((X_k,Y_k)\) is~\emph{not} near-stationary, rather than to~\emph{all} iterates, it is sufficient to establish the desired convergence rate because any iterate \((X_k,Y_k)\) where Lemma~\ref{lem.expect-diff-phi-k-bad} does not apply is already near-stationary, thus contributes minimally to the expected stationarity error.
\item[\emph{A new justification for \(\nonE\)}.] Both~\cite{CurtRobiOnei24} and our analysis assume the occurance of event \(\nonE\). To justify the occurance of event \(\nonE\), \cite{CurtRobiOnei24} imposes a restrictive sufficient condition that the estimation errors of \(\nabla f(x)\) are symmetric and sub-Gaussian. In contrast, we show in Lemma~\ref{lem:tau-nonasymptotic-bdd-away-from-zero} that as long as the variance sequence $\left\{\sqrt{\rho^g_k} + \sqrt{\rho^j_k} + \sqrt{\rho^c_k}\right\}$ is upper bounded by $\mathcal{O}(1/\sqrt{\kmax})$, there exists a \(\kmax\)-independent constant $\delta_{\tau} > 0$ such that $\bar{\cT}_k > \delta_{\tau}$ occurs with high probability for all \(k \in [\kmax]\).
\end{enumerate}

We are now ready to present our non-asymptotic convergence analysis. In Section~\ref{sec:complex-prob-bound}, we derive a tight lower bound on the probability of the event \(\bar{\cT}_k \leq \cT_k^{\trial}\) happening. Then, in Section~\ref{sec:complex-noise}, we formally establish~\eqref{eq:p-sketch-l-bar} and present our final complexity results in Section~\ref{sec:complex-result}.

\subsubsection{Analysis of the relation between $\bar{\cT}_k$ and $\cT_k^{\trial}$}\label{sec:complex-prob-bound}
To analyze the relationship between \(\{\bar{\cT}_k\}\) and \(\{\cT^{\trial}_k\}\), we leverage the relation \(\bar{\cT}_k \leq (1 - \epsilon_{\tau}) \bar{\cT}_k^{\trial}\) (see Lemma~\ref{lem.merit_parameter}) and analyze \(\{\cT^{\trial}_k\}\) and \(\{\bar{\cT}^{\trial}_k\}\) instead. To this end, we rewrite \(\bar{\cT}_k^{\trial}\) as a function of \((\bar{G}_k, \bar{C}_k, \bar{J}_k)\) and quantify how inaccurate estimates of \((\nabla f(X_k), c(X_k), \nabla c(X_k)^{\top})\) contribute to the estimation error of \(\cT_k^{\trial}\). Specifically, we introduce functions $T_n:\RR^{n+m+mn}\to\RR$ and $T_d:\RR^{n+m+mn}\times\RR^{n\times n}\to\RR$ to represent the e\emph{n}umerator and \emph{d}enominator of \(\bar{\cT}_k^{\trial} / (1 - \sigma)\) when $\bar\cT_k^{\trial} < +\infty$ (see~\eqref{eq.merit_parameter_update}), i.e.,
\begin{align*}
T_n\left(
\begin{bmatrix} G ; C; \Vectorize{}(J) \end{bmatrix}\right) = \norm{C}_1 \quad
\text{and} \quad
T_d\left(\begin{bmatrix} G ; C; \Vectorize{}(J) \end{bmatrix}, H\right) = - \frac{1}{2}
\begin{bmatrix}
G \\ C
\end{bmatrix}^T
\begin{bmatrix}
H & J^T \\ J & 0
\end{bmatrix}^{-1}
\begin{bmatrix}
G \\ C
\end{bmatrix},
\end{align*}
where \(\begin{bmatrix} G ; C; \Vectorize{}(J) \end{bmatrix}\) represents the vertical concatenation of column vectors \(G, C,\) and \(\Vectorize{}(J)\). Accordingly, it holds from \eqref{eq.merit_parameter_update} that when $\bar\cT_k^{\trial} < +\infty$,
\begin{align}
\frac{\bar{\cT}^{\trial}_k}{(1-\sigma)} :=
 \frac{\|\bar{C}_k\|_1}{\bar{G}_k^T \bar{D}_k + \frac{1}{2} \bar{D}^T_k H_k \bar{D}_k }
=
 \frac{\|\bar{C}_k\|_1}{\frac{1}{2} \bar{G}^T_k\bar{D}_k + \frac{1}{2} \bar{C}_k^T \bar{Y}_k}
=
\frac{T_n\left(
\begin{bmatrix} \bar{G}_k; \bar{C}_k; \Vectorize{}\left(\bar{J}_k\right)\end{bmatrix}
\right)
}{
T_d\left(
\begin{bmatrix} \bar{G}_k; \bar{C}_k; \Vectorize{}\left(\bar{J}_k\right)\end{bmatrix}, H_k\right)}, \label{eq:tau-trial-TnTd}
\end{align}
where the last two equalities are both from~\eqref{eq.linear_system}. Similarly, one can also use \(T_n\) and \(T_d\) to represent \(\cT_k^{\trial}\) as
\begin{align}
\cT_k^{\trial} = \frac{ (1-\sigma) \cdot
T_n\left(
\bbmatrix \nabla f(X_k) ; c(X_k) ; \Vectorize{}\left(\nabla c(X_k)^T\right) \ebmatrix
\right)
}{
T_d\left(
\bbmatrix \nabla f(X_k) ; c(X_k) ; \Vectorize{}\left(\nabla c(X_k)^T\right)\ebmatrix, H_k\right)
} \label{eq:tau-k-TnTd}
\end{align}
when $\cT_k^{\trial} < +\infty$.
 Additionally, we slightly abuse the notation by denoting
\[
Z_k :=
\bbmatrix \nabla f(X_k) ; c(X_k) ; \Vectorize{}(\nabla c(X_k)^T)\ebmatrix,
\quad
\bar{Z}_k :=
\bbmatrix \bar{G}_k ; \bar{C}_k ; \Vectorize{}(\bar{J}_k) \ebmatrix,\ \ \text{and} \
 \bar{\Delta}_k := \bar{Z}_k - Z_k
\]
as the true \(Z_k\), the estimated \(Z_k\), and the associated estimation error, respectively, which help to rewrite \(\bar{\cT}_k^{\trial} = \tfrac{(1-\sigma)T_n(\bar{Z}_k)}{T_d(\bar{Z}_k, H_k)}\) when \(\bar{\cT}_k^{\trial} < +\infty\). Moreover, by Assumption~\ref{ass.estimate_nonasymptotic},  \inrevise{conditioned on \(\cF_k'\)}, it holds for all \(k\in\NN\) that
\begin{align*}
\Expect_k \left[ \bar{\Delta}_k \right] = 0,
\quad 0 \in \supp{\bar{\Delta}_k},
\ \ \text{and} \ \ \Expect_k \left[ \norm{\bar{\Delta}_k}^2 \right] \leq \rho^g_k + \rho^c_k + \rho^j_k.
\end{align*}
Now, with~\eqref{eq:tau-trial-TnTd} and~\eqref{eq:tau-k-TnTd}, we compare \(\bar{\cT}_k^\trial\) and \(\cT^{\trial}_k\) by quantifying the differences \(T_n(Z_k + \bar{\Delta}_k) - T_n(Z_k)\) and \(T_d(Z_k + \bar{\Delta}_k, H_k) - T_d(Z_k, H_k)\), and then obtain the following iterate dependent lower bound on the probability of \(\bar{\cT}_k \leq \cT_k^{\trial}\).

\blemma\label{lem.prob-tau-good-k}
Suppose that Assumptions~\ref{ass.prob}--\ref{ass.estimate},~\ref{ass.event_nonasymptotic}, and~\ref{ass.estimate_nonasymptotic} hold, and define $\epsilon_k := \epsilon_{\tau} \norm{c(X_k)}_1$, where $\epsilon_{\tau}\in (0,1)$ is a parameter in~\eqref{eq.merit_parameter_update}.
Then, there exists a constant \(\nc\label{cnst:lem.prob-tau-good-k} > 0\) that only depends on \((q_{\text{min}}, \tauzero, \sigma, \epsilon_{\tau})\) defined in Lemma~\ref{lem.singular_values} and Algorithm~\ref{alg.main} such that
{\small
\begin{align}
\Prob_k \left[\bar{\cT}_k \leq \cT^{\trial}_k \right]
\geq
\begin{cases}
\displaystyle \frac{
\Expect_k \left[ \left(\epsilon_k - \rc{cnst:lem.prob-tau-good-k} \norm{\bar{\Delta}_k}  \right) \cdot \ones \left( \norm{\bar{\Delta}_k} < \frac{\epsilon_k}{2 \rc{cnst:lem.prob-tau-good-k}} \right)  \right]
}{
\epsilon_k - \rc{cnst:lem.prob-tau-good-k} \Expect_k \left[ \norm{\bar{\Delta}_k} \cdot \ones \left( \norm{\bar{\Delta}_k} < \frac{\epsilon_k}{2 \rc{cnst:lem.prob-tau-good-k}} \right) \right] + \left( \rc{cnst:lem.prob-tau-good-k}^2 / \epsilon_k \right) \Expect_k \left[ \norm{\bar{\Delta}_k}^2 \right]
}
& \text{ if \(\cT_k^{\trial} < \tauzero\) and \(\norm{c(X_k)}_1 > 0\),} \\
1 & \text{ otherwise. }
\end{cases}
\label{eq:prob-tau-good-main-k}
\end{align}
}
\elemma

\bproof
See Appendix~\ref{sec:proof-lem-prob-tau-good-k}.
\eproof

Although this lemma assumes Assumptions~\ref{ass.event_nonasymptotic} and~\ref{ass.estimate_nonasymptotic}, it does not rely on the occurance of \(\nonE\). Therefore, Lemma~\ref{lem.prob-tau-good-k} remains valid without Assumption~\ref{ass.event_nonasymptotic} and if Assumption~\ref{ass.estimate_nonasymptotic} holds with respective to the natural filtration \(\{\mathcal{G}_k\}\).

When \(\cT_k^{\trial} < \tauzero\) and \(\norm{c(X_k)}_1 > 0\),~\eqref{eq:prob-tau-good-main-k} is nontrivial, because \(0 \in \supp{\bar{\Delta}_k}\) for all $k$ (by Assumption~\ref{ass.estimate_nonasymptotic}), and then it holds that
\begin{align*}
\Expect_k \left[
(\epsilon_k - \rc{cnst:lem.prob-tau-good-k} \norm{\bar{\Delta}_k}) \cdot
\ones \left(
\norm{\bar{\Delta}_k} \leq \frac{\epsilon_k}{2 \rc{cnst:lem.prob-tau-good-k}}
\right)
\right]
\geq
\Expect_k \left[ (\epsilon_k - \epsilon_k / 2) \cdot \ones \left(
\norm{\bar{\Delta}_k} \leq \frac{\epsilon_k}{2 \rc{cnst:lem.prob-tau-good-k}}
\right)
\right] > 0.
\end{align*}
Meanwhile, \eqref{eq:prob-tau-good-main-k} is asymptotically tight when the variance \(\Expect_k \left[ \norm{\bar{\Delta}_k}^2 \right]\) diminishes to zero because
\begin{align*}
&\frac{
\Expect_k \left[ \left(\epsilon_k - \rc{cnst:lem.prob-tau-good-k} \norm{\bar{\Delta}_k}  \right) \cdot \ones \left( \norm{\bar{\Delta}_k} < \frac{\epsilon_k}{2 \rc{cnst:lem.prob-tau-good-k}} \right)  \right]
}{
\epsilon_k - \rc{cnst:lem.prob-tau-good-k} \Expect_k \left[ \norm{\bar{\Delta}_k} \cdot \ones \left( \norm{\bar{\Delta}_k} < \frac{\epsilon_k}{2 \rc{cnst:lem.prob-tau-good-k}} \right) \right] + \left( \rc{cnst:lem.prob-tau-good-k}^2 / \epsilon_k \right) \Expect_k \left[ \norm{\bar{\Delta}_k}^2 \right]
} \\
= \
&\frac{
\Expect_k \left[ (\epsilon_k - \rc{cnst:lem.prob-tau-good-k} \norm{\bar{\Delta}_k}) \cdot \ones \left( \norm{\bar{\Delta}_k} < \frac{\epsilon_k}{2\rc{cnst:lem.prob-tau-good-k}}\right) \right]
}{
\Expect_k \left[ (\epsilon_k - \rc{cnst:lem.prob-tau-good-k} \norm{\bar{\Delta}_k}) \cdot \ones \left( \norm{\bar{\Delta}_k} < \frac{\epsilon_k}{2\rc{cnst:lem.prob-tau-good-k}}\right) \right] + \epsilon_k \cdot \Prob_k \left[ \norm{\bar{\Delta}_k} \geq\frac{\epsilon_k}{2\rc{cnst:lem.prob-tau-good-k}}\right] + \left( \rc{cnst:lem.prob-tau-good-k}^2 / \epsilon_k \right) \Expect_k \left[ \norm{\bar{\Delta}_k}^2 \right]
}\\
\geq \
&\frac{
\Expect_k \left[ (\epsilon_k - \rc{cnst:lem.prob-tau-good-k} \norm{\bar{\Delta}_k}) \cdot \ones \left( \norm{\bar{\Delta}_k} < \frac{\epsilon_k}{2\rc{cnst:lem.prob-tau-good-k}}\right) \right]
}{
\Expect_k \left[ (\epsilon_k - \rc{cnst:lem.prob-tau-good-k} \norm{\bar{\Delta}_k}) \cdot \ones \left( \norm{\bar{\Delta}_k} < \frac{\epsilon_k}{2\rc{cnst:lem.prob-tau-good-k}}\right) \right] + \left(
 4 \rc{cnst:lem.prob-tau-good-k}^2 / \epsilon_k + \rc{cnst:lem.prob-tau-good-k}^2 / \epsilon_k \right) \Expect_k \left[ \norm{\bar{\Delta}_k}^2 \right]
} \xrightarrow[]{\Expect_k \left[ \norm{\bar{\Delta}_k}^2 \right]\searrow 0} 1,
\end{align*}
where the inequality follows from conditional Markov's inequality.

However, when applying Lemma~\ref{lem.prob-tau-good-k} to a sequence of stochastic iterates, it only provides nontrival lower bounds when the associated \inrevise{sequence of infeasibility errors are uniformly bounded away from zero.} To this end, we show in the following lemma that this is indeed true when the iterate \((X_k, Y_k)\) is~\emph{not} near-stationary and \(\cT^{\trial}_k < +\infty\). Specifically, for any \(\delta > 0\), we use the following event to represent the collection of realizations whose $k$th iterate \((X_k, Y_k)\) is~\emph{not} $\delta$-stationary, i.e.,
\begin{align}\label{eq.nonopt_event}
\mathcal{B}_{\delta,k} := \left\{ \;
\norm{\nabla f(X_k) + \nabla c(X_k) Y_k}^2 + \norm{c(X_k)}_1 > \delta \;
\right\}.
\end{align}

\blemma\label{lem.ck-bdd-away}
Suppose that Assumptions~\ref{ass.prob}--\ref{ass.estimate} hold. Then, there exists \(\kappa_y > 0\) that only depends on \((\kappa_{\nabla f}, \kappa_c, q_{\text{min}})\) defined in~\eqref{eq.basic_condition} and Lemma~\ref{lem.singular_values} such that \(\norm{Y_k}_{\infty} \leq \kappa_y\) and \(\norm{D_k} \leq \kappa_y\) hold for all iterations \(k\in\NN\), and
\begin{align*}
\norm{D_k}^2 < \frac{2 \kappa_y}{\zeta} \norm{c(X_k)}_1
\quad \text{and} \quad
\frac{\zeta \delta}{2 \kappa^2_H \kappa_y + \zeta} < \norm{c(X_k)}_1
\end{align*}
whenever \(\cT^{\trial}_k < +\infty\) and \(\mathcal{B}_{\delta,k}\) occurs (see~\eqref{eq.nonopt_event}).
\elemma

\bproof
By Assumption~\ref{ass.prob},~\eqref{eq.basic_condition},~\eqref{eq.linear_system_true}, Lemma~\ref{lem.singular_values} and the Cauchy-Schwarz inequality, it follows that for any $k\in\NN$, by choosing a sufficiently large $\kappa_y \geq \frac{\kappa_{\nabla f} + \kappa_c}{q_{\min}}$, we have
\begin{equation}\label{eq.yk_bound}
\max \Set{\norm{Y_k}_{\infty}, \norm{D_k}} \leq \left\|\bbmatrix D_k \\ Y_k\ebmatrix\right\| \leq \left\|\bbmatrix H_k & \nabla c(X_k) \\ \nabla c(X_k)^T & 0 \ebmatrix^{-1}\right\| \left\|\bbmatrix \nabla f(X_k) \\ c(X_k) \ebmatrix\right\| \leq \frac{\kappa_{\nabla f} + \kappa_c}{q_{\min}} \leq \kappa_y.
\end{equation}
Moreover, by the definition of $\cT_k^{\trial}$, $\cT_k^{\trial} < +\infty$, Assumption~\ref{ass.H},~\eqref{eq.linear_system_true}, the Cauchy-Schwarz inequality and~\eqref{eq.yk_bound}, it holds for any $k\in\NN$ that
\begin{align}
0 & < \nabla f(X_k)^TD_k + \frac{1}{2}D_k^TH_kD_k
= D_k^T(\nabla f(X_k) + H_kD_k) - \frac{1}{2}D_k^TH_kD_k = -D_k^T\nabla c(X_k)Y_k - \frac{1}{2}D_k^TH_kD_k \nonumber{} \\
& \leq c(X_k)^TY_k - \frac{\zeta}{2}\|D_k\|^2
\leq
\|c(X_k)\|_1\|Y_k\|_{\infty} - \frac{\zeta}{2}\|D_k\|^2 \leq \kappa_y\|c(X_k)\|_1 - \frac{\zeta}{2}\|D_k\|^2, \label{eq.ck_lb_by_dk}
\end{align}
implying that \(\norm{D_k}^2 < 2 \kappa_y \norm{c(X_k)}_1 / \zeta\).
Furthermore, by the occurrence of \(\mathcal{B}_{\delta,k}\), Assumption~\ref{ass.H},~\eqref{eq.linear_system_true}, and~\eqref{eq.ck_lb_by_dk}, it follows that
\begin{align*}
\delta
& < \norm{\nabla f(X_k) + \nabla c(X_k)Y_k}^2 + \norm{c(X_k)}_1
= \|H_kD_k\|^2 + \norm{c(X_k)}_1  \nonumber{} \\
& \leq \kappa_H^2\|D_k\|^2 + \norm{c(X_k)}_1
< \left(\frac{2\kappa_H^2\kappa_y}{\zeta} + 1\right)\cdot \norm{c(X_k)}_1,
\end{align*}
yielding $\|c(X_k)\|_1 > \frac{\zeta\delta}{2\kappa_H^2\kappa_y + \zeta}$.
\eproof

\subsubsection{Analysis of the noise term}\label{sec:complex-noise}

In this subsection, we introduce several preparatory lemmas that establish connections between
\begin{enumerate}[label=(\roman*)]
\item\label{it:complex-noise-one} the stationarity measure and the model reduction function, as well as
\item\label{it:complex-noise-two} the model reduction function and the changes in merit functions across iterations.
\end{enumerate}
To facilitate the analysis, we consider the following auxiliary merit parameter sequence (similar to~\cite{CurtRobiOnei24}),
\begin{align}\label{eq.tau_hat_def}
\hat{\cT}_k := \min \{ \bar{\cT}_k, \cT_k^{\trial}\} \quad \forall k \in \NN,
\end{align}
and we will focus on the relation between \(\hat{\cT}_k\) and \(\bar{\cT}_k\). To begin with, we achieve~\ref{it:complex-noise-one} in the following lemma.

\blemma\label{lem:kkt-delta-l}
Suppose that Assumptions~\ref{ass.prob}--\ref{ass.estimate} and~\ref{ass.event_nonasymptotic} hold. Then, for any iteration \(k \in \mathbb{N}\), it holds that
\begin{align*}
\norm{\nabla f(X_k) + \nabla c(X_k) Y_k}^2 + \norm{c(X_k)}_1
\leq  \frac{2 \kappa_H}{\min\{\taumin,1\} \zeta \sigma}
\Delta l(X_k, \hat{\cT}_k, \nabla f(X_k), c(X_k), \nabla c(X_k)^T, D_k).
\end{align*}
\elemma

\bproof
By~\eqref{eq.linear_system_true} and Assumption~\ref{ass.H}, it holds that
\begin{align*}
& \norm{\nabla f(X_k) + \nabla c(X_k) Y_k}^2 + \norm{c(X_k)}_1
= \norm{H_k D_k}^2 + \norm{c(X_k)}_1
\leq \kappa_H \norm{D_k}^2 + \frac{1}{\sigma} \cdot \sigma \norm{c(X_k)}_1\\
\leq ~ & \frac{2 \kappa_H}{\zeta} \frac{\hat{\cT}_k}{\taumin} \frac{\zeta}{2 \sigma} \norm{D_k}^2
  + \frac{1}{\sigma} \cdot \sigma \norm{c(X_k)}_1 \cdot \frac{2 \kappa_H}{\min\{\taumin,1\} \zeta}
\leq \frac{2 \kappa_H}{\min\{\taumin,1\} \zeta \sigma}
\left( \frac{\zeta}{2} \hat{\cT}_k \norm{D_k}^2 + \sigma \norm{c(X_k)}_1 \right)\\
\leq ~ & \frac{2 \kappa_H}{\min\{\taumin,1\} \zeta \sigma}
  \Delta l \left( X_k, \hat{\cT}_k, \nabla f(X_k), c(X_k), \nabla c(X_k)^T, D_k \right),
\end{align*}
where the second inequality is due to \(\taumin \leq \min \{\cT_k^{\trial}, \bar{\cT}_k\}=\hat{\cT}_k\) (see Assumption~\ref{ass.event_nonasymptotic}), \(\zeta \leq \kappa_H\), and \(\sigma \in (0, 1)\), and the last inequality follows Lemma~\ref{lem.model_reduction_suff_large_true}.
\eproof

To achieve~\ref{it:complex-noise-two}, one naturally considers employing Lemma~\ref{lem.merit_function_decrease}:
\begin{equation}\label{eq:complex-change-in-phi-demo}
\begin{aligned}
& \Expect_k \left[ \phi (X_k + \bar{\cA}_k \bar{D}_k, \bar{\cT}_k) \right] - \Expect_k \left[ \phi(X_k, \bar{\cT}_k) \right] \\
& \leq \Expect_k \left[
-\bar\cA_k\Delta l(X_k,\bar\cT_k,\nabla f(X_k),c(X_k),\nabla c(X_k)^T,D_k) +
(1-\eta)\bar\cA_k\beta_k\Delta l(X_k,\bar\cT_k,\bar{G}_k,\bar{C}_k,\bar{J}_k,\bar{D}_k)
 \right. \\
& \phantom{\leq \Expect_k \left[ \right]}
\left. + \bar\cA_k\bar\cT_k\nabla f(X_k)^T(\bar{D}_k - D_k) + \bar\cA_k\|\nabla c(X_k)^T(D_k - \bar{D}_k)\|_1 \inrevise{+ 2\bar\cA_k\|c(X_k) - \bar{C}_k\|_1} \right].
\end{aligned}
\end{equation}
However, according to Lemma~\ref{lem:kkt-delta-l}, we need \(\Delta l(X_k, \hat{\cT}_k, \nabla f(X_k), c(X_k), \nabla c(X_k)^T, D_k)\) on the right-hand-side of~\eqref{eq:complex-change-in-phi-demo}.
To this end, we relate three quantities, i.e., $\Delta l(X_k, \hat{\cT}_k, \bar{G}_k, \bar{C}_k, \bar{J}_k, \bar{D}_k)$, $\Delta l(X_k, \hat{\cT}_k, \nabla f(X_k), c(X_k), \nabla c(X_k)^T, D_k)$ and $\Delta l(X_k, \bar{\cT}_k, \nabla f(X_k), c(X_k), \nabla c(X_k)^T, D_k)$, in Lemmas~\ref{lem.l-hat-diff} and~\ref{lem.tau-hat-bar} to formally achieve~\ref{it:complex-noise-two} in Lemma~\ref{lem.expect-diff-phi-k}:
\begin{equation}
\label{eq:complex-change-in-phi-demo-2}
\begin{aligned}
& \Expect_k \left[ \phi (X_k + \bar{\cA}_k \bar{D}_k, \bar{\cT}_k) - \phi(X_k, \bar{\cT}_k) \right]
\leq
- \cAmin \beta_k \Expect_k \left[ \Delta l(X_k,\hat\cT_k,\nabla f(X_k),c(X_k),\nabla c(X_k)^T,D_k) \right] + \rc{cnst:lem.expect-diff-phi-k-2} \beta_k^2  \\
& \qquad + \text{``noise one''}\cdot
\Prob_k \left[ \bar{\cT}_k > \hat{\cT}_k \right]
+ \text{``noise two''} \cdot \left( \sqrt{\rho^g_k} + \sqrt{\rho^c_k} + \sqrt{\rho^j_k} \right),
\end{aligned}
\end{equation}
where ``noise one'' represents errors incurred due to \(\bar{\cT}_k > \hat{\cT}_k\) and ``noise two'' appears because of $\bar{D}_k$ being a biased estimator to $D_k$. Since we allow the variance sequence \(\{(\rho^g_k, \rho^c_k, \rho^j_k)\}\) to be dependent on \(\kmax\), the impact of the ``noise two'' diminishes when \(\kmax\) is large. The impact of ``noise one'' depends on \(\Prob_k \left[ \bar{\cT}_k > \hat{\cT}_k \right]\), which, as formalized in Lemma~\ref{lem.expect-diff-phi-k-bad}, can be controlled by \((\rho^g_k, \rho^c_k, \rho^j_k)\) based on the results in Section~\ref{sec:complex-prob-bound}, and consequently decreases when \(\kmax\) is large. Then, we combine all the results together to formally establish the complexity result in Section~\ref{sec:complex-result}.

In the following lemmas, we consider the following step size values:
\begin{gather}
\cAmin := \frac{2(1 - \eta) \ximin \taumin}{\taumin L + \Gamma} \quad \text{and} \quad
\cAmax := \frac{2(1 - \eta) \bar{\xi}_0 \bar\tau_0}{\bar\tau_0 L + \Gamma}, \label{eq:step-size-bds}
\end{gather}
which, by Assumption~\ref{ass.event_nonasymptotic}, Lemma~\ref{lem.ratio_parameter} and Lemma~\ref{lem.stepsize_interval}, satisfy \(
\cAmin \beta_k \leq \bar{\cA}^{\text{min}}_k \leq \bar{\cA}^{\text{max}}_k \leq \left( \cAmax + \theta \right) \beta_k
\) for all iterations \(k \in \naturals\).

\blemma\label{lem.l-hat-diff}
Suppose that Assumptions~\ref{ass.prob}--\ref{ass.estimate}, \ref{ass.event_nonasymptotic} and \ref{ass.estimate_nonasymptotic} hold. Then, there exists a constant \(\nc\label{cnst:lem.l-hat-diff} > 0\) that only depends on constant parameters \((\omega_3, \omega_4)\) in Lemma~\ref{lem.solution_variance}, \((\omega_5, \omega_6)\) in Lemma~\ref{lem.model_reduction_comparison}, and \(\rhomax\) in Assumption~\ref{ass.estimate_nonasymptotic} such that
\begin{align}
\Expect_k & \left[ \Delta l (X_k, \hat{\cT}_k, \bar{G}_k, \bar{C}_k, \bar{J}_k, \bar{D}_k) \right] \nonumber{} \\
& \leq \Expect_k \left[ \Delta l(X_k, \hat{\cT}_k, \nabla f(X_k), c(X_k), \nabla c(X_k)^T, D_k) \right] +
\rc{cnst:lem.l-hat-diff} \left( \sqrt{\rho^g_k} + \sqrt{\rho^c_k} + \sqrt{\rho^j_k} \right)
\label{eq:l-hat-diff}
\end{align}
holds for all iterations \(k \in \NN\).
\elemma

\bproof
By replacing \(\bar{\cT}_k\) with \(\hat{\cT}_k\) and following a similar logic as in the proof of Lemma~\ref{lem.model_reduction_comparison}, we obtain
\bequation\label{eq.key_relation_0}
\begin{aligned}
\Expect_k \left[ \Delta l (X_k, \hat{\cT}_k, \bar{G}_k, \bar{C}_k, \bar{J}_k, \bar{D}_k) \right]
&\leq \Expect_k \left[ \Delta l(X_k, \hat{\cT}_k, \nabla f(X_k), c(X_k), \nabla c(X_k)^T, D_k) \right] \\
&\quad\quad + \tauzero \left( \omega_5 \cdot \rho^g_k + \omega_6 \cdot \sqrt{\rho^c_k} + \omega_4 \cdot \sqrt{\rho^j_k} + \frac{\omega_3}{2} \sqrt{\rho^j_k \left( \rho^g_k + \rho^c_k \right)} \right).
\end{aligned}
\eequation
Since \(ab \leq \left( a^2 + b^2 \right) / 2\) and \(\sqrt{a + b} \leq \sqrt{a} + \sqrt{b}\) for any $\{a,b\}\subset\RR_{\geq 0}$, the last term in~\eqref{eq.key_relation_0} satisfies
\begin{align*}
& \omega_5 \cdot \rho^g_k + \omega_6 \cdot \sqrt{\rho^c_k} + \omega_4 \cdot \sqrt{\rho^j_k} + \frac{\omega_3}{2} \sqrt{\rho^j_k \left( \rho^g_k + \rho^c_k \right)}
\leq
\omega_5 \cdot \rho^g_k + \omega_6 \cdot \sqrt{\rho^c_k} + \omega_4 \cdot \sqrt{\rho^j_k} + \frac{\omega_3}{4}
\left( \rho^j_k + \rho^g_k + \rho^c_k \right) \\
& = \left(\omega_5 + \frac{\omega_3}{4}  \right) \rhomax \cdot \frac{\rho^g_k}{\rhomax} + \omega_6 \sqrt{\rho^c_k}
+ \frac{\omega_3 \rhomax}{4} \cdot \frac{\rho^c_k}{\rhomax}
+ \omega_4 \sqrt{\rho^j_k}
+ \frac{\omega_3 \rhomax}{4} \cdot \frac{\rho^j_k}{\rhomax} \\
& \leq \left(\omega_5 + \frac{\omega_3}{4}  \right) \sqrt{\rhomax}  \cdot \sqrt{\rho^g_k}
+ \left( \omega_6 + \frac{\omega_3 \sqrt{\rhomax}}{4} \right) \cdot \sqrt{\rho^c_k}
+ \left( \omega_4 + \frac{\omega_3 \sqrt{\rhomax}}{4} \right) \cdot \sqrt{\rho^j_k},
\end{align*}
where the last inequality follows Assumption~\ref{ass.estimate_nonasymptotic}. Consequently,~\eqref{eq:l-hat-diff} follows by taking a large enough positive constant
\(\rc{cnst:lem.l-hat-diff} \geq \bar\tau_0\cdot\max\left\{ \left(\omega_5 + \frac{\omega_3}{4}  \right) \sqrt{\rhomax}, \omega_6 + \frac{\omega_3 \sqrt{\rhomax}}{4}, \omega_4 + \frac{\omega_3 \sqrt{\rhomax}}{4}\right\}\).
\eproof

\inrevise{For ease of exposition, we use \(\Expect_k \left[ Z \bdsemicolon E \right] := \Expect_k \left[ Z \cdot \Ind{E} \right]\) to denote the corresponding (conditional) expectation of a random vector \(Z\) when an event $E$ occurs.}

\blemma\label{lem.tau-hat-bar}
Suppose that Assumptions~\ref{ass.prob}--\ref{ass.estimate}, \ref{ass.event_nonasymptotic} and \ref{ass.estimate_nonasymptotic} hold.
Then, for any iteration \(k \in \NN\),
\begin{align*}
& \Expect_k \left[ \bar{\cA}_k \Delta l(X_k, \hat{\cT}_k, \nabla f(X_k), c(X_k), \nabla c(X_k)^T, D_k) - \bar{\cA}_k \Delta l(X_k, \bar{\cT}_k, \nabla f(X_k), c(X_k), \nabla c(X_k)^T, D_k)
\bdsemicolon \bar{\cT}_k > \hat{\cT}_k
\right]
\\
& \qquad \leq \left( \cAmax + \theta \right) \beta_k \left( \tauzero - \taumin \right) \left( \kappa_H \norm{D_k}^2 + \norm{c(X_k)}_1 \norm{Y_k}_{\infty} \right) \cdot \Prob_k \left[ \bar{\cT}_k > \hat{\cT}_k \right]
\end{align*}
holds, and there exists \(\nc\label{cnst:lem.tau-hat-bar} > 0\) that only depends on the \((\omega_2, \omega_3)\) in Lemma~\ref{lem.solution_variance}, \(q_{\text{min}}\) in Lemma~\ref{lem.singular_values}, \(\kappa_y\) in Lemma~\ref{lem.ck-bdd-away}, \inrevise{\(\kappa_{\nabla f}\) in Assumption~\ref{ass.prob},} and \(\rhomax\) in Assumption~\ref{ass.estimate_nonasymptotic}  such that
\begin{align*}
& \Expect_k \left[
 \bar{\cA}_k \Delta l(X_k, \bar{\cT}_k, \bar{G}_k, \bar{C}_k, \bar{J}_k, \bar{D}_k) - \bar{\cA}_k \Delta l( X_k, \hat{\cT}_k, \bar{G}_k, \bar{C}_k, \bar{J}_k, \bar{D}_k)
\bdsemicolon \bar{\cT}_k > \hat{\cT}_k
  \right] \\
& \qquad  \leq \left( \cAmax + \theta \right) \beta_k (\tauzero - \taumin) \inrevise{\left(
  \left( \kappa_H \norm{D_k}^2 + \norm{c(X_k)}_1 \norm{Y_k}_{\infty} \right) \cdot \Prob_k \left[ \bar{\cT}_k > \hat{\cT}_k \right]
  + \rc{cnst:lem.tau-hat-bar} \left( \sqrt{\rho^g_k} + \sqrt{\rho^c_k} + \sqrt{\rho^j_k} \right) \right)}
\end{align*}
for all iterations \(k \in \NN\).
\elemma

\bproof
By~\eqref{eq.linear_model_reduction}, one finds
\begin{align}
& \Expect_k \left[
\bar{\cA}_k  \Delta l(X_k, \hat{\cT}_k, \nabla f(X_k), c(X_k), \nabla c(X_k)^T, D_k) - \bar{\cA}_k \Delta l(X_k, \bar{\cT}_k, \nabla f(X_k), c(X_k), \nabla c(X_k)^T, D_k)
\bdsemicolon \bar{\cT}_k > \hat{\cT}_k
\right] \nonumber{} \\
& = \Expect_k \left[
\bar{\cA}_k \left( - \hat{\cT}_k \nabla f(X_k)^{\top} D_k + \norm{c(X_k)}_1 \right) -
\bar{\cA}_k \left( - \bar{\cT}_k \nabla f(X_k)^{\top} D_k + \norm{c(X_k)}_1 \right)
\bdsemicolon \bar{\cT}_k > \hat{\cT}_k
\right] \nonumber{} \\
& = \Expect_k \left[\bar{\cA}_k \left(\bar{\cT}_k - \hat{\cT}_k \right) \nabla f(X_k)^{\top} D_k
\bdsemicolon \bar{\cT}_k > \hat{\cT}_k
\right]
\leq \Expect_k \left[\bar{\cA}^{\text{max}}_k \left(\bar{\cT}_k - \hat{\cT}_k \right) \abs{\nabla f(X_k)^{\top} D_k}
\bdsemicolon \bar{\cT}_k > \hat{\cT}_k
\right], \label{eq:tau-hat-bar-1}
\end{align}
where the inequality follows \(\hat{\cT}_k \leq \bar{\cT}_k\), \(\bar{\cA}_k \leq \bar{\cA}_k^{\text{max}}\) and the Cauchy-Schwarz inequality. Moreover, by~\eqref{eq.linear_system_true},
\begin{align*}
\abs{\nabla f(X_k)^{\top} D_k}
& = \abs{D^{\top}_k H_k D_k + D_k^{\top} \nabla c(X_k) Y_k}
= \abs{D^{\top}_k H_k D_k - c(X_k)^{\top} Y_k}
\leq |D^{\top}_k H_k D_k|  + \norm{c(X_k)}_1 \norm{Y_k}_{\infty} \\
& \leq \kappa_H \norm{D_k}^2 + \norm{c(X_k)}_1 \norm{Y_k}_{\infty},
\end{align*}
where the last two inequalities are due to triangle inequality and the Cauchy-Schwarz inequality, as well as Assumption~\ref{ass.H}. Combining Assumption~\ref{ass.event_nonasymptotic}, Lemmas~\ref{lem.merit_parameter} and~\ref{lem.stepsize_interval}, \eqref{eq:step-size-bds} and the inequality above, we have
\begin{align}
\eqref{eq:tau-hat-bar-1}
&\leq
\Expect_k \left[
\left(\cAmax + \theta \right) \beta_k (\tauzero - \taumin) \left( \kappa_H \norm{D_k}^2 + \norm{c(X_k)}_1 \norm{Y_k}_{\infty} \right)
\bdsemicolon \bar{\cT}_k > \hat{\cT}_k
\right] \nonumber{} \\
&= \left(\cAmax + \theta \right) \beta_k (\tauzero - \taumin) \left( \kappa_H \norm{D_k}^2 + \norm{c(X_k)}_1 \norm{Y_k}_{\infty} \right) \cdot \Prob_k \left[ \bar{\cT}_k > \hat{\cT}_k
 \right] \label{eq:tau-hat-bar-2}
\end{align}
where the equality is because \((D_k, X_k, Y_k)\) only depend on \((X_k, Y_k)\), hence \inrevise{\(\mathcal{F}'_k\)-measurable}.
It follows the similar logic that
\begin{align}
& \Expect_k \left[ \bar{\cA}_k  \Delta l(X_k, \bar{\cT}_k, \bar{G}_k, \bar{C}_k, \bar{J}_k, \bar{D}_k) - \bar{\cA}_k \Delta l(X_k, \hat{\cT}_k, \bar{G}_k, \bar{C}_k, \bar{J}_k, \bar{D}_k)
\bdsemicolon \bar{\cT}_k > \hat{\cT}_k
\right] \nonumber{}\\
& = \Expect_k \left[
  \bar{\cA}_k \left( - \bar{\cT}_k \bar{G}_k^{\top} \bar{D}_k + \norm{\bar{C}_k}_1 \right)
- \bar{\cA}_k  \left( - \hat{\cT}_k \bar{G}_k^{\top} \bar{D}_k + \norm{\bar{C}_k}_1 \right)
\bdsemicolon \bar{\cT}_k > \hat{\cT}_k
 \right] \nonumber{}\\
& = \Expect_k \left[
  - \bar{\cA}_k (\bar{\cT}_k - \hat{\cT}_k) \cdot \bar{G}_k^{\top} \bar{D}_k
\bdsemicolon \bar{\cT}_k > \hat{\cT}_k
 \right]
\leq \inrevise{\Expect_k \left[
  \bar{\cA}^{\text{max}}_k (\bar{\cT}_k - \hat{\cT}_k) \cdot \abs{ \bar{G}_k^{\top} \bar{D}_k }
\bdsemicolon \bar{\cT}_k > \hat{\cT}_k
 \right]} \label{eq:tau-hat-bar-3},
\end{align}
\inrevise{where the inequality is by the Cauchy-Schwarz inequality. Furthermore, by the Cauchy-Schwarz inequality, Assumption~\ref{ass.prob}, Lemma~\ref{lem.ck-bdd-away}, and \(ab \leq  a^2 + b^2 \) for any \(\{a,b\}\subset\mathbb{R}\), one has
\begin{align*}
  \abs{\bar{G}_k^{\top} \bar{D}_k}
  & = \abs{\left( \bar{G}_k - \nabla f(X_k) + \nabla f(X_k) \right)^{\top} \left( \bar{D}_k - D_k + D_k \right)} \\
  & = \abs{\left( \bar{G}_k - \nabla f(X_k) \right)^{\top} \left( \bar{D}_k - D_k \right)
    + \nabla f(X_k)^{\top} \left( \bar{D}_k - D_k \right)
    + \left( \bar{G}_k - \nabla f(X_k) \right)^{\top} D_k
    + \nabla f(X_k)^{\top} D_k} \\
  & \leq \abs{\nabla f(X_k)^{\top} D_k}
    + \norm{\bar{G}_k - \nabla f(X_k)} \norm{\bar{D}_k - D_k}
    + \kappa_{\nabla f} \norm{\bar{D}_k - D_k}
    + \kappa_y \norm{\bar{G}_k - \nabla f(X_k)} \\
  & \leq \abs{\nabla f(X_k)^{\top} D_k}
    + \norm{\bar{G}_k - \nabla f(X_k)}^2 +  \norm{\bar{D}_k - D_k}^2
    + \kappa_{\nabla f} \norm{\bar{D}_k - D_k}
    + \kappa_y \norm{\bar{G}_k - \nabla f(X_k)}.
\end{align*}
\inrevise{Accordingly, it holds by 
\(0 \leq \bar{\cA}^{\text{max}}_k \leq (\cA^{\text{max}} + \theta) \beta_k\), \(\bar{\cT}_k \geq \hat{\cT}_k\), Assumptions~\ref{ass.event_nonasymptotic} and~\ref{ass.estimate_nonasymptotic}, Lemmas~\ref{lem.solution_bias}--\ref{lem.solution_variance}, and Jensen's inequality that}
\begin{align*}
  \eqref{eq:tau-hat-bar-3}
  & \leq \Expect_k \left[ \bar{\cA}^{\text{max}}_k \left( \bar{\cT}_k - \hat{\cT}_k \right) \abs{\nabla f(X_k)^{\top} D_k} \bdsemicolon \bar{\cT}_k > \hat{\cT}_k \right]
    + \Expect_k \left[ \bar{\cA}^{\text{max}}_k \left( \bar{\cT}_k - \hat{\cT}_k \right) \kappa_{\nabla f} \norm{\bar{D}_k - D_k} \right] \\
  & \qquad + \Expect_k \left[ \bar{\cA}^{\text{max}}_k \left( \bar{\cT}_k - \hat{\cT}_k \right) \kappa_y \norm{\bar{G}_k - \nabla f(X_k)} \right]
    + \Expect_k \left[ \bar{\cA}^{\text{max}}_k \left( \bar{\cT}_k - \hat{\cT}_k \right) \norm{\bar{G}_k - \nabla f(X_k)}^2  \right] \\
  & \qquad + \Expect_k \left[ \bar{\cA}^{\text{max}}_k \left( \bar{\cT}_k - \hat{\cT}_k \right) \norm{\bar{D}_k - D_k}^2  \right] \\
  & \leq \Expect_k \left[ \bar{\cA}^{\text{max}}_k \left( \bar{\cT}_k - \hat{\cT}_k \right) \abs{\nabla f(X_k)^{\top} D_k} \bdsemicolon \bar{\cT}_k > \hat{\cT}_k \right]
    + \left( \cA^{\text{max}} + \theta  \right) \beta_k \left(\tauzero - \taumin\right) \left( \left(\sqrt{\rhomax} + \kappa_y\right) \sqrt{\rho_k^g} \right. \\
  & \qquad + \left. \left( \frac{2}{q_{\text{min}}} +  \omega_1 \right) \kappa_{\nabla f} \left( \sqrt{\rho^g_k} + \sqrt{\rho^c_k} + \sqrt{\rho^j_k}\right) + \left( \frac{1}{q_{\text{min}}^2} + \omega_2 + \omega_3 \right) \sqrt{\rhomax} \left( \sqrt{\rho^g_k} + \sqrt{\rho^c_k} + \sqrt{\rho^j_k} \right) \right).
\end{align*}
By choosing \(\rc{cnst:lem.tau-hat-bar}\) such that
\begin{align*}
\rc{cnst:lem.tau-hat-bar}
\geq \sqrt{\rhomax} + \kappa_y + \left( \frac{2}{q_{\text{min}}} +  \omega_1 \right) \kappa_{\nabla f} + \left( \frac{1}{q_{\text{min}}^2} + \omega_2 + \omega_3 \right) \sqrt{\rhomax},
\end{align*}
one has
\begin{align*}
\eqref{eq:tau-hat-bar-3}
  & \leq \Expect_k \left[ \bar{\cA}^{\text{max}}_k \left( \bar{\cT}_k - \hat{\cT}_k \right) \abs{\nabla f(X_k)^{\top} D_k} \bdsemicolon \bar{\cT}_k > \hat{\cT}_k \right]
    + \left( \cA^{\text{max}} + \theta  \right) \beta_k \left(\tauzero - \taumin\right) \rc{cnst:lem.tau-hat-bar} \left( \sqrt{\rho^g_k} + \sqrt{\rho^c_k} + \sqrt{\rho^j_k}  \right),
\end{align*}
which together with~\eqref{eq:tau-hat-bar-2} finishes the proof.}
\eproof

Building on the results of Lemmas~\ref{lem.l-hat-diff} and~\ref{lem.tau-hat-bar}, we formally derive~\eqref{eq:complex-change-in-phi-demo-2} in the following lemma.

\blemma\label{lem.expect-diff-phi-k}
Suppose that Assumptions~\ref{ass.prob}--\ref{ass.estimate}, \ref{ass.event_nonasymptotic} and \ref{ass.estimate_nonasymptotic} hold.
Then, there exist positive constants \(\{\nc\label{cnst:lem.expect-diff-phi-k-1}, \nc\label{cnst:lem.expect-diff-phi-k-2}\} \subset \reals_{>0}\) that only depend on \((\eta, \theta, \tauzero)\) in Algorithm~\ref{alg.main}, \((\kappa_{\nabla f}, \kappa_c)\) in Assumption~\ref{ass.prob}, \(\taumin\) in Assumption~\ref{ass.event_nonasymptotic}, \(\rhomax\) in Assumption~\ref{ass.estimate_nonasymptotic}, \(\cAmax\) in~\eqref{eq:step-size-bds}, \(q_{\text{min}}\) in Lemma~\ref{lem.singular_values}, \(\kappa_y\) in Lemma~\ref{lem.ck-bdd-away}, \(\rc{cnst:lem.l-hat-diff}\) in Lemma~\ref{lem.l-hat-diff}, and \(\rc{cnst:lem.tau-hat-bar}\) in Lemma~\ref{lem.tau-hat-bar} such that
\begin{align}
\label{eq:expect-diff-phi-k-main}
& \Expect_k \left[ \phi (X_k + \bar{\cA}_k \bar{D}_k, \bar{\cT}_k) \right] - \Expect_k \left[ \phi(X_k, \bar{\cT}_k) \right] \nonumber{} \\
&\qquad \leq \inrevise{((1 - \eta) (\cAmax{} + \theta) \beta^2_k - \cAmin{} \beta_k) \Expect_k \left[ \Delta l(X_k,\hat\cT_k,\nabla f(X_k),c(X_k),\nabla c(X_k)^T,D_k) \right]} \\
&\qquad \qquad +  \inrevise{2}\left( \cAmax + \theta \right) \beta_k (\tauzero - \taumin)
\left( \kappa_H \norm{D_k}^2 + \norm{c(X_k)}_1 \norm{Y_k}_{\infty} \right) \cdot
\Prob_k \left[ \bar{\cT}_k > \hat{\cT}_k \right] \nonumber{} \\
&\qquad\qquad + \inrevise{\left(  \beta_k \rc{cnst:lem.expect-diff-phi-k-1} + \beta^2_k   \rc{cnst:lem.expect-diff-phi-k-2} \right)}\cdot \left( \sqrt{\rho^g_k} + \sqrt{\rho^c_k} + \sqrt{\rho^j_k} \right),
\end{align}
holds for all iterations \(k \in \naturals\).
\elemma

\bproof
By Lemma~\ref{lem.merit_function_decrease}, it holds for every iteration \(k\) that
\bequation\label{eq.key_relation_1}
\begin{aligned}
& \Expect_k \left[ \phi (X_k + \bar{\cA}_k \bar{D}_k, \bar{\cT}_k) \right] - \Expect_k \left[ \phi(X_k, \bar{\cT}_k) \right] \\
& \leq \Expect_k \left[
-\bar\cA_k\Delta l(X_k,\bar\cT_k,\nabla f(X_k),c(X_k),\nabla c(X_k)^T,D_k) +
(1-\eta)\bar\cA_k\beta_k\Delta l(X_k,\bar\cT_k,\bar{G}_k,\bar{C}_k,\bar{J}_k,\bar{D}_k)
 \right. \\
& \phantom{\leq \Expect_k \left[ \right]}
\left. + \bar\cA_k\bar\cT_k\nabla f(X_k)^T(\bar{D}_k - D_k) + \bar\cA_k\|\nabla c(X_k)^T(D_k - \bar{D}_k)\|_1  \inrevise{+ 2 \bar\cA_k \norm{c(X_k) - \bar{C}_k}_1}\right].
\end{aligned}
\eequation
Moreover, we can bound the last \inrevise{three} terms in~\eqref{eq.key_relation_1} by choosing a large positive constant satisfying $\rc{cnst:lem.expect-diff-phi-k-1} \geq \inrevise{(\cAmax + \theta) (( \tauzero \kappa_{\nabla f} + \sqrt{m} \kappa_{\nabla c}) \max\{1 / q_{\text{min}}, \omega_1\} + 2 \sqrt{m})}$ such that
\begin{equation}
\label{eq.key_relation_2}
\begin{aligned}
& \Expect_k \left[
  \bar{\cA}_k \bar{\cT}_k \nabla f(X_k)^{\top}(\bar{D}_k - D_k) + \bar{\cA}_k \norm{\nabla c(X_k)^{\top} (D_k - \bar{D}_k)}_1 \inrevise{+ 2 \bar\cA_k \norm{c(X_k) - \bar{C}_k}_1}
 \right] \\
\leq \ & \Expect_k \left[
  \tauzero \bar{\cA}_k \abs{\nabla f(X_k)^{\top}(\bar{D}_k - D_k)} + \bar{\cA}_k \sqrt{m} \norm{\nabla c(X_k)^{\top} (D_k - \bar{D}_k)} \inrevise{+ 2 \bar\cA_k \sqrt{m} \sqrt{\norm{c(X_k) - \bar{C}_k}^2}}
 \right] \\
\leq \ &\left(\tauzero \kappa_{\nabla f} + \sqrt{m} \norm{\nabla c(X_k)} \right) \Expect_k \left[ \bar{\cA}^{\text{max}}_k \norm{\bar{D}_k - D_k}\right] \inrevise{+ 2 \sqrt{m} \left( \cAmax + \theta \right) \beta_k \sqrt{\Expect_k \left[ \norm{c(X_k) - \bar{C}_k}^2 \right]}}  \\
\leq \ & \left( \cAmax + \theta \right) \beta_k \inrevise{ \left[ \left(\tauzero \kappa_{\nabla f} + \sqrt{m} \kappa_{\nabla c} \right) \left( \frac{\sqrt{\rho^g_k + \rho^c_k}}{q_{\text{min}}} + \omega_1 \sqrt{\rho^j_k} \right) + 2 \sqrt{m \rho^c_k}\right] } \\
\leq \ &  \inrevise{\beta_k} \rc{cnst:lem.expect-diff-phi-k-1} \left( \sqrt{\rho^g_k} + \sqrt{\rho^c_k} + \sqrt{\rho^j_k}\right),
\end{aligned}
\end{equation}
where the first inequality exploits the relation between \(\norm{\cdot}_1\) and \(\norm{\cdot}_2\), the second inequality employs the step size selection policy, the Cauchy-Schwarz inequality, \inrevise{Jensen's inequality,} \inrevise{\eqref{eq:step-size-bds},} Lemma~\ref{lem.stepsize_interval}, and Assumption~\ref{ass.prob}, the third inequality takes Assumption~\ref{ass.prob}, similar logic as Lemma~\ref{lem.solution_bias} and the definition of $\cAmax$ (see~\eqref{eq:step-size-bds}), and the last inequality follows from the selection of $\rc{cnst:lem.expect-diff-phi-k-1}$ and \(\sqrt{a + b} \leq \sqrt{a} + \sqrt{b}\) for all \(\{a, b\} \subset \reals_{\geq 0}\).

To analyze the remaining two terms in~\eqref{eq.key_relation_1}, we first consider the case where \(\hat{\cT}_k = \bar{\cT}_k\). Then, we will derive a bound for the other case \((\hat{\cT}_k < \bar{\cT}_k)\) and combine the two cases together.

When \(\hat{\cT}_k = \bar{\cT}_k\), by Lemmas~\ref{lem.model_reduction_suff_large} and~\ref{lem.model_reduction_suff_large_true}, it holds that
\begin{align}
& \Expect_k \left[
-\bar\cA_k\Delta l(X_k,\bar\cT_k,\nabla f(X_k),c(X_k),\nabla c(X_k)^T,D_k) +
(1-\eta)\bar\cA_k\beta_k\Delta l(X_k,\bar\cT_k,\bar{G}_k,\bar{C}_k,\bar{J}_k,\bar{D}_k)
\bdsemicolon \bar{\cT}_k = \hat{\cT}_k
 \right] \nonumber{} \\
& \leq \Expect_k \left[
-\bar\cA^{\text{min}}_k\Delta l(X_k,\hat\cT_k,\nabla f(X_k),c(X_k),\nabla c(X_k)^T,D_k) +
(1-\eta)\bar\cA^{\text{max}}_k\beta_k\Delta l(X_k,\hat\cT_k,\bar{G}_k,\bar{C}_k,\bar{J}_k,\bar{D}_k)
\bdsemicolon \bar{\cT}_k = \hat{\cT}_k
 \right] \nonumber{} \\
& = \Expect_k \left[
\left((1 - \eta) \bar{\cA}^{\text{max}}_k \beta_k -\bar\cA^{\text{min}}_k\right) \Delta l(X_k,\hat\cT_k,\nabla f(X_k),c(X_k),\nabla c(X_k)^T,D_k)
 \right. \nonumber{} \\
& \hspace{3em} + \left. (1 - \eta) \bar{\cA}^{\text{max}}_k \beta_k \left(
  \Delta l(X_k, \hat{\cT}_k, \bar{G}_k, \bar{C}_k, \bar{J}_k, \bar{D}_k)
- \Delta l(X_k, \hat{\cT}_k, \nabla f(X_k), c(X_k), \nabla c(X_k)^{\top}, D_k)
 \right)
\bdsemicolon \bar{\cT}_k = \hat{\cT}_k
\right] \nonumber{}
\\
& \leq
\Expect_k \left[
\left((1 - \eta) \left( \cAmax + \theta \right) \beta^2_k - \cAmin \beta_k \right) \Delta l(X_k,\hat\cT_k,\nabla f(X_k),c(X_k),\nabla c(X_k)^T,D_k) \right. \label{eq:expect-diff-phi-k-equal} \\
& \hspace{3em} \left.
+ (1 - \eta) \bar{\cA}^{\text{max}}_k \beta_k  \left(
  \Delta l(X_k, \hat{\cT}_k, \bar{G}_k, \bar{C}_k, \bar{J}_k, \bar{D}_k)
- \Delta l(X_k, \hat{\cT}_k, \nabla f(X_k), c(X_k), \nabla c(X_k)^{\top}, D_k)
 \right)
\bdsemicolon \bar{\cT}_k = \hat{\cT}_k
 \right] , \nonumber{}
\end{align}
where the first inequality is because \(\bar{\cT}_k = \hat{\cT}_k \leq \cT^{\trial}_k\) and the last inequality is because \([\bar{\cA}_k^{\text{min}},  \bar{\cA}_k^{\text{max}}] \subseteq [\cAmin \beta_k, (\cAmax + \theta) \beta_k] \) for all \(k\) (see~\eqref{eq:step-size-bds} and Lemma~\ref{lem.stepsize_interval}).

When \(\hat{\cT}_k < \bar{\cT}_k\), we use Lemmas~\ref{lem.l-hat-diff} and~\ref{lem.tau-hat-bar} to bound the error incurred by large \(\bar{\cT}_k\), i.e.,
\begin{align}
& \Expect_k \left[
- \bar{\cA}_k \Delta l(X_k, \bar{\cT}_k, \nabla f(X_k), c(X_k), \nabla c(X_k)^T, D_k)
+ (1 - \eta) \bar{\cA}_k \beta_k \Delta l(X_k, \bar{\cT}_k, \bar{G}_k, \bar{C}_k, \bar{J}_k, \bar{D}_k)
\bdsemicolon \bar{\cT}_k > \hat{\cT}_k
\right] \nonumber{}\\
& = \Expect_k \left[
- \bar{\cA}_k \left( \Delta l(X_k, \hat{\cT}_k, \nabla f(X_k), c(X_k), \nabla c(X_k)^T, D_k) +
 \Delta l(X_k, \bar{\cT}_k, \nabla f(X_k), c(X_k), \nabla c(X_k)^T, D_k) \right. \right. \nonumber{}\\
& \quad \qquad \left. \left.
- \Delta l(X_k, \hat{\cT}_k, \nabla f(X_k), c(X_k), \nabla c(X_k)^T, D_k) \right)
+ (1 - \eta) \bar{\cA}_k \beta_k \left(
\Delta l(X_k, \hat{\cT}_k, \bar{G}_k, \bar{C}_k, \bar{J}_k, \bar{D}_k) \right. \right. \nonumber{}\\
& \qquad \quad \left. \left.
+ \Delta l(X_k, \bar{\cT}_k, \bar{G}_k, \bar{C}_k, \bar{J}_k, \bar{D}_k)
- \Delta l(X_k, \hat{\cT}_k, \bar{G}_k, \bar{C}_k, \bar{J}_k, \bar{D}_k)
\right)
\bdsemicolon \bar{\cT}_k > \hat{\cT}_k
\right] \nonumber{} \\
& \leq \Expect_k \left[
- \bar{\cA}^{\text{min}}_k \Delta l(X_k, \hat{\cT}_k, \nabla f(X_k), c(X_k), \nabla c(X_k)^T, D_k)
+ (1 - \eta) \bar{\cA}^{\text{max}}_k \beta_k \Delta l(X_k, \hat{\cT}_k, \bar{G}_k, \bar{C}_k, \bar{J}_k, \bar{D}_k)
\bdsemicolon \bar{\cT}_k > \hat{\cT}_k
\right] \nonumber{}\\
& \quad \qquad + \Expect_k \left[
  \bar{\cA}_k \left(-\Delta l(X_k, \bar{\cT}_k, \nabla f(X_k), c(X_k), \nabla c(X_k)^T, D_k)
+ \Delta l(X_k, \hat{\cT}_k, \nabla f(X_k), c(X_k), \nabla c(X_k)^T, D_k)
  \right)
\bdsemicolon \bar{\cT}_k > \hat{\cT}_k
\right] \nonumber{}\\
& \quad \qquad + \Expect_k \left[
(1 - \eta) \bar{\cA}_k \beta_k \left(
\Delta l(X_k, \bar{\cT}_k, \bar{G}_k, \bar{C}_k, \bar{J}_k, \bar{D}_k)
- \Delta l(X_k, \hat{\cT}_k, \bar{G}_k, \bar{C}_k, \bar{J}_k, \bar{D}_k)
 \right)
\bdsemicolon \bar{\cT}_k > \hat{\cT}_k
 \right] \nonumber{} \\
& = \Expect_k \left[
  \left((1 - \eta)\bar{\cA}^{\text{max}}_k \beta_k - \bar{\cA}^{\text{min}}_k\right) \Delta l(X_k, \hat{\cT}_k, \nabla f(X_k), c(X_k), \nabla c(X_k)^{\top}, D_k)
 \bdsemicolon \bar{\cT}_k > \hat{\cT}_k \right] \nonumber{}
\\
& \quad \qquad + \Expect_k \left[
  (1 - \eta) \bar{\cA}^{\text{max}}_k \beta_k \left(
  \Delta l(X_k, \hat{\cT}_k, \bar{G}_k, \bar{C}_k, \bar{J}_k, \bar{D}_k)
- \Delta l(X_k, \hat{\cT}_k, \nabla f(X_k), c(X_k), \nabla c(X_k)^{\top}, D_k)
 \right)
 \bdsemicolon  \bar{\cT}_k > \hat{\cT}_k\right] \nonumber{}
\\
& \quad \qquad + \Expect_k \left[
  \bar{\cA}_k \left(-\Delta l(X_k, \bar{\cT}_k, \nabla f(X_k), c(X_k), \nabla c(X_k)^T, D_k)
+ \Delta l(X_k, \hat{\cT}_k, \nabla f(X_k), c(X_k), \nabla c(X_k)^T, D_k)
  \right)
\bdsemicolon \bar{\cT}_k > \hat{\cT}_k
\right] \nonumber{} \\
& \quad \qquad + \Expect_k \left[
(1 - \eta) \bar{\cA}_k \beta_k \left(
\Delta l(X_k, \bar{\cT}_k, \bar{G}_k, \bar{C}_k, \bar{J}_k, \bar{D}_k)
- \Delta l(X_k, \hat{\cT}_k, \bar{G}_k, \bar{C}_k, \bar{J}_k, \bar{D}_k)
 \right)
\bdsemicolon \bar{\cT}_k > \hat{\cT}_k
 \right] \nonumber{} \\
& \leq \Expect_k
\left[
\left((1 - \eta) \left( \cAmax + \theta \right) \beta^2_k - \cAmin \beta_k \right) \Delta l(X_k,\hat\cT_k,\nabla f(X_k),c(X_k),\nabla c(X_k)^T,D_k)
\bdsemicolon \bar{\cT}_k > \hat{\cT}_k
\right] \nonumber{} \\
& \qquad \quad + \Expect_k \left[ (1 - \eta) \bar{\cA}^{\text{max}}_k \beta_k \left(
  \Delta l(X_k, \hat{\cT}_k, \bar{G}_k, \bar{C}_k, \bar{J}_k, \bar{D}_k)
- \Delta l(X_k, \hat{\cT}_k, \nabla f(X_k), c(X_k), \nabla c(X_k)^{\top}, D_k)
\right)
\bdsemicolon \bar{\cT}_k > \hat{\cT}_k
\right] \nonumber{}
 \\
& \qquad \quad +  \inrevise{\left( \cAmax + \theta \right) \left(\beta_k + \beta_k^2 \right)}\left( \tauzero - \taumin  \right)
  \left( \kappa_H \norm{D_k}^2 + \norm{c(X_k)}_1 \norm{Y_k}_{\infty} \right) \cdot
\Prob_k \left[ \bar{\cT}_k > \hat{\cT}_k
 \right] \nonumber{} \\
& \qquad \quad + (1 - \eta)\left(\cAmax + \theta\right) \beta^2_k \left( \tauzero - \taumin \right) \rc{cnst:lem.tau-hat-bar} \inrevise{\left( \sqrt{\rho^g_k} + \sqrt{\rho^c_k} + \sqrt{\rho^j_k} \right)} \label{eq:expect-diff-phi-k-large}
\end{align}
where the first inequality follows from Lemma~\ref{lem.model_reduction_suff_large}, Lemma~\ref{lem.model_reduction_suff_large_true}, and \(\hat{\cT}_k \leq \cT_k^{\trial}\), and the last inequality is due to \inrevise{\(\eta \in (0,1)\),} \eqref{eq:step-size-bds}, Lemma~\ref{lem.stepsize_interval} and Lemma~\ref{lem.tau-hat-bar}.

We combine~\eqref{eq:expect-diff-phi-k-equal} and~\eqref{eq:expect-diff-phi-k-large} then apply Lemma~\ref{lem.l-hat-diff} to obtain
\begin{equation}
\label{eq.key_relation_3}
\begin{aligned}
& \Expect_k \left[
-\bar\cA_k\Delta l(X_k,\bar\cT_k,\nabla f(X_k),c(X_k),\nabla c(X_k)^T,D_k) +
(1-\eta)\bar\cA_k\beta_k\Delta l(X_k,\bar\cT_k,\bar{G}_k,\bar{C}_k,\bar{J}_k,\bar{D}_k)
 \right] \\
& \leq \Expect_k
\left[
\left((1 - \eta) \left( \cAmax + \theta \right) \beta^2_k - \cAmin \beta_k \right) \Delta l(X_k,\hat\cT_k,\nabla f(X_k),c(X_k),\nabla c(X_k)^T,D_k) \right. \\
& \qquad \left.
+ (1 - \eta) \left( \cAmax + \theta \right) \beta^2_k \, \rc{cnst:lem.l-hat-diff} \left( \sqrt{\rho^g_k} + \sqrt{\rho^c_k} + \sqrt{\rho^j_k} \right)
\right]
+ (1 - \eta) \left( \cAmax + \theta \right) \beta^2_k
\left( \tauzero - \taumin \right) \rc{cnst:lem.tau-hat-bar} \inrevise{\left( \sqrt{\rho^g_k} + \sqrt{\rho^c_k} + \sqrt{\rho^j_k} \right)}
\\
& \qquad +
\inrevise{\left( \cAmax + \theta \right) \left(\beta_k + \beta_k^2 \right)} \left( \tauzero - \taumin \right)
  \left( \kappa_H \norm{D_k}^2 + \norm{c(X_k)}_1 \norm{Y_k}_{\infty} \right)
\cdot  \Prob_k \left[ \bar{\cT}_k > \hat{\cT}_k \right] \\
& \leq ((1 - \eta) (\cAmax + \theta) \beta^2_k - \cAmin \beta_k) \Expect_k \left[ \Delta l(X_k,\hat\cT_k,\nabla f(X_k),c(X_k),\nabla c(X_k)^T,D_k) \right] \\
& \qquad + \inrevise{\rc{cnst:lem.expect-diff-phi-k-2} \beta_k^2 \left( \sqrt{\rho^g_k} + \sqrt{\rho^c_k} + \sqrt{\rho^j_k} \right)} \\
& \hspace{2em} + \inrevise{2} (\cAmax + \theta) \beta_k (\tauzero - \taumin)
\left( \kappa_H \norm{D_k}^2 + \norm{c(X_k)}_1 \norm{Y_k}_{\infty} \right)
\cdot \Prob_k \left[ \bar{\cT}_k > \hat{\cT}_k \right],
\end{aligned}
\end{equation}
where the first inequality utilizes $\bar{\cA}_k^{\text{max}} \leq (\cAmax + \theta) \beta_k$ (by~\eqref{eq:step-size-bds} and Lemma~\ref{lem.stepsize_interval}), and \inrevise{the last inequality follows \(\beta_k \in (0,1]\) and $\rc{cnst:lem.expect-diff-phi-k-2}$ being} a sufficiently large constant satisfying
\inrevise{\(\rc{cnst:lem.expect-diff-phi-k-2}
\geq
(1 - \eta)(\cAmax + \theta)((\tauzero - \taumin)\rc{cnst:lem.tau-hat-bar} + \rc{cnst:lem.l-hat-diff})\)}.
Consequently, by~\eqref{eq.key_relation_1},~\eqref{eq.key_relation_2} and~\eqref{eq.key_relation_3}, we bound the expected difference in merit functions with \(\{(\beta_k, \rho^g_k, \rho^c_k, \rho^j_k)\}\) and obtain the desired result that for any iteration $k\in\mathbb{N}$,
\begin{align*}
& \Expect_k \left[ \phi (X_k + \bar{\cA}_k \bar{D}_k, \bar{\cT}_k) \right] - \Expect_k \left[ \phi(X_k, \bar{\cT}_k) \right] \\
& \leq
  ((1 - \eta) (\cAmax{} + \theta) \beta^2_k - \cAmin{} \beta_k) \Expect_k \left[ \Delta l(X_k,\hat\cT_k,\nabla f(X_k),c(X_k),\nabla c(X_k)^T,D_k) \right] \\
& \qquad + \inrevise{ \rc{cnst:lem.expect-diff-phi-k-2}  \beta_k^2 \left( \sqrt{\rho^g_k} + \sqrt{\rho^c_k} + \sqrt{\rho^j_k} \right) } \\
& \qquad + \inrevise{2}(\cAmax{} + \theta) \beta_k (\tauzero - \taumin)
\left( \kappa_H \norm{D_k}^2 + \norm{c(X_k)}_1 \norm{Y_k}_{\infty} \right)
\Prob_k \left[ \bar{\cT}_k > \hat{\cT}_k \right] \\
& \qquad + \inrevise{\beta_k} \rc{cnst:lem.expect-diff-phi-k-1} \left( \sqrt{\rho^g_k} + \sqrt{\rho^c_k} + \sqrt{\rho^j_k} \right), \end{align*}
\inrevise{concluding the statement.}
\eproof

In the next lemma, we leverage Lemmas~\ref{lem.prob-tau-good-k} and~\ref{lem.ck-bdd-away} to relate \(\Prob_k \left[ \bar{\cT}_k > \hat{\cT}_k \right]\) with \((\rho^g_k, \rho^c_k, \rho^j_k)\) when
\((X_k, Y_k)\)
is not \(\delta\)-stationary. Conversely, when $(X_k,Y_k)$ is \(\delta\)-stationary, we show in Lemma~\ref{lem.expect-diff-phi-k-stationary} that the expected change in the merit function is small.

\blemma\label{lem.expect-diff-phi-k-bad}
Suppose that Assumptions~\ref{ass.prob}--\ref{ass.estimate}, \ref{ass.event_nonasymptotic} and \ref{ass.estimate_nonasymptotic} hold.
Then, there exists a positive constant \(\nc\label{cnst:lem.expect-diff-phi-k-bad} > 0\) that only depends on \((\rc{cnst:lem.prob-tau-good-k}, \kappa_H, \kappa_y, \zeta, \epsilon_{\tau})\) in Assumption~\ref{ass.H}, Lemmas~\ref{lem.prob-tau-good-k} and~\ref{lem.ck-bdd-away}, and~\eqref{eq.merit_parameter_update}
such that
\begin{align*}
& \Expect_k \left[ \phi (X_k + \bar{\cA}_k \bar{D}_k, \bar{\cT}_k) \right] - \Expect_k \left[ \phi(X_k, \bar{\cT}_k) \right] \\
& \qquad \leq
  \inrevise{((1 - \eta) (\cAmax{} + \theta) \beta^2_k - \cAmin{} \beta_k) \Expect_k \left[ \Delta l(X_k,\hat\cT_k,\nabla f(X_k),c(X_k),\nabla c(X_k)^T,D_k) \right]}
\\
& \qquad\qquad + \inrevise{2} (\cAmax + \theta) \beta_k (\tauzero - \taumin) \frac{\rc{cnst:lem.expect-diff-phi-k-bad}}{\delta} \left(  \rho^g_k + \rho^c_k + \rho^j_k \right)
  \\
& \qquad \qquad + \inrevise{\left( \beta_k \rc{cnst:lem.expect-diff-phi-k-1} + \beta^2_k  \rc{cnst:lem.expect-diff-phi-k-2} \right)} \left( \sqrt{\rho^g_k} + \sqrt{\rho^c_k} + \sqrt{\rho^j_k} \right)
\end{align*}
holds for any iteration \(k \in \naturals\) where \(\mathcal{B}_{\delta,k}\) occurs (see~\eqref{eq.nonopt_event}).
\elemma

\bproof
Based on Lemma~\ref{lem.expect-diff-phi-k}, to conclude the statement, it is sufficient to show that
\begin{align*}
\left( \kappa_H \norm{D_k}^2 + \norm{c(X_k)}_1 \norm{Y_k}_{\infty} \right) \cdot
\Prob_k \left[ \bar{\cT}_k > \hat{\cT}_k \right] \leq \frac{\rc{cnst:lem.expect-diff-phi-k-bad}}{\delta} \left(  \rho^g_k + \rho^c_k + \rho^j_k \right)
\end{align*}
holds for any iteration \(k \in \naturals\) where \(\mathcal{B}_{\delta,k}\) occurs (see~\eqref{eq.nonopt_event}).
In view of Lemma~\ref{lem.expect-diff-phi-k}, we first bound the (conditional) probability of event \(\bar{\cT}_k > \hat{\cT}_k\) happening, i.e.,
\begin{align}
\Prob_k \left[ \bar{\cT}_k > \hat{\cT}_k \right]
& = \Prob_k \left[ \bar{\cT}_k > \min \{\bar{\cT}_k, \cT_k^{\trial}\} \right]
= \Prob_k \left[ \bar{\cT}_k > \cT_k^{\trial} \right]
= 1 - \Prob_k \left[ \bar{\cT}_k \leq \cT_k^{\trial} \right]. \label{eq:eiff-phi-k-bad-1}
\end{align}
When \(\cT_k^{\trial} \geq \tauzero\), we have \(\bar{\cT}_k \leq \cT^{\trial}_k\) by Lemma~\ref{lem.merit_parameter}, making \(\Prob_k \left[ \bar{\cT}_k > \hat{\cT}_k \right] = 0\). Therefore, it is sufficient to focus on the case when \(\cT_k^{\trial} < \tauzero\). By the occurance of \(\mathcal{B}_{\delta,k}\), Lemma~\ref{lem.ck-bdd-away} ensures that \(\norm{c(X_k)}_1 > \frac{\zeta \delta}{2 \kappa^2_H \kappa_y + \zeta} > 0\). Therefore, we have $\cT_k^{\trial} <\bar\tau_0$ and $\|c(X_k)\|_1 > 0$. Then, applying Lemma~\ref{lem.prob-tau-good-k} to~\eqref{eq:eiff-phi-k-bad-1} results in
\begin{align}\label{eq:key_phi_diff_subcase}
\eqref{eq:eiff-phi-k-bad-1}
& \leq 1 - \frac{
\Expect_k \left[ \left(\epsilon_k - \rc{cnst:lem.prob-tau-good-k} \norm{\bar{\Delta}_k}  \right) \cdot \ones \left( \norm{\bar{\Delta}_k} < \frac{\epsilon_k}{2 \rc{cnst:lem.prob-tau-good-k}} \right)  \right]
}{
\epsilon_k - \rc{cnst:lem.prob-tau-good-k} \Expect_k \left[ \norm{\bar{\Delta}_k} \cdot \ones \left( \norm{\bar{\Delta}_k} < \frac{\epsilon_k}{2 \rc{cnst:lem.prob-tau-good-k}} \right) \right] + \left( \rc{cnst:lem.prob-tau-good-k}^2 / \epsilon_k \right) \Expect_k \left[ \norm{\bar{\Delta}_k}^2 \right]
} \nonumber{} \\
& = \frac{ \epsilon_k \cdot \Prob_k \left[ \norm{\bar{\Delta}_k} \geq \frac{\epsilon_k}{2 \rc{cnst:lem.prob-tau-good-k}} \right] +
\left( \rc{cnst:lem.prob-tau-good-k}^2 / \epsilon_k \right) \Expect_k \left[ \norm{\bar{\Delta}_k}^2 \right]
}{
\epsilon_k - \rc{cnst:lem.prob-tau-good-k} \Expect_k \left[ \norm{\bar{\Delta}_k} \cdot \ones \left( \norm{\bar{\Delta}_k} < \frac{\epsilon_k}{2 \rc{cnst:lem.prob-tau-good-k}} \right) \right] + \left( \rc{cnst:lem.prob-tau-good-k}^2 / \epsilon_k \right) \Expect_k \left[ \norm{\bar{\Delta}_k}^2 \right]
} \nonumber{} \\
& \leq \frac{ \epsilon_k \cdot \Prob_k \left[ \norm{\bar{\Delta}_k} \geq \frac{\epsilon_k}{2 \rc{cnst:lem.prob-tau-good-k}} \right] +
\left( \rc{cnst:lem.prob-tau-good-k}^2 / \epsilon_k \right) \Expect_k \left[ \norm{\bar{\Delta}_k}^2 \right]
}{
0.5 \epsilon_k + \left( \rc{cnst:lem.prob-tau-good-k}^2 / \epsilon_k \right) \Expect_k \left[ \norm{\bar{\Delta}_k}^2 \right]
} \nonumber{} \\
& \leq \frac{ \left( 4 \rc{cnst:lem.prob-tau-good-k}^2  / \epsilon_k +
 \rc{cnst:lem.prob-tau-good-k}^2 / \epsilon_k \right) \Expect_k \left[ \norm{\bar{\Delta}_k}^2 \right]
}{
0.5 \epsilon_k
} = \frac{10 \rc{cnst:lem.prob-tau-good-k}^{2} }{ \epsilon^2_k} \Expect_k \left[ \norm{\bar{\Delta}_k}^2 \right] \nonumber{} \\
& \leq \frac{10 \rc{cnst:lem.prob-tau-good-k}^2 }{ \epsilon^2_k} \left( \rho^g_k + \rho^c_k + \rho^j_k \right) = \frac{10 \rc{cnst:lem.prob-tau-good-k}^2}{\epsilon_{\tau}^2 \norm{c(X_k)}_1^2}
\left( \rho^g_k + \rho^c_k + \rho^j_k \right),
\end{align}
where \(\epsilon_k := \epsilon_{\tau} \norm{c(X_k)}_1\), the second inequality is because \(\rc{cnst:lem.prob-tau-good-k} \Expect_k \left[ \norm{\bar{\Delta}_k} \cdot \ones \left( \norm{\bar{\Delta}_k} < \epsilon_k / (2 \rc{cnst:lem.prob-tau-good-k}) \right) \right] \leq 0.5 \epsilon_k\), the third inequality is by conditional Markov's inequality, and the last inequality is from Assumption~\ref{ass.estimate_nonasymptotic}.

Furthermore, Lemma~\ref{lem.ck-bdd-away} implies that when $\cT_k^{\trial} < +\infty$ and \(\mathcal{B}_{\delta,k}\) occurs (see~\eqref{eq.nonopt_event}),
\begin{align*}
&\kappa_H \norm{D_k}^2 + \norm{c(X_k)}_1 \norm{Y_k}_{\infty} < \frac{2 \kappa_H \kappa_y}{\zeta} \norm{c(X_k)}_1 + \kappa_y \norm{c(X_k)}_1 = \frac{ 2 \kappa_H \kappa_y + \kappa_y \zeta}{\zeta} \norm{c(X_k)}_1 \\
\text{and}\quad &\frac{\zeta\delta}{2 \kappa^2_H \kappa_y + \zeta} < \norm{c(X_k)}_1,
\end{align*}
from which, we further have
\begin{align*}
\left(\kappa_H \norm{D_k}^2 + \norm{c(X_k)}_1 \norm{Y_k}_{\infty}\right) \Prob_k \left[ \bar{\cT}_k > \hat{\cT}_k \right]
& \leq
\frac{ 2 \kappa_H \kappa_y + \kappa_y \zeta}{\zeta} \norm{c(X_k)}_1 \cdot
\frac{10 \rc{cnst:lem.prob-tau-good-k}^2}{\epsilon_{\tau}^2 \norm{c(X_k)}_1^2}
\left( \rho^g_k + \rho^c_k + \rho^j_k \right) \\
& = \frac{10 \rc{cnst:lem.prob-tau-good-k}^2 (2 \kappa_H \kappa_y + \kappa_y \zeta)}{\zeta \epsilon^2_{\tau} \norm{c(X_k)}_1}
\left( \rho^g_k + \rho^c_k + \rho^j_k \right)
\leq \frac{\rc{cnst:lem.expect-diff-phi-k-bad}}{\delta} \left( \rho^g_k + \rho^c_k + \rho^j_k \right),
\end{align*}
where the first inequality follows from~\eqref{eq:key_phi_diff_subcase} and the last inequality holds by choosing a large enough constant \(\rc{cnst:lem.expect-diff-phi-k-bad} \geq 10 \rc{cnst:lem.prob-tau-good-k}^2 (2 \kappa_H \kappa_y + \kappa_y \zeta) (2 \kappa^2_H \kappa_y + \zeta)/(\zeta^2\epsilon_{\tau}^2)\). The proof is completed.
\eproof

\blemma\label{lem.expect-diff-phi-k-stationary}
Suppose that Assumptions~\ref{ass.prob}--\ref{ass.estimate}, \ref{ass.event_nonasymptotic} and \ref{ass.estimate_nonasymptotic} hold.
Then, there exist positive constants \(\{ \nc\label{cnst:lem.expect-diff-phi-k-stationary-1}, \nc\label{cnst:lem.expect-diff-phi-k-stationary-2}\} \subset \reals_{>0}\) that only depend on \((L, \Gamma, \cAmax, \theta, \delta, \zeta, \rhomax, q_{\text{min}}, \omega_1, \omega_2, \omega_3, \kappa_{\nabla f}, \kappa_{\nabla c})\) in
Assumptions~\ref{ass.prob},~\ref{ass.H}, and~\ref{ass.estimate_nonasymptotic},~\eqref{eq:step-size-bds}, and Lemmas~\ref{lem.singular_values},~\ref{lem.solution_bias}, and~\ref{lem.solution_variance} such that the following holds for any iteration \(k \in \naturals\) where \(\mathcal{B}_{\delta,k}\) (see~\eqref{eq.nonopt_event}) does not occur:
\begin{align}
\Expect_k \left[ \phi (X_k + \bar{\cA}_k \bar{D}_k, \bar{\cT}_k) - \phi(X_k, \bar{\cT}_k) \right] \leq \inrevise{\rc{cnst:lem.expect-diff-phi-k-stationary-2} \beta_k \delta} +
\rc{cnst:lem.expect-diff-phi-k-stationary-1} \beta_k \left( \sqrt{\rho^g_k} + \sqrt{\rho^c_k} + \sqrt{\rho^j_k} \right).
\label{eq:expect-diff-phi-k-stationary}
\end{align}
\elemma

\bproof
At any \(\delta\)-stationary iterate \((X_k, Y_k)\), it holds that
\(\norm{\nabla f(X_k) + \nabla c(X_k) Y_k}^2 + \norm{c(X_k)}_1 \leq \delta\) and by~\eqref{eq.linear_system_true} one has
\begin{align}
&-\nabla f(X_k)^{\top} D_k
= D^{\top}_k H_k D_k + D^{\top}_k \nabla c(X_k) Y_k
= D^{\top}_k H_k D_k - c(X_k)^{\top} Y_k \label{eq:kkt-grad-decrease}
\\
\text{and} \quad &\zeta^2 \norm{D_k}^2
\leq \norm{H_k D_k}^2
= \norm{-\nabla f(X_k) - \nabla c(X_k) Y_k}^2 \leq \delta. \label{eq:dk-bdd-by-delta}
\end{align}
Expanding the difference between merit functions in the left-hand-side of~\eqref{eq:expect-diff-phi-k-stationary} yields
\begin{align}
& \phi (X_k + \bar{\cA}_k \bar{D}_k, \bar{\cT}_k)  -  \phi(X_k, \bar{\cT}_k) \nonumber{}\\
= \ &\bar{\cT}_k \left( f(X_k + \bar{\cA}_k \bar{D}_k) - f(X_k) \right)
+ \norm{c(X_k + \bar{\cA}_k \bar{D}_k)}_1 - \norm{c(X_k)}_1 \nonumber{} \\
\leq \ &\bar{\cT}_k \left(\bar{\cA}_k \nabla f(X_k)^{\top} \bar{D}_k + \frac{L}{2} \bar{\cA}_k^2 \norm{\bar{D}_k}^2\right) + \left(\norm{\bar{\cA}_k \nabla c(X_k)^{\top} \bar{D}_k}_1 + \frac{\Gamma}{2} \bar{\cA}_k^2 \norm{\bar{D}_k}^2  \right), \label{eq:diff-phi-decompo}
\end{align}
where the inequality is due to~\eqref{eq.Lipschitz_continuity} and triangle inequality. Next, we bound the first term in the right-hand-side of~\eqref{eq:diff-phi-decompo} by
\begin{equation}\label{eq.merit_func_decrease_key2}
\begin{aligned}
& \bar{\cT}_k \left(
  \bar{\cA}_k \nabla f(X_k)^{\top} \bar{D}_k + \frac{L}{2} \bar{\cA}_k^2 \norm{\bar{D}_k}^2
 \right) \\
\leq \ &\bar{\cT}_k \bar{\cA}_k \left(
  \nabla f(X_k)^{\top} D_k + \nabla f(X_k)^{\top} \left(\bar{D}_k - D_k\right)
+ \frac{L}{2} \bar{\cA}_k \left( \norm{D_k} + \norm{\bar{D}_k - D_k} \right)^2
 \right) \\
= \ &\bar{\cT}_k \bar{\cA}_k \left(
  c(X_k)^{\top} Y_k - D_k^{\top} H_k D_k + \nabla f(X_k)^{\top} \left(\bar{D}_k - D_k\right)
+ \frac{L}{2} \bar{\cA}_k \left( \norm{D_k} + \norm{\bar{D}_k - D_k} \right)^2
 \right) \\
\leq \ &\tauzero \bar{\cA}^{\text{max}}_k \left(
  \norm{c(X_k)}_1 \norm{Y_k}_{\infty} + \kappa_{\nabla f} \norm{\bar{D}_k - D_k} + L \bar{\cA}_k \left( \norm{D_k}^2 + \norm{\bar{D}_k - D_k}^2 \right)
 \right),
\end{aligned}
\end{equation}
where the first equality is due to~\eqref{eq:kkt-grad-decrease}, and the second inequality follows from the Cauchy-Schwarz inequality, Assumption~\ref{ass.H}, and the monotonicity of $\{\bar\cT_k\}$ (see Lemma~\ref{lem.merit_parameter}). By taking conditional expectation on both sides of~\eqref{eq.merit_func_decrease_key2} and choosing large enough positive constants \(\nctwo\label{cnst:aux-1}\) and \(\nctwo\label{cnst:aux-2}\) such that
\begin{gather*}
\rctwo{cnst:aux-1} \geq
L \left( \cAmax + \theta \right)
  \inrevise{\left(\frac{1}{q^2_{\text{min}}} + \omega_2 + \omega_3 \right) \sqrt{\rhomax}}
\quad \text{and} \quad
\rctwo{cnst:aux-2} \geq
\kappa_{\nabla f} \cdot \max \left\{ \frac{1}{q_{\text{min}}}, \omega_1\right\},
\end{gather*}
one has
\begin{align}
& \Expect_k \left[
\bar{\cT}_k \left(
  \bar{\cA}_k \nabla f(X_k)^{\top} \bar{D}_k + \frac{L}{2} \bar{\cA}_k^2 \norm{\bar{D}_k}^2
 \right)
\right] \nonumber{} \\
\leq \ &\tauzero \inrevise{(\cAmax + \theta) \beta_k} \left(
  \kappa_y \delta + \kappa_{\nabla f} \Expect_k \left[ \norm{\bar{D}_k - D_k} \right] + \inrevise{L (\cAmax + \theta) \beta_k \left( \delta / \zeta^2 + \frac{\rho_k^g + \rho_k^c}{q_{\min}^2} + \omega_2\cdot\rho_k^j + \omega_3\cdot \sqrt{\rho_k^j(\rho_k^g+\rho_k^c)}  \right)}
 \right) \nonumber{} \\
\leq \ &\tauzero \inrevise{(\cAmax + \theta) \beta_k} \left(
  \inrevise{\left(\kappa_y + L \left( \cAmax + \theta \right) \beta_k / \zeta^2 \right)} \delta + \kappa_{\nabla f} \left( \frac{\sqrt{\rho_k^g + \rho_k^c}}{q_{\min}} + \omega_1\cdot \sqrt{\rho_k^j} \right)
+ \beta_k \rctwo{cnst:aux-1} \inrevise{\left( \sqrt{\rho^g_k} + \sqrt{\rho^c_k} + \sqrt{\rho^j_k} \right)}
 \right) \nonumber{} \\
\leq \ & \tauzero (\cAmax + \theta) \beta_k \left(
      \inrevise{\left(\kappa_y + L \left( \cAmax + \theta \right)\beta_k / \zeta^2 \right)} \delta
      + \inrevise{\left(\rctwo{cnst:aux-2} + \beta_k \rctwo{cnst:aux-1} \right) \left( \sqrt{\rho^g_k} + \sqrt{\rho^c_k} + \sqrt{\rho^j_k} \right)} \right) \label{eq:diff-phi-decompo-1}
\end{align}
where the first inequality is due to~\eqref{eq.basic_condition},~\inrevise{\eqref{eq:step-size-bds}},~\eqref{eq:dk-bdd-by-delta}, \eqref{eq.merit_func_decrease_key2} and Lemmas~\ref{lem.solution_variance} and~\ref{lem.ck-bdd-away}, \inrevise{and the second inequality follows Assumption~\ref{ass.estimate_nonasymptotic}, Lemma~\ref{lem.solution_bias} and \(\sqrt{a + b} \leq \sqrt{a} + \sqrt{b}\) for \(\set{a, b} \subset \reals_{\geq 0}\)}.
Following a similar approach, for the second term in the right-hand-side of~\eqref{eq:diff-phi-decompo}, we have that
\begin{equation}\label{eq.merit_func_decrease_key3}
\begin{aligned}
& \bar{\cA}_k \left( \norm{\nabla c(X_k)^{\top} \bar{D}_k}_1 + \frac{\Gamma}{2} \bar{\cA}_k \norm{\bar{D}_k}^2 \right) \\
\leq \ &\bar{\cA}_k \left(
\norm{ \nabla c(X_k)^{\top} D_k }_1 + \norm{\nabla c(X_k)^{\top} \left( \bar{D}_k - D_k \right)}_1
+ \frac{\Gamma}{2} \bar{\cA}_k \left( \norm{D_k} + \norm{\bar{D}_k - D_k} \right)^2
 \right) \\
\leq \ &\bar{\cA}_k^{\text{max}} \left(
  \norm{-c(X_k)}_1
  + \sqrt{m} \norm{\nabla c(X_k)^{\top}} \norm{ \bar{D}_k - D_k}
  + \Gamma \bar{\cA}_k \left( \norm{D_k}^2 + \norm{\bar{D}_k - D_k}^2 \right)
 \right),
\end{aligned}
\end{equation}
where the both of the inequalities apply triangle inequality, and the second inequality also leverages \eqref{eq.linear_system_true} and the Cauchy-Schwarz inequality.
Consequently, by choosing large enough positive constants \(\nctwo\label{cnst:aux-3}\) and \(\nctwo\label{cnst:aux-4}\) such that
\begin{gather*}
\rctwo{cnst:aux-3} \geq
\Gamma \left( \cAmax + \theta \right) \inrevise{\left(
  \frac{1}{q^2_{\text{min}}} + \omega_2 + \omega_3 \right) \sqrt{\rhomax}}
\quad \text{and} \quad
\rctwo{cnst:aux-4} \geq \sqrt{m}
\kappa_{\nabla c} \cdot \max \left\{ \frac{1}{q_{\text{min}}}, \omega_1\right\},
\end{gather*}
one has
\begin{align}
& \Expect_k \left[
\bar{\cA}_k \left( \norm{\nabla c(X_k)^{\top} \bar{D}_k}_1 + \frac{\Gamma}{2} \bar{\cA}_k \norm{\bar{D}_k}^2 \right)
\right] \nonumber{} \\
\leq \ &\inrevise{(\cAmax + \theta) \beta_k} \left(
\delta + \sqrt{m}\kappa_{\nabla c} \Expect_k \left[ \norm{\bar{D}_k - D_k} \right]
  + \Gamma \inrevise{(\cAmax + \theta) \beta_k} \left( \delta / \zeta^2 + \frac{\rho_k^g + \rho_k^c}{q_{\min}^2} + \omega_2\cdot\rho_k^j + \omega_3\cdot \sqrt{\rho_k^j(\rho_k^g+\rho_k^c)}  \right)
\right) \nonumber{} \\
\leq \ &(\cAmax + \theta) \beta_k  \left(
      \inrevise{\left( 1 + \Gamma (\cAmax + \theta) \beta_k / \zeta^2 \right)} \delta + \sqrt{m}\kappa_{\nabla c} \left( \frac{\sqrt{\rho_k^g + \rho_k^c}}{q_{\min}} + \omega_1\cdot \sqrt{\rho_k^j} \right)
      + \beta_k \rctwo{cnst:aux-3} \inrevise{\left( \sqrt{\rho^g_k} + \sqrt{\rho^c_k} + \sqrt{\rho^j_k} \right)}
 \right) \nonumber{} \\
\leq \ &(\cAmax + \theta) \beta_k  \left( \inrevise{\left( 1 + \Gamma \left( \cAmax + \theta \right) \beta_k / \zeta^2 \right)} \delta +  \inrevise{\left(\rctwo{cnst:aux-4} + \beta_k \rctwo{cnst:aux-3}  \right) \left( \sqrt{\rho^g_k} + \sqrt{\rho^c_k} + \sqrt{\rho^j_k} \right)} \right),
\label{eq:diff-phi-decompo-2}
\end{align}
where the first inequality is due to~\eqref{eq.basic_condition},\inrevise{\eqref{eq:step-size-bds},}~\eqref{eq:dk-bdd-by-delta}, \eqref{eq.merit_func_decrease_key3} and Lemma~\ref{lem.solution_variance}, \inrevise{and the second inequality follows Assumption~\ref{ass.estimate_nonasymptotic}, Lemma~\ref{lem.solution_bias} and \(\sqrt{a + b} \leq \sqrt{a} + \sqrt{b}\) for \(\set{a, b} \subset \reals_{\geq 0}\)}.
Combining~\eqref{eq:diff-phi-decompo},~\eqref{eq:diff-phi-decompo-1} and~\eqref{eq:diff-phi-decompo-2} yields
\begin{align*}
&\Expect_k \left[ \phi (X_k + \bar{\cA}_k \bar{D}_k, \bar{\cT}_k)  -  \phi(X_k, \bar{\cT}_k)  \right] \\
\leq \ &\tauzero (\cAmax + \theta) \beta_k \left(
  \inrevise{\left( \kappa_y + L \left( \cAmax + \theta \right) \beta_k / \zeta^2  \right)} \delta + \inrevise{\left(  \rctwo{cnst:aux-2} + \beta_k \rctwo{cnst:aux-1} \right) \left( \sqrt{\rho^g_k} + \sqrt{\rho^c_k} + \sqrt{\rho^j_k} \right)}
\right) \\
& \qquad + (\cAmax + \theta) \beta_k  \left( \inrevise{\left( 1 + \Gamma \left( \cAmax + \theta \right) \beta_k / \zeta^2 \right)} \delta + \inrevise{\left(\rctwo{cnst:aux-4} + \beta_k \rctwo{cnst:aux-3}  \right)
\left( \sqrt{\rho^g_k} + \sqrt{\rho^c_k} + \sqrt{\rho^j_k} \right)} \right)
 \\
  = \ & \beta_k \delta \inrevise{(\cAmax + \theta) \left( \tauzero  \left(\kappa_y + L(\cAmax + \theta) \beta_k / \zeta^2 \right) + \left( 1 + \Gamma (\cAmax + \theta) \beta_k / \zeta^2 \right) \right)} \\
& \qquad
  + \inrevise{\left( \cAmax + \theta \right) \beta_k \left( \tauzero \left( \rctwo{cnst:aux-2} + \beta_k \rctwo{cnst:aux-1} \right) + \rctwo{cnst:aux-4} + \beta_k \rctwo{cnst:aux-3} \right) \left( \sqrt{\rho^g_k} + \sqrt{\rho^c_k} + \sqrt{\rho^j_k} \right).} 
\end{align*}
\inrevise{Then, because $\beta_k\in(0,1]$,}~\eqref{eq:expect-diff-phi-k-stationary} follows by choosing large enough constants \(\{\rc{cnst:lem.expect-diff-phi-k-stationary-1}, \rc{cnst:lem.expect-diff-phi-k-stationary-2}\} \subset \RR_{>0}\) such that
\begin{gather*}
\rc{cnst:lem.expect-diff-phi-k-stationary-1}
\geq  \left( \cAmax + \theta \right) \inrevise{\left( \tauzero \left( \rctwo{cnst:aux-2} + \rctwo{cnst:aux-1} \right) + \rctwo{cnst:aux-4} + \rctwo{cnst:aux-3}\right)} \\
\text{and}\quad \rc{cnst:lem.expect-diff-phi-k-stationary-2}
\geq
(\cAmax + \theta) \inrevise{\left( \tauzero  \left(\kappa_y + L(\cAmax + \theta) / \zeta^2 \right) + \left( 1 + \Gamma (\cAmax + \theta) / \zeta^2 \right) \right),}
\end{gather*}
\inrevise{concluding the proof}.
\eproof

\subsubsection{Complexity of Algorithm~\ref{alg.main}}\label{sec:complex-result}

Combining theoretical results in Sections~\ref{sec:complex-prob-bound} and~\ref{sec:complex-noise} yields our final non-asymptotic convergence result.

\begin{theorem}\label{thm.complex}
Suppose that Assumptions~\ref{ass.prob}--\ref{ass.estimate}, \ref{ass.event_nonasymptotic} and \ref{ass.estimate_nonasymptotic} hold.
Let
\(\omega_{\delta} \geq \left(1 + \frac{\inrevise{4\kappa_H \rc{cnst:lem.expect-diff-phi-k-stationary-2}}}{\cAmin \min\{\taumin,1\} \zeta \sigma}\right)\) and \(\{\omega_{\beta}, \omega_{\rho}\} \subset \RR_{>0}\) be some positive constants, and
\(K\) be a uniform random variable with probability mass function \(p_K (k) := 1 / \kmax\) for all \(k \in [\kmax]\).
If \((\beta_k,\rho^{g}_k,\rho^c_k,\rho^j_k) = (\beta,\rho^g,\rho^c,\rho^j)\) for all \(k \in [k_{\text{max}}]\) and for any $\varepsilon > 0$ we set
\begin{equation}\label{eq:complex-cond}
\begin{aligned}
\inrevise{\beta \in \left(0, \min \left\{\frac{\cAmin}{2(1 - \eta)(\cAmax + \theta)}, 1  \right\}\right]}, \quad
  \left(\sqrt{\rho^g} + \sqrt{\rho^c} + \sqrt{\rho^j}  \right)
  \leq  \frac{\omega_{\rho}}{\inrevise{k_{\text{max}}}}, \quad\text{and}\quad
\delta = \frac{\inrevise{\varepsilon^2}}{10 \; \omega_{\delta}},
\end{aligned}
\end{equation}
then it follows with \(K\) having an independent discrete uniform distribution over \([k_{\text{max}}]\) that
\begin{align}
\Expect \left[ \,
\norm{\nabla f(X_K) + \nabla c(X_K) Y_K}^2 + \norm{c(X_K)}_1
\mid \nonE \,
\right]
  \leq
\inrevise{\frac{\rc{cnst:thm.complex-1} + \rc{cnst:thm.complex-4} \omega_{\rho}}{\beta \kmax}}
+ \frac{10 \omega_{\delta} \rc{cnst:thm.complex-3} \omega^2_{\rho}}{\inrevise{\varepsilon^2 \kmax^2}}
  + 0.1 \inrevise{\varepsilon^2},
  \label{eq:complex-main-rst}
\end{align}
where constants \inrevise{\(\{\nc\label{cnst:thm.complex-1}, \nc\label{cnst:thm.complex-3}, \nc\label{cnst:thm.complex-4}\} \subset\mathbb{R}_{>0}\)} are
\inrevise{{\small
\begin{equation}\label{eq:complex-param}
\begin{aligned}
\rc{cnst:thm.complex-1}
:= \frac{4\kappa_H \left( \tauzero \left( f(x_1) - f_{\inf} \right) + \norm{c(x_1)}_1 \right)}{\cAmin \min\{\taumin,1\} \zeta \sigma},~
\rc{cnst:thm.complex-3}
:= \frac{8 \kappa_H(\cAmax + \theta) (\tauzero - \taumin) \rc{cnst:lem.expect-diff-phi-k-bad}}{\cAmin \min\{\taumin,1\} \zeta \sigma},
\text{ and }
\rc{cnst:thm.complex-4}
:= \frac{4\kappa_H \left(\rc{cnst:lem.expect-diff-phi-k-stationary-1} + \rc{cnst:lem.expect-diff-phi-k-1} + \rc{cnst:lem.expect-diff-phi-k-2}\right)}{\cAmin \min\{\taumin,1\} \zeta \sigma}.
\end{aligned}
\end{equation}
}}
\end{theorem}
\bproof
\inrevise{Let $\IndSet{\mathcal{B}_{\delta,k}}$ and $\IndSet{\mathcal{B}^c_{\delta,k}}$ denote the indicator functions of the events that \(\mathcal{B}_{\delta,k}\) (see~\eqref{eq.nonopt_event}) occurs and does not occur, respectively.
We first relate the left-hand-side of~\eqref{eq:complex-main-rst} with the model reduction function by conditioning on whether event \(\mathcal{B}_{\delta,k}\) happens or not,} i.e.,
\begin{align}
& \Expect \left[ \,
\norm{\nabla f(X_K) + \nabla c(X_K) Y_K}^2 + \norm{c(X_K)}_1
\mid \nonE \,
\right] \nonumber{} \\
& = \sum_{k = 1}^{\kmax} p_K(k) \cdot \Expect \left[ \norm{\nabla f(X_k) + \nabla c(X_k) Y_k}^2 + \norm{c(X_k)}_1 \mid \nonE \right] \nonumber{} \\
& = \sum_{k = 1}^{\kmax} p_K(k) \cdot \left(
\Expect \left[ \left(\norm{\nabla f(X_k) + \nabla c(X_k) Y_k}^2 + \norm{c(X_k)}_1\right) \cdot \IndSet{\mathcal{B}_{\delta,k}}
\mid \nonE \right] \right. \nonumber{}
 \\
& \hspace{10em} \left.
  + \Expect \left[ \left(\norm{\nabla f(X_k) + \nabla c(X_k) Y_k}^2 + \norm{c(X_k)}_1 \right) \cdot \IndSet{\mathcal{B}^c_{\delta,k}}
\mid \nonE \right]
\right) \nonumber{} \\
& \leq \sum_{k = 1}^{\kmax} p_K(k) \cdot \left(
\Expect \left[ \left(\norm{\nabla f(X_k) + \nabla c(X_k) Y_k}^2 + \norm{c(X_k)}_1\right) \cdot \IndSet{\mathcal{B}_{\delta,k}}
\mid \nonE \right] + \delta \cdot \Prob \left[ \mathcal{B}^c_{\delta,k} \mid \nonE \right] \right) \nonumber{} \\
& \leq \sum_{k = 1}^{\kmax} p_K(k) \cdot \left( \frac{2 \kappa_H}{\min\{\taumin,1\} \zeta \sigma}
\Expect \left[ \Delta l(X_k, \hat{\cT}_k, \nabla f(X_k), c(X_k), \nabla c(X_k)^T, D_k) \cdot \IndSet{\mathcal{B}_{\delta,k}}
\mid \nonE\right] + \delta  \right), \label{eq:complex-main-1}
\end{align}
where the second equality follows from the property of expectation, the first inequality is due to the definition of event $\mathcal{B}_{\delta,k}$ (see~\eqref{eq.nonopt_event}), and the last inequality is by Lemma~\ref{lem:kkt-delta-l}. Next, we relate the model reduction function with the changes in merit functions, i.e.,
\begin{align}
& \sum_{k = 1}^{\kmax} \Expect \left[
\phi(X_k + \bar{\cA}_k \bar{D}_k, \bar{\cT}_k) - \phi(X_k, \bar{\cT}_k)
 \mid \nonE \right] \label{eq:sum-expect-phi-diff} \\
& = \sum_{k = 1}^{\kmax} \left(\Expect \left[
\left(\phi(X_k + \bar{\cA}_k \bar{D}_k, \bar{\cT}_k) - \phi(X_k, \bar{\cT}_k)\right) \cdot \IndSet{\mathcal{B}_{\delta,k}}
\mid \nonE \right] \right. \nonumber{} \\
& \left.\hspace{6em} + \Expect \left[
\left(\phi(X_k + \bar{\cA}_k \bar{D}_k, \bar{\cT}_k) - \phi(X_k, \bar{\cT}_k)  \right) \cdot \IndSet{\mathcal{B}^c_{\delta,\kappa}}
\mid \nonE \right] \right) \nonumber{} \\
& = \sum_{k = 1}^{\kmax} \left(\Expect \left[ \Expect_k \left[ \left(
\phi(X_k + \bar{\cA}_k \bar{D}_k, \bar{\cT}_k) - \phi(X_k, \bar{\cT}_k) \right)
 \cdot \IndSet{\mathcal{B}_{\delta,k}}\right]
\mid \nonE \right] \right. \nonumber{}\\
& \left.\hspace{6em} + \Expect \left[ \Expect_k \left[ \left(
\phi(X_k + \bar{\cA}_k \bar{D}_k, \bar{\cT}_k) - \phi(X_k, \bar{\cT}_k) \right)
  \cdot \IndSet{\mathcal{B}^c_{\delta,k}}\right]
\mid \nonE \right] \right), \nonumber{} \\
& = \sum_{k = 1}^{\kmax} \left(\Expect \left[ \Expect_k \left[
\phi(X_k + \bar{\cA}_k \bar{D}_k, \bar{\cT}_k) - \phi(X_k, \bar{\cT}_k)
\right] \cdot \IndSet{\mathcal{B}_{\delta,k}}
\mid \nonE \right] \right. \label{eq:cond-phi-diff-1}\\
& \left.\hspace{6em} + \Expect \left[ \Expect_k \left[
\phi(X_k + \bar{\cA}_k \bar{D}_k, \bar{\cT}_k) - \phi(X_k, \bar{\cT}_k)
  \right] \cdot \IndSet{\mathcal{B}^c_{\delta,k}}
\mid \nonE \right] \right), \label{eq:cond-phi-diff-2}
\end{align}
where the second equality follows from the tower property of conditional expectation, and the last equality is because \(\IndSet{\mathcal{B}_{\delta,k}}\) and \(\IndSet{\mathcal{B}^c_{\delta,k}}\) only depend on \((X_k,Y_k)\), making them \inrevise{\(\mathcal{F}'_k\)-measurable}.
By Lemma~\ref{lem.expect-diff-phi-k-bad}, the inner conditional expectation in~\eqref{eq:cond-phi-diff-1} admits the following upper bound
\begin{align*}
& \Expect_k \left[
\phi(X_k + \bar{\cA}_k \bar{D}_k, \bar{\cT}_k) - \phi(X_k, \bar{\cT}_k)
\right] \inrevise{\cdot \IndSet{\mathcal{B}_{\delta,k}}}  \\
&\quad \leq
  \inrevise{((1 - \eta) (\cAmax{} + \theta) \beta^2_k - \cAmin{} \beta_k) \Expect_k \left[ \Delta l(X_k,\hat\cT_k,\nabla f(X_k),c(X_k),\nabla c(X_k)^T,D_k) \right] \cdot \IndSet{\mathcal{B}_{\delta,k}}} \\
& \qquad + \inrevise{2} (\cAmax + \theta) \beta_k (\tauzero - \taumin) \frac{\rc{cnst:lem.expect-diff-phi-k-bad}}{\delta} \left(  \rho^g_k + \rho^c_k + \rho^j_k \right)
+ \inrevise{\left(\beta_k \rc{cnst:lem.expect-diff-phi-k-1}  + \beta^2_k \rc{cnst:lem.expect-diff-phi-k-2} \right)} \left( \sqrt{\rho^g_k} + \sqrt{\rho^c_k} + \sqrt{\rho^j_k} \right).
\end{align*}
Similarly, by Lemma~\ref{lem.expect-diff-phi-k-stationary}, one can obtain the following upper bound for the inner conditional expectation in~\eqref{eq:cond-phi-diff-2}
\begin{align*}
\Expect_k \left[ \phi (X_k + \bar{\cA}_k \bar{D}_k, \bar{\cT}_k) - \phi(X_k, \bar{\cT}_k) \right] \inrevise{\cdot \IndSet{\mathcal{B}^c_{\delta,k}}} 
\leq \inrevise{\rc{cnst:lem.expect-diff-phi-k-stationary-2} \beta_k \delta} + \rc{cnst:lem.expect-diff-phi-k-stationary-1}
 \beta_k \left( \sqrt{\rho^g_k} + \sqrt{\rho^c_k} + \sqrt{\rho^j_k} \right).
\end{align*}

Consequently,~\eqref{eq:sum-expect-phi-diff} has the following upper bound:
\begin{align}
& \sum_{k = 1}^{\kmax} \Expect \left[
\phi(X_k + \bar{\cA}_k \bar{D}_k, \bar{\cT}_k) - \phi(X_k, \bar{\cT}_k)
 \mid \nonE \right] \\
&\quad \leq \sum_{k=1}^{\kmax} \left( \Expect \left[
  \inrevise{((1 - \eta) (\cAmax{} + \theta) \beta^2_k - \cAmin{} \beta_k)} \Expect_k \left[
  \Delta l(X_k,\hat\cT_k,\nabla f(X_k),c(X_k),\nabla c(X_k)^T,D_k)
 \right] \cdot \IndSet{\mathcal{B}_{\delta,k}} \mid \nonE \right] \right. \nonumber{}\\
& \qquad \qquad + \inrevise{2} (\cAmax + \theta) \beta_k (\tauzero - \taumin) \frac{\rc{cnst:lem.expect-diff-phi-k-bad}}{\delta} \left(  \rho^g_k + \rho^c_k + \rho^j_k \right)
+ \inrevise{\left( \beta_k  \rc{cnst:lem.expect-diff-phi-k-1} + \beta_k^2 \rc{cnst:lem.expect-diff-phi-k-2} \right)} \left( \sqrt{\rho^g_k} + \sqrt{\rho^c_k} + \sqrt{\rho^j_k} \right) \nonumber{}
\\
&  \left.
 \qquad \qquad + \inrevise{\rc{cnst:lem.expect-diff-phi-k-stationary-2} \beta_k \delta} + \rc{cnst:lem.expect-diff-phi-k-stationary-1}
 \beta_k \left( \sqrt{\rho^g_k} + \sqrt{\rho^c_k} + \sqrt{\rho^j_k} \right)
 \right) \nonumber{} \\
&\quad \leq \sum_{k = 1}^{\kmax} \left(
  - \inrevise{\frac{1}{2}} \cAmin{} \beta_k \Expect \left[
  \Delta l (X_k, \hat{\cT}_k, \nabla f(X_k), c(X_k), \nabla c(X_k)^T, D_k) \cdot \IndSet{\mathcal{B}_{\delta,k}}
 \mid \nonE \right] \right.\nonumber{} \\
& \hspace{4em}
+ \inrevise{2} (\cAmax + \theta) \beta_k (\tauzero - \taumin) \frac{\rc{cnst:lem.expect-diff-phi-k-bad}}{\delta}
\left(  \rho^g_k + \rho^c_k + \rho^j_k \right)
+ \inrevise{\rc{cnst:lem.expect-diff-phi-k-stationary-2} \beta_k \delta} \nonumber{} \\
& \hspace{4em} \left.
+ \inrevise{\left( \rc{cnst:lem.expect-diff-phi-k-stationary-1} + \rc{cnst:lem.expect-diff-phi-k-1} \right)} \beta_k
  \left( \sqrt{\rho^g_k} + \sqrt{\rho^c_k} + \sqrt{\rho^j_k} \right)
+ \inrevise{\beta^2_k \rc{cnst:lem.expect-diff-phi-k-2}} \left( \sqrt{\rho^g_k} + \sqrt{\rho^c_k} + \sqrt{\rho^j_k} \right)
  \right),
\label{eq:sum-expect-phi-diff-ubd}
\end{align}
\inrevise{where the last inequality follows~\eqref{eq:complex-cond}.}
Next, we provide a lower bound of~\eqref{eq:sum-expect-phi-diff}. It follows Lemma~\ref{lem.merit_parameter} and~\eqref{eq.basic_condition} that \(\{\bar{\cT}_k\}_{k=1}^{\kmax}\) is monotonically decreasing and \(f(X_k) \geq f_{\inf}\) always holds, and we further have from~\eqref{eq.merit_func} that for any $k\in\{2,\ldots,k_{\text{max}}\}$
\begin{align*}
&\Expect \left[\phi(X_k,\bar{\cT}_{k-1}) - \bar{\cT}_{k-1}f_{\inf} \mid \nonE \right] = \Expect \left[
 \bar{\cT}_{k-1} \left( f(X_k) - f_{\inf} \right) + \norm{c(X_k)}_1 \mid \nonE
\right] \\
\geq \ &\Expect \left[
 \bar{\cT}_k \left( f(X_k) - f_{\inf} \right) + \norm{c(X_k)}_1 \mid  \nonE
\right]
= \Expect \left[\phi(X_k,\bar{\cT}_k) - \bar{\cT}_k f_{\inf} \mid \nonE \right],
\end{align*}
from which it follows that
\begin{align}
& \sum_{k = 1}^{k_{\text{max}}} \left(\Expect \left[
 \phi (X_k + \bar{\cA}_k \bar{D}_k, \bar{\cT}_k) \mid \nonE \right] - \Expect \left[ \phi(X_k, \bar{\cT}_k) \mid \nonE
\right] \right) \nonumber{} \\
= ~& \sum_{k = 1}^{k_{\text{max}}} \left(\Expect \left[
 \phi (X_{k+1}, \bar{\cT}_k) \mid \nonE \right] - \Expect \left[ \phi(X_k, \bar{\cT}_k) \mid \nonE
\right] \right) \nonumber{} \\
= ~& \Expect \left[ \phi(X_{k_{\text{max}}+1},\bar{\cT}_{k_{\text{max}}}) \mid \nonE \right] - \Expect \left[ \phi(x_1, \bar{\cT}_1) \mid \nonE
\right] + \sum_{k=2}^{k_{\text{max}}}\left( \Expect \left[ \phi(X_k, \bar{\cT}_{k-1}) - \phi(X_k, \bar{\cT}_k) \mid \nonE
\right] \right) \nonumber{} \\
\geq ~& \Expect \left[ \phi(X_{k_{\text{max}}+1},\bar{\cT}_{k_{\text{max}}}) \mid \nonE \right] - \Expect \left[ \phi(x_1, \bar{\cT}_1) \mid \nonE
\right] + \inrevise{\sum_{k=2}^{k_{\text{max}}} \Expect \left[ (\bar{\cT}_{k-1} - \bar{\cT}_k)f_{\inf} \mid \nonE
\right]} \nonumber{} \\
= ~ &
\Expect \left[ \bar{\cT}_{k_{\text{max}}} \left(
f(X_{k_{\text{max}} + 1}) - f_{\inf} \right) + \norm{c(X_{k_{\text{max}}+1})}_1 \mid \nonE \right]
- \inrevise{\Expect \left[ \bar{\cT}_1(f(x_1) - f_{\inf}) + \|c(x_1)\|_1 \mid \nonE \right]} \nonumber{} \\
\geq ~ &- \bar\tau_0 (f(x_1) - f_{\inf}) - \norm{c(x_1)}_1.
\label{eq:sum-expect-phi-diff-lbd}
\end{align}
Since \(\beta_k = \beta\) and \((\rho^g_k, \rho^c_k, \rho^j_k) = (\rho^g, \rho^c, \rho^j)\) for all $k\in[k_{\max}]$, combining~\eqref{eq:sum-expect-phi-diff-ubd} and~\eqref{eq:sum-expect-phi-diff-lbd} yields
\begin{equation}\label{eq.merit_func_decrease_key4}
\begin{aligned}
& \sum_{k=1}^{\kmax} \inrevise{\frac{1}{2}} \Expect \left[
  \Delta l (X_k, \hat{\cT}_k, \nabla f(X_k), c(X_k), \nabla c(X_k)^T, D_k) \cdot \IndSet{\mathcal{B}_{\delta,k}}
 \mid \nonE \right] \\
\leq \ & \frac{\bar\tau_0 (f(x_1) - f_{\inf}) + \norm{c(x_1)}_1}{\cAmin \beta}
      + \frac{\kmax}{\cAmin} \left[
\inrevise{2} (\cAmax + \theta) (\tauzero - \taumin) \frac{\rc{cnst:lem.expect-diff-phi-k-bad}}{\delta} \left( \rho^g + \rho^c + \rho^j \right) \right.  \\
& \qquad \left. + \inrevise{\rc{cnst:lem.expect-diff-phi-k-stationary-2}} \delta + \inrevise{\left( \rc{cnst:lem.expect-diff-phi-k-stationary-1} + \rc{cnst:lem.expect-diff-phi-k-1} + \beta \rc{cnst:lem.expect-diff-phi-k-2}\right)}
\left( \sqrt{\rho^g} + \sqrt{\rho^c} + \sqrt{\rho^j} \right) \right].
\end{aligned}
\end{equation}
Therefore, by the definition of function $p_K(\cdot)$, \eqref{eq:complex-cond}, \eqref{eq:complex-param}, \eqref{eq:complex-main-1}, \eqref{eq.merit_func_decrease_key4}, \inrevise{and Lemma~\ref{lem:kkt-delta-l}}, after rearrangement one has
\begin{align*}
&\Expect \left[ \,
\norm{\nabla f(X_K) + \nabla c(X_K) Y_K}^2 + \norm{c(X_K)}_1
\mid \nonE \,
\right] \\
\leq \ & \frac{\inrevise{4}\kappa_H}{\cAmin \min\{\taumin,1\} \zeta \sigma} \left[
\frac{\tauzero (f(x_1) - f_{\inf}) + \norm{c(x_1)}_1}{\beta \kmax}
+ \inrevise{2} (\cAmax + \theta) (\tauzero - \taumin) \frac{\rc{cnst:lem.expect-diff-phi-k-bad}}{\delta} \left(  \rho^g + \rho^c + \rho^j \right)
 \right. \\
&  \hspace{4em} \left. +
\inrevise{\left(
  \rc{cnst:lem.expect-diff-phi-k-stationary-1} + \rc{cnst:lem.expect-diff-phi-k-1}
+ \rc{cnst:lem.expect-diff-phi-k-2}
\right)}  \left( \sqrt{\rho^g} + \sqrt{\rho^c} + \sqrt{\rho^j} \right)
\right] + \left(\frac{\inrevise{4\kappa_H \rc{cnst:lem.expect-diff-phi-k-stationary-2}}}{\cAmin \min\{\taumin,1\} \zeta \sigma}  + 1\right) \delta \\
= \ & \inrevise{\frac{\rc{cnst:thm.complex-1}}{\beta \kmax}}
+ \frac{\rc{cnst:thm.complex-3}}{\delta} \left(  \rho^g + \rho^c + \rho^j  \right)
+ \rc{cnst:thm.complex-4} \left( \sqrt{\rho^g} + \sqrt{\rho^c} + \sqrt{\rho^j} \right)
+ \left(1 + \frac{\inrevise{4\kappa_H \rc{cnst:lem.expect-diff-phi-k-stationary-2}}}{\cAmin \min\{\taumin,1\} \zeta \sigma}\right) \delta \\
\leq \ &\inrevise{\frac{\rc{cnst:thm.complex-1} + \rc{cnst:thm.complex-4} \omega_{\rho}}{\beta \kmax}}
+ \frac{10 \omega_{\delta} \rc{cnst:thm.complex-3}}{\inrevise{\varepsilon^2}} \cdot \frac{\omega^2_{\rho}}{\inrevise{\kmax^2}}
+ 0.1 \inrevise{\varepsilon^2},
\end{align*}
which concludes the statement.
\eproof

\begin{cor}\label{cor.complex}
Suppose that one uses the empirical mean for estimating $(\nabla f(X_k), c(X_k), \nabla c(X_k)^T)$, and their population variances are all bounded by \(\rhomax\) (defined in Assumption~\ref{ass.estimate_nonasymptotic}). Then, under the conditions of Theorem~\ref{thm.complex}, Algorithm~\ref{alg.main} takes at most
\[
K_{\varepsilon} = \inrevise{\left\lceil\max \Set{
\frac{5(\rc{cnst:thm.complex-1} + \rc{cnst:thm.complex-4} \omega_{\rho})}{2\beta \varepsilon^2},
\frac{  5 \omega_{\rho} \sqrt{\omega_{\delta} \rc{cnst:thm.complex-3}}}{\varepsilon^2}
} \right\rceil} = \mathcal{O}(\varepsilon^{-2})
\] iterations and \(W_{\varepsilon} = \left\lceil\frac{27 \rhomax}{\omega^2_{\rho}} \inrevise{K^3_{\varepsilon}}\right\rceil = \mathcal{O}(\inrevise{\varepsilon^{-6}})\) samples for Algorithm~\ref{alg.main} to reach an iterate \((X_k, Y_k)\) that is \(\varepsilon\)-stationary in expectation, that is,
\begin{align*}
\Expect \left[
\norm{\nabla f(X_k) + \nabla c(X_k) Y_k}^2 + \norm{c(X_k)}_1
\mid \nonE \right] < \inrevise{\varepsilon^2}.
\end{align*}
\end{cor}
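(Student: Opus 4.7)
The proof will be a bookkeeping reduction from Theorem~\ref{thm.complex}, together with a short conversion from the variance condition in~\eqref{eq:complex-cond} to a sample budget. The plan is to: (i) allocate the target tolerance $\varepsilon$ across the three summands on the right-hand-side of~\eqref{eq:complex-main-rst}; (ii) read off $K_{\varepsilon}$ from the two resulting inequalities on $\kmax$; and (iii) translate the variance requirement into a per-iteration batch size and sum over iterations.

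For step (ii), I would split $\varepsilon$ as $0.4\varepsilon + 0.4\varepsilon + 0.1\varepsilon = 0.9\varepsilon < \varepsilon$, matching the first two slices to the two $\kmax$-dependent terms of~\eqref{eq:complex-main-rst} and absorbing the residual $0.1\varepsilon$ already present there. The condition
\[
\frac{\rc{cnst:thm.complex-1}/\omega_{\beta} + \omega_{\beta}\rc{cnst:thm.complex-2} + \rc{cnst:thm.complex-4}\omega_{\rho}}{\sqrt{\kmax}}\;\leq\;0.4\varepsilon
\]
rearranges to $\kmax \geq 25(\rc{cnst:thm.complex-1}/\omega_{\beta} + \omega_{\beta}\rc{cnst:thm.complex-2} + \rc{cnst:thm.complex-4}\omega_{\rho})^2/(4\varepsilon^2)$, and the condition $10\,\omega_{\delta}\,\rc{cnst:thm.complex-3}\,\omega_{\rho}^{2}/(\varepsilon\kmax) \leq 0.4\varepsilon$ rearranges to $\kmax \geq 25\,\omega_{\delta}\,\rc{cnst:thm.complex-3}\,\omega_{\rho}^{2}/\varepsilon^{2}$. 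Taking the ceiling of the maximum of these two quantities recovers $K_{\varepsilon} = \mathcal{O}(\varepsilon^{-2})$, and plugging back into~\eqref{eq:complex-main-rst} yields a right-hand-side strictly below $\varepsilon$, which is the claimed in-expectation $\varepsilon$-stationarity.

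For step (iii), I would convert the requirement $\sqrt{\rho^{g}} + \sqrt{\rho^{c}} + \sqrt{\rho^{j}} \leq \omega_{\rho}/\sqrt{\kmax}$ into a lower bound on the batch size $n$ used for each empirical-mean estimator. Under i.i.d.\ sampling with per-sample (population) variance bounded by $\rhomax$, averaging $n$ samples gives $\max\{\rho^{g},\rho^{c},\rho^{j}\} \leq \rhomax/n$, so the sufficient condition $3\sqrt{\rhomax/n} \leq \omega_{\rho}/\sqrt{\kmax}$ forces $n \geq 9\,\rhomax\,\kmax/\omega_{\rho}^{2}$. Since each iteration draws three such independent batches (one per estimated quantity), the per-iteration sample cost is $3n$, and summing across $K_{\varepsilon}$ iterations delivers the total $W_{\varepsilon} = \lceil 3n\cdot K_{\varepsilon}\rceil = \lceil 27\,\rhomax\,K_{\varepsilon}^{2}/\omega_{\rho}^{2}\rceil = \mathcal{O}(\varepsilon^{-4})$.

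The main ``obstacle'' is purely bookkeeping: verifying that the three $\varepsilon$-slices $(0.4,0.4,0.1)$ sum to strictly less than one, ensuring $n$ sits in the denominator of the empirical-mean variance bound (so that a larger batch directly shrinks $\rho^g,\rho^c,\rho^j$), and charging the three estimators separately to recover the constant $27$ in the definition of $W_{\varepsilon}$. No deeper analysis is required beyond what is already established in Theorem~\ref{thm.complex}.
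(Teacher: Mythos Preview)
Your proposal is correct and follows essentially the same approach as the paper: the same $0.4\varepsilon + 0.4\varepsilon + 0.1\varepsilon$ split of~\eqref{eq:complex-main-rst} to read off $K_{\varepsilon}$, and the same variance-to-batch-size conversion with the factor $9$ per estimator and $3$ estimators giving the constant $27$. The only step you leave implicit is passing from the expectation over the uniform random index $K$ in Theorem~\ref{thm.complex} to the existence of a single iterate $k\in[K_{\varepsilon}]$ achieving the bound; the paper makes this explicit via $\min_{k\in[K_{\varepsilon}]}\Expect[\cdot]\leq \Expect_K[\cdot]$.
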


\begin{proof}
By setting $\kmax \leftarrow K_{\varepsilon}$ and conditioning on the occurance of \(\nonE\), it holds by Theorem~\ref{thm.complex} that
\begin{align*}
& \min_{k \in [K_{\varepsilon}]} \left\{ \Expect \left[
\norm{\nabla f(X_k) + \nabla c(X_k) Y_k}^2 + \norm{c(X_k)}_1
\mid \nonE \right]
\right\}
\leq
\Expect \left[ \,
\norm{\nabla f(X_K) + \nabla c(X_K) Y_K}^2 + \norm{c(X_K)}_1
\mid \nonE \,
\right] \\
& \qquad \leq
\inrevise{\frac{\rc{cnst:thm.complex-1} + \rc{cnst:thm.complex-4} \omega_{\rho}}{\beta K_{\varepsilon}}}
+ \frac{10 \omega_{\delta} \rc{cnst:thm.complex-3}\omega^2_{\rho}}{\inrevise{\varepsilon^2K_{\varepsilon}^2}}
+ 0.1 \inrevise{\varepsilon^2}
\leq 0.4 \inrevise{\varepsilon^2} + 0.4 \inrevise{\varepsilon^2} + 0.1 \inrevise{\varepsilon^2} < \inrevise{\varepsilon^2},
\end{align*}
where $K$ is the uniform random variable defined in Theorem~\ref{thm.complex} and the third inequality follows the selection of \(K_{\varepsilon}\) because
\begin{align*}
K_{\varepsilon} &\geq \inrevise{\frac{5(\rc{cnst:thm.complex-1} + \rc{cnst:thm.complex-4} \omega_{\rho})}{2\beta \varepsilon^2}}
\quad\text{implies} \quad
\inrevise{\frac{\rc{cnst:thm.complex-1} + \rc{cnst:thm.complex-4} \omega_{\rho}}{\beta K_{\varepsilon}}}
\leq 0.4\inrevise{\varepsilon^2}, \\
\text{and} \quad K_{\varepsilon} &\geq \inrevise{\frac{5 \omega_{\rho} \sqrt{\omega_{\delta} \rc{cnst:thm.complex-3}}}{\varepsilon^2}}
\quad\text{implies} \quad
\frac{10 \omega_{\delta} \rc{cnst:thm.complex-3}\omega^2_{\rho}}{\inrevise{\varepsilon^2K_{\varepsilon}^2}}
\leq 0.4\inrevise{\varepsilon^2}.
\end{align*}
Next, we analyze the sample complexity. Suppose that one obtains \(N^g_k, N^c_k\) and \(N^j_k\)~\iid{} samples for estimating \(\nabla f(X_k)\), \(c(X_k)\) and \(\nabla c(X_k)\) at iteration \(k\), then
\begin{align*}
\Expect_k \left[ \norm{\bar{G}_k - \nabla f(X_k)}^2 \right] = \frac{(\sigma^g_k)^2}{N^g_k},
\quad
\Expect_k \left[ \norm{\bar{C}_k - c(X_k)}^2 \right] = \frac{(\sigma^c_k)^2}{N^c_k},\quad \text{and}
\quad
\Expect_k \left[ \norm{\inrevise{\bar{J}_k - \nabla c(X_k)^{\top}}}_{F}^2 \right] = \frac{(\sigma^j_k)^2}{N^j_k},
\end{align*}
where $(\sigma^g_k)^2$, $(\sigma^c_k)^2$, and  $(\sigma^j_k)^2$ are the population variances associated to $\nabla f(X_k)$, $c(X_k)$, and \(\nabla c(X_k)^T\), respectively. Therefore, it is sufficient to sample
\begin{align*}
N^g_k = \left\lceil \frac{9 (\sigma^g_k)^2 \inrevise{K^2_{\varepsilon}}}{\omega^2_{\rho}} \right\rceil,
\quad
N^c_k = \left\lceil \frac{9 (\sigma^c_k)^2 \inrevise{K^2_{\varepsilon}}}{\omega^2_{\rho}} \right\rceil,
\quad \text{and}\quad
N^j_k = \left\lceil \frac{9 (\sigma^j_k)^2 \inrevise{K^2_{\varepsilon}}}{\omega^2_{\rho}} \right\rceil
\end{align*}
to satisfy
\(\frac{\sigma^g_k}{\sqrt{N^g_k}} + \frac{\sigma^c_k}{\sqrt{N^c_k}}
+ \frac{\sigma^j_k}{\sqrt{N^j_k}}
\leq \frac{\omega_{\rho}}{\inrevise{K_{\varepsilon}}}\). In total, one needs
\begin{align*}
W_{\varepsilon} = \sum_{k=1}^{K_{\varepsilon}} \left( N^g_k + N^c_k + N^j_k \right)
\leq 3K_{\varepsilon} +  \frac{9 \inrevise{K^2_{\varepsilon}}}{\omega^2_{\rho}} \sum_{k=1}^{K_{\varepsilon}} \left( (\sigma^g_k)^2 + (\sigma^c_k)^2 + (\sigma^j_k)^2 \right)
\leq 3K_{\varepsilon} + \frac{27 \rhomax}{\omega^2_{\rho}} \inrevise{K_{\varepsilon}^3}
= \mathcal{O}(\inrevise{\varepsilon^{-6}})
\end{align*}
samples, where the last inequality follows the property of $\rhomax \geq \max\left\{(\sigma^g_k)^2, (\sigma^c_k)^2, (\sigma^j_k)^2\right\}$ (see Assumption~\ref{ass.estimate_nonasymptotic}). Therefore, we conclude the statement.
\end{proof}

\begin{bulkrevise}
\begin{remark}\label{rem.complexity_comparison}
The complexity results reported in Corollary~\ref{cor.complex} are competitive and well aligned with existing iteration and sample complexity bounds in the literature~\cite{alacaoglu2024complexity,BeraXieZhou23,CurtRobiOnei24,fang2025high}. 
Among these works,~\cite{BeraXieZhou23,CurtRobiOnei24,fang2025high} focus on \textit{deterministic} equality-constrained stochastic optimization problems, whereas~\cite{alacaoglu2024complexity} studies \textit{expectation} equality-constrained problems, which align more closely with the setting considered in our paper. 

Specifically,~\cite{CurtRobiOnei24} analyzes the complexity of the objective-function-free stochastic SQP method originally proposed in~\cite{BeraCurtRobiZhou21} and establishes an $\mathcal{O}(\varepsilon^{-4})$ iteration and sample complexity in the fully stochastic regime, where only one or very few stochastic estimates are required per iteration. In contrast,~\cite{BeraXieZhou23,fang2025high} rely on objective function estimates in their iterative updates, consider irreducible noise, and require refined stochastic estimates until the noise level in estimates is sufficiently small (with certain probability). More precisely,~\cite{BeraXieZhou23} assumes light-tailed noise in objective function estimates (e.g., ``one-sided'' subexponential distributions) and proposes a stochastic line-search SQP method that identifies first-order $\varepsilon$-stationary points within $\mathcal{O}(\varepsilon^{-2})$ iterations with high probability. \citet{fang2025high} addresses heavy-tailed noise and analyzes a stochastic trust-region SQP algorithm, establishing high-probability iteration complexities of $\mathcal{O}(\varepsilon^{-2})$ and $\mathcal{O}(\varepsilon^{-3})$ for first- and second-order $\varepsilon$-stationary points, respectively. Notably, neither~\cite{BeraXieZhou23} nor \cite{fang2025high} operates in the fully stochastic regime, and both require substantially more samples per iteration that the algorithm in~\cite{CurtRobiOnei24}, which can lead to inferior overall sample complexity. For example, the algorithm in~\cite{fang2025high} requires $\mathcal{O}(\varepsilon^{-6})$ and $\mathcal{O}(\varepsilon^{-9})$ samples to identify first- and second-order $\varepsilon$-stationary points, respectively, which are worse than the sample complexity reported in~\cite{CurtRobiOnei24}.

Finally,~\cite{alacaoglu2024complexity} considers the same problem setting as our work but imposes a strictly stronger assumption, namely the \textit{mean-square smoothness} condition, and proposes a variance-reduced algorithm that achieves an $\mathcal{O}(\varepsilon^{-5})$ sample complexity for identifying $\varepsilon$-stationary points. In contrast, our analysis neither relies on the \textit{mean-square smoothness} condition nor employs variance-reduction techniques. It is well known that, in unconstrained stochastic optimization, classical variance-reduction methods (e.g.,~\cite{fang2018spider}) typically improve the complexity of vanilla stochastic gradient methods by one order in $\varepsilon$ (from $\mathcal{O}(\varepsilon^{-4})$ to $\mathcal{O}(\varepsilon^{-3})$). From this perspective, we believe that our $\mathcal{O}(\varepsilon^{-6})$ sample complexity bound remains competitive, as the variance-reduced algorithm in~\cite{alacaoglu2024complexity} also improves the dependence on $\varepsilon$ by only one order.
\end{remark}

\begin{remark}\label{rmk:stop-rule}
A common practical stopping rule in stochastic optimization is to run the algorithm for a prescribed number of iterations \(\kmax\) and return one of the generated iterates. Motivated by the non-asymptotic result in Theorem~\ref{thm.complex}, one may select the output iterate uniformly at random from the \(\kmax\) iterates. In this case, Theorem~\ref{thm.complex} guarantees that the expected stationarity error of the selected iterate is bounded by the right-hand side of~\eqref{eq:complex-main-rst}.

In practice, however, it is often desirable to obtain a probabilistic guarantee on the stationarity error of the output iterate. To this end, under the assumptions of Theorem~\ref{thm.complex}, for any
$(\upvarepsilon, \updelta) \in \reals_{>0} \times (0,1]$, to achieve
\begin{align*}
\Prob \Big[
\norm{\nabla f(X_K) + \nabla c(X_K) Y_K}^2 + \norm{c(X_K)}_1 \geq \upvarepsilon^2
\,\Big|\, \nonE \Big] \leq \updelta,
\end{align*}
it is sufficient to select \(\kmax\) such that 
\begin{align*}
\kmax \geq \max \Set{
\frac{3 \left( \rc{cnst:thm.complex-1} + \rc{cnst:thm.complex-4} \omega_{\rho} \right)}{\beta \updelta \cdot \upvarepsilon^2},
\frac{3 \sqrt{\omega_{\updelta} \rc{cnst:thm.complex-3} \omega^2_{\rho}}}{\updelta \cdot \upvarepsilon^2}
} = \mathcal{O} \left( \frac{1}{\updelta \cdot \upvarepsilon^2} \right).
\end{align*}
To see this, let \(\varepsilon := \sqrt{10 \updelta \cdot \upvarepsilon^2 / 3}\) and by Theorem~\ref{thm.complex} and conditional Markov's inequality, it holds almost surely that
\begin{align*}
\Prob \Big[
\norm{\nabla f(X_K) + \nabla c(X_K) Y_K}^2 + \norm{c(X_K)}_1 \geq \upvarepsilon^2
\,\Big|\, \nonE \Big]
&\leq \frac{1}{\upvarepsilon^2}\mathbb{E}\Big[ \norm{\nabla f(X_K) + \nabla c(X_K) Y_K}^2 + \norm{c(X_K)}_1 \,\Big|\, \nonE \Big] \\
&\leq  
\frac{1}{\upvarepsilon^2} \left( 
\frac{\rc{cnst:thm.complex-1} + \rc{cnst:thm.complex-4} \omega_{\rho}}{\beta \kmax} 
+ \frac{3 \omega_{\updelta} \rc{cnst:thm.complex-3} \omega^2_{\rho}}{\updelta \upvarepsilon^2 \kmax^2}
  + \frac{1}{3} \updelta \upvarepsilon^2
\right)
\leq \updelta.
\end{align*}

In practice, selecting the last iterate encountered also serves as a reasonable option. In this regard, we refer the reader to our numerical experiments on CUTEst~\cite{gould2015cutest} problems in Section \ref{sec.ssm}, in particular Figure~\ref{fig:8plots}, which reports data profiles over 20 instances on 8 different problems. These results indicate that the last iterate is usually not far from the best iterate attained among all iterates generated by the algorithm. When our algorithm is applied to solve constrained machine learning problems, one may terminate it in a manner similar to stochastic gradient methods. For example, one can stop the algorithm based on practical heuristics such as stabilization of the training or validation loss or diminishing improvements across iterations.
\end{remark}
\end{bulkrevise}

\subsection{Poor Merit Parameter Behaviors}\label{sec.merit_parameter_behaviors}

\inrevise{In this section, we further illustrate the poor behaviors of $\{\bar\cT_k\}$ discussed in~\eqref{eq.poor_merit_parameters_asymptotic} and show that, under additional mild assumptions, case~$(i)$ of~\eqref{eq.poor_merit_parameters_asymptotic} never happens while case~$(ii)$ of~\eqref{eq.poor_merit_parameters_asymptotic} can occur only with probability zero.

Specifically, we will prove in Lemma~\ref{lem.tau_zero_asymptotic} that case~$(i)$ of~\eqref{eq.poor_merit_parameters_asymptotic} can occur only when the stochastic objective gradient estimates $\{\bar{G}_k\}$ and/or the stochastic constraint function estimates $\{\bar{C}_k\}$ are not  uniformly bounded (or, more precisely, there exists an infinite set $\mathcal{K}_{\searrow 0}\subseteq\mathbb{N}$ such that $\left\{\max\{\|\bar{G}_k\|,\|\bar{C}_k\|\}\right\}_{k\in\mathcal{K}_{\searrow 0}}$ goes to $+\infty$). When case~$(i)$ of~\eqref{eq.poor_merit_parameters_asymptotic} occurs (i.e., $\lim_{k\to\infty}\bar\cT_k = 0$), the algorithm asymptotically ignores the objective function and focuses solely on minimizing constraint violation, which in turn prevents convergence of the KKT stationarity measures in expectation. 

We will further analyze the likelihood of case~$(ii)$ of~\eqref{eq.poor_merit_parameters_asymptotic} under standard conditions. By Lemma~\ref{lem.merit_parameter}, case~$(ii)$ of~\eqref{eq.poor_merit_parameters_asymptotic} implies that there exist infinitely many iterations $k\in\mathcal{K}_{\bar\tau,big}$ such that 
\begin{equation}\label{eq.tau_trial}
\bar\cT_k^{\text{trial}} \geq (1-\epsilon_{\tau})\cdot\bar\cT_k^{\text{trial}} \geq \bar\cT_k > \cT_k^{\text{trial}} > 0.
\end{equation}
For any iteration $t\in\mathcal{K}_{\bar\tau,big}$, we directly have from~\eqref{eq.tau_trial} that
\begin{equation*}
\frac{|\bar\cT_t^{\text{trial}} - \cT_t^{\text{trial}}|}{\cT_t^{\text{trial}}} \geq \frac{\bar\cT_t^{\text{trial}} - \cT_t^{\text{trial}}}{\cT_t^{\text{trial}}} > \frac{\frac{\cT_t^{\text{trial}}}{1-\epsilon_{\tau}} - \cT_t^{\text{trial}}}{\cT_t^{\text{trial}}} = \frac{1}{1-\epsilon_{\tau}} - 1 = \frac{\epsilon_{\tau}}{1-\epsilon_{\tau}} > \epsilon_{\tau},
\end{equation*}
which implies that the stochastic estimates $(\bar{G}_t,\bar{C}_t,\bar{J}_t)$ cannot be sufficiently accurate. Hence, each iteration $t\in\mathcal{K}_{\bar\tau,big}$ necessarily corresponds to a significant inaccuracy in the stochastic estimates $(\bar{G}_t,\bar{C}_t,\bar{J}_t)$. 
Consequently, case~$(ii)$ of~\eqref{eq.poor_merit_parameters_asymptotic} would require such inaccuracies to occur infinitely often. Therefore, if the stochastic estimates $(\bar{G}_k,\bar{C}_k,\bar{J}_k)$ could be sufficiently accurate with non-zero probability at any iteration $k\in\mathbb{N}$, then case~$(ii)$ of~\eqref{eq.poor_merit_parameters_asymptotic} can occur only with probability zero. This observation constitutes the core reasoning underlying Lemma~\ref{lem.tau_large_asymptotic}. From an algorithmic perspective, whenever $\bar\cT_k > \cT_k^{\text{trial}}$, the crucial link between the model reduction $\Delta l(X_k,\bar\cT_k,\nabla f(X_k),c(X_k),\nabla c(X_k)^T,D_k)$ and the quantity $\|D_k\|^2 + \|c(X_k)\|_1$ (see Lemma~\ref{lem.model_reduction_suff_large_true}) is lost, which is essential for establishing convergence of the KKT stationarity measures.}

\blemma\label{lem.tau_zero_asymptotic}
Suppose Assumptions~\ref{ass.prob}--\ref{ass.estimate} hold. For any $k\in\mathbb{N}$, if $\bar{G}_k^T\bar{D}_k + \frac{1}{2}\bar{D}_k^TH_k\bar{D}_k > 0$, then $\bar\cT_k^{\trial} \geq \frac{1-\sigma}{\|\bar{Y}_k\|_{\infty}}$.
If $\{\bar{G}_k\}$ and $\{\bar{C}_k\}$ are uniformly bounded, then case~$(i)$ of~\eqref{eq.poor_merit_parameters_asymptotic} never happens, i.e., there must exists a constant $\bar\tau_{lb} > 0$ such that $\bar\cT_k \geq \bar\tau_{lb}$ for all $k\in\mathbb{N}$.
\elemma
\bproof
When $\bar{G}_k^T\bar{D}_k + \frac{1}{2}\bar{D}_k^TH_k\bar{D}_k > 0$, we have from Assumption~\ref{ass.H},~\eqref{eq.linear_system},~\eqref{eq.merit_parameter_update} and the Cauchy-Schwarz inequality that
\bequation
\label{eq:lower-bd-on-tau-trial}
\begin{aligned}
\bar\cT_k^{\trial} &= \frac{(1-\sigma)\|\bar{C}_k\|_1}{\bar{G}_k^T\bar{D}_k + \frac{1}{2}\bar{D}_k^TH_k\bar{D}_k} \geq \frac{(1-\sigma)\|\bar{C}_k\|_1}{\bar{G}_k^T\bar{D}_k + \bar{D}_k^TH_k\bar{D}_k} = \frac{(1-\sigma)\|\bar{C}_k\|_1}{-\bar{D}_k^T\bar{J}_k^T\bar{Y}_k} \\
&= \frac{(1-\sigma)\|\bar{C}_k\|_1}{\bar{C}_k^T\bar{Y}_k} \geq \frac{(1-\sigma)\|\bar{C}_k\|_1}{\|\bar{C}_k\|_1\|\bar{Y}_k\|_{\infty}} = \frac{1-\sigma}{\|\bar{Y}_k\|_{\infty}},
\end{aligned}
\eequation
which proves the first part of the statement.

\inrevise{Next, we show}
that $\{\bar\cT_k^{\trial}\}$ is uniformly bounded away from zero. When $\bar{G}_k^T\bar{D}_k + \frac{1}{2}\bar{D}_k^TH_k\bar{D}_k \leq 0$, we have $\bar\cT_k^{\trial} = +\infty$ from~\eqref{eq.merit_parameter_update}. On the other hand, if $\bar{G}_k^T\bar{D}_k + \frac{1}{2}\bar{D}_k^TH_k\bar{D}_k > 0$, then we have \eqref{eq:lower-bd-on-tau-trial} holds.
Moreover, by~\eqref{eq.linear_system}, Lemma~\ref{lem.singular_values}, and the boundedness of $\{\bar{G}_k\}$ and $\{\bar{C}_k\}$, there exists a constant $\kappa_{\bar{y}} > 0$ such that $\|\bar{Y}_k\|_{\infty} \leq \kappa_{\bar{y}}$ for all $k\in\mathbb{N}$, which further provides a lower bound of $\{\bar\cT_k^{\trial}\}$ that
\bequationn
\bar\cT_k^{\trial} \geq \frac{1-\sigma}{\|\bar{Y}_k\|_{\infty}} \geq \frac{1-\sigma}{\kappa_{\bar{y}}}
\eequationn
for all $k\in\mathbb{N}$. Finally, by the help of~\eqref{eq.merit_parameter_update}, we have $\bar\cT_k < \bar\cT_{k-1}$ only when $\bar\cT_{k-1} > (1-\epsilon_{\tau})\bar\cT_k^{\trial} \geq \frac{(1-\epsilon_{\tau})(1-\sigma)}{\kappa_{\bar{y}}}$. By taking $\bar\tau_{lb} = (1-\epsilon_{\tau})\cdot\min\left\{\bar\tau_{0}, \frac{(1-\epsilon_{\tau})(1-\sigma)}{\kappa_{\bar{y}}}\right\}$, we would conclude the statement.
\eproof

\blemma\label{lem.tau_large_asymptotic}
Suppose Assumptions~\ref{ass.prob}--\ref{ass.estimate} hold. If there exists a probability $p\in(0,1]$ that for any $k\in\mathbb{N}$
\bequationn
\mathbb{P}[(\bar{G}_k,\bar{C}_k,\bar{J}_k) = (\nabla f(X_k),c(X_k),\nabla c(X_k)^T) | \inrevise{\cG_k}] \geq p,
\eequationn
then case~$(ii)$ of~\eqref{eq.poor_merit_parameters_asymptotic} at most happens with probability zero.
\elemma
\bproof
From the lemma statement and Lemma~\ref{lem.merit_parameter}, we know that for any $k\in\mathbb{N}$,
\bequation\label{eq.low_prob_event}
\mathbb{P}[\bar\cT_k > \cT_k^{\trial} | \inrevise{\cG_k}] = 1 - \mathbb{P}[\bar\cT_k \leq \cT_k^{\trial} | \inrevise{\cG_k}] \leq 1 - \mathbb{P}[\bar\cT_k^{\trial} = \cT_k^{\trial} | \inrevise{\cG_k}] \leq 1 - p.
\eequation
Note that case~$(ii)$ of~\eqref{eq.poor_merit_parameters_asymptotic} can only happen when the event $\bar\cT_k > \cT_k^{\trial}$ shows up for infinite iterations $k\in\cK_{\bar\tau,big}$, which can only happen with probability zero; see~\eqref{eq.low_prob_event}.
\eproof

We note that in addition to Assumptions~\ref{ass.prob}--\ref{ass.estimate}, Lemma~\ref{lem.tau_zero_asymptotic} requires $\{\bar{G}_k\}$ and $\{\bar{C}_k\}$ being uniformly bounded, which is not a restrictive condition because when random vector $\omega$ (see~\eqref{eq.prob})
has finite support it is implied by the condition that all objective and constraint component functions as well as their associated derivatives are bounded. Similarly, Lemma~\ref{lem.tau_large_asymptotic} also applies when \(\omega\) has finite support, because each singleton in the support is an atom. Additionally, we show that \inrevise{case $(i)$ of}~\eqref{eq.poor_merit_parameters_asymptotic} occurs with low probability if the variance sequence \(\left\{\sqrt{\rho^g_k} + \sqrt{\rho^j_k} + \sqrt{\rho^c_k}\right\}\) is bounded by \(\mathcal{O}(1/\sqrt{\kmax})\) \inrevise{(see Lemma~\ref{lem:tau-nonasymptotic-bdd-away-from-zero})} and with zero probability if the variance sequence \(\{\rho^g_k + \rho^j_k + \rho^c_k\}\) is summable \inrevise{(see Lemma~\ref{lem.tau_zero_wpzero})}. The main motivation behind these two results is that \(1/\bar{\cT}_k^\trial\) is finite with high probability, which is formalized in the following lemma.

\blemma\label{lem.tau_trail_markov}
Suppose that Assumptions~\ref{ass.prob}--\ref{ass.estimate} hold and  that Assumption~\ref{ass.estimate_asymptotic} holds with the natural filtration $\{\natF_k\}$. Then, for any iterate \(k \in \naturals{}\), it holds for any \(\nctwo\label{cnst:tau-trail-markov} > \kappa_y\) (see Lemma~\ref{lem.ck-bdd-away}) that
\bequationn
\Prob \left[
 \frac{(1 - \sigma)}{\bar{\cT}_k^{\trial}} \geq \rctwo{cnst:tau-trail-markov}
\bigg| \natF_k
\right] \leq
\frac{\rc{cnst:diminishing-tau-1}}{(\rctwo{cnst:tau-trail-markov} - \kappa_y)^2} \left( \rho^j_k + \rho^g_k + \rho^c_k \right),
\eequationn
where \(\nc\label{cnst:diminishing-tau-1}\) is a positive constant that only depends on \((q_{\text{min}}, \omega_2, \omega_3)\) defined in Lemmas~\ref{lem.singular_values} and~\ref{lem.solution_variance}.
\elemma

\bproof
By Lemma~\ref{lem.tau_zero_asymptotic} and the definition of \(\bar\cT_k^\trial\), when $\bar{G}_k^T\bar{D}_k + \frac{1}{2}\bar{D}_k^TH_k\bar{D}_k > 0$, we have
\begin{align*}
\frac{(1 - \sigma)}{\bar{\cT}_k^{\trial}}
\leq \norm{\bar{Y}_k}_\infty
\leq \norm{\bar{Y}_k} \leq \norm[\bigg]{
  \begin{bmatrix}
    \bar{D}_k \\ \bar{Y}_k
  \end{bmatrix}
} \leq \norm[\bigg]{
  \begin{bmatrix}
    D_k \\ Y_k
  \end{bmatrix}} + \norm[\bigg]{
  \begin{bmatrix}
    \bar{D}_k - D_k \\ \bar{Y}_k - Y_k
  \end{bmatrix}}
\leq \kappa_y + \norm[\bigg]{
  \begin{bmatrix}
    \bar{D}_k - D_k \\ \bar{Y}_k - Y_k
  \end{bmatrix}},
\end{align*}
where the fourth inequality is due to the triangle inequality and the last inequality follows from \eqref{eq.yk_bound}.
In addition, when $\bar{G}_k^T\bar{D}_k + \frac{1}{2}\bar{D}_k^TH_k\bar{D}_k \leq 0$, by~\eqref{eq.merit_parameter_update} and \eqref{eq.yk_bound}, we directly have $\frac{(1 - \sigma)}{\bar{\cT}_k^{\trial}} \leq \kappa_y + \left\|\begin{bmatrix}
    \bar{D}_k - D_k \\ \bar{Y}_k - Y_k
  \end{bmatrix}\right\|$ as well.
Accordingly, by the conditional Markov's inequality and Lemma~\ref{lem.solution_variance} applied to the natural filtration, one has
\begin{align*}
\Prob \left[
 \frac{(1 - \sigma)}{\bar{\cT}_k^{\trial}} \geq \rctwo{cnst:tau-trail-markov}
 \bigg| \natF_k
\right]
& \leq
\Prob \left[
 \kappa_y +
\norm[\bigg]{
  \begin{bmatrix}
    \bar{D}_k - D_k \\ \bar{Y}_k - Y_k
  \end{bmatrix}}
\geq \rctwo{cnst:tau-trail-markov}
\bigg| \natF_k
\right]
=
\Prob \left[
\norm[\bigg]{
  \begin{bmatrix}
    \bar{D}_k - D_k \\ \bar{Y}_k - Y_k
  \end{bmatrix}}
\geq \rctwo{cnst:tau-trail-markov} - \kappa_y
\bigg| \natF_k
\right] \\
& \leq \frac{1}{(\rctwo{cnst:tau-trail-markov} - \kappa_y)^2} \
\Expect \left[
\norm[\bigg]{
  \begin{bmatrix}
    \bar{D}_k - D_k \\ \bar{Y}_k - Y_k
  \end{bmatrix}}^2
  \bigg| \natF_k
 \right] \\
& \leq \frac{1}{(\rctwo{cnst:tau-trail-markov} - \kappa_y)^2}
\left(
\frac{\rho_k^g + \rho_k^c}{q_{\min}^2} + \omega_2\cdot\rho_k^j + \omega_3\cdot \sqrt{\rho_k^j(\rho_k^g+\rho_k^c)}
\right).
\end{align*}
Subsequently, by \(ab \leq \left( a^2 + b^2 \right) / 2\) for any $\{a, b\} \subset\mathbb{R}_{\geq 0}$, we have \(\sqrt{\rho^j_k \left( \rho^g_k + \rho^c_k \right)} \leq \frac{1}{2} \left(\rho^j_k + \rho^g_k + \rho^c_k\right)\) and
\begin{align*}
\Prob \left[
 \frac{(1 - \sigma)}{\bar{\cT}_k^{\trial}} \geq \rctwo{cnst:tau-trail-markov}
 \bigg| \natF_k
\right] \leq
\frac{\rc{cnst:diminishing-tau-1}}{(\rctwo{cnst:tau-trail-markov} - \kappa_y)^2} \left( \rho^j_k + \rho^g_k + \rho^c_k \right)
\end{align*}
follows by choosing \(\rc{cnst:diminishing-tau-1} = \max\left\{1/q^2_{\text{min}}, \omega_2\right\} + \omega_3/2\).
\eproof

\blemma\label{lem.tau_zero_wpzero}
Suppose that Assumptions~\ref{ass.prob}--\ref{ass.estimate} hold and  that Assumption~\ref{ass.estimate_asymptotic} holds with the natural filtration. Then \(\displaystyle \Prob \left[ \lim_{k \to \infty} \bar{\cT}_k = 0 \right] = 0\) as long as \(\sum_{k=1}^{\infty} (\rho^g_k + \rho^j_k + \rho^c_k) < +\infty\).
\elemma
\bproof
By~\eqref{eq.merit_parameter_update}, we have $\bar\cT_k < \bar\cT_{k-1}$ only when \(\bar{\cT}_k^{\trial} <  \bar{\cT}_{k-1} / (1 - \epsilon_{\tau})\), and $\lim_{k\to\infty}\bar{\cT}_k = 0$ implies that \(\bar{\cT}_k^{\trial} < \epsilon\) infinitely often (\io{}) for any given \(\epsilon > 0\). In particular, it holds for an arbitrary \(\epsilon_0 \in (0, (1 - \sigma) / (2\kappa_y)) \) that
\begin{align*}
\Prob \left[ \lim_{k \to \infty} \bar{\cT}_k = 0 \right] \leq \Prob \left[ \bar{\cT}_k^\trial < \epsilon_0 \ \text{i.o.} \right]
= \Prob \left[ 1/\bar{\cT}_k^\trial > 1/\epsilon_0 \ \text{i.o.} \right].
\end{align*}
If we can show that \(\sum_{k=1}^\infty \Prob \left[ 1 / \bar{\cT}_k^\trial > 1 / \epsilon_0 \right]\) exists and is finite, then \(\Prob \left[ 1 / \bar{\cT}_k^\trial > 1 / \epsilon_0 \ \text{i.o.} \right] = 0\) by Borel-Cantelli Lemma and we may complete the proof. To this end, we apply Lemma~\ref{lem.tau_trail_markov}, yielding
\begin{align*}
\Prob \left[
 \frac{(1 - \sigma)}{\bar{\cT}_k^{\trial}} > 2 \kappa_y + \delta_{\cT}
 \bigg| \natF_k
\right] \leq
\frac{\rc{cnst:diminishing-tau-1}}{(\kappa_y + \delta_{\cT})^2} \left( \rho^j_k + \rho^g_k + \rho^c_k \right)
\end{align*}
for any \(\delta_\cT > 0\). Then, it holds that
\begin{align*}
\sum_{k=1}^{\infty} \Prob \left[ \frac{1}{\bar{\cT}^\trial_k} > \frac{1}{\epsilon_0} \right]
& = \sum_{k=1}^{\infty} \Expect \left[ \Prob \left[
\frac{(1 - \sigma)}{\bar{\cT}^\trial_k} > 2 \kappa_y + \left( \frac{1 - \sigma}{\epsilon_0} - 2 \kappa_y \right) \bigg| \natF_k
\right]
\right] \\
& \leq \frac{\rc{cnst:diminishing-tau-1}}{(\frac{1 - \sigma}{\epsilon_0} - \kappa_y)^2}
\sum_{k=1}^{\infty} \left(  \rho^j_k + \rho^g_k + \rho^c_k \right) < +\infty,
\end{align*}
which concludes the statement.
\eproof

\blemma\label{lem:tau-nonasymptotic-bdd-away-from-zero}
Suppose that Assumptions~\ref{ass.prob}--\ref{ass.estimate} hold, Assumption~\ref{ass.estimate_asymptotic} holds with the natural filtration \(\{\cG_k\}\), and there exists a positive constant \(\omega_{\rho} > 0\) such that \(\sqrt{\rho^j_k} + \sqrt{\rho^g_k} + \sqrt{\rho^c_k} \leq \frac{\omega_{\rho}}{\sqrt{\kmax}}\) for all \(k \in [\kmax]\). Then, for any \(\epsilon \in (0, 1)\) there exists a positive constant \(\delta_{\tau} \) that is independent of \(\kmax\) and only relies on \((\bar\tau_0, \sigma, \epsilon_{\tau}, \omega_{\rho}, \kappa_y, \rc{cnst:diminishing-tau-1}, \epsilon)\) defined in Algorithm~\ref{alg.main},~\eqref{eq.merit_parameter_update}, Lemma~\ref{lem.ck-bdd-away}, and Lemma~\ref{lem.tau_trail_markov} such that
\begin{align*}
  \Prob \left[ \bar{\cT}_k > \delta_{\tau} \right] \geq 1 - \epsilon,
\end{align*}
for all \(k \in [\kmax]\).
\elemma

\bproof
We take \(\delta_{\tau} := 0.5 \cdot \min \Set{\frac{(1-\sigma)(1 - \epsilon_{\tau})^2}{\kappa_y + \sqrt{(\omega^2_{\rho} \rc{cnst:diminishing-tau-1}) / \epsilon}}, (1 - \epsilon_\tau) \tauzero}\) and decompose the proof into three steps, where in the steps~1 and~2 we construct an auxiliary random sequence to help connect \(\bar{\cT}_k\) with \(\max_{i \in [k]} \{1/\bar{\cT}_i^{\trial}\}\). In step~3, we show that for any \(k \in [\kmax]\), \(\max_{i \in [k]} \{1/\bar{\cT}_i^{\trial}\}\) is finite, or equivalently \(\min_{i \in [k]} \bar{\cT}_i^{\trial}\) is bounded away from zero, with high probability. \\[0.5em]
\textbf{Step 1.} Consider the following auxiliary sequence
\begin{align}\label{eq.key_relation_Tk}
\bar{T}_0 := \tauzero
\quad \text{and} \quad
\bar{T}_k :=
\begin{cases}
(1 - \epsilon_{\tau}) \cdot \bar{\cT}^{\trial}_k & \text{if \(\bar{\cT}_{k-1} > ( 1 - \epsilon_{\tau}) \bar{\cT}_k^{\trial}\),} \\
\bar{T}_{k-1} & \text{otherwise,}
\end{cases}
\quad \forall k \in \naturals,
\end{align}
where we know $\{\bar{T}_k\}\subset\mathbb{R}_{>0}$ by Lemma~\ref{lem.merit_parameter}. Next, we show that \((1 - \epsilon_{\tau}) \bar{T}_k \leq \bar{\cT}_k\) for all \(k \in \naturals\) using mathematical induction. When \(k = 1\), we consider two cases depending on $\bar\tau_0 \leq (1-\epsilon_{\tau})\cdot\bar\cT_1^{\trial}$ or not:
\begin{enumerate}[label=(\arabic*)]
\item When \(\bar\tau_0 \leq (1 - \epsilon_{\tau}) \cdot \bar{\cT}_1^{\trial}\), by \eqref{eq.merit_parameter_update} and \eqref{eq.key_relation_Tk}, we have
\(\bar{\cT}_1 = \bar\tau_0 = \bar{T}_0 = \bar{T}_1 \geq (1 - \epsilon_{\tau})\bar{T}_1\).
\item When \(\bar\tau_0 > (1 - \epsilon_{\tau}) \cdot \bar{\cT}_1^{\trial}\), it holds from \eqref{eq.merit_parameter_update} and \eqref{eq.key_relation_Tk} that
\[
\bar{\cT}_1
= (1 - \epsilon_{\tau}) \min \{\bar\tau_0, \bar{\cT}_1^{\trial}\}
\geq (1 - \epsilon_{\tau}) \min \{(1- \epsilon_{\tau}) \bar{\cT}_1^{\trial}, \bar{\cT}^{\trial}_1\}
= (1 - \epsilon_{\tau})^2 \cdot \bar{\cT}_1^{\trial} = (1 - \epsilon_{\tau}) \bar{T}_1.
\]
\end{enumerate}
For \(k \geq 2\), we assume that \((1 - \epsilon_{\tau}) \bar{T}_{k-1} \leq \bar{\cT}_{k-1}\) and aim to show \((1 - \epsilon_{\tau}) \bar{T}_k \leq \bar{\cT}_k\). To this end, using the same logic as above, we consider the following two cases:
\begin{enumerate}[label=(\arabic*)]
\item When \(\bar{\cT}_{k-1} \leq (1 - \epsilon_{\tau}) \cdot \bar{\cT}_k^{\trial}\), by \eqref{eq.merit_parameter_update} and \eqref{eq.key_relation_Tk}, we have
\(\bar{\cT}_k = \bar{\cT}_{k-1} \geq (1 - \epsilon_{\tau})\bar{T}_{k-1} = (1 - \epsilon_{\tau})\bar{T}_k\).
\item When \(\bar{\cT}_{k-1} > (1 - \epsilon_{\tau}) \cdot \bar{\cT}_k^{\trial}\), it holds from \eqref{eq.merit_parameter_update} and \eqref{eq.key_relation_Tk} that
\[
\bar{\cT}_k
= (1 - \epsilon_{\tau}) \min \{\bar{\cT}_{k-1}, \bar{\cT}_k^{\trial}\}
\geq (1 - \epsilon_{\tau}) \min \{(1- \epsilon_{\tau}) \bar{\cT}_k^{\trial}, \bar{\cT}^{\trial}_k\}
= (1 - \epsilon_{\tau})^2 \cdot \bar{\cT}_k^{\trial} = (1 - \epsilon_{\tau}) \bar{T}_k.
\]
\end{enumerate}
Therefore, we complete \textbf{Step 1}.

\textbf{Step~2.} We show that \(\displaystyle \bar{T}_k \geq \min\left\{\tauzero, (1 - \epsilon_{\tau}) \min_{i \in [k]} \bar{\cT}_i^{\trial}\right\}\) for all $k\in\NN$. When \(k = 1\), this relation holds by considering the following two cases:
\begin{enumerate}[label=(\arabic*)]
\item When \(\bar\tau_0 \leq (1 - \epsilon_{\tau}) \cdot \bar{\cT}_1^{\trial}\), by \eqref{eq.merit_parameter_update} and \eqref{eq.key_relation_Tk}, we have $\bar{T}_1 = \bar{T}_0 = \bar\tau_0
\geq \min\left\{\tauzero, (1 - \epsilon_{\tau}) \min_{i \in [1]} \bar{\cT}_i^{\trial}\right\}$.
\item When \(\bar\tau_0 > (1 - \epsilon_{\tau}) \cdot \bar{\cT}_1^{\trial}\), by \eqref{eq.merit_parameter_update} and \eqref{eq.key_relation_Tk}, we have
\(\bar{T}_1 = (1 - \epsilon_{\tau}) \bar{\cT}_1^{\trial}
\geq \min\left\{\tauzero, (1 - \epsilon_{\tau}) \min_{i \in [1]} \bar{\cT}_i^{\trial}\right\}\).
\end{enumerate}
When \(k \geq 2\), let's assume that \(\bar{T}_{k-1} \geq \min\{\tauzero, (1 - \epsilon_{\tau}) \min_{i \in [k-1]} \bar{\cT}_i^{\trial}\}\) and consider the following two cases:
\begin{enumerate}[label=(\arabic*)]
\item When \(\bar{\cT}_{k-1} \leq (1 - \epsilon_{\tau}) \cdot \bar{\cT}_k^{\trial}\), by \eqref{eq.merit_parameter_update} and \eqref{eq.key_relation_Tk}, we have
\[
\bar{T}_k = \bar{T}_{k-1}
\geq \min\left\{\tauzero, (1 - \epsilon_{\tau}) \min_{i \in [k-1]} \bar{\cT}_i^{\trial}\right\}
\geq \min\left\{\tauzero, (1 - \epsilon_{\tau}) \min_{i \in [k]} \bar{\cT}_i^{\trial}\right\}.
\]
\item When \(\bar{\cT}_{k-1} > (1 - \epsilon_{\tau}) \cdot \bar{\cT}_k^{\trial}\), by \eqref{eq.merit_parameter_update} and \eqref{eq.key_relation_Tk}, we have
\(\bar{T}_k = (1 - \epsilon_{\tau}) \bar{\cT}_k^{\trial}
\geq \min\left\{\tauzero, (1 - \epsilon_{\tau}) \min_{i \in [k]} \bar{\cT}_i^{\trial}\right\}\).
\end{enumerate}
In this way, we complete \textbf{Step 2}. Combining steps~1 and~2, we conclude that
\begin{align*}
\min\left\{
  (1 - \epsilon_{\tau}) \tauzero, (1 - \epsilon_{\tau})^2 \min_{i \in [k]} \bar{\cT}_i^{\trial}
\right\} \leq (1 - \epsilon_{\tau})\bar{T}_k \leq \bar{\cT}_k,
\quad
\forall k \in \naturals{}.
\end{align*}
As a result, it is sufficient to analyze \(\{\bar{\cT}_k^{\trial}\}\) because for any \(k \in [\kmax]\),
\begin{align}
\Prob \left[ \bar{\cT}_k \leq \delta_{\tau} \right]
& \leq \Prob \left[
\min\left\{
  (1 - \epsilon_{\tau}) \tauzero, (1 - \epsilon_{\tau})^2 \min_{i \in [k]} \bar{\cT}_i^{\trial}
\right\} \leq \delta_{\tau}
 \right] \nonumber{}\\
& = \Prob \left[
  (1 - \epsilon_{\tau})^2 \min_{i \in [k]} \bar{\cT}_i^{\trial}
\leq \delta_{\tau}
\right]
= \Prob \left[
  \max_{i \in [k]} \frac{(1 - \sigma)}{\bar{\cT}_i^{\trial}} \geq \frac{(1 - \sigma)(1 - \epsilon_{\tau})^2}{\delta_{\tau}}
 \right], \label{eq:tau-step23}
\end{align}
where the first equality is because \(\delta_{\tau} < (1 - \epsilon_{\tau}) \tauzero{}\) due to the definition of $\delta_{\tau}$, and the last equality is because \(\bar{\cT}_k^{\trial}\) is always positive.

\textbf{Step~3.} For clarity, we define \(\nctwo{}\label{cnst:delta-tau}:=(1 - \sigma)(1 - \epsilon_{\tau})^2 / \delta_{\tau}\) and \(S_k := \max_{i \in [k]} \{(1 - \sigma) / \bar{\cT}_i^{\trial}\}\) for all $k\in\NN$. Then, by the choice of \(\delta_{\tau}\), we have
\begin{align*}
\rctwo{cnst:delta-tau}
\geq 2 \left(\kappa_y + \sqrt{\frac{\omega^2_{\rho} \rc{cnst:diminishing-tau-1}}{\epsilon}} \right)
\implies \left( \rctwo{cnst:delta-tau} - \kappa_y \right)^2 \geq \frac{\omega^2_{\rho} \rc{cnst:diminishing-tau-1}}{\epsilon}
\implies \frac{\omega^2_{\rho} \rc{cnst:diminishing-tau-1}}{\left( \rctwo{cnst:delta-tau} - \kappa_y \right)^2} \leq \epsilon,
\end{align*}
and that \(S_{k-1}\) is \(\cG_k\)-measurable, because \(S_{k-1}\) only depends on
\(\left\{\bar{\cT}_i^{\trial}\right\}_{i \in [k-1]}\). Accordingly, we have that for all \(k \in [\kmax]\),
\bequation\label{eq.final_tau_bad_prob}
\begin{aligned}
\Prob \left[ S_k \geq \rctwo{cnst:delta-tau} \right]
& = \Prob \left[ S_k \geq \rctwo{cnst:delta-tau}, \ \frac{1 - \sigma}{\bar{\cT}_k^{\trial}} \geq \rctwo{cnst:delta-tau} \right]
+ \Prob \left[ S_k \geq \rctwo{cnst:delta-tau}, \ \frac{1 - \sigma}{\bar{\cT}_k^{\trial}} < \rctwo{cnst:delta-tau} \right] \\
& = \Prob \left[ \frac{1 - \sigma}{\bar{\cT}_k^{\trial}} \geq \rctwo{cnst:delta-tau} \right]
+ \Expect \left[ \Prob \left[ S_{k-1} \geq \rctwo{cnst:delta-tau}, \ \frac{1 - \sigma}{\bar{\cT}_k^{\trial}} < \rctwo{cnst:delta-tau} \bigg| \cG_k \right] \right] \\
& = \Expect \left[ \Prob \left[ \frac{1 - \sigma}{\bar{\cT}_k^{\trial}} \geq \rctwo{cnst:delta-tau} \bigg| \cG_k \right]
+ \Prob \left[
\frac{1 - \sigma}{\bar{\cT}_k^{\trial}} < \rctwo{cnst:delta-tau} \bigg| \cG_k \right] \cdot
\ones (S_{k-1} \geq \rctwo{cnst:delta-tau}) \right].
\end{aligned}
\eequation
For ease of exposition, we define \(a_k := \Prob \left[ \frac{1 - \sigma}{\bar{\cT}_k^{\trial}} \geq \rctwo{cnst:delta-tau} \mid \cG_k \right]\) and let \(\bar{a}_k \in [0, 1]\) be any deterministic upper bound of \(a_k\), whose existence is guaranteed because \(a_k \leq 1\). Then, it holds pointwise that
\begin{align*}
a_k + (1 - a_k) \cdot \ones (S_{k-1} \geq \rctwo{cnst:delta-tau}) \leq \bar{a}_k + (1 - \bar{a}_k) \cdot \ones (S_{k-1} \geq \rctwo{cnst:delta-tau}),
\end{align*}
and by~\eqref{eq.final_tau_bad_prob} we have the following recursive relation
\begin{align*}
\Prob \left[ S_k \geq \rctwo{cnst:delta-tau} \right] &= \mathbb{E}[a_k + (1 - a_k) \cdot \ones (S_{k-1} \geq \rctwo{cnst:delta-tau})] \\
& \leq \Expect \left[ \bar{a}_k + (1 - \bar{a}_k) \cdot \ones (S_{k-1} \geq \rctwo{cnst:delta-tau})\right]
  = \bar{a}_k + (1 - \bar{a}_k) \Prob \left[ S_{k-1} \geq \rctwo{cnst:delta-tau} \right],
\end{align*}
which recursively expands into
\begin{equation}\label{eq.key_cond_ub_P}
\begin{aligned}
\Prob \left[ S_k \geq \rctwo{cnst:delta-tau} \right]
& \leq \bar{a}_k + (1 - \bar{a}_k) \Prob \left[ S_{k-1} \geq \rctwo{cnst:delta-tau} \right] \\
& \leq \bar{a}_k + (1 - \bar{a}_k) (\bar{a}_{k-1} + (1 - \bar{a}_{k-1}) \Prob \left[ S_{k-2} \geq \rctwo{cnst:delta-tau} \right]) \\
& = \bar{a}_k + (1 - \bar{a}_k) \bar{a}_{k-1} + (1 - \bar{a}_k)(1 - \bar{a}_{k-1}) \Prob \left[ S_{k-2} \geq \rctwo{cnst:delta-tau} \right] \\
& \leq \sum_{i = 2}^k \left[ \bar{a}_i \prod_{j = i+1}^k (1 - \bar{a}_j) \right] + \Prob \left[ S_1 \geq \rctwo{cnst:delta-tau} \right] \prod_{i=2}^k (1 - \bar{a}_i)
\leq \sum_{i = 1}^k \left[ \bar{a}_i \prod_{j = i+1}^k (1 - \bar{a}_j) \right].
\end{aligned}
\end{equation}
Since \(\sqrt{\rho^j_k} + \sqrt{\rho^g_k} + \sqrt{\rho^c_k} \leq \frac{\omega_{\rho}}{\sqrt{\kmax}}\) for all \(k \in [\kmax]\), we have
\begin{gather*}
\left(\rho^j_k + \rho^g_k + \rho^c_k\right)
\leq \left( \sqrt{\rho^j_k} + \sqrt{\rho^g_k} + \sqrt{\rho^c_k} \right)^2
\leq \frac{\omega^2_{\rho}}{\kmax}, \quad \text{and} \\
a_k := \Prob \left[ \frac{1-\sigma}{\bar{\cT}_k^{\trial}} \geq \rctwo{cnst:delta-tau} \bigg| \cG_k \right]
  \leq \frac{\rc{cnst:diminishing-tau-1}}{\left(\rctwo{cnst:delta-tau} - \kappa_y\right)^2} \left( \rho^j_k + \rho^g_k + \rho^c_k \right)
\leq \frac{\omega^2_{\rho}\rc{cnst:diminishing-tau-1}}{\left(\rctwo{cnst:delta-tau} - \kappa_y\right)^2 \cdot \kmax}
\leq
\frac{\omega^2_{\rho}\rc{cnst:diminishing-tau-1}}{\left(\rctwo{cnst:delta-tau} - \kappa_y\right)^2}
\leq \epsilon < 1,
\end{gather*}
by Lemma~\ref{lem.tau_trail_markov} and the definitions of \(\delta_{\tau}\) and $\rctwo{cnst:delta-tau}$, implying that \(\frac{\omega^2_{\rho}\rc{cnst:diminishing-tau-1}}{\left(\rctwo{cnst:delta-tau} - \kappa_y\right)^2 \cdot \kmax}
\) is a valid choice of \(\bar{a}_k\) for all \(k\). Denoting \(\bar{a} := \frac{\omega^2_{\rho}\rc{cnst:diminishing-tau-1}}{\left(\rctwo{cnst:delta-tau} - \kappa_y\right)^2 \cdot \kmax} \in (0,1)\), then by~\eqref{eq.key_cond_ub_P}, we obtain an upper bound for \(\Prob \left[ S_k \geq \rctwo{cnst:delta-tau} \right]\) for all \(k \in [\kmax]\) as
\begin{align*}
\Prob \left[ S_k \geq \rctwo{cnst:delta-tau} \right]
& \leq \sum_{i = 1}^k \left[ \bar{a} \prod_{j = i+1}^k (1 - \bar{a}) \right]
= \bar{a} \cdot \sum_{i=1}^k (1 - \bar{a})^{k-i}
= \bar{a} \cdot \sum_{j=0}^{k-1} (1 - \bar{a})^j
= \bar{a} \cdot \frac{1 - (1 - \bar{a})^k}{1 - (1 - \bar{a})} \\
& = 1 - (1 - \bar{a})^k
= 1 - \left( 1 - \frac{\omega^2_{\rho}\rc{cnst:diminishing-tau-1}}{\left(\rctwo{cnst:delta-tau} - \kappa_y\right)^2 \cdot \kmax} \right)^k,
\end{align*}
and we have for all \(k \in [\kmax]\) that
\begin{align*}
\Prob \left[ \bar{\cT}_k \leq \delta_{\tau} \right]
\leq \Prob \left[ S_{\kmax} \geq \rctwo{cnst:delta-tau} \right]
\leq 1 - \left( 1 - \frac{\omega^2_{\rho}\rc{cnst:diminishing-tau-1}}{\left(\rctwo{cnst:delta-tau} - \kappa_y\right)^2 \cdot \kmax}\right)^{\kmax}
\leq \frac{\omega^2_{\rho} \rc{cnst:diminishing-tau-1}}{(\rctwo{cnst:delta-tau} - \kappa_y)^2}
\leq \epsilon,
\end{align*}
where the first inequality is due to~\eqref{eq:tau-step23}, the third inequality is because \(\left( 1 - \frac{\bar{a}k_{\max}}{x} \right)^x\) is monotonically increasing with $x$ over \([\bar{a}k_{\max}, +\infty) \supset [1,k_{\max}]\), and the last inequality is by the choice of $\delta_{\tau}$ and $\rctwo{cnst:delta-tau}$.
\eproof

\section{Numerical Results}\label{sec.numerical}

In this section, we compare Algorithm~\ref{alg.main} to a stochastic subgradient method and a stochastic momentum-based algorithm on test problems from the CUTEst collection~\cite{gould2015cutest} as well as the LIBSVM collection~\cite{chang2011libsvm}. The purpose of these experiments is demonstrating the numerical performance of our proposed Algorithm~\ref{alg.main} and the other two alternative algorithms' for solving expectation equality constrained stochastic optimization problems. We first present numerical comparisons of Algorithm~\ref{alg.main} and a stochastic subgradient method on test problems from the CUTEst collection~\cite{gould2015cutest} in Section~\ref{sec.ssm}, while in Section~\ref{sec.tstom}, we further consider the LIBSVM~\cite{chang2011libsvm} test problems and the stochastic momentum-based method from~\cite{CuiWangXiao24}.

\subsection{Experiments on CUTEst~\cite{gould2015cutest} \inrevise{Problems}}\label{sec.ssm}

The first set of experiments compared Algorithm~\ref{alg.main} against a stochastic subgradient method, which aims for minimizing the exact penalty function in the form of~\eqref{eq.merit_func}. This set of experiments focused on a total of 44 test problems selected from the CUTEst collection~\cite{gould2015cutest}, which met the criteria of {\sl (i)} only having equality constraints, {\sl (ii)} the total number of variables and constraints not exceeding 1000, {\sl (iii)} not being with constant objectives, and {\sl (iv)} the LICQ condition being satisfied at all iterations in all runs of both algorithms.

\inrevise{The experiments were conducted at different sample variances. Specifically, we considered two noise-level settings: \textit{(a)} $\epsilon_g \in \{10^{-8}, 10^{-4}, 10^{-2}\}$ with $\epsilon_c = \epsilon_J^2 \in \{10^{-8}, 10^{-4}, 10^{-2}\}$; and \textit{(b)} $(\epsilon_g,\epsilon_c,\epsilon_J)\in\{(10^{-2},0.25,0.5),(0.25,10^{-2},10^{-1}),(0.25,0.25,0.5)\}$, where setting \textit{(b)} corresponds to higher noise levels than setting \textit{(a)}. Altogether, this yields 12 distinct combinations of sample variances. Specifically, for test problems with $n$-dimensional variables and $m$ equality constraints, we generated stochastic estimates $\bar{g}_k\sim\mathcal{N}\left(\nabla f(x_k),\frac{\epsilon_g\beta_k^2}{n}I\right)$, $\bar{c}_k\sim\mathcal{N}\left(c(x_k),\frac{\epsilon_c\beta_k^2}{m}I\right)$, and $\bar{j}_k^i\sim\left(\nabla c^i(x_k),\frac{\epsilon_J\beta_k^2}{mn}I\right)$ for all $i\in[m]$, where $\bar{j}_k^i$ represents the $i$-th column of matrix $\bar{j}_k^T$ and $\{\beta_k\}\subset(0,1]$ is a predetermined step size parameter sequence. We tested the performance of both Algorithm~\ref{alg.main} and the stochastic subgradient method with two different selections of $\{\beta_k\}$: {\sl (i)} a constant sequence $\beta_k = \beta = 0.1$ for every iteration $k$, and {\sl (ii)} a diminishing sequence of $\beta_k = \left(\left(\left\lceil \frac{k}{500}\right\rceil - 1\right)\times 500 + 1\right)^{-0.6}$ at each iteration $k$. We considered all 9 aforementioned sample variances from setting \textit{(a)}, i.e., $(\epsilon_g,\epsilon_c) \in \{10^{-8},10^{-4},10^{-2}\}\times \{10^{-8},10^{-4},10^{-2}\}$ and $\epsilon_J = \sqrt{\epsilon_c}$, for the constant sequence $\{\beta_k\}$. 
Meanwhile, for the diminishing sequence $\{\beta_k\}$, we focused on variance combinations from setting \textit{(b)}, as well as the ones with the highest noise level of $\epsilon_c = \epsilon_J^2$ from setting \textit{(a)}, leading to 6 different combinations such as $(\epsilon_g,\epsilon_c,\epsilon_J) \in \{(10^{-2},0.25,0.5),(0.25,10^{-2},10^{-1}),(0.25,0.25,0.5)\} \cup \{10^{-8},10^{-4},10^{-2}\}\times \{10^{-2}\}\times \{10^{-1}\}$. We conducted 5 independent runs of Algorithm~\ref{alg.main} over 44 problems and 15 different sample variances (9 for constant $\{\beta_k\}$ and 6 for diminishing $\{\beta_k\}$), which resulted in 3300 instances. For each instance, we first ran Algorithm~\ref{alg.main} with a budget of 5000 iterations and recorded the CPU time has been used. Then we provided the same CPU time budget for each run of the stochastic subgradient method with 7 different choices of $\tau \in \{10^{-6}, 10^{-5}, 10^{-4}, 10^{-3}, 10^{-2}, 10^{-1}, 10^{0}\}$. We further compared the best iterate found by Algorithm~\ref{alg.main} and the stochastic subgradient method and reported their infeasibility errors and stationarity errors in Figures~\ref{fig:first_experiment}--\ref{fig:diminishing}.
However, it's worth mentioning that the best iterate for the stochastic subgradient method was selected among different values of $\tau$, effectively providing the stochastic subgradient algorithm with a CPU time budget of as many as 7 times compared to what Algorithm~\ref{alg.main} used.
The criterion used to determine the best results is described in~\eqref{eq.best_iterate}.

To ensure a fair comparison, we provided the same sample variances $(\epsilon_g,\epsilon_c,\epsilon_J)$ and step size parameter sequences $\{\beta_k\}$ for both Algorithm~\ref{alg.main} and the stochastic subgradient method. For each test problem, both algorithms were using the same initial iterate suggested by the CUTEst collection~\cite{gould2015cutest} and the same Lipschitz constant parameters $(L,\Gamma)$, which were inputs estimated before running any instances and stayed unchanged in all runs of both algorithms. The other input parameters of Algorithm~\ref{alg.main} were chosen as follows: $\bar\tau_0 = 0.1$, $\bar\xi_0 = 1$, $\eta = 0.5$, $\sigma = 0.1$, $\epsilon_\tau = 0.01$, $\epsilon_\xi = 0.01$, $\theta=10$, and $H_k=I$ for all $k$. At each iteration $k$, after achieving $\bar\alpha_k^{\min}$ by~\eqref{eq.stepsizes} and if $\bar{d}_k\neq 0$, we computed an adaptive step size $\bar\alpha_k$ by setting
\bequationn
\bar\alpha_k = \max\left\{(1.1)^t\cdot\bar\alpha_k^{\min} | t\in\{0\}\cup\mathbb{N},\ \varphi_k((1.1)^t\cdot\bar\alpha_k^{\min}) \leq 0, \text{ and } (1.1)^t\cdot\bar\alpha_k^{\min} \leq \bar\alpha_k^{\min} + \theta\beta_k \right\},
\eequationn
and it is guaranteed by~\eqref{eq.stepsizes} and Lemma~\ref{lem.stepsize} that such $\bar\alpha_k\in[\bar\alpha_k^{\min},\bar\alpha_k^{\max}]$. On the other hand, the stochastic subgradient method always took non-adaptive step sizes $\alpha_k = \frac{\beta_k\tau}{\tau L+\Gamma}$ at all iterations $k$. Given a sequence of iterates $\mathcal{S}$, which was generated by an instance of Algorithm~\ref{alg.main} or the stochastic subgradient method, we selected the best iterate, $x_{\texttt{best}}(\mathcal{S})$, by setting
\bequation\label{eq.best_iterate}
x_{\texttt{best}}(\mathcal{S})\leftarrow \bcases \argmin_{x\in\mathcal{S}}\left\{\left\|\bbmatrix\nabla f(x) + \nabla c(x)y_{\texttt{LS}(x)} \\ c(x) \ebmatrix\right\|_{\infty}\right\} & \text{if }\min\left\{\|c(x)\|_{\infty}:x\in\mathcal{S}\right\} \leq 10^{-4}, \\
\argmin_{x\in\mathcal{S}}\left\{\|c(x)\|_{\infty}\right\} & \text{otherwise,}\ecases
\eequation
where $y_{\texttt{LS}}(x)= \argmin_{y\in\mathbb{R}^m}\|\nabla f(x) + \nabla c(x)y\|$ is a least-squares multiplier. In Figures~\ref{fig:first_experiment}--\ref{fig:diminishing}, we present infeasibility errors and stationarity errors of the best iterates ever found by Algorithm~\ref{alg.main} and the stochastic subgradient method (over seven different values of $\tau \in \{10^{-6}, 10^{-5}, 10^{-4}, 10^{-3}, 10^{-2}, 10^{-1}, 10^{0}\}$), i.e., $\|c(x_{\texttt{best}}(\mathcal{S}))\|_{\infty}$ and $\|\nabla f(x) + \nabla c(x_{\texttt{best}}(\mathcal{S}))y_{\texttt{LS}(x_{\texttt{best}}(\mathcal{S}))}\|_{\infty}$ with $\mathcal{S}$ representing the iterate sequence generated by either Algorithm~\ref{alg.main} or the stochastic subgradient method with seven different choices of merit parameters.

\begin{figure}[ht!]
    \centering
    \includegraphics[width=0.8\textwidth]{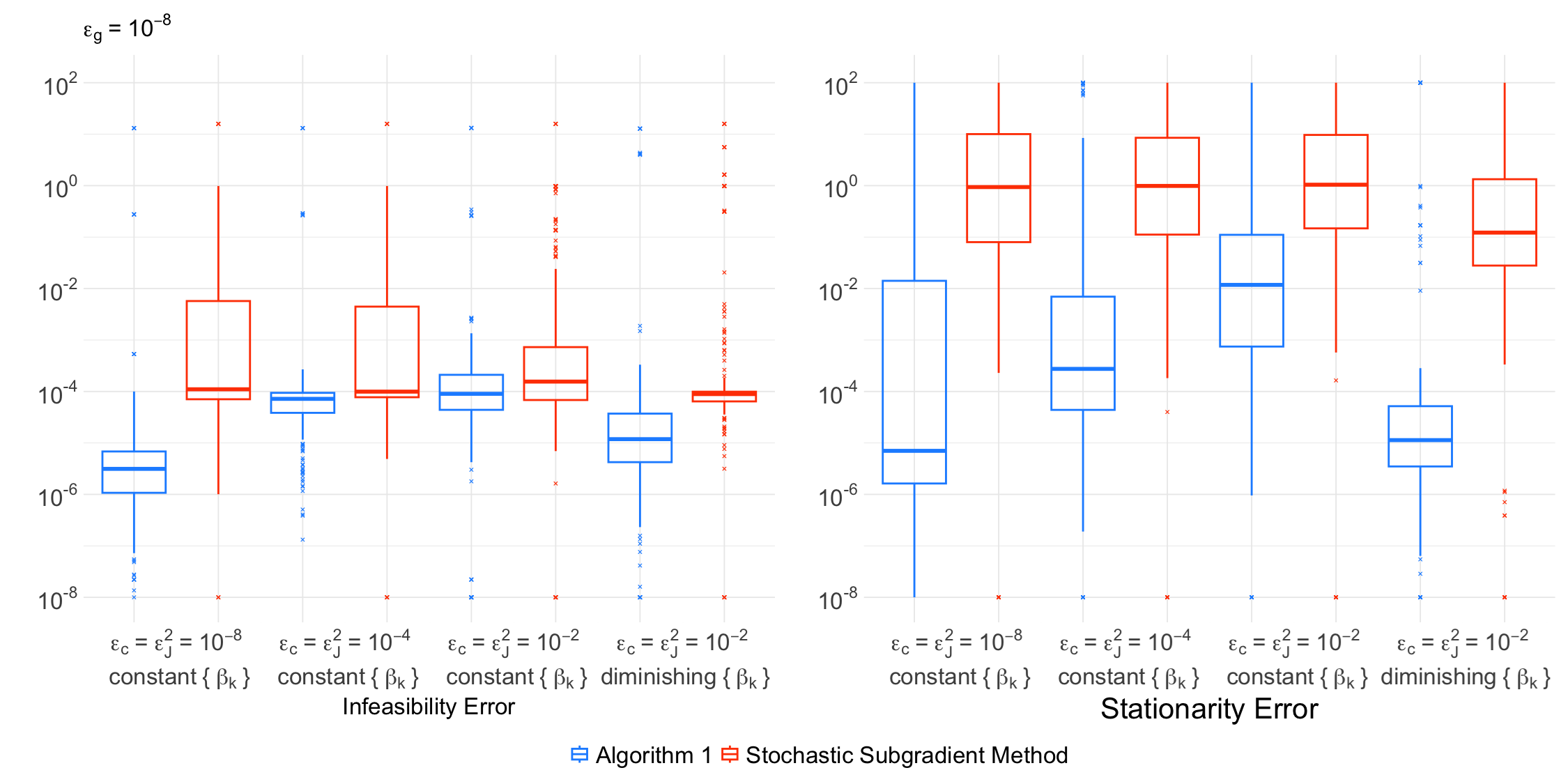}
    \includegraphics[width=0.8\textwidth]{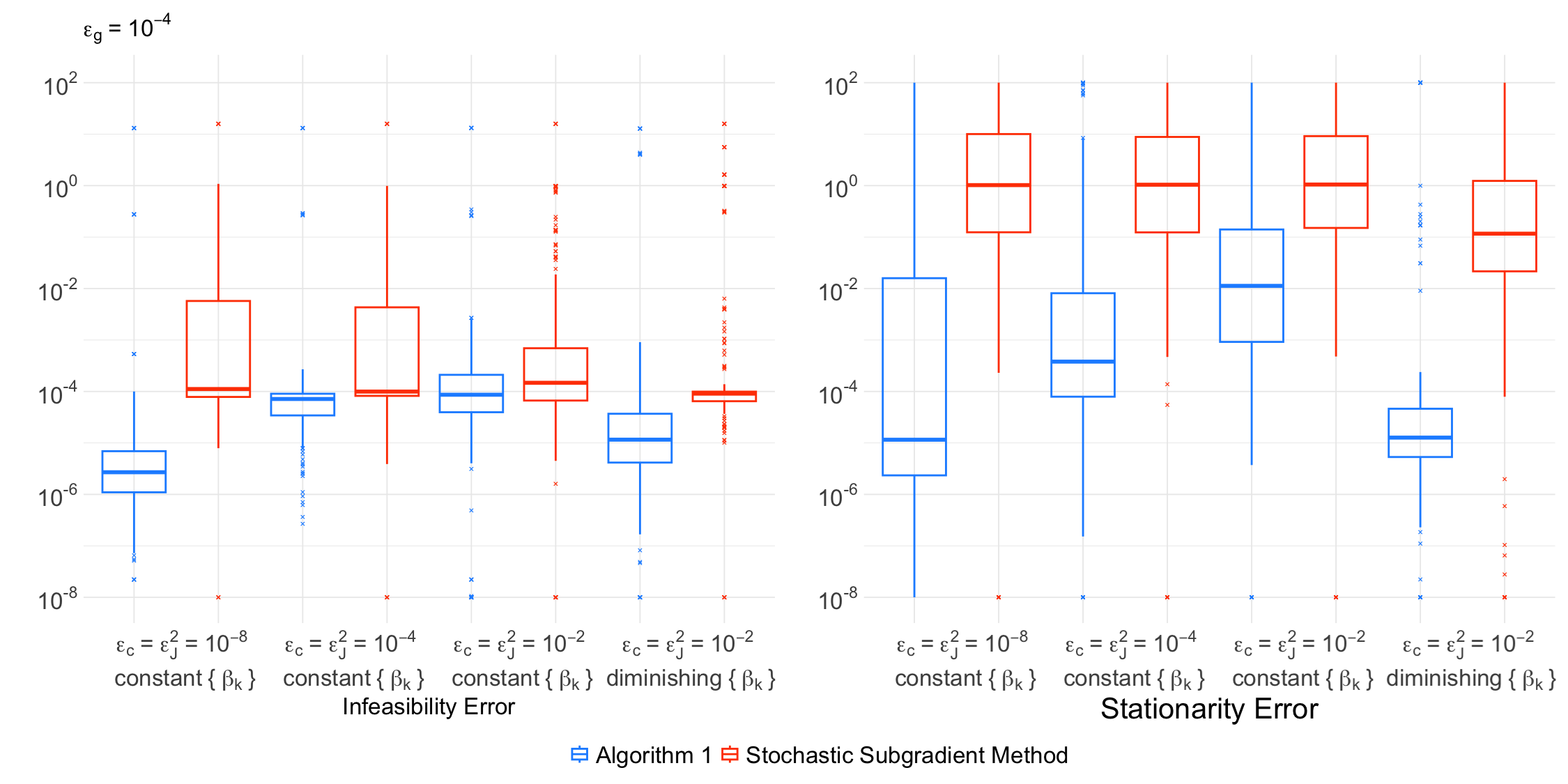}
    \includegraphics[width=0.8\textwidth]{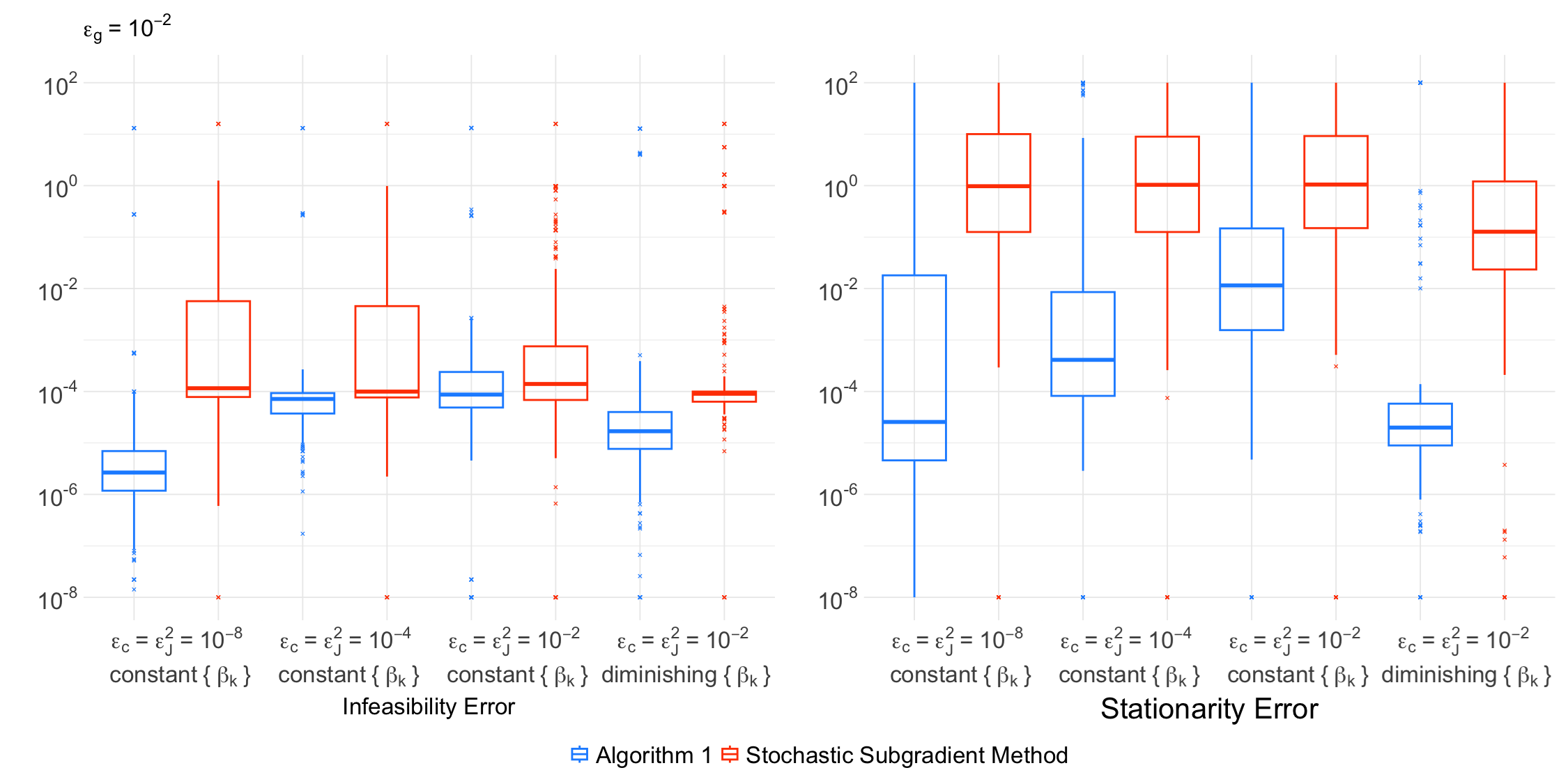}
    \caption{\inrevise{Box plots of infeasibility errors (the left column) and stationarity errors (the right column) on a total of 44 CUTEst problems with $\epsilon_g \in \{10^{-8},10^{-4},10^{-2}\}$ (from top to bottom).}}
    \label{fig:first_experiment}
\end{figure}
\begin{figure}[ht!]
    \centering
    \includegraphics[width=0.8\textwidth]{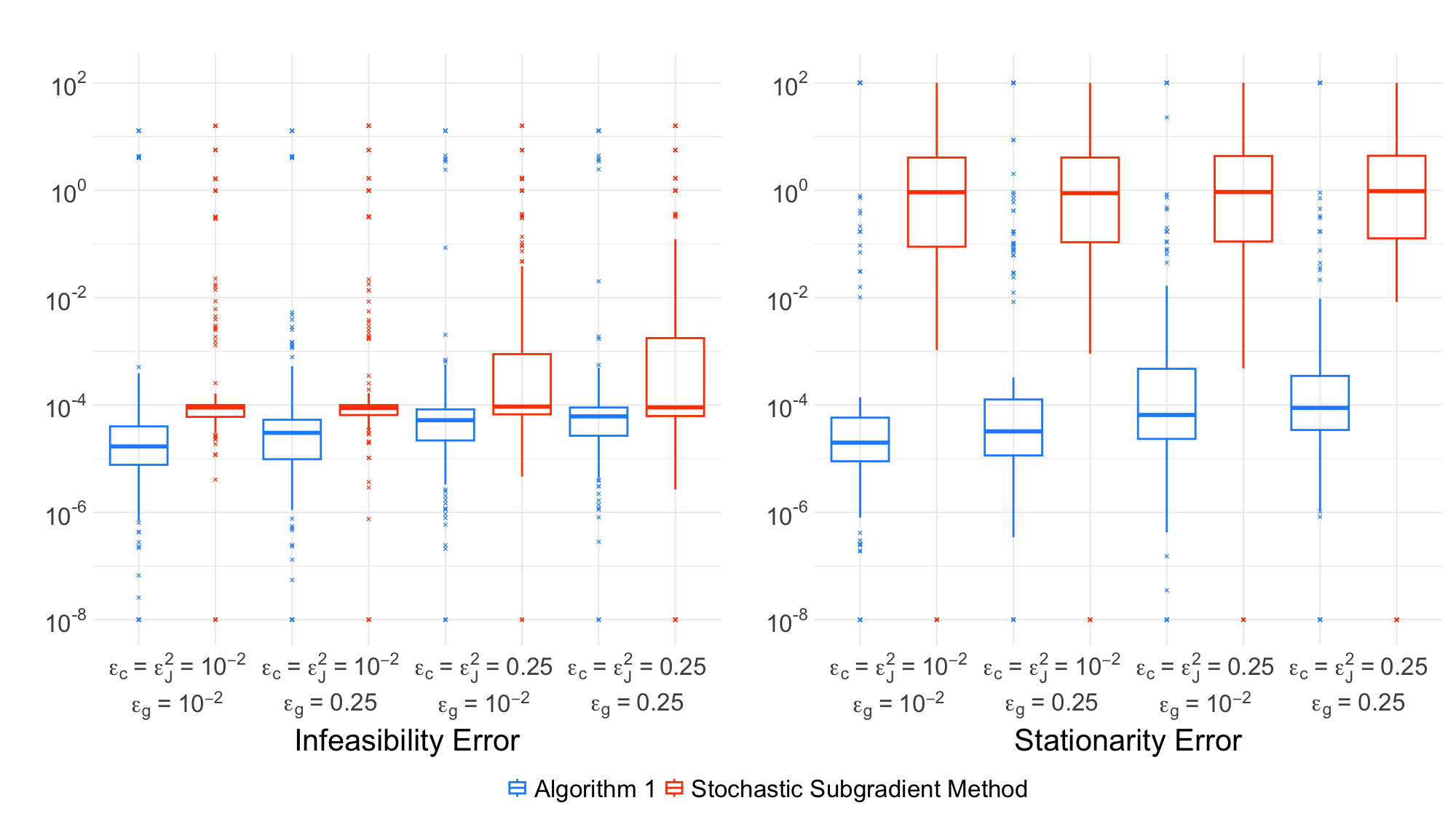}
    \caption{\inrevise{Box plots of infeasibility errors (the left column) and stationarity errors (the right column) on a total of 44 CUTEst problems with diminishing $\{\beta_k\}$ sequences under large-noise settings.}}
    \label{fig:diminishing}
\end{figure}

Figures~\ref{fig:first_experiment}--\ref{fig:diminishing} report box plots that include information of infeasibility errors and stationarity errors of the best iterates ever found by Algorithm~\ref{alg.main} and the stochastic subgradient method on 44 CUTEst problems~\cite{gould2015cutest}.} From these box plots, we observe that although provided with less CPU time, Algorithm~\ref{alg.main} constantly outperforms the stochastic subgradient method in terms of both infeasibility error and stationarity error over all options of variance level $(\epsilon_g,\epsilon_c,\epsilon_J)$ and step size parameter sequence $\{\beta_k\}$. In particular, Algorithm~\ref{alg.main} is usually able to identify the best iterate with infeasibility error smaller than $10^{-4}$ and stationarity error smaller than $10^{-2}$, while the stochastic subgradient method suffers from improving stationarity errors. 
\begin{wrapfigure}{r}{0.38\textwidth}
  \vspace{-15pt}
  
  \begin{center}
    \includegraphics[width=0.36\textwidth]{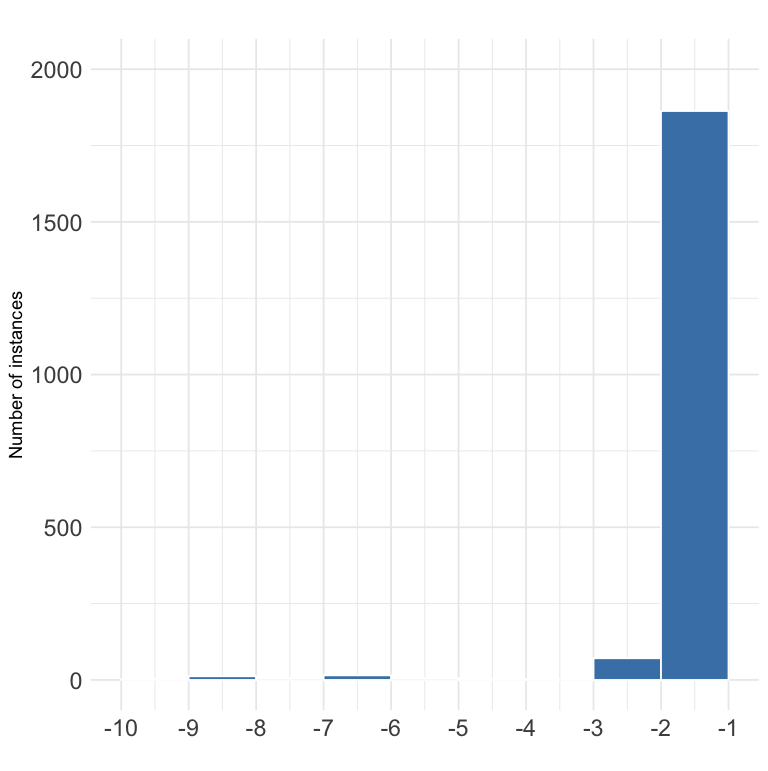}
  \end{center}
  \vspace{-10pt}
  
  \caption{\inrevise{Histogram of the final values of \(\log_{10}(\bar{\mathcal{T}}')\).}}
  \label{fig.histogram_tau}
  \vspace{-10pt}
\end{wrapfigure}
In addition, by comparing results over different sample variances, we notice that Algorithm~\ref{alg.main} can achieve better iterates, in terms of both infeasibility errors and stationarity errors, when the variance level $(\epsilon_g,\epsilon_c,\epsilon_J)$ is smaller, which matches the result of \textbf{Case (i)} of Corollary~\ref{cor.asymptotic} that the upper bound of convergence neighborhood decays as the variances diminish. Last but not least, we know from these box plots that both Algorithm~\ref{alg.main} and the stochastic subgradient method may benefit from utilizing a diminishing $\{\beta_k\}$ sequence, which controls step sizes and variances of stochastic objective gradient, constraint function, and constraint jacobian estimates. 
This result has also been described by Corollary~\ref{cor.asymptotic} that a constant $\{\beta_k\}$ sequence can make Algorithm~\ref{alg.main} converge to a neighborhood of stationarity in expectation (see \textbf{Case (i)} of Corollary~\ref{cor.asymptotic}) while a diminishing sequence of $\{\beta_k\}$ would enhance the performance of Algorithm~\ref{alg.main} to exact convergence in expectation (\textbf{Case (ii)} of Corollary~\ref{cor.asymptotic}).

\inrevise{
\paragraph{On the practical validity of Assumption~\ref{ass.event_asymptotic}.} \ \  Using the instances with constant $\{\beta_k\}$ sequences that have been reported in Figure~\ref{fig:first_experiment}, we further numerically validate our Assumption~\ref{ass.event_asymptotic}. 
Specifically, we set \((\bar{\tau}_{\min}, k_{\max}, f_{\max}) = (10^{-3}, 4000, 10^6)\) in Assumption~\ref{ass.event_asymptotic} and examined whether each condition in Assumption~\ref{ass.event_asymptotic} was satisfied across a total of 1980 numerical instances, summarizing 44 problems, 5 random seeds, and 9 noise levels of $(\epsilon_g,\epsilon_c,\epsilon_J)$, with constant \(\{\beta_k\}\) on the CUTEst benchmark problems, whose box plots have been reported in Figure~\ref{fig:first_experiment}. The validity of the individual conditions in Assumption~\ref{ass.event_asymptotic} across these 1980 instances is as follows: $(i)$ \(f(x_{k_{\max}})\leq f_{\max}\): all 1980 intances satisfied this inequality with $k_{\max}=4000$ and $f_{\max} = 10^6$; $(ii)$ \(\mathcal{T}_k^{\text{trial}} \geq \bar{\mathcal{T}}_k\): with \(k_{\max}=4000\) and an iteration budget of 5000, we found that more than \(95\%\) of the iterations with $k\geq k_{\max}$ satisfied this condition across the 1980 instances;
$(iii)$ \(\bar{\mathcal{T}}_k = \bar{\mathcal{T}}'\): with \(k_{\max}=4000\) and an iteration budget of 5000, nearly \(98\%\) of the 1980 instances satisfied this condition for all $k\geq k_{\max}$; $(iv)$ \(\bar{\mathcal{T}}' \geq \bar{\tau}_{\min}\): with $\bar\tau_{\min} = 10^{-3}$ and an iteration budget of 5000, there were over \(97\%\) of the 1980 instances having their final stochastic merit parameter value satisfy this condition. We also provide a histogram of the final \(\log_{10}(\bar{\mathcal{T}}')\) over the 1980 instances (see Figure~\ref{fig.histogram_tau}); and $(v)$ \(\bar{\Xi}_k = \bar{\Xi}' > 0\): with \(k_{\max}=4000\) and an iteration budget of 5000,  approximately \(99.5\%\) of the 1980 instances had a stochatic ratio parameter at iteration $k_{\max}$ that was equal to its final value. In summary, these observations provide concrete support for the validity of Assumption~\ref{ass.event_asymptotic}.
}

\begin{bulkrevise}
\paragraph{Data profiles for Algorithm~\ref{alg.main}.} \ \  
In addition to the box plots shown in Figures~\ref{fig:first_experiment}--\ref{fig:diminishing}, we present data profiles of the true KKT error per iteration for eight problems from the CUTEst collection~\cite{gould2015cutest}: \texttt{BT5}, \texttt{BT12}, \texttt{BYRDSPHR}, \texttt{GENHS28}, \texttt{HS27}, \texttt{HS77}, \texttt{MWRIGHT}, and \texttt{ORTHREGB}; see Figure~\ref{fig:8plots}. For each problem, we ran 20 instances corresponding to 10 different random seeds and two noise settings, both employing diminishing $\{\beta_k\}$ sequences defined as $\beta_k=\left(\left(\left\lceil \frac{k}{500}\right\rceil-1\right)\times 500+1\right)^{-0.6}$ at each iteration $k$. Figure~\ref{fig:8plots} reports data profiles (``True KKT Error vs. Iteration'') over these eight problems. In each plot, the curves show mean values, while the shaded regions indicate 95\% confidence intervals. The red curves correspond to the larger noise level \((\epsilon_g,\epsilon_c,\epsilon_J)=(0.25,0.25,0.5)\), and the cyan curves use a smaller noise level \((\epsilon_g,\epsilon_c,\epsilon_J)=(10^{-2},10^{-2},10^{-1})\). The results indicate that $(i)$ instances with smaller noise consistently achieve lower true KKT errors than their larger-noise counterparts, and $(ii)$ the true KKT error generally decreases as the algorithm progresses, with fluctuations attributable to stochastic noise in the estimates.

\begin{figure}[h]
\centering

\begin{subfigure}{0.48\textwidth}
  \centering
  \includegraphics[width=\textwidth]{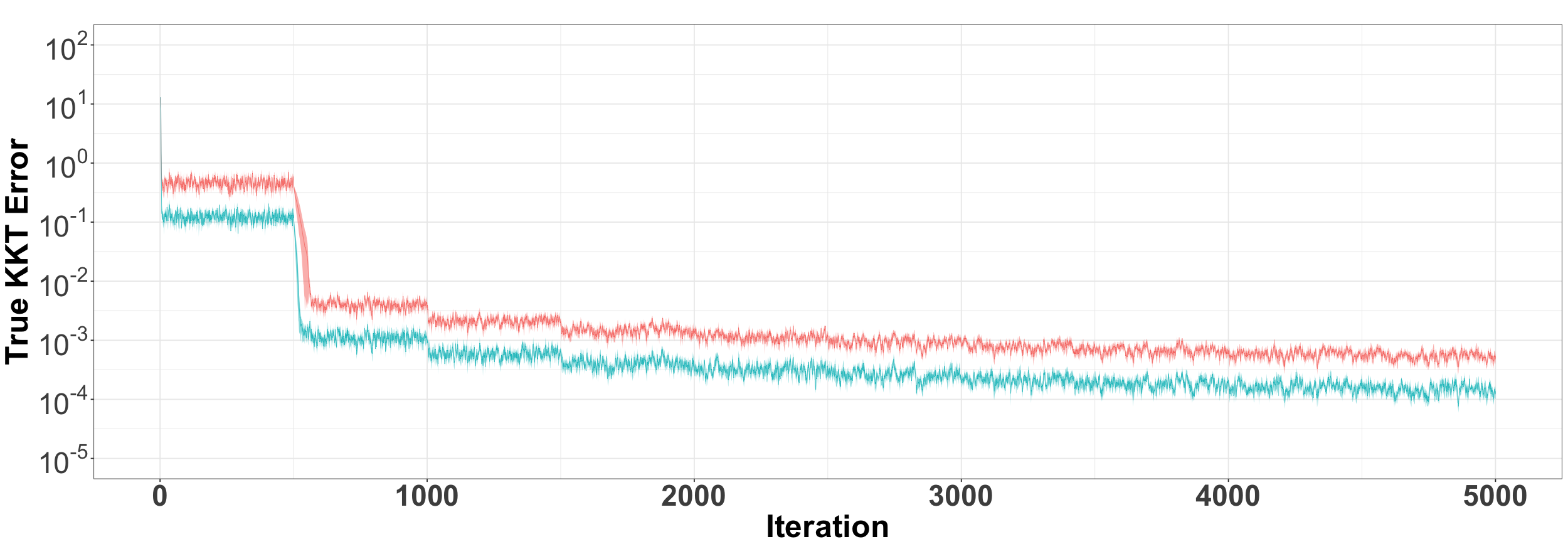}
  \caption{\texttt{BT5}}
\end{subfigure}
\hfill
\begin{subfigure}{0.48\textwidth}
  \centering
  \includegraphics[width=\textwidth]{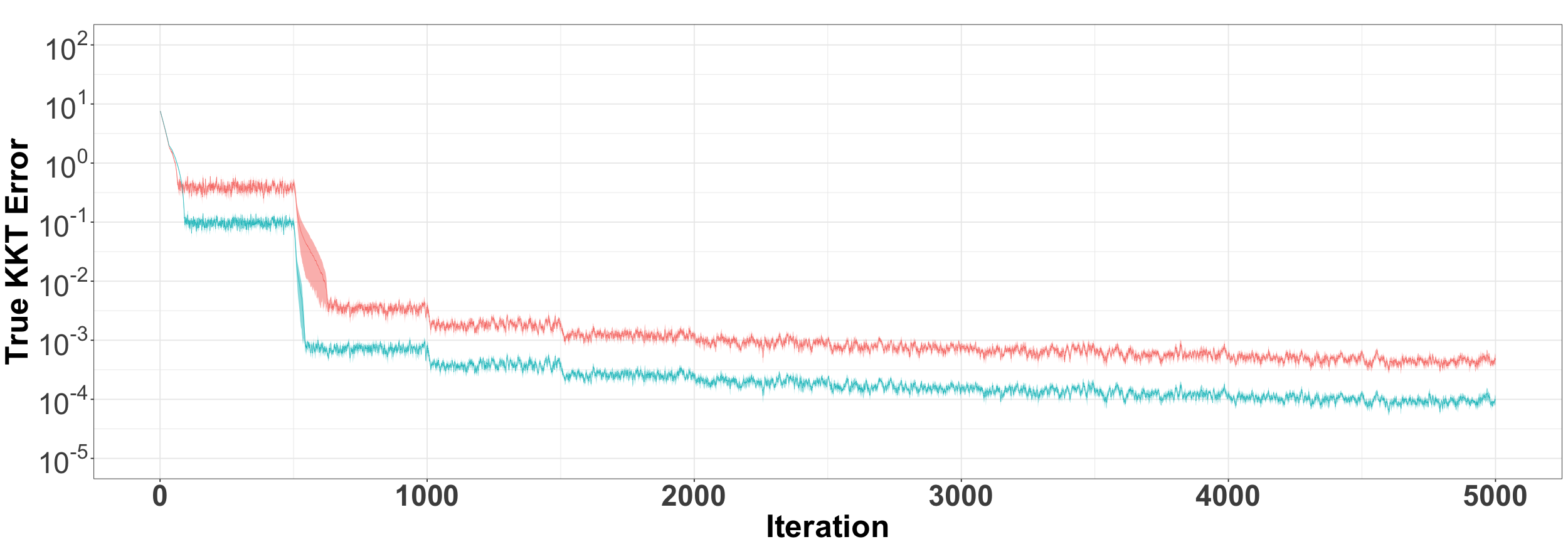}
  \caption{\texttt{BT12}}
\end{subfigure}

\vspace{-0.1cm}

\begin{subfigure}{0.48\textwidth}
  \centering
  \includegraphics[width=\textwidth]{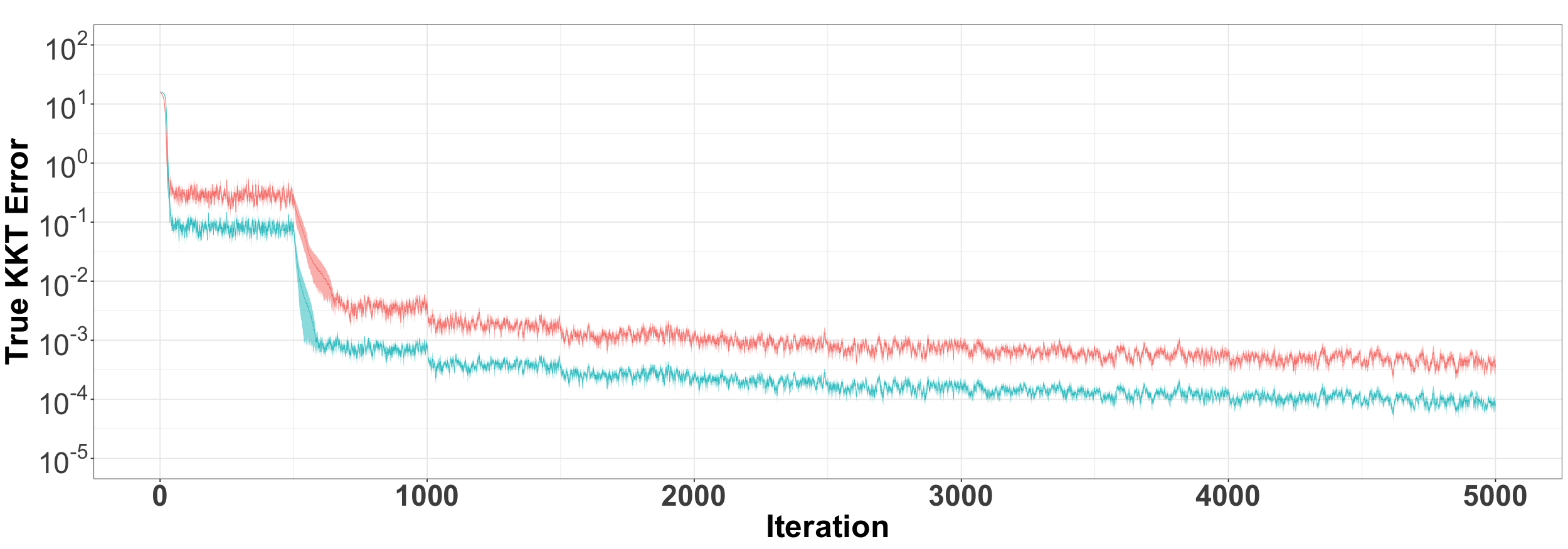}
  \caption{\texttt{BYRDSPHR}}
\end{subfigure}
\hfill
\begin{subfigure}{0.48\textwidth}
  \centering
  \includegraphics[width=\textwidth]{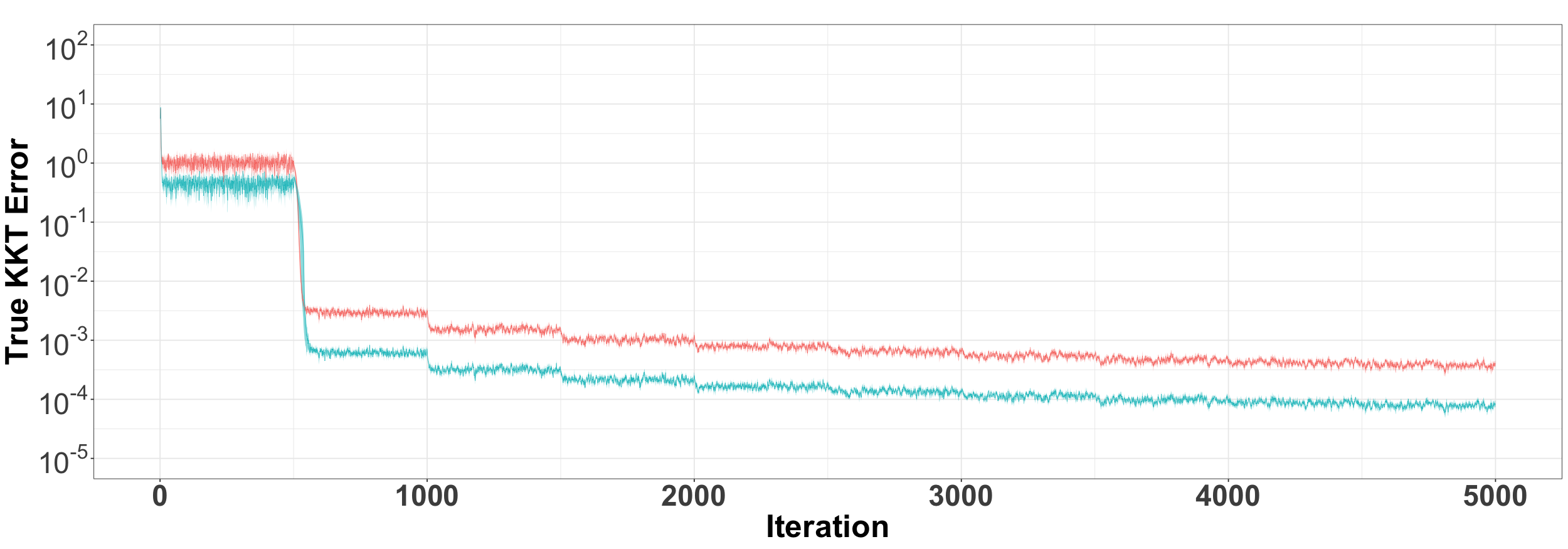}
  \caption{\texttt{GENHS28}}
\end{subfigure}

\vspace{-0.1cm}

\begin{subfigure}{0.48\textwidth}
  \centering
  \includegraphics[width=\textwidth]{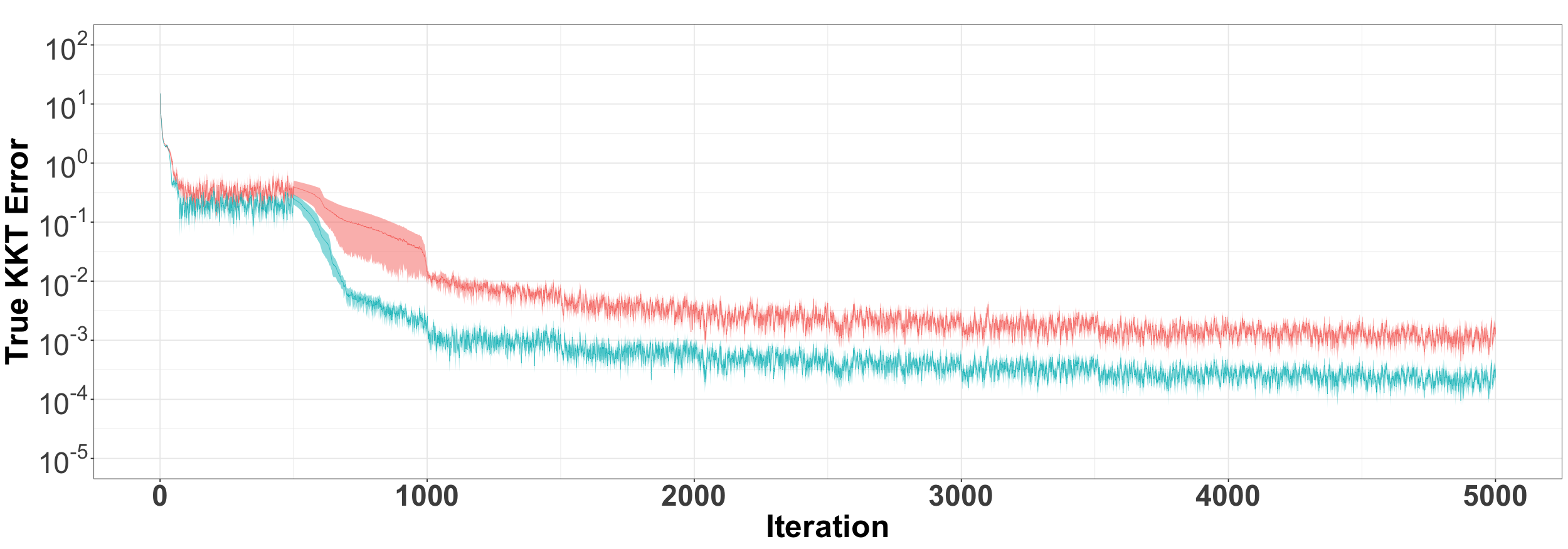}
  \caption{\texttt{HS27}}
\end{subfigure}
\hfill
\begin{subfigure}{0.48\textwidth}
  \centering
  \includegraphics[width=\textwidth]{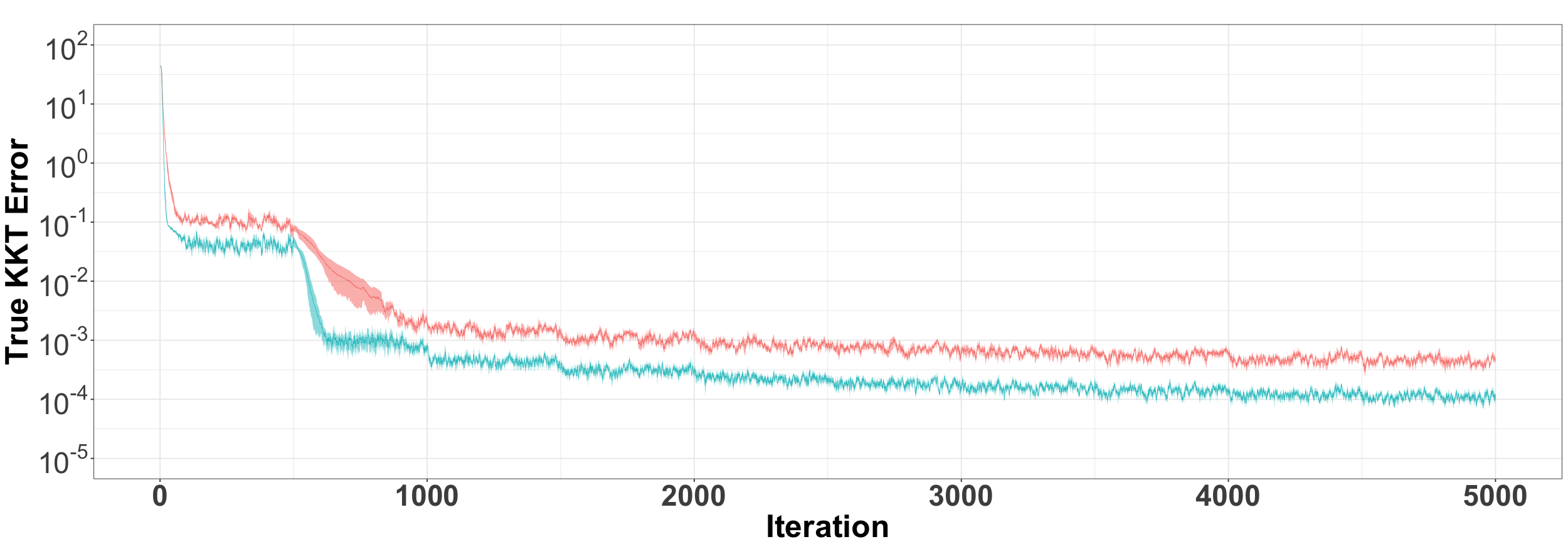}
  \caption{\texttt{HS77}}
\end{subfigure}

\vspace{-0.1cm}

\begin{subfigure}{0.48\textwidth}
  \centering
  \includegraphics[width=\textwidth]{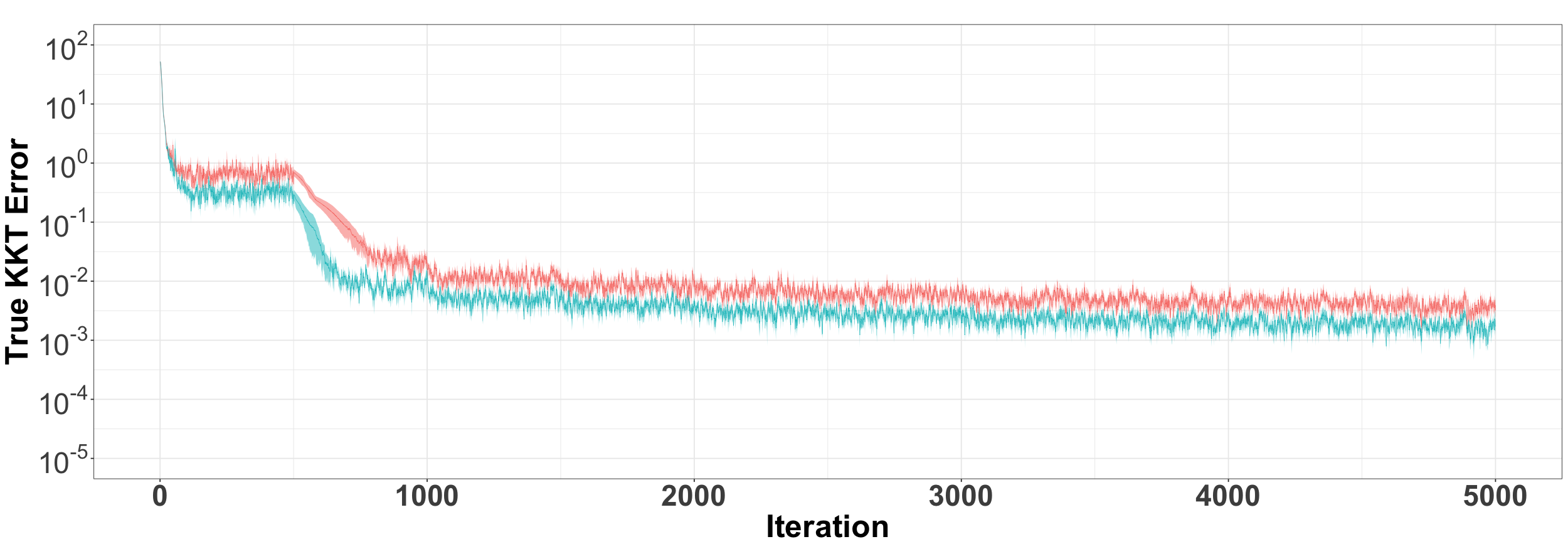}
  \caption{\texttt{MWRIGHT}}
\end{subfigure}
\hfill
\begin{subfigure}{0.48\textwidth}
  \centering
  \includegraphics[width=\textwidth]{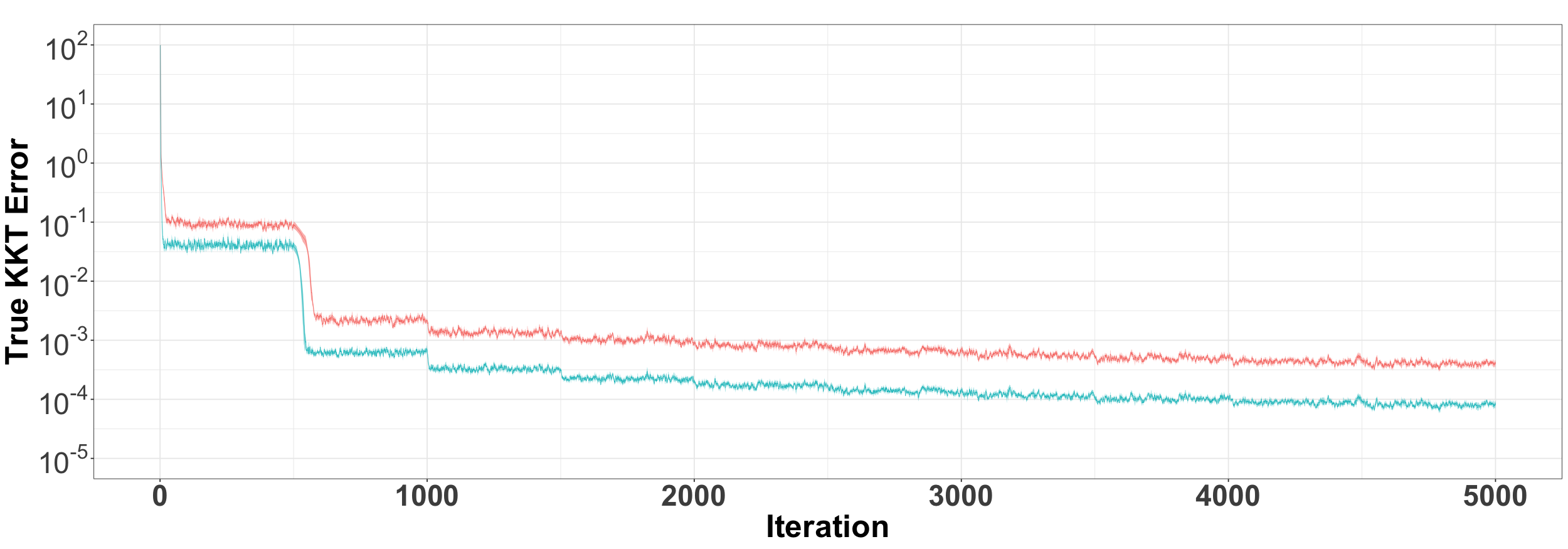}
  \caption{\texttt{ORTHREGB}}
\end{subfigure}

\caption{\inrevise{Data profiles, including mean values and 95\% confidence intervals, of true KKT errors vs. iterations on 8 problems from the CUTEst collection~\cite{gould2015cutest}. Red and cyan curves represent noise levels $(\epsilon_g,\epsilon_c,\epsilon_J) = (0.25,0.25,0.5)$ and $(\epsilon_g,\epsilon_c,\epsilon_J) = (10^{-2},10^{-2},10^{-1})$, respectively.}}
\label{fig:8plots}
\end{figure}
    
\end{bulkrevise}

\subsection{Experiments on LIBSVM~\cite{chang2011libsvm} \inrevise{Problems}}\label{sec.tstom}
In the second set of experiments, we tested the performance of our proposed Algorithm~\ref{alg.main} against with TStoM, a stochastic momentum-based optimization algorithm designed by~\cite{CuiWangXiao24}. We especially considered the following constrained binary classification problem:
\begin{equation}\label{eq.libsvm}
\mathop{\text{min}}\limits_{x\in\mathbb{R}^n} f(x)=\frac{1}{N}\sum\limits_{i=1}^N\text{log}(1+e^{-y_i(X_i^Tx)}) \text{ s.t. } \frac{1}{K}\sum\limits_{k=1}^KA_{1,k}x=\frac{1}{K}\sum\limits_{k=1}^Ka_{1,k}, ||x||_2^2=a_2,
\end{equation}
where $X_i\in\mathbb{R}^n$ and $y_i\in\{-1,1\}$ are data representing the feature vector and the label, respectively, for each $i\in[N]$; $K = 1000$, $a_2 = 1$, and $(A_{1,k},a_{1,k})\in\mathbb{R}^{10\times n}\times\mathbb{R}^{10}$ are random matrices and random vectors for each $k\in[K]$. \inrevise{Each random matrix $A_{1,k}$ was generated based on a fixed matrix $A_1\in\mathbb{R}^{10\times n}$ that $A_{1,k}^{(i,j)}$, the element at $i$th row and $j$th column of $A_{1,k}$, satisfied $A_{1,k}^{(i,j)}\sim\mathcal{N}\left(A_1^{(i,j)},\frac{10^{-3}}{n}\right)$ with $A_1^{(i,j)}\sim\mathcal{N}(1,10^4)$ for any $(i,j,k)\in[10]\times[n]\times[1000]$, i.e., each element of the fixed matrix $A_1$ was generated by the distribution of $\mathcal{N}(1,10^4)$ and each element of $A_{1,k}$ followed a normal distribution with the mean value as the corresponding element of $A_1$ and the variance as $\frac{10^{-3}}{n}$. Similarly, we first generated a fixed vector $a_1\in\mathbb{R}^{10}$, whose elements all followed the distribution of $\mathcal{N}(1,10^4)$, then each random vector $a_{1,k}\in\mathbb{R}^{10}$ was sampled by using $a_{1,k}\sim\mathcal{N}(a_1,10^{-3}I)$ for any $k\in[1000]$.} We tested problem~\eqref{eq.libsvm} over five datasets from the LIBSVM collection~\cite{chang2011libsvm} with five independent runs, while the dataset information is listed in Table~\ref{table:libsvm_info}.

\begin{table}[h]
\centering
\caption{Binary classification datasets details. For more information, see \cite{chang2011libsvm}}
\begin{tabular}{|c|c|c|}
\hline
\textbf{Dataset} & \textbf{Dimension ($n$)} & \textbf{$\#$ of Data ($N$)} \\ \hline
\texttt{a9a}             & 123                       & 32,561                    \\ \hline
\texttt{ionosphere}      & 34                        & 351                       \\ \hline
\texttt{mushrooms}       & 112                       & 5,500                     \\ \hline
\texttt{phishing}         & 68                        & 11,055                    \\ \hline
\texttt{sonar}           & 60                        & 208                       \\ \hline
\end{tabular}
\label{table:libsvm_info}
\end{table}

At the beginning of each run of Algorithm~\ref{alg.main} and TStoM~\cite{CuiWangXiao24}, we selected the objective batch size $b_1$ and the constraint batch size $b_2$, where $(b_1, b_2) \in \{16, 128\}\times\{16, 128\}$. For all instances, we initialized \( x_1 \in \mathbb{R}^n \) by sampling from the standard Gaussian distribution and normalizing it to ensure \( \|x_1\| = 0.1 \). Based on the structure of problem~\eqref{eq.libsvm}, we set $\Gamma=2$ and estimated $L$ using differences of gradients near $x_1$. With exceptions of Lipschitz constants $(L,\Gamma)$ described above and step size parameters $\beta_k=1$ for all iterations $k\in\mathbb{N}$, all other parameter selections of Algorithm~\ref{alg.main} were the same with experiments conducted in Section~\ref{sec.ssm}.

\begin{bulkrevise}
For TStoM, we set $T=50$, $\gamma=0.3$, and $V=2\times10^7$ for Phase I iterations and $\alpha_k=0.6$, $\tau_k=0.3$, $\beta_k=15$, $\eta_k=5\times 10^{-8}$, $\rho_k=0.4$ for Phase II iterations, where these parameter values are based on the recommendations from the authors of~\cite{CuiWangXiao24} and were further tuned for our experiments. We first ran each instance of Algorithm~\ref{alg.main} with 10000 iterations, and then provided the same amount of CPU time that had been used by Algorithm~\ref{alg.main} to all TStoM~\cite{CuiWangXiao24} instances. After finishing all runs of both algorithms, following the same policies in Section~\ref{sec.ssm}, we selected the best iterates ever found by Algorithm~\ref{alg.main} and TStoM~\cite{CuiWangXiao24} and reported their infeasibility errors and stationarity errors in Table~\ref{table:libsvm_result}.

\begin{table}[h]
\centering
\footnotesize
\caption{\inrevise{Mean values of infeasibility and stationarity errors of Algorithm~\ref{alg.main} and TStoM~\cite{CuiWangXiao24}}}
\begin{tabular}{|c|c|c|c|c|c|c|c|c|}
\hline
{} & \multicolumn{4}{c|}{$b_1=16$, $b_2=16$} & \multicolumn{4}{c|}{$b_1=16$, $b_2=128$}\\ \hline
{} & \multicolumn{2}{c|}{Infeasibility Error} & \multicolumn{2}{c|}{Stationarity Error} & \multicolumn{2}{c|}{Infeasibility Error} & \multicolumn{2}{c|}{Stationarity Error}\\ \hline
Dataset & Algorithm 1 & TStoM & Algorithm 1 & TStoM & Algorithm 1 & TStoM & Algorithm 1 & TStoM\\ \hline
\texttt{a9a} &
\inrevise{$2.00\text{e-}04$} & \inrevise{$8.63\text{e-}01$} &
\inrevise{$4.96\text{e-}03$} & \inrevise{$2.42\text{e-}01$} &
\inrevise{$1.87\text{e-}04$} & \inrevise{$8.74\text{e-}01$} &
\inrevise{$4.22\text{e-}03$} & \inrevise{$2.43\text{e-}01$} \\ \hline
\texttt{ionosphere} &
\inrevise{$8.70\text{e-}04$} & \inrevise{$4.34\text{e-}01$} &
\inrevise{$8.74\text{e-}03$} & \inrevise{$1.97\text{e-}01$} &
\inrevise{$7.87\text{e-}04$} & \inrevise{$4.39\text{e-}01$} &
\inrevise{$7.51\text{e-}03$} & \inrevise{$1.96\text{e-}01$} \\ \hline
\texttt{mushrooms} &
\inrevise{$1.13\text{e-}04$} & \inrevise{$8.36\text{e-}01$} &
\inrevise{$9.51\text{e-}04$} & \inrevise{$7.18\text{e-}01$} &
\inrevise{$3.27\text{e-}05$} & \inrevise{$8.62\text{e-}01$} &
\inrevise{$2.42\text{e-}04$} & \inrevise{$7.07\text{e-}01$} \\ \hline
\texttt{phishing} &
\inrevise{$1.27\text{e-}04$} & \inrevise{$8.15\text{e-}01$} &
\inrevise{$9.08\text{e-}04$} & \inrevise{$5.59\text{e-}02$} &
\inrevise{$3.93\text{e-}05$} & \inrevise{$8.32\text{e-}01$} &
\inrevise{$3.06\text{e-}04$} & \inrevise{$5.58\text{e-}02$} \\ \hline
\texttt{sonar} &
\inrevise{$3.69\text{e-}04$} & \inrevise{$8.18\text{e-}01$} &
\inrevise{$7.92\text{e-}03$} & \inrevise{$9.06\text{e-}02$} &
\inrevise{$3.60\text{e-}04$} & \inrevise{$8.31\text{e-}01$} &
\inrevise{$5.71\text{e-}03$} & \inrevise{$8.80\text{e-}02$} \\ \hline

{} & \multicolumn{4}{c|}{$b_1=128$, $b_2=16$} & \multicolumn{4}{c|}{$b_1=128$, $b_2=128$}\\ \hline
{} & \multicolumn{2}{c|}{Infeasibility Error} & \multicolumn{2}{c|}{Stationarity Error} & \multicolumn{2}{c|}{Infeasibility Error} & \multicolumn{2}{c|}{Stationarity Error}\\ \hline
Dataset & Algorithm 1 & TStoM & Algorithm 1 & TStoM & Algorithm 1 & TStoM & Algorithm 1 & TStoM\\ \hline
\texttt{a9a} &
\inrevise{$4.72\text{e-}05$} & \inrevise{$8.70\text{e-}01$} &
\inrevise{$4.63\text{e-}04$} & \inrevise{$2.42\text{e-}01$} &
\inrevise{$2.80\text{e-}05$} & \inrevise{$8.74\text{e-}01$} &
\inrevise{$7.40\text{e-}04$} & \inrevise{$2.42\text{e-}01$} \\ \hline
\texttt{ionosphere} &
\inrevise{$2.44\text{e-}04$} & \inrevise{$4.36\text{e-}01$} &
\inrevise{$3.34\text{e-}03$} & \inrevise{$1.97\text{e-}01$} &
\inrevise{$1.25\text{e-}04$} & \inrevise{$4.38\text{e-}01$} &
\inrevise{$2.22\text{e-}03$} & \inrevise{$1.96\text{e-}01$} \\ \hline
\texttt{mushrooms} &
\inrevise{$1.11\text{e-}04$} & \inrevise{$8.47\text{e-}01$} &
\inrevise{$6.82\text{e-}04$} & \inrevise{$7.14\text{e-}01$} &
\inrevise{$3.26\text{e-}05$} & \inrevise{$8.53\text{e-}01$} &
\inrevise{$1.75\text{e-}04$} & \inrevise{$7.11\text{e-}01$} \\ \hline
\texttt{phishing} &
\inrevise{$1.15\text{e-}04$} & \inrevise{$8.27\text{e-}01$} &
\inrevise{$8.79\text{e-}04$} & \inrevise{$5.59\text{e-}02$} &
\inrevise{$3.69\text{e-}05$} & \inrevise{$8.31\text{e-}01$} &
\inrevise{$2.34\text{e-}04$} & \inrevise{$5.59\text{e-}02$} \\ \hline
\texttt{sonar} &
\inrevise{$7.01\text{e-}05$} & \inrevise{$8.25\text{e-}01$} &
\inrevise{$6.20\text{e-}04$} & \inrevise{$8.91\text{e-}02$} &
\inrevise{$4.99\text{e-}05$} & \inrevise{$8.29\text{e-}01$} &
\inrevise{$1.28\text{e-}03$} & \inrevise{$8.83 \text{e-}02$} \\ \hline
\end{tabular}
\label{table:libsvm_result}
\end{table}

From Table~\ref{table:libsvm_result}, we observe that, under the same CPU time budget, Algorithm~\ref{alg.main} is able to consistently identify near-feasible and near-stationary iterates for all test instances across different datasets and various batch-size combinations for the objective and constraint functions. In constract, under the same experimental settings, TStoM~\cite{CuiWangXiao24} tends to produce iterates with comparatively larger infeasibility errors, which limits the meaningful assessment of stationarity. For example, when $(b_1,b_2) = (16,16)$, Table~\ref{table:libsvm_result} shows that Algorithm~\ref{alg.main} achieved best iterates with average infeasibility and stationarity errors on the order of \texttt{1e-04} and \texttt{1e-03}, respectively, while TStoM~\cite{CuiWangXiao24} only attained an average infeasibility error on the order of \texttt{1e-01}. Additionally, Table~\ref{table:libsvm_result} further indicates that increasing batch sizes generally improves the performance of Algorithm~\ref{alg.main}. For TStoM~\cite{CuiWangXiao24}, however, the effect of batch size is less pronounced, as the best iterates obtained across different batch-size settings exhibit similar infeasibility levels. This behavior aligns with the algorithmic design of TStoM~\cite{CuiWangXiao24}, which adopts a momentum-based framework with fixed parameter choices during execution. As a result, its numerical performance can be sensitive to the initial parameter selection and may benefit from additional, problem-dependent tuning. By contrast, our Algorithm~\ref{alg.main} incorporates adaptive updates of key parameters (such as merit parameters and ratio parameters) throughout the iterations, which appears to enhance its robustness and effectiveness across a diverse collection of test problems and batch-size configurations.
\end{bulkrevise}

\section{Conclusion}\label{sec.conclusion}
In this paper, we discuss the design, analysis, and implementation of a stochastic SQP algorithm for solving expectation-equality-constrained stochastic optimization problems. Our algorithm is objective-function-free that its iterative update only relies on the estimates of objective gradient, constraint function, and constraint jacobian information. Meanwhile, we consider the ``fully-stochastic'' regime that only relatively loose conditions on the quality of stochastic estimates need to be satisfied at each iteration. Our proposed algorithm does not require a heavy-tuning process, as it utilizes adaptive parameter and step size update strategies. Under common assumptions, we have shown that our algorithm achieves theoretical convergence guarantees and both iteration and sample complexity results in expectation. 
\inrevise{These theoretical results remain competitive with the performance of corresponding state-of-the-art optimization algorithms for the same problem setting.} 
The results of numerical experiments have shown that our proposed stochastic SQP algorithm is more efficient and more reliable than a stochastic subgradient method and a stochastic momentum-based algorithm on classic constrained optimization test problems.

\subsection*{Acknowledgements}
The authors would like to thank Dr. Xiao Wang for providing consultation about the implementation of~\cite[Algorithm~2.1]{CuiWangXiao24}.

\subsection*{Competing Interests}
All authors certify that they have no affiliations with or involvement in any organization or entity with any financial interest or non-financial interest in the subject matter or materials discussed in this manuscript.

\printbibliography{}

\appendix

\begin{bulkrevise}
\section{Justification of the Support Assumption in Assumption~\ref{ass.estimate_asymptotic}}\label{sec:apx-supp-asm}

In this section, we show that if the sampling noise \(\Delta\) for a random vector has zero mean and~\emph{symmetric support}, then the estimation error of the empirical mean estimator satisfies the support assumption in Assumption~\ref{ass.estimate_asymptotic}.

\blemma
Let \(\Delta\) be a zero mean random vector with distribution \(\Prob\) and symmetric support, that is, \(z \in \supp{\Delta}\) implies that \( - z \in \supp{\Delta}\). Additionally, let \(N \in \naturals{}_{\geq 2}\) be an even number, \(\set{\Delta_i}_{i=1}^N\) be \(N\)~\iid{} samples of \(\Delta\), and \(\bar{\Delta} := \frac{1}{N} \sum_{i=1}^N \Delta_i\) be the empirical mean estimator of \(\Delta\).
Then,
\begin{align*}
0 \in \supp{\bar{\Delta}}.
\end{align*}
\elemma

\begin{proof}
We denote by \(\Prob^N\) the joint distribution of \(\set{\Delta_i}_{i=1}^N\)  and by \(\ball{z}\) the open ball centered around \(z\) with radius \(r > 0\). Then, it is sufficient to show that for all \(r > 0\),
\(\Prob^N \left[ \norm{\bar{\Delta}} < r  \right] > 0\).
To this end, we first pick \(z_i \in \supp{\Delta}\) for all \(i \in [N / 2]\), and then set \(z_i \gets -z_{i - N / 2}\) for \(i \in [N] \backslash [N/2]\). Then, it holds that
\begin{align*}
\Prob^N \left[ \norm[\bigg] {\frac{1}{N}\sum_{i=1}^N \Delta_i} < r \right]
& = \Prob^N \left[ \sum_{i=1}^N \Delta_i \in \ball[rN]{0} \right]
\geq \Prob^N \left[ \Delta_i \in \ball[r/2]{z_i}, \forall i \in [N] \right] \\
& = \prod_{i = 1}^N \Prob \left[ \Delta \in \ball[r/2]{z_i} \right]
> 0,
\end{align*}
where the first inequality uses the Minkowski sum, \(\sum_{i=1}^N \ball[r/2]{z_i} = \ball[rN/2]{\sum_{i=1}^N z_i} = \ball[rN/2]{0}\) being a subset of \(\ball[rN]{0}\), the second equality follows from the independence among \(\set{\Delta_i}_{i=1}^N\), and the last strict inequality is due to \(\Delta\) having symmetric support and \(z_i \in \supp{\Delta}\) for all \(i \in [N]\).
\end{proof}
\end{bulkrevise}

\section{Proof of Lemma~\ref{lem:inv-mat-err}}\label{sec:apx-inv-mat-err}

\begin{proof}[Proof of Lemma~\ref{lem:inv-mat-err}]
To simplify notations, we denote $A_k := \bbmatrix H_k & \nabla c(X_k) \\ \nabla c(X_k)^T & 0 \ebmatrix$ and $\bar{A}_k := \bbmatrix H_k & \bar{J}_k^T \\ \bar{J}_k & 0 \ebmatrix$ for each iteration~$k$. By triangle inequality and Lemma~\ref{lem.singular_values}, for any $k\in\NN$,
\begin{align*}
\norm{\bar{A}_k^{-1} - A_k^{-1}} \leq \|\bar{A}_k^{-1}\| + \|A_k^{-1}\| \leq \frac{2}{q_{\min}},
\end{align*}
which proves the first part of the statement. Moreover, when $\|\bar{A}_k - A_k\| < \frac{q_{\min}}{3}$, using the Cauchy-Schwarz inequality and Lemma~\ref{lem.singular_values}, we know
\bequation\label{eq.key_inverse_diff_1}
\|A_k^{-1}(\bar{A}_k - A_k)\| \leq \|A_k^{-1}\|\|\bar{A}_k - A_k\| \leq \frac{1}{q_{\min}}\cdot \frac{q_{\min}}{3} = \frac{1}{3}.
\eequation
Furthermore, for any $k\in\NN$,
\bequationn
\baligned
\bar{A}_k^{-1} &= (A_k + (\bar{A}_k - A_k))^{-1} = (A_k(I + A_k^{-1}(\bar{A}_k - A_k)))^{-1} \\
&= (I + A_k^{-1}(\bar{A}_k - A_k))^{-1}A_k^{-1} = \left(I + \sum_{j=1}^{\infty} (-1)^j(A_k^{-1}(\bar{A}_k - A_k))^j\right)A_k^{-1},
\ealigned
\eequationn
where the last equality follows \cite[Proposition 3.3.1]{AllaKabe08}. Meanwhile, by triangle inequality and Lemma~\ref{lem.singular_values}, we have that
\bequationn
\baligned
\|\bar{A}_k^{-1} - A_k^{-1}\| &= \left\|\left(\sum_{j=1}^{\infty} (-1)^j(A_k^{-1}(\bar{A}_k - A_k))^j\right)A_k^{-1}\right\| \leq \left\|\sum_{j=1}^{\infty} (-1)^j(A_k^{-1}(\bar{A}_k - A_k))^j\right\|\cdot\|A_k^{-1}\| \\
&\leq \frac{1}{q_{\min}}\cdot \sum_{j=1}^{\infty}\left\|A_k^{-1}(\bar{A}_k - A_k)\right\|^j
= \frac{1}{q_{\min}}\cdot \frac{\|A_k^{-1}(\bar{A}_k - A_k)\|}{1 - \|A_k^{-1}(\bar{A}_k - A_k)\|}
\leq \frac{3\|A_k^{-1}(\bar{A}_k - A_k)\|}{2q_{\min}} \\
&\leq \frac{3\|A_k^{-1}\|\|\bar{A}_k - A_k\|}{2q_{\min}} \leq \frac{3}{2q_{\min}^2}\cdot \|\bar{A}_k - A_k\|,
\ealigned
\eequationn
where the second equality and the third inequality are both from \eqref{eq.key_inverse_diff_1}.
\end{proof}

\section{Proof of Lemma~\ref{lem.solution_bias}}\label{sec:apx-sol-est}

\bproof[Proof of Lemma~\ref{lem.solution_bias}.]  Similar to the proof of Lemma~\ref{lem:inv-mat-err}, we first define matrices $A_k := \bbmatrix H_k & \nabla c(X_k) \\ \nabla c(X_k)^T & 0 \ebmatrix$ and $\bar{A}_k := \bbmatrix H_k & \bar{J}_k^T \\ \bar{J}_k & 0 \ebmatrix$ for all iterations $k$ to simplify notations. By \eqref{eq.linear_system} and \eqref{eq.linear_system_true}, it holds that for any $k\in\NN{}$,
\bequationn
\bbmatrix \bar{D}_k \\ \bar{Y}_k \ebmatrix - \bbmatrix D_k \\ Y_k \ebmatrix = -\bar{A}_k^{-1}\bbmatrix \bar{G}_k \\ \bar{C}_k \ebmatrix + A_k^{-1}\bbmatrix \nabla f(X_k) \\ c(X_k) \ebmatrix = (A_k^{-1} - \bar{A}_k^{-1})\bbmatrix \bar{G}_k \\ \bar{C}_k \ebmatrix + A_k^{-1}\bbmatrix \nabla f(X_k) - \bar{G}_k \\ c(X_k) - \bar{C}_k \ebmatrix.
\eequationn
Next, for every iteration $k\in\NN{}$, we define the following events depending on whether $\bar{A}_k$ and $A_k$ are close enough, i.e., let $E_k$ be the event that $\|\bar{A}_k - A_k\| < \frac{q_{\min}}{3}$ and let $E_k^c$ be the event that $\|\bar{A}_k - A_k\| \geq \frac{q_{\min}}{3}$. Furthermore, using conditional Markov's inequality, we have that
\begin{equation}\label{eq.probability_Ekc}
   \begin{aligned}
    \PP_k[E_k^c] &= \PP_k\left[\|\bar{A}_k - A_k\| \geq \frac{q_{\min}}{3}\right]\leq \frac{9\EE_k\left[\|\bar{A}_k - A_k\|^2\right]}{q_{\min}^2}\leq \frac{9\EE_k\left[\|\bar{A}_k - A_k\|_F^2\right]}{q_{\min}^2}\\
    &= \frac{18\EE_k\left[\|\bar{J}_k - \nabla c(X_k)^T\|_F^2\right]}{q_{\min}^2} \leq \frac{18\rho_k^j}{q_{\min}^2},
\end{aligned}
\end{equation}
where the last inequality follows \eqref{eq.variance}. Consequently, by Lemma~\ref{lem:inv-mat-err} and triangle inequality, we further have
\bequation\label{eq.inverse_diff}
\baligned
\EE_k\left[\|\bar{A}_k^{-1} - A_k^{-1}\|\right] &= \EE_k\left[\|\bar{A}_k^{-1} - A_k^{-1}\| | E_k\right]\cdot\PP_k[E_k] + \EE_k\left[\|\bar{A}_k^{-1} - A_k^{-1}\| | E_k^c\right]\cdot\PP_k[E_k^c] \\
&\leq \frac{3}{2q_{\min}^2}\cdot\EE_k\left[\|\bar{A}_k - A_k\| | E_k\right]\cdot\PP_k[E_k] + \EE_k\left[\|\bar{A}_k^{-1}\| + \|A_k^{-1}\| | E_k^c\right]\cdot\PP_k[E_k^c] \\
&\leq \frac{3}{2q_{\min}^2}\cdot \EE_k\left[\|\bar{A}_k - A_k\|\right] + \frac{2}{q_{\min}}\cdot\PP_k[E_k^c] \leq \frac{3}{2q_{\min}^2}\cdot\sqrt{\EE_k\left[\|\bar{A}_k - A_k\|_{F}^2\right]} + \frac{36\rho_k^j}{q_{\min}^3} \\
&= \frac{3}{\sqrt{2}q_{\min}^2}\cdot\sqrt{\EE_k\left[\|\bar{J}_k - \nabla c(X_k)^T\|_{F}^2\right]} + \frac{36\rho_k^j}{q_{\min}^3} \leq \frac{3\sqrt{\rho_k^j}}{\sqrt{2}q_{\min}^2} + \frac{36\rho_k^j}{q_{\min}^3},
\ealigned
\eequation
where the second inequality follows Lemma~\ref{lem.singular_values}, the third inequality is using \eqref{eq.probability_Ekc}, the second equality comes from the structure of $\bar{A}_k$ and $A_k$, and the last inequality is from Assumption~\ref{ass.estimate_asymptotic}. Now we are ready to show the main results in the statement.

First, for any iteration $k\in\NN{}$, it follows \eqref{eq.linear_system}, \eqref{eq.linear_system_true}, Assumption~\ref{ass.estimate_asymptotic}, and the Cauchy-Schwarz inequality that
\bequationn
\baligned
\left\|\EE_k\left[\bar{D}_k - D_k\right]\right\| &\leq \left\|\EE_k\left[\bbmatrix \bar{D}_k \\ \bar{Y}_k \ebmatrix - \bbmatrix D_k \\ Y_k \ebmatrix\right]\right\| = \left\|\EE_k\left[\bar{A}_k^{-1}\bbmatrix \bar{G}_k \\ \bar{C}_k \ebmatrix - A_k^{-1}\bbmatrix \nabla f(X_k) \\ c(X_k) \ebmatrix\right]\right\| \\
&= \left\|\EE_k\left[(\bar{A}_k^{-1} - A_k^{-1})\bbmatrix \bar{G}_k \\ \bar{C}_k \ebmatrix\right]\right\| = \left\|\EE_k\left[\bar{A}_k^{-1} - A_k^{-1}\right] \cdot \EE_k\bbmatrix\bbmatrix \bar{G}_k \\ \bar{C}_k \ebmatrix\ebmatrix\right\| \\
&\leq \left\|\EE_k\left[\bar{A}_k^{-1} - A_k^{-1}\right]\right\| \cdot \left\|\EE_k\bbmatrix\bbmatrix \bar{G}_k \\ \bar{C}_k \ebmatrix\ebmatrix\right\| = \left\|\EE_k\left[\bar{A}_k^{-1} - A_k^{-1}\right]\right\| \cdot \left\|\bbmatrix \nabla f(X_k) \\ c(X_k) \ebmatrix\right\| \\
&\leq \EE_k\left[\left\|\bar{A}_k^{-1} - A_k^{-1}\right\|\right]\cdot(\kappa_{\nabla f} + \kappa_c) \leq \frac{\kappa_{\nabla f} + \kappa_c}{q_{\min}^3}\cdot \left(\frac{3}{\sqrt{2}}q_{\min}\sqrt{\rho_k^j} + 36\rho_k^j\right),
\ealigned
\eequationn
where the third equality follows from the assumed independence between \(\bar{J}_k\) and \((\bar{G}_k, \bar{C}_k)\) in Assumption~\ref{ass.estimate_asymptotic}, the second last inequality is from \eqref{eq.basic_condition}, and the last inequality follows \eqref{eq.inverse_diff}. Because $\rho_k^j \leq \sqrt{\rhomax} \cdot \sqrt{\rho_k^j}$ for all $k\in\NN{}$ (see Assumption~\ref{ass.estimate_asymptotic}), by setting the fixed constant $\omega_1 := \frac{\kappa_{\nabla f} + \kappa_c}{q_{\min}^3}\cdot \left(\frac{3}{\sqrt{2}}q_{\min} + 36\sqrt{\rhomax}\right)$, we may conclude the first part of the statement.

Meanwhile, for any iteration $k\in\NN{}$, using \eqref{eq.linear_system}, \eqref{eq.linear_system_true}, Assumption~\ref{ass.estimate_asymptotic}, and the Cauchy-Schwarz inequality, we have
\begin{align*}
\EE_k\left[\|\bar{D}_k - D_k\|\right] &\leq \EE_k\left[\left\|\bbmatrix \bar{D}_k \\ \bar{Y}_k \ebmatrix - \bbmatrix D_k \\ Y_k \ebmatrix\right\|\right] = \EE_k\left[\left\|\bar{A}_k^{-1}\bbmatrix \bar{G}_k \\ \bar{C}_k \ebmatrix - A_k^{-1}\bbmatrix \nabla f(X_k) \\ c(X_k) \ebmatrix\right\|\right] \\
&\leq \EE_k\left[\left\|\bar{A}_k^{-1}\bbmatrix \bar{G}_k - \nabla f(X_k) \\ \bar{C}_k - c(X_k) \ebmatrix\right\|\right] + \EE_k\left[\left\|(\bar{A}_k^{-1} - A_k^{-1})\bbmatrix \nabla f(X_k) \\ c(X_k) \ebmatrix\right\|\right] \\
&\leq \EE_k\left[\left\|\bar{A}_k^{-1}\right\|\cdot\left\|\bbmatrix \bar{G}_k - \nabla f(X_k) \\ \bar{C}_k - c(X_k) \ebmatrix\right\|\right] + \EE_k\left[\left\|\bar{A}_k^{-1} - A_k^{-1}\right\|\right]\cdot \left\|\bbmatrix \nabla f(X_k) \\ c(X_k) \ebmatrix\right\| \\
&\leq \frac{1}{q_{\min}}\cdot \EE_k\left[\left\|\bbmatrix \bar{G}_k - \nabla f(X_k) \\ \bar{C}_k - c(X_k) \ebmatrix\right\|\right] + (\kappa_{\nabla f} + \kappa_c)\cdot\EE_k\left[\left\|\bar{A}_k^{-1} - A_k^{-1}\right\|\right] \\
&\leq \frac{1}{q_{\min}}\cdot \sqrt{\EE_k\left[\left\|\bbmatrix \bar{G}_k - \nabla f(X_k) \\ \bar{C}_k - c(X_k) \ebmatrix\right\|^2\right]} + (\kappa_{\nabla f} + \kappa_c)\cdot\EE_k\left[\left\|\bar{A}_k^{-1} - A_k^{-1}\right\|\right] \\
&\leq \frac{\sqrt{\rho_k^g + \rho_k^c}}{q_{\min}} + \frac{\kappa_{\nabla f} + \kappa_c}{q_{\min}^3}\cdot\left(\frac{3}{\sqrt{2}}q_{\min}\sqrt{\rho_k^j} + 36\rho_k^j\right) \leq \frac{\sqrt{\rho_k^g + \rho_k^c}}{q_{\min}} + \omega_1\cdot \sqrt{\rho_k^j},
\end{align*}
where the fourth inequality is from Lemma~\ref{lem.singular_values} and \eqref{eq.basic_condition}, the second last inequality follows Assumption~\ref{ass.estimate_asymptotic} and \eqref{eq.inverse_diff}, and the last inequality relies on the definition of $\omega_1$, which is defined in the first part of the proof, and $\rho_k^j \leq \sqrt{\rhomax} \cdot \sqrt{\rho_k^j}$ at any iteration $k\in\NN{}$ (see Assumption~\ref{ass.estimate_asymptotic}). Therefore, we conclude the statement.
\eproof

\section{Proof of Lemma~\ref{lem.solution_variance}}\label{sec:apx-pf-sol-var}

\bproof[Proof of Lemma~\ref{lem.solution_variance}.]
Similar to the proof of Lemma~\ref{lem:inv-mat-err}, we first define matrices $A_k := \bbmatrix H_k & \nabla c(X_k) \\ \nabla c(X_k)^T & 0 \ebmatrix$ and $\bar{A}_k := \bbmatrix H_k & \bar{J}_k^T \\ \bar{J}_k & 0 \ebmatrix$ for all iterations $k$ to simplify notations. By \eqref{eq.linear_system} and \eqref{eq.linear_system_true}, it holds that for any $k\in\NN{}$,
\begin{align}
\EE_k\left[\left\|\bar{D}_k - D_k\right\|^2\right] &\leq \EE_k\left[\left\|\bbmatrix \bar{D}_k \\ \bar{Y}_k \ebmatrix - \bbmatrix D_k \\ Y_k \ebmatrix \right\|^2\right] = \EE_k\left[\left\|\bar{A}_k^{-1}\bbmatrix \bar{G}_k \\ \bar{C}_k \ebmatrix - A_k^{-1}\bbmatrix \nabla f(X_k) \\ c(X_k) \ebmatrix\right\|^2\right] \nonumber{} \\
&= \EE_k\left[\left\|\bar{A}_k^{-1}\bbmatrix \bar{G}_k - \nabla f(X_k) \\ \bar{C}_k - c(X_k) \ebmatrix + (\bar{A}_k^{-1} - A_k^{-1})\bbmatrix \nabla f(X_k) \\ c(X_k) \ebmatrix\right\|^2\right] \nonumber{} \\
&= \EE_k\left[\left\|\bar{A}_k^{-1}\bbmatrix \bar{G}_k - \nabla f(X_k) \\ \bar{C}_k - c(X_k) \ebmatrix\right\|^2\right] + \EE_k\left[\left\|(\bar{A}_k^{-1} - A_k^{-1})\bbmatrix \nabla f(X_k) \\ c(X_k) \ebmatrix\right\|^2\right] \label{eq.sol_variance} \\
&\quad  \quad + \EE_k\left[2\bbmatrix \nabla f(X_k) \\ c(X_k) \ebmatrix^T(\bar{A}_k^{-1} - A_k^{-1})\bar{A}_k^{-1}\bbmatrix \bar{G}_k - \nabla f(X_k) \\ \bar{C}_k - c(X_k) \ebmatrix\right]. \nonumber{}
\end{align}
Then we provide seperate upper bounds for the three terms on the right-hand side of \eqref{eq.sol_variance}.

First, by the Cauchy-Schwarz inequality, Lemma~\ref{lem.singular_values} and Assumption~\ref{ass.estimate_asymptotic}, we have
\bequation\label{eq.variance_ub_1}
\baligned
\EE_k\left[\left\|\bar{A}_k^{-1}\bbmatrix \bar{G}_k - \nabla f(X_k) \\ \bar{C}_k - c(X_k) \ebmatrix\right\|^2\right] &\leq \EE_k\left[\left\|\bar{A}_k^{-1}\right\|^2\cdot\left\|\bbmatrix \bar{G}_k - \nabla f(X_k) \\ \bar{C}_k - c(X_k) \ebmatrix\right\|^2\right] \\
&\leq \frac{1}{q_{\min}^2}\cdot\EE_k\left[\left\|\bbmatrix \bar{G}_k - \nabla f(X_k) \\ \bar{C}_k - c(X_k) \ebmatrix\right\|^2\right] \leq \frac{\rho_k^g + \rho_k^c}{q_{\min}^2}.
\ealigned
\eequation
Before providing an upper bound for the second term, we try to bound $\EE_k\left[\|\bar{A}_k^{-1} - A_k^{-1}\|^2\right]$ as follows. Using the same definition of events $E_k$ and $E_k^c$ from the proof of Lemma~\ref{lem.solution_bias} and following the same logic as \eqref{eq.inverse_diff}, by Lemma~\ref{lem:inv-mat-err} and the triangle inequality, we have
\begin{align}
\EE_k\left[\|\bar{A}_k^{-1} - A_k^{-1}\|^2\right] &= \EE_k\left[\|\bar{A}_k^{-1} - A_k^{-1}\|^2 | E_k\right]\cdot\PP_k[E_k] + \EE_k\left[\|\bar{A}_k^{-1} - A_k^{-1}\|^2 | E_k^c\right]\cdot\PP_k[E_k^c] \nonumber{} \\
&\leq \frac{9}{4q_{\min}^4} \cdot \EE_k\left[\|\bar{A}_k - A_k\|^2 | E_k\right]\cdot\PP_k[E_k] + \EE_k\left[(\|\bar{A}_k^{-1}\| + \|A_k^{-1}\|)^2 | E_k^c\right]\cdot\PP_k[E_k^c] \nonumber{} \\
&\leq \frac{9}{4q_{\min}^4}\cdot \EE_k\left[\|\bar{A}_k - A_k\|^2\right] + \frac{4}{q_{\min}^2}\cdot\PP_k[E_k^c] \nonumber{} \\
&\leq \frac{9}{4q_{\min}^4}\cdot\EE_k\left[\|\bar{A}_k - A_k\|_F^2\right] + \frac{72\rho_k^j}{q_{\min}^4} \label{eq.inverse_diff_variance} \\
&= \frac{9}{2q_{\min}^4}\cdot\EE_k\left[\|\bar{J}_k - \nabla c(X_k)^T\|_F^2\right] + \frac{72\rho_k^j}{q_{\min}^4} \nonumber{} \\
&\leq \frac{9\rho_k^j}{2q_{\min}^4} + \frac{72\rho_k^j}{q_{\min}^4} = \frac{153\rho_k^j}{2q_{\min}^4}, \nonumber{}
\end{align}
where the second inequality follows Lemma~\ref{lem.singular_values}, the third inequality is using \eqref{eq.probability_Ekc}, the second equality comes from the structure of $\bar{A}_k$ and $A_k$, and the last inequality is from Assumption~\ref{ass.estimate_asymptotic}. Now from the Cauchy-Schwarz inequality, \eqref{eq.basic_condition} and \eqref{eq.inverse_diff_variance}, we can bound the second term on the right hand side of \eqref{eq.sol_variance} as
\bequation\label{eq.variance_ub_2}
\EE_k\left[\left\|(\bar{A}_k^{-1} - A_k^{-1})\bbmatrix \nabla f(X_k) \\ c(X_k) \ebmatrix\right\|^2\right] \leq \EE_k\left[\left\|\bar{A}_k^{-1} - A_k^{-1}\right\|^2\right]\cdot \left\|\bbmatrix \nabla f(X_k) \\ c(X_k) \ebmatrix\right\|^2 \leq \frac{153\rho_k^j}{2q_{\min}^4}\cdot (\kappa_{\nabla f} + \kappa_c)^2.
\eequation
Lastly, to bound the last term on the right hand side of \eqref{eq.sol_variance}, it follows the Cauchy-Schwarz inequality, \eqref{eq.basic_condition}, Lemma~\ref{lem.singular_values}, Assumption~\ref{ass.estimate_asymptotic} and \eqref{eq.inverse_diff} that
\begin{align}
\EE_k&\left[2\bbmatrix \nabla f(X_k) \\ c(X_k) \ebmatrix^T(\bar{A}_k^{-1} - A_k^{-1})\bar{A}_k^{-1}\bbmatrix \bar{G}_k - \nabla f(X_k) \\ \bar{C}_k - c(X_k) \ebmatrix\right] \nonumber{} \\
&\quad \leq 2\left\|\bbmatrix \nabla f(X_k) \\ c(X_k) \ebmatrix\right\| \cdot \EE_k\left[\left\|\bar{A}_k^{-1} - A_k^{-1}\right\|\cdot\|\bar{A}_k^{-1}\|\right]\cdot\EE_k\left[\left\|\bbmatrix \bar{G}_k - \nabla f(X_k) \\ \bar{C}_k - c(X_k) \ebmatrix\right\|\right] \nonumber{} \\
&\quad \leq 2\left(\kappa_{\nabla f} + \kappa_c\right) \cdot \frac{1}{q_{\min}}\cdot \EE_k\left[\left\|\bar{A}_k^{-1} - A_k^{-1}\right\|\right]\cdot \sqrt{\EE_k\left[\left\|\bbmatrix \bar{G}_k - \nabla f(X_k) \\ \bar{C}_k - c(X_k) \ebmatrix\right\|^2\right]} \label{eq.variance_ub_3} \\
&\quad \leq \frac{2\left(\kappa_{\nabla f} + \kappa_c\right)}{q_{\min}}\cdot\left(\frac{3\sqrt{\rho_k^j}}{\sqrt{2}q_{\min}^2} + \frac{36\rho_k^j}{q_{\min}^3}\right)\cdot \sqrt{\rho_k^g + \rho_k^c} \nonumber{} \\
&\quad \leq \frac{6\left(\kappa_{\nabla f} + \kappa_c\right)}{q_{\min}^4}\cdot\left(\frac{q_{\min}}{\sqrt{2}} + 12\sqrt{\rhomax}\right)\cdot \sqrt{\rho_k^j(\rho_k^g + \rho_k^c)}, \nonumber{}
\end{align}
where the last inequality follows Assumption~\ref{ass.estimate_asymptotic} that $\rho_k^j \leq \sqrt{\rhomax} \cdot \sqrt{\rho_k^j}$ for all $k\in\NN{}$. Combining \eqref{eq.variance_ub_1}, \eqref{eq.variance_ub_2} and \eqref{eq.variance_ub_3}, and choosing sufficiently large values for constants $\omega_2$ and $\omega_3$,~\ie{},
\begin{align*}
\omega_2 := \frac{153}{2q_{\min}^4}\cdot (\kappa_{\nabla f} + \kappa_c)^2,
\quad
\omega_3 := \frac{6\left(\kappa_{\nabla f} + \kappa_c\right)}{q_{\min}^4}\cdot\left(\frac{q_{\min}}{\sqrt{2}} + 12\sqrt{\rhomax}\right),
\end{align*}
we conclude the first part of the statement.

Now we are ready to prove the second part of the statement. For any iteration $k\in\NN{}$, using the triangle inequality, the fact of $\EE_k[\bar{G}_k] = \nabla f(X_k)$ (see Assumption~\ref{ass.estimate_asymptotic}) and the Cauchy-Schwarz inequality, we have
\bequationn
\baligned
\left|\EE_k\left[\bar{G}_k^T\bar{D}_k - \nabla f(X_k)^TD_k\right]\right| &\leq \left|\EE_k\left[(\bar{G}_k - \nabla f(X_k))^T(\bar{D}_k - D_k)\right]\right| + \left|\EE_k\left[(\bar{G}_k - \nabla f(X_k))^TD_k\right]\right| \\
&\quad\quad + \left|\EE_k\left[\nabla f(X_k)^T(\bar{D}_k - D_k)\right]\right| \\
&\leq \frac{1}{2}\EE_k\left[\|\bar{G}_k - \nabla f(X_k)\|^2\right] + \frac{1}{2}\EE_k\left[\|\bar{D}_k - D_k\|^2\right] + \|\nabla f(X_k)\|\cdot \left\|\EE_k\left[\bar{D}_k - D_k\right]\right\| \\
&\leq \frac{\rho_k^g}{2} + \frac{\rho_k^g + \rho_k^c}{2q_{\min}^2} + \frac{\omega_2}{2}\cdot\rho_k^j + \frac{\omega_3}{2}\cdot\sqrt{\rho_k^j(\rho_k^g+\rho_k^c)} + \kappa_{\nabla f}\omega_1\cdot\sqrt{\rho_k^j} \\
&\leq \frac{\rho_k^g}{2} + \frac{\rho_k^g + \rho_k^c}{2q_{\min}^2} + \omega_4\cdot \sqrt{\rho_k^j} + \frac{\omega_3}{2}\cdot\sqrt{\rho_k^j(\rho_k^g+\rho_k^c)},
\ealigned
\eequationn
where the second last inequality comes from \eqref{eq.basic_condition}, Assumption~\ref{ass.estimate_asymptotic}, Lemma~\ref{lem.solution_bias}, and the first part of the statement in this lemma, and the last inequality follows the fact of $\rho_k^j \leq \sqrt{\rhomax} \cdot \sqrt{\rho_k^j}$ (see Assumption~\ref{ass.estimate_asymptotic}) and choosing a suitable value for $\omega_4$, e.g., $\omega_ 4 = \frac{\sqrt{\rhomax}}{2}\omega_2 + \kappa_{\nabla f}\omega_1$. Therefore, the proof is completed.
\eproof

\section{Proof of Lemma~\ref{lem.prob-tau-good-k}}
\label{sec:proof-lem-prob-tau-good-k}

The proof of Lemma~\ref{lem.prob-tau-good-k} relies on the following lemmas. We first
show that the function \(T_d\) is Lipschitz continuous, which helps to bound the difference between \(T_d(Z_k + \bar{\Delta}_k, H_k)\) and \(T_d(Z_k, H_k)\) using \(\norm{\bar{\Delta}_k}\).

\blemma\label{lem.Td-lip-cont}
Given any constant $\Lambda > 0$, \(f(x, Y) := x^{\top} Y^{-1} x\) is Lipschitz continuous on $\mathcal{S}_{\Lambda} := \{(x,Y)\in\RR^{n}\times \mathbb{S}^n: \|x\| \leq \Lambda, Y^TY \succeq \frac{1}{\Lambda}I \}$.
\elemma

\begin{proof}[Proof of Lemma~\ref{lem.Td-lip-cont}]
To conclude the statement, it is enough to show that there exists a constant \(L > 0\) such that for any $\{(x_1,Y_1),(x_2,Y_2)\}\subset \mathcal{S}_{\Lambda}$, it always holds that
\begin{align*}
\abs{f(x_1, Y_1) - f(x_2, Y_2)} \leq L \left( \norm{x_1 - x_2} + \norm{Y_1 - Y_2} \right).
\end{align*}
We first fix \(Y\) terms and compare the function values at different $x$ values. For any $\{(x_1,Y),(x_2,Y)\}\subset\mathcal{S}_{\Lambda}$, by the Cauchy-Schwarz inequality and the definition of $\mathcal{S}_{\Lambda}$, we have
\begin{align*}
\abs{f(x_1, Y) - f(x_2, Y)}
& = \abs{ x_1^{\top} Y^{-1} x_1 - x^{\top}_2 Y^{-1} x_2 }
= \abs{ (x_1 + x_2)^{\top} Y^{-1} (x_1 - x_2)} \\
& \leq \norm{x_1 + x_2} \cdot \norm{Y^{-1}} \cdot \norm{x_1 - x_2} \leq 2\Lambda^{\frac{3}{2}}\cdot\|x_1-x_2\| \leq L_x \norm{x_1 - x_2}
\end{align*}
for some large enough constant \(L_x \geq 2\Lambda^{\frac{3}{2}}\).
Next, we bound the partial derivative of \(f(x, Y)\) with respect to \(Y\) for any fixed \(x\). Specifically, by~\cite[Equation (61)]{petersen2008matrix}, the Cauchy-Schwarz inequality, and a large enough constant \(L_Y \geq \Lambda^3\), we have
\begin{align*}
\norm{\partial_Y f(x, Y)} = \norm{- Y^{-T} x x^{\top} Y^{-T}}
\leq \norm{x x^{\top}} \norm{Y^{-T}}^2 \leq \|x\|^2\|Y^{-1}\|^2 \leq \Lambda^3 \leq L_Y.
\end{align*}
Therefore, by the triangle inequality, it holds that
\begin{align*}
\abs{f(x_1, Y_1) - f(x_2, Y_2)} \leq \abs{f(x_1, Y_1) - f(x_1, Y_2)} + \abs{f(x_1, Y_2) - f(x_2, Y_2)} \leq L_Y\|Y_1 - Y_2\| + L_x\|x_1 - x_2\|,
\end{align*}
and we conclude the proof by taking $L = \max\{L_x,L_Y\} > 0$.
\end{proof}

We then establish a useful probability inequality in Lemma~\ref{lem:prob-lb}, building on the fundamental Markov-type probabilistic inequality provided in Lemma~\ref{lem:chebyshev-at-zero}.
\blemma\label{lem:chebyshev-at-zero}
Suppose that \(X\in\RR\) is a random variable with \(\Expect \left[ X^2 \right] < +\infty\) and \(\Expect \left[ X \right] > 0\). Then,
\begin{align}
\Prob \left[ X > 0 \right] \geq \frac{
\left(\Expect \left[ X \right]\right)^2
}{
\left(\Expect \left[ X \right]\right)^2 + \Var \left[ X \right]
}, \label{eq:prob-lb-0}
\end{align}
where $\Var \left[ X \right]$ represents for the variance of $X$.
\elemma
\bproof
To begin with, notice that \(\Expect \left[ X^2 \right] \geq \left(\Expect \left[X\right]\right)^2 > 0\). Then, by the Cauchy-Schwarz inequality and properties of integration, one has
\begin{align*}
0 < \Expect \left[ X \right]
=
\Expect \left[ X \cdot \ones\left( X > 0 \right) \right] +
\Expect \left[ X \cdot \ones\left( X \leq 0 \right) \right]
\leq
\Expect \left[ X \cdot \ones\left( X > 0 \right) \right]
\leq
\sqrt{\Expect \left[ X^2 \right]\cdot \Prob\left[ X > 0 \right]},
\end{align*}
from which \(\displaystyle \Prob [X > 0] \geq \frac{\left( \Expect \left[ X \right] \right)^{2}}{\Expect \left[ X^2 \right]}\), and~\eqref{eq:prob-lb-0} follows from \(\Var \left[ X \right] = \Expect \left[ X^2 \right] - \left( \Expect \left[ X \right] \right)^2\).
\eproof

\blemma\label{lem:prob-lb}
Suppose that $\Delta$ is a random vector satisfying
\begin{align*}
\Expect \left[ \Delta \right] = 0,
\quad
0 \in \supp{\Delta},
\ \ \text{and} \ \
\Expect \left[ \norm{\Delta}^2 \right] < +\infty.
\end{align*}
Then, for any $\{\epsilon,M\}\subset\RR_{>0}$, it holds for any $r \in (0, \frac{\epsilon}{2M}]$ that
\begin{align}
\Prob \left[ \epsilon - M \norm{\Delta} > 0 \right]
& \geq
\Prob \left[(\epsilon - M \norm{\Delta}) \cdot \ones \left( \norm{\Delta} < r \right) > 0 \right] \nonumber{} \\
& \geq
\frac{
\Expect \left[ \left(\epsilon - M \norm{\Delta}  \right) \cdot \ones \left( \norm{\Delta} < r \right)  \right]
}{
\epsilon - M \Expect \left[ \norm{\Delta} \cdot \ones \left( \norm{\Delta} < r \right) \right] + \left( M^2 / \epsilon\right) \Expect \left[ \norm{\Delta}^2 \right]
}.
\label{eq:prob-lb}
\end{align}
\elemma
\bproof[Proof of Lemma~\ref{lem:prob-lb}]
Let \(X := (\epsilon - M \norm{\Delta}) \cdot \ones \left( \norm{\Delta} < r \right)\). By the choice of \(r\) and \(0 \in \supp{\Delta}\), we have
\begin{align*}
&0 < \epsilon/2 \cdot \Prob \left[ \norm{\Delta} < r \right]
\leq \Expect \left[ (\epsilon - M \norm{\Delta}) \cdot \ones \left( \norm{\Delta} < r \right)\right], \\
\text{and} \quad &\Expect \left[ (\epsilon - M \norm{\Delta})^2 \cdot \ones \left( \norm{\Delta} < r \right) \right]
\leq \epsilon^2 < +\infty.
\end{align*}
Consequently, applying Lemma~\ref{lem:chebyshev-at-zero} to \(X\) yields
\begin{align}
\Prob \left[\epsilon - M \norm{\Delta} > 0\right]
\geq
\Prob \left[ \left(\epsilon - M \norm{\Delta}  \right) \cdot \ones \left( \norm{\Delta} < r \right) > 0 \right]
= \Prob \left[ X > 0 \right]
\geq \frac{\left( \Expect \left[ X \right] \right)^2}{\left( \Expect \left[ X \right] \right)^2 + \Var \left[ X \right]}, \label{eq:prob-lb-raw}
\end{align}
where the first inequality is because \(\norm{\Delta} < r \in (0,  \frac{\epsilon}{2M}]\) implies \(\epsilon - M\norm{\Delta} > \epsilon/2 > 0\). Next, we will derive a lower bound for~\eqref{eq:prob-lb-raw} by constructing an upper bound for \(\Var \left[ X \right]\). Since \(X = (\epsilon - M \norm{\Delta}) \cdot \ones{} \left( \norm{\Delta} < r \right) \), one can expand \(\Var{} \left[ X \right]\) with \(X_1 := \epsilon \cdot \ones{} \left( \norm{\Delta} < r \right)\) and \(X_2 := M \norm{\Delta} \cdot \ones \left( \norm{\Delta} < r \right)\) as
\begin{align*}
\Var{} \left[ X \right]
= \Var{} \left[ X_1 - X_2 \right]
= \Var{} \left[ X_1 \right] + \Var{} \left[ X_2 \right]
- 2 \Cov{} \left[ X_1, X_2 \right],
\end{align*}
where $\Cov{} \left[ X_1, X_2 \right]$ represents the covariance between random variables $X_1$ and $X_2$. Moreover, when $\|\Delta\| < r$ and $r\in (0,  \frac{\epsilon}{2M}]$, we have \(\norm{\Delta} < r \leq \epsilon / (2M) < \epsilon / M\), which further implies \(M \norm{\Delta}^2 / \epsilon < \norm{\Delta}\) and
\begin{align}
\Var{} \left[ X_2 \right]
& = M^2 \left( \Expect \left[ \norm{\Delta}^2 \cdot \ones \left( \norm{\Delta} < r \right) \right]
- \left( \Expect \left[ \norm{\Delta} \cdot \ones \left( \norm{\Delta} < r \right) \right] \right)^2 \right) \nonumber{} \\
& \leq M^2 \left( \Expect \left[ \norm{\Delta}^2 \cdot \ones \left( \norm{\Delta} < r \right) \right]
-  \Expect \left[ \frac{M}{\epsilon} \norm{\Delta}^2 \cdot \ones \left( \norm{\Delta} < r \right) \right]
\cdot \Expect \left[ \norm{\Delta} \cdot \ones \left( \norm{\Delta} < r \right) \right]
 \right) \nonumber{} \\
& = M^2 \Expect \left[ \norm{\Delta}^2 \cdot \ones \left( \norm{\Delta} < r \right) \right]
\left( 1 - \frac{M}{\epsilon} \Expect \left[ \norm{\Delta} \cdot \ones \left( \norm{\Delta} < r \right) \right] \right) \nonumber{} \\
& = \frac{M^2}{\epsilon} \Expect \left[ \norm{\Delta}^2 \cdot \ones \left( \norm{\Delta} < r \right) \right]
\left(
  \epsilon \cdot \Prob \left[ \norm{\Delta} \geq r \right]
+ \Expect \left[ (\epsilon - M \norm{\Delta}) \cdot \ones \left( \norm{\Delta} < r \right) \right]
\right) \nonumber{} \\
& = M^2 \Expect \left[ \norm{\Delta}^2 \cdot \ones \left( \norm{\Delta} < r \right) \right]
\Prob \left[ \norm{\Delta} \geq r \right]
+ \frac{M^2}{\epsilon} \Expect \left[ \norm{\Delta}^2 \cdot \ones \left( \norm{\Delta} < r \right) \right]
\cdot \Expect \left[ X \right]. \label{eq:prob-lb-var-x2}
\end{align}
Expanding \(\Var{} \left[ X_1 \right]\) and \(\Cov \left[ X_1, X_2 \right]\) leads to
\begin{align}
\Var{} \left[X_1 \right]
& = \Expect \left[ \epsilon^2 \cdot \ones \left( \norm{\Delta} < r \right) \right]
- \epsilon^2 \Prob \left[ \norm{\Delta} < r \right]^2
= \epsilon^2 \Prob \left[ \norm{\Delta} < r \right] \Prob \left[ \norm{\Delta} \geq r \right] \label{eq:prob-lb-var-x1} \\
\text{and} \quad\Cov{} \left[ X_1, X_2 \right]
& = \Expect \left[ X_1 \cdot X_2 \right] - \Expect \left[ X_1 \right] \Expect \left[ X_2 \right] \nonumber{} \\
& = \epsilon M \Expect \left[ \norm{\Delta} \cdot \ones \left( \norm{\Delta} < r \right) \right]
- \epsilon \cdot \Prob \left[ \norm{\Delta} < r \right]
\cdot M \Expect \left[ \norm{\Delta} \cdot \ones \left( \norm{\Delta} < r \right) \right] \nonumber{} \\
& = \epsilon M \Prob \left[ \norm{\Delta} \geq r \right] \cdot \Expect \left[ \norm{\Delta} \cdot \ones \left( \norm{\Delta} < r \right) \right]. \label{eq:prob-lb-cov}
\end{align}
It follows by
combining~\eqref{eq:prob-lb-var-x2},~\eqref{eq:prob-lb-var-x1}, and~\eqref{eq:prob-lb-cov} that
\begin{align}
\Var{} \left[ X \right]
& = \Var{} \left[ X_1 \right] + \Var{} \left[ X_2 \right] - 2\Cov \left[ X_1, X_2 \right]
\nonumber{} \\
& \leq \Prob \left[ \norm{\Delta} \geq r \right] \cdot \left(
  \epsilon^2 \cdot \Prob \left[ \norm{\Delta} < r \right]
- 2 \epsilon M \Expect \left[ \norm{\Delta} \cdot \ones \left( \norm{\Delta} < r \right) \right]
+ M^2 \Expect \left[ \norm{\Delta}^2 \cdot \ones \left( \norm{\Delta} < r \right) \right]
 \right) \nonumber{} \\
& \qquad + \frac{M^2}{\epsilon} \Expect \left[ \norm{\Delta}^2 \cdot \ones \left( \norm{\Delta} < r \right) \right]
\cdot \Expect \left[ X \right] \nonumber{} \\
& = \Prob \left[ \norm{\Delta} \geq r \right] \cdot \Expect \left[ (\epsilon - M \norm{\Delta})^2 \cdot \ones \left( \norm{\Delta} < r \right) \right] +
\frac{M^2}{\epsilon} \Expect \left[ \norm{\Delta}^2 \cdot \ones \left( \norm{\Delta} < r \right) \right]
\cdot \Expect \left[ X \right] \nonumber{} \\
& \leq \Prob \left[ \norm{\Delta} \geq r \right] \cdot \epsilon \cdot \Expect \left[ X \right]
+ \frac{M^2}{\epsilon} \Expect \left[ \norm{\Delta}^2 \cdot \ones \left( \norm{\Delta} < r \right) \right] \cdot \Expect \left[ X \right], \label{eq:prob-lb-var-x}
\end{align}
where the last inequality is because \((\epsilon - M \norm{\Delta})^2 \leq \epsilon(\epsilon - M\norm{\Delta})\) when \(\norm{\Delta} < r \in (0,  \frac{\epsilon}{2M}]\). Then, the desired result follows, because
\begin{align*}
&\Prob \left[ \left(\epsilon - M \norm{\Delta}  \right) \cdot \ones \left( \norm{\Delta} < r \right) > 0 \right] = \Prob\left[X > 0\right]
\geq \frac{\left(\Expect \left[ X \right]\right)^2}{\left(\Expect \left[ X \right]\right)^2 + \Var \left[ X \right] } = \frac{\Expect \left[X\right]}{\Expect \left[X\right] + \frac{\Var \left[ X \right]}{\Expect \left[X\right]}} \\
\geq \ & \frac{
\Expect \left[ \left(\epsilon - M \norm{\Delta}  \right) \cdot \ones \left( \norm{\Delta} < r \right)  \right]
}{
\Expect \left[ \left(\epsilon - M \norm{\Delta}  \right) \cdot \ones \left( \norm{\Delta} < r \right)  \right] + \left( \Prob \left[ \norm{\Delta} \geq r \right] \cdot \epsilon + \left( M^2 / \epsilon \right) \Expect \left[ \norm{\Delta}^2 \cdot \ones \left( \norm{\Delta} < r \right) \right] \right)
} \\
\geq \ & \frac{
\Expect \left[ \left(\epsilon - M \norm{\Delta}  \right) \cdot \ones \left( \norm{\Delta} < r \right)  \right]
}{
\epsilon - M \Expect \left[ \norm{\Delta} \cdot \ones \left( \norm{\Delta} < r \right) \right] + \left( M^2 / \epsilon\right) \Expect \left[ \norm{\Delta}^2 \right].
}
\end{align*}
where the second inequality is by \(\Expect \left[ X \right] > 0\) and~\eqref{eq:prob-lb-var-x}.
\eproof

Now we are ready to present the proof of Lemma~\ref{lem.prob-tau-good-k}.

\bproof[Proof of Lemma~\ref{lem.prob-tau-good-k}]
When \(\cT_k^{\trial} \geq \tauzero\),~\eqref{eq:prob-tau-good-main-k} holds trivially, as \(\{\bar{\cT}_k\}\) is monotonically decreasing (see Lemma~\ref{lem.merit_parameter}). When \(\norm{c(X_k)}_1 = 0\), we have \(\cT_k^{\trial} = +\infty \geq \tauzero\), reducing to the previous case. Therefore, it is sufficient to focus on the case where \(\cT^{\trial}_k < \tauzero\) and \(\norm{c(X_k)}_1 > 0\). To begin with, one has
\begin{align}
\Prob_k \left[ \bar{\cT}_k \leq \cT_k^{\trial} \right]
& \geq \Prob_k \left[
    (1 - \epsilon_{\tau}) \bar{\cT}^{\trial}_k < \cT_k^{\trial} \right] \nonumber{} \\
& \geq \Prob_k \left[
    (1 - \epsilon_{\tau}) \bar{\cT}^{\trial}_k < \cT_k^{\trial},
     T_n(\bar{Z}_k) > 0, T_d(\bar{Z}_k,H_k) > 0  \right] \nonumber{} \\
& = \Prob_k \left[
    (1 - \epsilon_{\tau}) \bar{\cT}^{\trial}_k < \cT_k^{\trial}, T_n(\bar{Z}_k)> 0 \right] \label{eq:key_eq_0}
\end{align}
where the first inequality is by Lemma~\ref{lem.merit_parameter}, and the last equality is because \((1 - \epsilon_{\tau}) \bar{\cT}_k^{\trial} < \cT_k^{\trial} < \tauzero\) implies that \(\bar{\cT}_k^{\trial} < +\infty\) and $\cT_k^{\trial} < +\infty$, in which case \(T_d(\bar{Z}_k,H_k) > 0\) and $T_d(Z_k,H_k) > 0$. Consequently, by the definitions of \(\cT_k^{\trial}\) and \(\bar{\cT}_k^{\trial}\), it follows that
\begin{align}
\Prob_k \left[ (1-\epsilon_{\tau})\bar\cT_k^{\trial} < \cT_k^{\trial}, T_n(\bar{Z}_k) > 0 \right]
= \Prob_k \left[\frac{T_d(\bar{Z}_k, H_k)}{T_n(\bar{Z}_k)}
> (1-\epsilon_{\tau}) \cdot \frac{T_d(Z_k,H_k)}{T_n(Z_k)}, T_n(\bar{Z}_k) > 0 \right].
\label{eq:prob-tau-good-step-1}
\end{align}
Moreover, due to \inrevise{Assumption~\ref{ass.prob}, Lemma~\ref{lem.singular_values} and Lemma~\ref{lem.Td-lip-cont}}, $T_n(\cdot)$ and $T_d(\cdot, H_k)$ are Lipschitz continuous for all $k\in\NN{}$. Consequently, there exists a sufficiently large constant \(\nctwo\label{cnst:apx-lip-const} > 0\) that only depends on \(\mathcal{X}\) and \(q_{\text{min}}\) such that for all $k\in\NN$ and $Z\in\RR^{n+m+mn}$, one has
\begin{align*}
\abs{T_d(Z,H_k) - T_d(Z_k, H_k)} \leq \rctwo{cnst:apx-lip-const} \norm{Z - Z_k} \quad
\text{ and }\quad \abs{T_n(Z) - T_n(Z_k)} \leq \rctwo{cnst:apx-lip-const} \norm{Z - Z_k}.
\end{align*}
Then, when \(\cT^{\trial}_k < \tauzero\), it holds that
\begin{align}
\eqref{eq:prob-tau-good-step-1}
= ~ & ~ \Prob_k \left[ T_d(\bar{Z}_k, H_k) T_n(Z_k)
> (1-\epsilon_{\tau}) \cdot T_d(Z_k, H_k) T_n(\bar{Z}_k), T_n(\bar{Z}_k) > 0\right] \nonumber{} \\
\geq ~ & ~ \Prob_k \left[ T_n(Z_k) \left( T_d(Z_k, H_k)
- \rctwo{cnst:apx-lip-const} \norm{\bar{Z}_k - Z_k}  \right)
> (1-\epsilon_{\tau}) \cdot T_d(Z_k, H_k) \left( T_n(Z_k) + \rctwo{cnst:apx-lip-const} \norm{\bar{Z}_k - Z_k} \right),
 T_n(\bar{Z}_k) > 0 \right] \nonumber{} \\
\geq ~ & ~
\Prob_k \left[ \epsilon_{\tau} T_n(Z_k) T_d(Z_k, H_k)
- \rctwo{cnst:apx-lip-const} (T_n(Z_k) + (1 - \epsilon_{\tau}) \cdot T_d(Z_k, H_k))\norm{\bar{Z}_k - Z_k} > 0, T_n(Z_k) > \rctwo{cnst:apx-lip-const} \norm{Z_k - \bar{Z}_k}  \right] \nonumber{} \\
= ~ & ~
\Prob_k \left[ \epsilon_{\tau} T_n(Z_k) - \rctwo{cnst:apx-lip-const} \left(\frac{T_n(Z_k)}{T_d(Z_k, H_k)} + (1-\epsilon_{\tau}) \right)\norm{\bar{Z}_k - Z_k} > 0, \norm{Z_k - \bar{Z}_k} < \frac{T_n(Z_k)}{\rctwo{cnst:apx-lip-const}}
 \right] \nonumber{} \\
\geq ~ & ~
\Prob_k \left[ \epsilon_{\tau} \norm{c(X_k)}_1
- \rctwo{cnst:apx-lip-const}
\left( \frac{\tauzero}{(1-\sigma)} + (1-\epsilon_{\tau}) \right)\norm{\bar{\Delta}_k} > 0,
\norm{\bar{\Delta}_k} < \frac{\epsilon_{\tau} \norm{c(X_k)}_1}{\rctwo{cnst:apx-lip-const}}
 \right],
\label{eq:key_eq_1}
\end{align}
where the first inequality follows from \(T_n(Z_k) = \norm{c(X_k)}_1 > 0\) and \(T_d(Z_k, H_k) > 0\), and the last inequality is because \(\cT^{\trial}_k = (1 - \sigma) T_n(Z_k) / T_d(Z_k, H_k) < \tauzero\). Therefore, by letting \(\epsilon_k := \epsilon_{\tau} \norm{c(X_k)}_1\) and choosing \(\rc{cnst:lem.prob-tau-good-k} := \rctwo{cnst:apx-lip-const} \max\left\{1, \frac{\tauzero}{1 - \sigma} + ( 1 - \epsilon_{\tau} )\right\}\) , one obtains
\begin{equation}\label{eq:key_eq_2}
\begin{aligned}
\eqref{eq:key_eq_1}
& \geq \Prob_k \left[ \epsilon_k - \rc{cnst:lem.prob-tau-good-k} \norm{\bar{\Delta}_k} > 0, \norm{\bar{\Delta}_k} < \frac{\epsilon_k}{2 \rc{cnst:lem.prob-tau-good-k}} \right]
= \Prob_k \left[ \left(\epsilon_k - \rc{cnst:lem.prob-tau-good-k} \norm{\bar{\Delta}_k}  \right)
\cdot \ones \left( \norm{\bar{\Delta}_k} < \frac{\epsilon_k}{2 \rc{cnst:lem.prob-tau-good-k}}  \right) > 0, \right] \\
& \geq
\frac{
\Expect_k \left[ \left(\epsilon_k - \rc{cnst:lem.prob-tau-good-k} \norm{\bar{\Delta}_k}  \right) \cdot \ones \left( \norm{\bar{\Delta}_k} < \frac{\epsilon_k}{2 \rc{cnst:lem.prob-tau-good-k}} \right)  \right]
}{
\epsilon_k - \rc{cnst:lem.prob-tau-good-k} \Expect_k \left[ \norm{\bar{\Delta}_k} \cdot \ones \left( \norm{\bar{\Delta}_k} < \frac{\epsilon_k}{2 \rc{cnst:lem.prob-tau-good-k}} \right) \right] + \left( \rc{cnst:lem.prob-tau-good-k}^2 / \epsilon_k \right) \Expect_k \left[ \norm{\bar{\Delta}_k}^2 \right]
},
\end{aligned}
\end{equation}
where the last inequality applies Lemma~\ref{lem:prob-lb} to the filtered probability space associated to \inrevise{\(\mathcal{F}'_k\)}. Combining \eqref{eq:key_eq_0}--\eqref{eq:key_eq_2}, we complete the proof.
\eproof

\end{document}